\documentclass[11pt,a4paper,english]{amsart}
\usepackage[T1]{fontenc}
\usepackage[latin9]{inputenc}
\pagestyle{plain}
\usepackage{babel}
\usepackage{amstext}
\usepackage{amsthm}
\usepackage{comment}
\usepackage{amssymb}
\usepackage[unicode=true,pdfusetitle,
 bookmarks=true,bookmarksnumbered=false,bookmarksopen=false,
 breaklinks=false,pdfborder={0 0 1},backref=false,colorlinks=false]
 {hyperref}
 \usepackage[colorinlistoftodos]{todonotes}
\usepackage{amsmath}
\makeatletter
\pdfpageheight\paperheight
\pdfpagewidth\paperwidth

\numberwithin{equation}{section}
\numberwithin{figure}{section}
\theoremstyle{plain}
\newtheorem{thm}{\protect\theoremname}[section]
\theoremstyle{definition}
\newtheorem{defn}[thm]{\protect\definitionname}
\theoremstyle{plain}
\newtheorem{lem}[thm]{\protect\lemmaname}
\theoremstyle{definition}
\newtheorem{example}[thm]{\protect\examplename}
\theoremstyle{plain}
\newtheorem{cor}[thm]{\protect\corollaryname}
\theoremstyle{plain}
\newtheorem{prop}[thm]{\protect\propositionname}
\theoremstyle{remark}
\newtheorem{claim}[thm]{\protect\claimname}
\theoremstyle{remark}
\newtheorem{remark}[thm]{\protect\remarkname}

\usepackage{tikz-cd}
\usepackage{tikz}
\usepackage{adjustbox}
\usepackage{mathabx}
\usepackage[font=small,labelfont=bf]{caption}
\usepackage{graphicx}
\usepackage{amsfonts, amsmath, amsthm, amssymb}
\usepackage{wrapfig}
\usepackage[font=small,labelfont=bf]{caption} 
\usepackage{caption,subcaption,tikz}
\captionsetup{singlelinecheck = true}
\usepackage{amscd}

\xdefinecolor{darkgreen}{RGB}{0, 100, 0}
\definecolor{redviolet}{rgb}{0.78,0.08,0.52}
\definecolor{bu-red}{rgb}{0.8,0,0}

\newcommand{\R}{\mathbb{R}}
\newcommand{\Q}{\mathbb{Q}}

\newcommand{\bC}{\mathbb{C}}
\newcommand{\Z}{\mathbb{Z}}
\newcommand{\Spec}{\operatorname{Spec}}
\newcommand{\Nef}{\operatorname{Nef}}
\newcommand{\Proj}{\operatorname{Proj}}

\newcommand{\Pic}{\operatorname{Pic}}
\newcommand{\NE}{\operatorname{NE}}
\newcommand{\Hom}{\operatorname{Hom}}
\newcommand{\ord}{\operatorname{ord}}
\newcommand{\trop}{\operatorname{trop}}
\newcommand{\SK}{\operatorname{SK}}
\newcommand{\Aut}{\operatorname{Aut}}
\newcommand{\cX}{\mathcal{X}}

\newcommand{\cY}{\mathcal{Y}}
\newcommand{\cZ}{\mathcal{Z}}
\newcommand{\cD}{\mathcal{D}}
\newcommand{\cO}{\mathcal{O}}

\newcommand{\xyR}[1]{
  \xydef@\xymatrixrowsep@{#1}}
\newcommand{\xyC}[1]{
  \xydef@\xymatrixcolsep@{#1}}

 \allowdisplaybreaks

\usetikzlibrary{shapes.geometric, arrows}
\usetikzlibrary{arrows}
\usetikzlibrary{intersections}
\usetikzlibrary{calc,patterns,angles,quotes}
\usepgflibrary{patterns}
\usetikzlibrary{patterns}
\usetikzlibrary{decorations.markings}
\usetikzlibrary{positioning}

\theoremstyle{definition}
\newtheorem{cons}[thm]{Construction}

\makeatother

\providecommand{\claimname}{Claim}
\providecommand{\corollaryname}{Corollary}
\providecommand{\definitionname}{Definition}
\providecommand{\examplename}{Example}
\providecommand{\lemmaname}{Lemma}
\providecommand{\propositionname}{Proposition}
\providecommand{\theoremname}{Theorem}
\providecommand{\remarkname}{Remark}

\begin{document}
\title{Mirror Symmetry for Log Calabi-Yau Surfaces II}
\author{Jonathan Lai}
\address{Department of Mathematics \\ Imperial College London \\ 180 Queen's Gate \\ London SW7 2AZ\\ United Kingdom}
\email{j.lai@imperial.ac.uk}
\author{Yan Zhou}
\address{Department of Mathematics \\ Northeastern University \\ 360 Huntington Avenue \\ MA 02115  \\ United States of America}
\email{y.zhou@northeastern.edu}
\maketitle

\begin{abstract}
We show that the ring of regular functions of every smooth affine log Calabi-Yau surface with maximal boundary  has a vector space basis parametrized by its set of integer tropical points and a $\mathbb{C}$-algebra structure with structure coefficients given by the geometric construction of \cite{KY}. To prove this result, we first give a canonical compactification of the mirror family associated with a pair $(Y,D)$ constructed in \cite{GHK}  where $Y$ is a smooth projective rational surface, $D$ is  an anti-canonical cycle of rational curves and $Y\setminus D$ is the minimal resolution of an affine surface with, at worst, du Val singularities. Then, we compute periods for the compactified family using techniques from \cite{RS19} and use this to give a modular interpretation of the compactified mirror family.
    
\end{abstract}

\tableofcontents

\section{Introduction} 
\noindent For this paper, we work over the ground field $\bC$.\\
\subsection{Main results}
Let $(Y,D)$ be a pair where $Y$ is a smooth projective rational surface and $D\in |-K_Y|$ is either an irreducible rational nodal curve or a cycle of $n\geq 2$ smooth rational curves. We call such a pair a \emph{Looijenga pair}.  Write $D=D_1+..+D_n$ where $D_i$ is an irreducible component. We say that $(Y,D)$ is \emph{positive} if the intersection matrix $(D_i\cdot D_j)$ is not negative semi-definite.

Recall that a smooth surface $U$ is log Calabi-Yau with maximal boundary if and only if $U$ can be realized as the complement of $D$ in $Y$ for a Looijenga pair $(Y,D)$. Additionally, $U$ is affine if and only if $D$ supports an ample divisor. Another equivalent condition for a Looijenga pair $(Y,D)$ to be positive is that the complement $U=Y\setminus D$ is the minimal resolution of an affine log Calabi-Yau surface with at worst du Val singularities. 

Let $U$ be a smooth affine log-Calabi-Yau surface with maximal boundary. Let $\omega$ be the canonical volume holomorphic form (unique up to scaling) of $U$ and $\mathbb{C}(U)$ the function field of $U$. The set of integer tropical points $U^{\trop}(\Z)$ of $U$ is the set of divisorial discrete valuations $\nu:\mathbb{C}(U)\setminus \{0\} \rightarrow \Z$ such that $\nu(\omega)<0$ together with the zero valuation.

The main result of the paper is the proof of a refined version of Conjecture 0.6 announced in \cite{GHK} in the case of dimension $2$:
\begin{thm} \label{thm:mai-main-intro}
Let  $U$ be a smooth affine log Calabi-Yau surface with maximal boundary. Then, there is a  decomposition 
\begin{align}
    H^0(U,\cO_U)=\bigoplus_{q\in U^{\operatorname{trop}}(\Z)} \mathbb{C}\cdot \theta_q \label{eq:basis-intro}
\end{align}
of the vector space of regular functions on $U$  into one-dimensional subspaces, canonical up to the choice of one of the two possible orientations of $U^{\trop}(\mathbb{Z})$. 
\end{thm}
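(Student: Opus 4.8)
The plan is to deduce the statement from the structure of the mirror family: realize $H^0(U,\cO_U)$ as a fiber of the structure algebra of a (suitably compactified) mirror family and transport onto it the canonical basis of theta functions. Throughout I use the equivalences recalled above, so that $U\cong Y\setminus D$ for a positive Looijenga pair $(Y,D)$ with $D$ supporting an ample divisor, and I fix the volume form $\omega$ on $U$. I will also use the standard reformulation of $U^{\trop}(\Z)$ as the set of integral points of the dual intersection complex of $D$ with its integral affine structure, the divisorial valuations singled out by $\nu(\omega)<0$ being exactly those arising from the (two-dimensional) cones over that complex; GHK's theta functions are indexed by this set.

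The bulk of the argument is the construction and identification of the compactified mirror family, carried out in three steps. First, starting from the GHK mirror family of $(Y,D)$, which in the positive case algebraizes to an affine family $\cX\to\Spec R$ with $R$ built from $\NE(Y)$, I would produce a canonical compactification $\overline{\cX}\to\Spec R$ with boundary divisor $\cD$ and $\cX=\overline{\cX}\setminus\cD$, equipped with a relative log volume form restricting to the canonical form on each fiber. Second, using the techniques of \cite{RS19}, I would compute the period integrals of this form against the relevant (vanishing) cycles and thereby express the GHK/Gross--Siebert canonical coordinates on $\Spec R$ as honest period coordinates. Third, I would conclude the modular interpretation: $\overline{\cX}\to\Spec R$ is a cover or localization of the moduli space of Looijenga pairs deformation equivalent to $(Y,D)$ equipped with a marking of $\Pic$. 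I expect this to be essentially the whole of the work and the only genuine obstacle — producing the period map and proving it realizes $\Spec R$ as the stated moduli space is where all the analytic and combinatorial difficulty lies; the remaining steps are then formal.

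Granting the modular interpretation, $(Y,D)$ with its marking is a point of this moduli space, so there is a closed point $s_0\in\Spec R$ — after passing to the relevant cover if necessary — with $(\overline{\cX}_{s_0},\cD_{s_0})\cong(Y,D)$, hence $\cX_{s_0}\cong U$; that is, $U$ itself occurs as a fiber of its own compactified mirror family. On the other hand, combining GHK's construction of the canonical scattering diagram with the finiteness available in the positive case, the ring $A:=H^0(\cX,\cO_{\cX})$ is a free $R$-module with basis $\{\theta_q\}_{q\in U^{\trop}(\Z)}$ of theta functions (with $\theta_0=1$), whose multiplication has structure constants given by the construction of \cite{KY}. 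Since $\cX\to\Spec R$ is flat with affine fibers, base change gives $H^0(U,\cO_U)\cong A\otimes_R\kappa(s_0)$, and freeness of $A$ over $R$ forces the restrictions $\theta_q|_{s_0}$ to form a $\bC$-basis of $H^0(U,\cO_U)$; writing $\theta_q$ for $\theta_q|_{s_0}$ yields the decomposition \eqref{eq:basis-intro} (only the resulting vector-space decomposition is needed for the present theorem).

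It remains to address the canonicity clause. The canonical scattering diagram, and hence each $\theta_q$, is built intrinsically from $U$, so the decomposition does not depend on the auxiliary choice of Looijenga pair $(Y,D)$ compactifying $U$; the only remaining input is an orientation of the anticanonical cycle $D$, equivalently one of the two orientations of the affine manifold $U^{\trop}(\Z)$. These two choices are interchanged by the corresponding reflection of $U^{\trop}(\Z)$ and produce the two admissible families $\{\theta_q\}$, so the decomposition \eqref{eq:basis-intro} is canonical precisely up to this choice, as asserted.
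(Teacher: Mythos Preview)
Your overall strategy matches the paper's: deduce the theorem from Theorem~\ref{main theorem} by locating $(Y,D)$ as a marked fiber of the compactified mirror family and specializing the $R$-basis of theta functions. The steps up through obtaining the decomposition at a specific point $s_0$ are essentially the paper's argument.

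The gap is in your canonicity paragraph. Your claim that ``the canonical scattering diagram, and hence each $\theta_q$, is built intrinsically from $U$'' is false as stated: the scattering diagram of \cite{GHK} is constructed from the relative Gromov--Witten theory of a \emph{chosen} compactification $(Y,D)$, the structure constants lie in $\mathbb{C}[\NE(Y)]$, and the point $s_0\in T_Y$ at which you evaluate depends on the marking of $D$ (and of $\Pic$). So independence of these choices is not automatic and must be proved. The paper does this in three separate pieces: changing the marking of $D$ is the $T^D$-action on the family (Lemma~\ref{lem:rel-tor-mar}), and since each $\theta_q$ is a $T^D$-eigenfunction (Theorem~\ref{thm:rel-equi}) this only rescales the basis; changing the marking of $\Pic$ by an admissible lattice automorphism leaves the canonical scattering diagram invariant; and changing the compactification is handled by passing to a common toric blowup together with the compatibility statements of Lemma~\ref{lem:tor-bl-compa} and Lemma~\ref{lem:compa-para-tor-bl}. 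Finally, the paper explicitly identifies $\cX_t^{\trop}(\Z)$ with $U^{\trop}(\Z)$ via the half-space compactifications of Section~\ref{sec:compa}, and it is precisely this identification that records the orientation dependence. Without these arguments, you have produced \emph{a} basis depending on auxiliary data, not the canonical decomposition the theorem asserts.
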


The canonical decomposition \ref{eq:basis-intro} implies the existence of  distinguished vector space basis elements, unique up to scaling, called theta functions in \cite{GHK}. The original conjecture of \cite{GHK} says that the multiplication rule of theta functions has a combinatorial description in terms of broken lines defined in \cite{GHK}. After \cite{GHK}, there has been rapid development of the geometric construction of structure constants for the multiplication of theta functions. Using these recent developments, we are able to prove a more refined version of the original conjecture in \cite{GHK}, as stated later in Theorem \ref{thm:main-main-intro-2}, which states that the structure constants for the multiplication rule of the canonical basis elements of $H^0(U,\cO_U)$ has a nice geometric construction via naive counts of analytic discs in the sense of non-archimedean geometry as in \cite{KY}. There, in \cite{KY}, broken lines are geometrically realized as the tropicalization, or the essential skeletons, of punctured non-archimedean analytic discs.  Alternatively, these structure constants can also be constructed via the punctured Gromov-Witten invariants in \cite{ACGS20} as we will explain later in the introduction.

Theorem \ref{thm:mai-main-intro}, and later Theorem \ref{thm:main-main-intro-2}, follow from another main result of the paper, the identification of the mirror family associated with a positive Looijenga pair $(Y,D)$ constructed in \cite{GHK} as the universal family over a certain moduli space, as stated in Theorem \ref{main theorem}. The proof of this result forms the bulk of this paper. 

Before we give the precise statement of Theorem \ref{main theorem}, let us briefly recall the construction from \cite{GHK}.  Let $(Y,D)$ be a positive Looijenga pair and $\NE(Y)$ the monoid generated by effective curve classes on $Y$. We can assume that $U=Y\setminus D$ is affine.  The mirror algebra $A$ constructed in \cite{GHK}, as a $\mathbb{C}[\NE(Y)]$-module, has a basis of theta functions parametrized by $U^{\trop}(\Z)$. The structure coefficients of $A$ in \cite{GHK}
are obtained combinatorially via the machinery of the canonical scattering diagram built from the data of relative Gromov-Witten invariants of $(Y,D)$, which counts rational curves on $Y$ meeting $D$ at a single point. Taking the spectrum of $A$, we get an algebraic family 
\[ \cX:= \Spec(A) \rightarrow S:=\Spec(\bC[\NE(Y)]).\] 
In \cite{GHK}, it is proved that the mirror family is a flat affine family of Gorenstein semi-log canonical surfaces with smooth generic fiber. 
However, it is hard to give a further concrete description of what fibers in the mirror family are by simply studying the mirror algebra $A$ itself. 

We aim to describe the mirror family associated with a positive Looijenga pair $(Y,D)$ as the universal family of deformations of $U$ plus some extra data.  To achieve this goal, we first give a canonical, fiberwise compactification of the mirror family to obtain a family of pairs, $(\bar{\cX},\cD)\rightarrow S$. We will be interested in studying the restriction of this family to the algebraic torus $T_Y=\Pic(Y)\otimes \mathbb{G}_m\subset S$. Over the torus, each fiber, $(\bar{\mathcal{X}}_t,\cD_t)$,  is a pair where $\bar{\mathcal{X}}_t$ is a projective rational surface with at worst du Val singularity and smooth along $\cD_t$, and $\mathcal{D}_t\in |-K_{\bar{\mathcal{X}}_t}|$  is a rational singular nodal curve. Naturally, we would like to know what surface pairs appear as fibers in this family. To this end, we are able to realize our family as the universal family of, what we call, \textit{generalized marked pairs}. The moduli functor in question is closely related to those considered in \cite{moduli}, which gives moduli stacks for Looijenga pairs (with and without markings). The main difference in our work is that we have to account for certain types of singular surfaces. We briefly describe the objects that our mirror family will parametrize. 

\begin{defn}\label{generalized pair}
A \textit{generalized pair} is a pair $(Y,D)$ where $Y$ is a projective rational surface, $D\in |-K_Y|$ is either an irreducible rational nodal curve or a cycle of $n\geq 2$ smooth rational curves, $Y$ is smooth along $D$ and $Y\setminus D$ is an affine surface with at worst du Val singularities.
\end{defn}

In particular, if $(X,\tilde{D})$ is the minimal resolution of a generalized pair $(Y,D)$ together with the strict transform $\tilde{D}$ of $D$, $\tilde{D}$ is anti-canonical. Then,  $(X,\tilde{D})$ is a positive Looijenga pair.

 We will then decorate  a generalized pair with two types of markings.

\begin{defn}
    Given a generalized pair $(Y,D)$, a \emph{marking} of $D$ is a choice of point $p_i\in D_i^{\circ}$ for each $i$, where $D_i^{\circ}$ is the intersection of $D_i$ with the smooth locus of $D$.
\end{defn}

\begin{defn}
An \emph{internal} (-2)-\emph{curve} on a Looijenga pair $(Y,D)$ means a smooth rational curve of self-intersection -2 disjoint from $D$.  A pair $(Y,D)$ is called \textit{generic} if $Y$ has no internal $-2$ curves.
\end{defn}

\begin{defn}\label{def:R(Y)-I(Y)}
Suppose the minimal resolution of $Y$ is given by $f:X\rightarrow Y$. Then, define
\[ R(Y):=\{\delta=\sum a_i\delta_i \in \Pic(X):\delta_i \text{ is a (-2)-curve}\, 
\text{contracted by} f\}\]
and let $I(Y)=R(Y)^{\perp}$.
\end{defn}

\begin{defn} \label{def:gen-mar}
Fix a generic Looijenga pair $(Y_0, D)$. A \textit{marking of} $\Pic(Y)$ \textit{by} $\Pic(Y_0)$ is an embedding $\mu: I(Y)\rightarrow \Pic(Y_0)$ which extends to an isometry $\tilde{\mu}:\Pic(X)\rightarrow \Pic(Y_0)$ where $X\rightarrow Y$ is the minimal resolution of $Y$ and $\tilde{\mu}$ is a marking as in Definition \ref{smooth marking}.
\end{defn}

The objects that our mirror family will parametrize are then given by the following objects.

\begin{defn}\label{generalized marked pair}
A \textit{generalized marked pair} is a tuple $((Y,D),p_i,\mu)$ such that $(Y,D)$ is a generalized pair, $p_i$ is a marking of the boundary and $\mu$ is a marking of $\Pic(Y)$.  We say $p_i$ together with $\mu$ gives a \emph{marking of} $(Y,D)$.
\end{defn}

Now, we give the second main theorem of this paper which leads to Theorem \ref{thm:mai-main-intro}:
\begin{thm}\label{main theorem}
Given a positive Looijenga pair $(Y,D)$ and its mirror family $\cX \rightarrow S$, there exists a canonical compactification of $\mathcal{X}$ to a family of surface pairs $(\bar{\cX},\cD)\rightarrow S$ such that the restriction to the algebraic torus $T_Y=\Pic(Y)\otimes \mathbb{G}_m$ is a family $(\bar{\cX}\mid_{T_Y},\cD \mid_{T_Y})$ of generalized pairs deformation equivalent to $(Y,D)$. Furthermore, $(\bar{\cX}\mid_{T_Y},\cD \mid_{T_Y})$ comes with a natural marking $p_i$ of the boundary $\cD$ and a marking $\mu$ of the Picard group by $\Pic(Y)$ such that the resulting marked family, $((\bar{\cX} \mid_{T_Y},\cD \mid_{T_Y}),p_i,\mu)$, is isomorphic to the universal family of generalized marked pairs.
\end{thm}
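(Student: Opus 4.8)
The plan is to build the isomorphism in several stages, passing back and forth between the generalized pairs and their minimal resolutions, where the more familiar machinery of \cite{moduli} applies. First I would construct the canonical compactification $(\bar{\cX},\cD)\to S$ fiberwise: the mirror algebra $A$ comes with a $U^{\trop}(\Z)$-grading, and the theta functions attached to the boundary rays of the tropicalization generate a natural polarization; I would use the sub- and quotient-algebras determined by the cone structure of the canonical scattering diagram to glue in the divisor $\cD$ at infinity, exactly as in the surface case of \cite{GHK}, and check that $\cD\in|-K_{\bar{\cX}}|$ on each fiber by a direct computation with the degree-zero part of $A$. The key output of this step is that over $T_Y$ the fibers are Gorenstein, have only du Val singularities (this is already in \cite{GHK}), and are smooth along $\cD_t$, so that $(\bar\cX_t,\cD_t)$ is a generalized pair; deformation equivalence to $(Y,D)$ follows because $T_Y$ is connected and one distinguished fiber (corresponding to a point where the scattering diagram is ``consistent in the classical sense'') is $(Y,D)$ itself, after taking minimal resolution.

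Next I would produce the markings. The boundary marking $p_i$ is canonical: each component $\cD_{i}$ of $\cD_t$ carries a distinguished point cut out by the theta function $\theta_{m_i}$ associated to the primitive generator $m_i$ of the corresponding ray, and one checks these lie in the smooth locus $\cD_i^\circ$ and vary algebraically in $t$, giving a section of $\cD\to S$ over $T_Y$. For the Picard marking $\mu$: base changing along the minimal resolution, the family of resolutions $(\cX',\tilde\cD)\to T_Y$ is a family of positive Looijenga pairs, and $T_Y=\Pic(Y)\otimes\mathbb{G}_m$ acts on the mirror family by its very construction (the $\Pic(Y)$-grading on $A$), with the exceptional curves of the resolution being locally constant in cohomology; this rigidifies $\Pic(\cX'_t)$ and determines an isometry $\tilde\mu:\Pic(X)\to\Pic(Y_0)$ restricting to $\mu:I(Y)\to\Pic(Y_0)$ in the sense of Definition \ref{def:gen-mar}. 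I would verify the compatibility conditions (that $\tilde\mu$ sends effective/nef classes appropriately and is compatible with the boundary marking) using the explicit torus action.

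Now the heart of the argument: showing $((\bar\cX\mid_{T_Y},\cD\mid_{T_Y}),p_i,\mu)$ \emph{is} the universal family of generalized marked pairs. I would do this by comparing with the smooth case. By Definition \ref{generalized pair} every generalized marked pair has a minimal resolution which is a marked positive Looijenga pair, and conversely contracting the internal $(-2)$-curves recovers the generalized pair; the markings correspond under this, with $I(Y)=R(Y)^\perp$ precisely encoding the freedom lost by contraction. So the moduli space of generalized marked pairs fibers over (an open locus of) the moduli space of marked Looijenga pairs of \cite{moduli}, and by the results there the latter is $T_{Y_0}/($a finite group$)$ with the period map as coordinates. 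I would then invoke the period computation of the compactified mirror family — done in the body of the paper via the techniques of \cite{RS19} — which shows that the periods of $(\bar\cX_t,\cD_t,p_i,\mu)$ are exactly the coordinate $t\in T_Y$; this identifies the classifying map $T_Y\to\{\text{moduli of generalized marked pairs}\}$ as an isomorphism, since the period map is a bijection on both sides and our classifying map intertwines them with the identity.

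The main obstacle I expect is the period computation and, relatedly, proving that the classifying map is an \emph{isomorphism} rather than merely étale or bijective on points. Torelli-type statements for these log Calabi-Yau surfaces (in the spirit of \cite{GHK,moduli}) give injectivity of the period map, but surjectivity — i.e.\ that every generalized marked pair with the right discrete invariants actually appears as a fiber — requires knowing the image of the period map precisely, which is where the singular fibers make life harder than in \cite{moduli}: one must show the relevant root hyperplanes $\delta^\perp$ ($\delta\in R(Y)$) are exactly the locus where the fiber acquires the du Val singularity being resolved, and that no further degeneration occurs. I would handle this by a careful local analysis of the scattering diagram along these walls, together with a dimension count matching $\dim T_Y$ against the expected dimension of the deformation space of $(Y,D)$ as a pair (which, for log Calabi-Yau surfaces, equals $\operatorname{rank}\Pic(Y)$), so that an étale bijective map between smooth varieties of the same dimension is forced to be an isomorphism.
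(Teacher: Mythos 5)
Your proposal reproduces the correct overall architecture of the paper's argument (compactify fiberwise, mark the boundary and the Picard group, compute marked periods via \cite{RS19}, conclude with a Torelli theorem), but its load-bearing middle step has a genuine gap: the deformation-type argument is circular. You claim deformation equivalence of the fibers with $(Y,D)$ ``follows because $T_Y$ is connected and one distinguished fiber \dots is $(Y,D)$ itself.'' No such distinguished fiber is available a priori: the fibers of $\Spec(A)$ are defined purely by broken-line structure constants, and the assertion that $(Y,D)$, or anything deformation equivalent to it, occurs among them is precisely what the theorem must establish. The paper instead proves this (Lemma \ref{lem:def-type}, Proposition \ref{prop:mar-family}) by restricting to the Gross--Siebert locus of a toric model, factoring the affine singularity of $B$ into focus--focus singularities, and using \cite{RS19} and \cite{Sym} to produce on each nearby smooth fiber a configuration of $(-1)$-curves with the combinatorics of the exceptional set of the toric model; blowing these down yields a pair of charge zero, hence a toric pair, which pins down the deformation type. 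The same tropical-cycle construction is what actually furnishes the Picard marking $\mu$ --- one needs the classes $[E_{ij,s}]$ themselves, matched to $\cO_Y(E_{ij})$ --- so your appeal to the exceptional curves being ``locally constant in cohomology'' and to the torus action presupposes the very classes whose existence is in question.

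Two further remarks. The compactification step is underspecified: ``gluing in $\cD$ using sub- and quotient-algebras as in \cite{GHK}'' is not a construction over the whole base; the paper develops generalized half-spaces and parallel polygons $P(W)$ in $B$ (Proposition \ref{constructpoly}), defines $V\subset A[T]$, proves finite generation, and takes $\Proj$, and the du Val/smooth-along-$\cD$ statements for the compactified fibers are proved in Proposition \ref{compa-fiber} --- they are not already in \cite{GHK}. Finally, your closing worry about surjectivity of the period map and the proposed dimension-count/\'etale-bijectivity argument are unnecessary: the moduli space of generalized marked pairs is constructed directly as the fine moduli space $T_{Y_0}^{*}$ with an explicit universal family (contract the internal curves in an almost-universal family of \cite{moduli}), so once the marked period map $T_Y\to T_{Y}^{*}$ is shown to be an isomorphism (Proposition \ref{identity}), the identification follows from Proposition \ref{Torelli} and Lemma \ref{noaut} with no separate analysis of which root hyperplanes produce which singularities.
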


Now, given a smooth affine log Calabi-Yau surface $U$, we fix a Looijenga pair $(Y,D)$ that compactifies $U$ and a marking of $D$. By Proposition 4.10 (cf. Proposition \ref{prop:compar-ky}) of \cite{HKY}, the structure constants of the mirror algebra $A$ associated with $(Y,D)$ can be equivalently given by the geometric construction of \cite{KY}. By Theorem \ref{main theorem}, $(Y,D)$ will appear in the canonically compactified mirror family. So, to get a basis of theta functions for $H^0(U,\cO_U)$, it suffices to specialize the coefficients of $A$.  However, the identification of $(Y,D)$ with a fiber of the compactified mirror family is not unique. As a result, the structure constants of $H^0(U,\cO_U)$ we get by specializing coefficients depend on the identification we choose. However, once we fix a marking of $D$, we will fix the set of basis elements of theta functions. Thus, we are able to prove Theorem \ref{thm:mai-main-intro} and the following theorem, whose detailed proofs are carried out in Section \ref{sec:keel-yu}:
\begin{thm} \label{thm:main-main-intro-2}
Let  $U$ be a smooth affine log Calabi-Yau surface with maximal boundary. Fix a Looijenga pair $(Y,D)$ that compactifies $U$ and a marking of $D$. Then the decomposition given in \ref{eq:basis-intro} has a canonical basis  of theta functions such that the structure constants for multiplication of theta functions are given by the geometric construction in \cite{KY}. Different choices of compactifications  and markings will only result in the rescaling of basis elements and not affect the decomposition in \ref{eq:basis-intro}. 
\end{thm}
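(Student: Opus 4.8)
The plan is to deduce this result as a corollary of Theorem \ref{main theorem}, combined with the comparison result of \cite{HKY} (Proposition \ref{prop:compar-ky}) that identifies the structure constants of the \cite{GHK} mirror algebra $A$ with the naive curve counts of \cite{KY}. First I would fix the Looijenga pair $(Y,D)$ compactifying $U$ and a marking $p_i$ of $D$, and recall that the mirror algebra $A$ over $\bC[\NE(Y)]$ has, by construction, a free module basis $\{\vartheta_q\}_{q\in U^{\trop}(\Z)}$ with structure constants $\alpha_{q_1 q_2}^{q} \in \bC[\NE(Y)]$. By Proposition \ref{prop:compar-ky}, these $\alpha_{q_1q_2}^q$ are computed by the geometric counts of \cite{KY}. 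The point is then to specialize: by Theorem \ref{main theorem}, $(Y,D)$ (with its marking) arises as a fiber $(\bar{\cX}_t, \cD_t)$ of the compactified mirror family over a point $t \in T_Y \subset S$, and restricting to the open part recovers $U = Y\setminus D$ as the corresponding fiber of the original family $\cX \to S$. Specializing the $\bC[\NE(Y)]$-algebra $A$ at the closed point $t$ therefore yields a $\bC$-algebra structure on $\bigoplus_{q} \bC\cdot \vartheta_q|_t$ which is identified with $H^0(U,\cO_U)$, with structure constants $\alpha_{q_1q_2}^q(t)$ — the evaluations of the \cite{KY} counts. This produces the decomposition \eqref{eq:basis-intro} together with a basis of theta functions whose multiplication is governed by \cite{KY}, proving both Theorem \ref{thm:mai-main-intro} and the existence statement in Theorem \ref{thm:main-main-intro-2}.

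The remaining, and genuinely delicate, point is the independence statement: different choices of compactifying Looijenga pair $(Y',D')$, of marking, and of the identification of $(Y,D)$ with a fiber of the compactified family must all yield the same decomposition \eqref{eq:basis-intro}, changing only the scaling of each $\vartheta_q$. Here I would argue in two stages. First, fixing $(Y,D)$ but varying the point $t \in T_Y$ realizing it: since the torus $T_Y = \Pic(Y)\otimes\mathbb{G}_m$ acts on the family and the fibers over a $T_Y$-orbit are all isomorphic as generalized marked pairs, the different specializations of $A$ differ by the torus action, which rescales each $\vartheta_q$ by a character evaluated at the relevant cocharacter — hence only rescales basis elements, leaving the lines $\bC\cdot\vartheta_q$ and thus the decomposition unchanged. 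Second, varying the compactification: two Looijenga pairs compactifying the same $U$ are related by a sequence of corner blow-ups/blow-downs on the boundary, under which $U^{\trop}(\Z)$ and the divisorial valuations defining it are canonically identified (the tropical set is intrinsic to $U$, depending only on $\omega$ and $\bC(U)$); one checks that the theta functions transform compatibly, again up to scaling, using that the canonical scattering diagram / the \cite{KY} counts are themselves intrinsic to $U$ once the orientation of $U^{\trop}(\Z)$ is fixed.

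The main obstacle I anticipate is precisely this last independence-of-compactification argument: making rigorous that the basis of theta functions, a priori defined via the auxiliary data $(Y,D)$ and an embedding into the moduli of generalized marked pairs, descends to something canonical on $U$ up to scaling. Concretely, one must show that the naive disc counts of \cite{KY}, or equivalently the relevant punctured Gromov–Witten invariants of \cite{ACGS20}, depend only on $U$ and the chosen orientation of $U^{\trop}(\Z)$, and not on the chosen log smooth compactification — a statement whose proof uses birational invariance of punctured log Gromov–Witten theory under log modifications. The sign/orientation bookkeeping (the two orientations of $U^{\trop}(\Z)$) must be tracked carefully throughout, since that is the one genuine ambiguity that survives. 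Once these compatibilities are in place, assembling the final statements is formal. The detailed execution of all of this is carried out in Section \ref{sec:keel-yu}.
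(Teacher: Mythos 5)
Your proposal follows essentially the same route as the paper: realize $(Y,D)$ as the fiber of the canonically compactified mirror family over the (inverse image under the marked period map of the) marked period point, specialize the $\bC[\NE(Y)]$-algebra $A$ there, invoke Proposition \ref{prop:compar-ky} for the \cite{KY} description of the structure constants, and then check independence of the choices. Two small corrections to the independence argument. First, the rescaling under a change of boundary marking is not governed by $T_Y$-orbits --- the moduli space $T_Y^*$ is a \emph{fine} moduli space, so distinct points of $T_Y$ give non-isomorphic generalized marked pairs; the relevant torus is the relative torus $T^D$, which acts equivariantly on the family and moves the boundary marking (Lemma \ref{lem:rel-tor-mar}), and the rescaling then follows because the theta functions are $T^D$-eigenfunctions (Theorem \ref{thm:rel-equi}). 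Second, for independence of the compactification you do not need birational invariance of punctured Gromov--Witten theory: the paper passes to a common toric blowup and uses the direct compatibility of the mirror algebras and of their canonical compactifications under toric blowdowns (Lemmas \ref{lem:tor-bl-compa} and \ref{lem:compa-para-tor-bl}, ultimately Proposition 3.10 of \cite{GHK}), which is an algebraic statement about localizing at exceptional classes rather than an enumerative one. With those substitutions your outline matches the proof in Section \ref{sec:keel-yu}, including the final identification of $\cX_t^{\trop}(\Z)$ with $U^{\trop}(\Z)$ up to the choice of orientation.
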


The structure constants of the mirror algebra $A$ in the construction of \cite{KY} are given by naive counts of non-archimedean analytic discs in the Berkovich analytification \cite{Ber90} of $U$. In particular, the structure coefficients of $A$ are actually in $\mathbb{N}[\NE(Y)]$. We refer readers to Section \ref{sec:keel-yu} for the background of \cite{KY} and Section 1.1 of \cite{KY} for the technical definition of the structure constants of mirror algebras. 

We want to emphasize that the structure constants of the multiplication rule of theta functions in Theorem \ref{thm:main-main-intro-2} can be constructed through other geometric approaches. As a special instance of the more general result in \cite{Man19}, the structure constants of theta functions for smooth affine log Calabi-Yau surfaces can be given the geometric interpretation as descendant log Gromov-Witten invariants. Another more systematic approach is that of punctured Gromov-Witten invariants. Originally announced in \cite{GS19} and realized in \cite{GS19} and \cite{GS21}, the authors construct the mirror of a log Calabi-Yau pair $(X,D)$ as the spectrum of a degree zero relative quantum cohomology ring for $(X,D)$. The definition of the ring and the corresponding structure constants use punctured Gromov-Witten invariants, as developed in \cite{ACGS20}. Furthermore, Example 3.14 of \cite{GS21} shows that the canonical wall structure in \cite{GS21} associated with a Looijenga pair agrees with that of \cite{GHK} and Theorem C of \cite{GS21} shows that mirror families constructed in \cite{GS21} agree with those in \cite{GS19}, where structure constants of mirror algebras are constructed by the punctured Gromov-Witten invariants of \cite{ACGS20}. 

\subsection{Connections to other works.}
\subsubsection{Tropicalization of theta functions in dimension 2}
In \cite{Man16}, the tropicalization of theta functions of an affine log Calabi-Yau surface is studied and used to generalize the dual pairing between the character lattice and cocharacter lattice of an algebraic torus. Moreover, \cite{Man16} generalizes the notion of Newton polytopes, Minkowski sums, and finite Fourier series expansions in toric geometry for regular functions on an affine log Calabi-Yau surface. It would be interesting to generalize techniques of \cite{Man16} and Section \ref{sec:compa} of the current work to the study of higher dimensional log Calabi-Yau varieties.

\subsubsection{Theta functions in algebraic geometry}


Generalized from classical theta functions of Abelian varieties, theta functions give canonical coordinates on moduli spaces of (log) Calabi-Yau varieties and therefore have a deep reach into moduli and deformation problems in classical algebraic geometry. Theorem \ref{thm:main-main-thm} and Theorem \ref{thm:main-main-intro-2} in the present work give an instance of the existence of theta functions on classical objects in algebraic geometry. For some explicit equations for mirror families of del Pezzo surface pairs where the boundary $D$ is maximally degenerate, see \cite{Bar18, GHKS}. Other examples of applications of mirror symmetry and theta functions to algebraic geometry include the proof of Looijenga's conjecture on smoothability of 2-dimensional cusp singularities \cite{GHK}, theta functions and canonical compactifications of moduli spaces of polarized K3 surfaces \cite{GHKS} and the study of unirationality of Koll\'ar-Shepherd-Barron-Alexeev moduli spaces of stable pairs \cite{HKY}.

\subsubsection{The symplectic cohomology of smooth affine log Calabi-Yau surfaces.}

In this paper, we do not justify the use of the adjective `mirror' before `family', i.e., we do not prove  how the family is the mirror family to the original Looijenga pair in the sense of Homological Mirror Symmetry. In the initial arXiv version of \cite{GHK}, a series of HMS conjectures are formulated for the mirror family. Since then, there has been significant progress toward proving these conjectures.

One main HMS conjecture in \cite{GHK} states that the wrapped Fukaya category of a smooth affine log Calabi-Yau surface $U$, viewed as a symplectic manifold with a choice of symplectic form, should be equivalent to the bounded derived category of coherent of its corresponding mirror in the mirror family. Under this conjecture, the Hochschild cohomology of the two categories is isomorphic and in particular, the degree zero part of the symplectic cohomology $SH^0(U)$ of $U$ will be isomorphic to the ring of regular functions of its mirror. Theorem \ref{thm:main-main-thm} and Theorem \ref{thm:main-main-intro-2} of the present work then imply that $SH^0(U)$ should be equipped with a canonical basis as a vector space. \cite{Pa15} computed $SH^0(U)$  and showed that it is equipped with a basis parametrized by $U^{\rm{trop}}(\mathbb{Z})$.

\subsection{Outline of the paper.}The paper starts off with a review of notions pertaining to Looijenga pairs, namely markings, marked periods, and the Torelli theorem, as in \cite{moduli}. We then modify those constructions to allow du Val singularities on $Y$ and provide an adapted Torelli theorem for the new class of objects, which we call generalized marked pairs. The moduli space and universal family are made explicit as well.

After a review of the mirror construction made in \cite{GHK}, we look at how to construct compactifications of the mirror families. This is carried out in Section \ref{sec:compa}. Similar to how a convex polygon in a lattice gives rise to a polarized toric variety, the mirror family is compactified using a more general notion of a convex polygon, which no longer lives in a lattice, but in an integral affine manifold, the tropicalization of $U=Y\setminus D$. Once the compactification is given, we then investigate the modular meaning of this family. It ends up that the compactified family can be realized as a family of generalized marked pairs, as defined in Subsection \ref{ssec:ger-mar}.

In order to make this identification, two problems must be addressed. The first is to determine the deformation type of the family, and the second is to compute the marked periods of the compactified mirror family. To this end, modern techniques from mirror symmetry and tropical geometry are used to construct curves that correspond to exceptional curves of a toric model of $(Y,D)$ and to compute their corresponding periods. In doing so, we are able to determine the deformation type of the family along the way. The fibers in the family are deformation equivalent to the original pair, $(Y,D)$. This is consistent with SYZ mirror symmetry where mirrors are given by dual torus fibrations. In this perspective, the compactified mirror should be  diffeomorphic to the original pair by Poincar\'e Duality in dimension $2$. The deformation type and period computations are carried out in Section \ref{sec:per-int}. The final result is that when restricted over the algebraic torus, the marked period map to $T_Y \simeq \Hom(\Pic(Y),\mathbb{G}_m)$ is the isomorphism (except for one special case, it is actually the identity map). The work done in these sections uses results from \cite{RS19}. This readily gives us the interpretation of our compactified family as the universal family. 

In Section \ref{sec:keel-yu}, we prove Theorem \ref{thm:mai-main-intro} and Theorem \ref{thm:main-main-intro-2}.

\subsection{Acknowledgements}
This project originates from a set of preliminary notes by M. Gross, P. Hacking, and S. Keel. We thank the three of them deeply for sharing their original notes and ideas with us. We want to give special thanks to P. Hacking for suggesting using the work of H. Ruddat and B. Siebert to produce cycles homologous to $(-1)$-curves and compute the corresponding periods. We would also like to thank B. Siebert for numerous conversations related to the period formula. The first author was supported by EPSRC grant EP/S025839/1. The second author was partially supported by the National Key Research and Development Program of China (No. 2021YFA1002000) and by the National Natural Science Foundation of China (No. 12171006).


\section{Periods for Looijenga Pairs}

\begin{defn}
A \textit{Looijenga pair} is a pair $(Y,D)$ where $Y$ is a smooth projective surface with $D\in |-K_Y|$ a reduced nodal curve. We write $D=D_1+..+D_n$ where $D_i$ is an irreducible component of $D$. Denote $U:=Y\setminus D$, which has a holomorphic symplectic $2$-form $\omega$, unique up to scaling, with simple poles along $D$, making $U$ a log Calabi-Yau surface.
\end{defn}

\begin{defn}
Let $(\bar{Y},\bar{D})$ be a Looijenga pair given by a smooth projective toric surface and its toric boundary. Let $\pi: Y\rightarrow \bar{Y}$ be the blowup of $\bar{Y}$ at a finite number of smooth points of $\bar{D}$ and let $D$ be the strict transform of $\bar{D}$. Then $(Y,D)$ is also a Looijenga pair and we call $(\bar{Y},\bar{D})$ a \textit{toric model} for $(Y,D)$.
\end{defn}





For a Looijenga pair $(Y,D)$, a blowup at a point in $D$ is a \textit{toric blowup} if the blowup point is a node. Otherwise, it is called a \textit{non-toric blowup}. Many constructions we consider are not affected by toric blowups. For convenience, we will use the following proposition:

\begin{prop}[Proposition $1.3$, \cite{GHK}]
Every Looijenga pair $(Y,D)$ has a toric blowup $(Y',D')$ such that $(Y',D')$ has a toric model.
\end{prop}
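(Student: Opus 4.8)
The plan is to argue by induction on the Picard rank $\rho(Y)$, exploiting the fact that on a Looijenga pair every $(-1)$-curve is extremely constrained. First I would dispose of the cases $n=1$ and $n=2$: a toric blowup at a node of $D$ increases $n$ by one, so after one or two toric blowups we may assume $n\geq 3$; since toric blowups are permitted in the statement, this costs nothing and is convenient because then every component of $D$ is a $\mathbb{P}^1$ with two well-defined neighbours.

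The key local observation is the following. If $E\subset Y$ is a $(-1)$-curve, then by adjunction $E\cdot D=E\cdot(-K_Y)=1$. Hence $E$ cannot pass through a node of $D$ (a curve through a node meets $D$ with local multiplicity $\geq 2$ there), $E$ cannot be disjoint from $D$ (so there are no interior $(-1)$-curves), and therefore $E$ is either a component $D_i$ with $D_i^2=-1$, or a smooth rational curve meeting $D$ transversally at a single smooth point of $D$ — call the latter a \emph{non-toric exceptional curve}. Contracting such an $E$ yields a birational morphism $(Y,D)\to(Y_1,D_1)$ onto a Looijenga pair with $\rho(Y_1)=\rho(Y)-1$ and the same number of boundary components, with $D_1$ still a nodal anticanonical cycle and $Y_1$ still smooth.

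For the inductive step, suppose $Y$ carries a non-toric exceptional curve $E$; contract it to obtain $(Y_1,D_1)$ and apply the inductive hypothesis, producing a sequence of toric blowups $(Y_1',D_1')\to(Y_1,D_1)$ with a toric model $(\bar Y_1,\bar D_1)$. Since the toric blowups of $(Y_1,D_1)$ occur at nodes of $D_1$ while the point $p\in D_1$ blown up to create $E$ is a smooth point of $D_1$, these operations are disjoint, so the corresponding toric blowups can be performed on $(Y,D)$, giving $(Y',D')\to(Y,D)$ together with a contraction $(Y',D')\to(Y_1',D_1')$ of the strict transform of $E$. Then $Y'\to Y_1'\to\bar Y_1$ exhibits $(\bar Y_1,\bar D_1)$ as a toric model of $(Y',D')$, since a toric model absorbs one further non-toric blowup; this closes the induction whenever a non-toric exceptional curve is available.

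What remains — and what I expect to be the crux — is the case of a Looijenga pair $(Y,D)$ with $n\geq 3$ whose only $(-1)$-curves are boundary components. Contracting such a boundary component is the inverse of a toric blowup and lowers $\rho$, so iterating reduces us to $Y$ relatively minimal, i.e. $\mathbb{P}^2$ or a Hirzebruch surface $\mathbb{F}_a$ with $a=0$ or $a\geq 2$, equipped with an anticanonical cycle $D$. Here I would classify the cycles directly: on $\mathbb{P}^2$ a length-$3$ anticanonical cycle is a triangle of lines, hence toric, and on $\mathbb{F}_a$ a length-$4$ anticanonical cycle is a toric boundary; the remaining short cycles are not themselves toric, and one must show that after finitely many toric blowups a non-toric exceptional curve appears, so the induction restarts. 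Termination is governed by the charge $Q(Y,D):=12-K_Y^2-n$, which is a non-negative integer, is invariant under toric blowups, and strictly decreases when a non-toric exceptional curve is contracted, with $Q=0$ characterizing the toric case. The hard part will be this last point: manufacturing a non-toric exceptional curve on a toric blowup of a non-toric minimal pair — by exhibiting explicit fibers of a ruling or by $\Pic$-lattice computations rather than contracting a curve handed to us — and organizing the bookkeeping so that the charge argument genuinely forces the process to halt at the toric case.
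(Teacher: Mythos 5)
The paper does not prove this proposition; it quotes it from \cite{GHK}, and the argument there has exactly the architecture you describe: reduce to $n\geq 3$ by toric blowups, observe that adjunction forces any $(-1)$-curve not contained in $D$ to meet $D$ transversally at one smooth point, contract such curves to descend, and control termination by the charge $Q=12-K_Y^2-n$. Your reductions and your inductive step (lifting the toric blowups of $(Y_1,D_1)$ back to $Y$, since their centers are nodes and hence disjoint from the smooth point blown up to create $E$) are sound, modulo the harmless wrinkle that the new non-toric blowup may be at a point infinitely near to one of the existing centers.

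The problem is that the one step carrying all the content of the proposition is left unproven, and you say so yourself: you never show that a non-toric Looijenga pair acquires a non-toric exceptional curve after finitely many toric blowups. After contracting boundary $(-1)$-curves you land on $\mathbb{P}^2$ with $D$ a nodal cubic or a conic plus a line, or on $\mathbb{F}_a$ with a cycle of length $\leq 3$, and none of these surfaces has any $(-1)$-curve at all, so the induction cannot restart without an explicit construction. These constructions are concrete but must actually be exhibited --- e.g.\ for $(\mathbb{P}^2,\text{nodal cubic})$ one blows up the node and then one of the two points where the exceptional curve meets the strict transform of $D$; the strict transform of the tangent line to the corresponding branch then has self-intersection $-1$ and meets the new boundary transversally at one smooth point --- and an analogous case check is needed for the short cycles on $\mathbb{F}_a$. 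A second, smaller issue is that you invoke ``$Q\geq 0$, with $Q=0$ characterizing the toric case'' for termination; the usual proof of that fact goes \emph{through} the existence of toric models ($Q$ counts the non-toric exceptional curves of a toric model), so as written your termination argument is circular unless you either supply an independent proof of $Q\geq 0$ or replace it by a descent on $\rho(Y)$ combined with the finite list of minimal base cases.
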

We now cover the notion of periods for Looijenga pairs. Fix a Looijenga pair $(Y,D)$ and let
\[ D^{\perp}:=\{\alpha\in \Pic(Y):\alpha\cdot [D_i]=0, \text{ for all } i\}.\]

We have an exact sequence 
\[
0 \rightarrow \mathbb{Z} \rightarrow H_2(U, \mathbb{Z}) \rightarrow D^{\perp}.
\]
The kernel of the above exact sequence is generated by the homology class $\Gamma$ of a  real 2-torus in $U$. In this paper, we always assume that we normalize the holomorphic volume form $\omega$ of $U$ so that 
\[
\int_{\Gamma} \omega = (2\pi i )^{2}.
\]
\begin{defn}\label{defn:unmarkedperiod}
Let $T_{(D^{\perp})^*}=\Hom(D^{\perp},\mathbb{G}_m)  $.  The \textit{period point} $\varphi_Y\in  T_{(D^{\perp})^*}$ of $(Y,D)$ is the homomorphism given by\\ 
\[ \varphi_Y:D^{\perp}\rightarrow \Pic^0(D)=\mathbb{G}_m, \hspace{.5cm}L\mapsto L|_D.\] 
\end{defn}
In justification of calling these homomorphisms periods, the mixed Hodge structure of $H^2(Y\setminus D)$ is classified by the period (see \cite{F84}). Briefly, given $\gamma\in D^{\perp}$, $\gamma$ can be represented by a class $\Sigma=\Sigma_k\pm [C_k]$ where $[C_k]$ are smooth curves in $Y$ meeting $D$ transversally at distinct points of the smooth locus of $D$. A cycle, $\Sigma'$, that is contained in $U$ and homologous to $\Sigma$, can be formed and used to define the period map $\varphi_Y':D^{\perp}\rightarrow \mathbb{G}_m$ by
\[ \gamma\mapsto \exp\left(\frac{1}{2\pi i}\int_{\Sigma'}\omega\right).\]
The mixed Hodge structures on $H^2(U)$ are indeed classified by such integrals. Then, Proposition $3.12$ of \cite{F84} shows that these integrals are equal to the periods as defined in Definition \ref{defn:unmarkedperiod}.

We will use an extended notion of the period point, which is that of a marked period point. 

\begin{defn}
For a Looijenga pair $(Y,D)$, a marking of the boundary is a choice of points $p_i\in D_i^{\circ}$, for each $i$, where $D_i^{\circ}$ is the intersection of $D_i$ with the smooth locus of $D$. 
\end{defn}

\begin{defn}
Let $(Y,D)$ be a Looijenga pair.
\begin{enumerate}\label{def:cones}
\item $\{x\in \Pic(Y)_{\R}:x^2>0\}$ is a cone with two connected components, one of which contains all ample classes. We denote this component by $C^+$.
\item For an ample class $H$, let $\tilde{M}$ be the set of classes $E$ such that $E^2=K_Y\cdot E=-1$ and $E\cdot H>0$. Then $C^{++}$ is defined by the inequalities $x\cdot E\ge 0$ for all $E\in \tilde{M}$.
\item Let $C_{D}^{++} \subset C^{++}$ be the subcone where additionally $x \cdot \left[ D_i \right] \geq 0$ for all $i$.
\end{enumerate}
\end{defn}

\begin{defn}
Let $(Y,D)$ be a Looijenga pair. 
\begin{enumerate}
\item The roots $\Phi\subset\Pic(Y)$ are classes in $D^{\perp}$ with square $-2$.
\item $\Delta_Y\subset  \Pic(Y)$ is the set of internal $(-2)$-curves. Here, an internal curve is a smooth rational curve disjoint from $D$ with self-intersection $-2$.
\item Let $\Phi_{Y}\subset \Phi \subset \Pic(Y)$ be the subset of roots, $\alpha$, with $\varphi_{Y}(\alpha) =1$. Note that $\Delta_Y \subset \Phi_Y \subset \Phi$.
\item $W\subset\Aut(\Pic(Y))$ is the subgroup generated by reflections $\{r_{\alpha}:\alpha\in \Phi\}$ where
\[ r_{\alpha}(\beta)=\beta+\langle \alpha,\beta\rangle \alpha.\]
Let $W_Y\subset W$ be the subgroup generated by $\{r_{\alpha}:\alpha\in \Delta_Y\}$.
\end{enumerate}
\end{defn}
By Proposition 3.4 of \cite{moduli}, we have $\Phi_Y =W_Y \cdot \Delta_Y$.

 \begin{defn}\label{smooth marking}
  Let $(Y,D)$ be a Looijenga pair and fix a generic Looijenga pair $(Y_0,D)$. Then a marking of $\Pic(Y)$ by $\Pic(Y_0)$ is an isomorphism of lattices $\mu:\Pic(Y)\rightarrow \Pic(Y_0)$ such that $\mu([D_i])=[D_i]$ for each $i$ and $\mu(C^{++})=C^{++}$. 
  \end{defn}
  The marked period can now be defined as follows.
 
 \begin{defn}\label{defn:markedperiod}
  Given a tuple $((Y,D),p_i,\mu)$, the \textit{marked period point} is the homomorphism $\varphi_{((Y,D),p_i,\mu)}\in \Hom(\Pic(Y_0),\Pic^0(D))$ given by
\[ \varphi(L):=(\mu^{-1}(L)|_D)^{-1}\otimes \cO_D\left(\sum (L\cdot D_i)p_i\right).\]
\end{defn} 

\begin{defn}
Two marked Looijenga pairs $((Y,D),p_i,\mu)$ and $((Y',D'),p'_i,\mu')$ are said to be isomorphic if there is an isomorphism $f:Y\rightarrow Y'$ such that $f(D_i)=D'_i$, $f(p_i)=f(p_i')$ and the induced map $f^*:\Pic(Y')\rightarrow\Pic(Y)$ satisfies $f^{*} = \mu^{-1} \circ \mu'$.
\end{defn} 

The following is a wide known result that we reproduce to make future computations explicit.
\begin{lem}[Lemma $1.6$, \cite{Fr2} and Lemma $2.1$, \cite{moduli}] \label{lem:pic-mul}
The choice of an orientation of $D$ gives a canonical isomorphism $\psi: \Pic^{\circ}(D) \simeq \mathbb{G}_m$. Via this identification, given two points $p,q \in D_{i}^{\circ}$, $\psi(\mathcal{O}(p-q))= \frac{z(q)}{z(p)}$ where $z$ is a coordinate we choose on $D_i$. 
\end{lem}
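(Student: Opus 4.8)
The statement to prove is Lemma~\ref{lem:pic-mul}: once we orient $D$, there is a canonical isomorphism $\psi\colon \Pic^{\circ}(D)\xrightarrow{\sim}\mathbb{G}_m$, and under it $\psi(\mathcal{O}(p-q)) = z(q)/z(p)$ for $p,q\in D_i^{\circ}$ in a chosen coordinate $z$ on $D_i$. The plan is to reduce to the two building blocks — the irreducible nodal cubic and the cycle of $\mathbb{P}^1$'s — and in each case exhibit the isomorphism explicitly, then check independence of the coordinate choice. Since the lemma is attributed to \cite{Fr2} and \cite{moduli}, I would keep the argument short and concrete, emphasizing the normalization that makes $\psi$ canonical once an orientation is fixed.

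First I would treat the \emph{cycle} case $D = D_1 + \dots + D_n$ with $n\ge 2$. Normalizing each $D_i\cong\mathbb{P}^1$, a line bundle of total degree $0$ on $D$ is the same as a collection of degree-$d_i$ bundles on the components with $\sum d_i$ arbitrary but glued across the $n$ nodes; the subgroup $\Pic^{\circ}(D)$ of \emph{multidegree zero} line bundles is then $\mathbb{G}_m$, because trivializing $\mathcal{O}_{\mathbb{P}^1}$ on each component leaves exactly the choice of $n$ gluing scalars modulo the $n$ rescalings of the trivializations on the components — a torus of rank $n-n+1 = 1$. Orienting $D$ means ordering the components cyclically, hence ordering the two branches at each node; this pins down the "ratio" direction and gives a canonical identification with $\mathbb{G}_m$ rather than one well-defined only up to inversion. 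For the explicit formula: $\mathcal{O}_D(p-q)$ with $p,q\in D_i^{\circ}$ has trivial multidegree and is trivial on every component except $D_i$, where it is $\mathcal{O}_{\mathbb{P}^1}(p-q)$; trivializing this away from $p$ and $q$, the resulting transition datum, read through the oriented node ordering, is precisely the ratio of the coordinate values, $z(q)/z(p)$. I would note that replacing $z$ by a Möbius coordinate fixing the two nodes of $D_i$ only rescales $z$, so $z(q)/z(p)$ is unchanged, which is what makes the map on the nose rather than up to scaling.

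Next I would do the \emph{irreducible nodal cubic} case. Here the normalization map $\nu\colon\mathbb{P}^1\to D$ identifies two points $0,\infty$, and the standard exact sequence $0\to\mathbb{G}_m\to\Pic(D)\to\Pic(\mathbb{P}^1)=\mathbb{Z}\to 0$ identifies $\Pic^{\circ}(D)=\Pic^0(D)$ with $\mathbb{G}_m = \mathcal{O}_{\mathbb{P}^1}^*|_{\{0\}}/\mathcal{O}_{\mathbb{P}^1}^*|_{\{\infty\}}$, i.e.\ with the ratio of gluing values at the two preimages of the node; choosing an orientation of $D$ chooses which preimage is "first", fixing $\psi$. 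The formula $\psi(\mathcal{O}(p-q)) = z(q)/z(p)$ then follows by the same trivialization computation as in the cycle case, now on the single component $D_1^{\circ}$, using that the rational function on $\mathbb{P}^1$ with divisor $p-q$ has value ratio $z(q)/z(p)$ at the two node preimages.

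The only genuinely delicate point — and the place I would be most careful — is the claim that the isomorphism is \emph{canonical} given just an orientation: one must check that the two natural sign/inversion ambiguities (swapping the two branches at a node, and the overall inversion $\mathbb{G}_m\to\mathbb{G}_m$) are resolved by the cyclic orientation data and by the convention in the displayed formula, and that these conventions are compatible with the ones used in \cite{moduli} so that Definition~\ref{defn:markedperiod} is consistent. I would verify compatibility by comparing with the Weil pairing / residue description of $\Pic^{\circ}(D)$ used there, but for the lemma itself the explicit transition-function computation above suffices. The degree-of-freedom count and the explicit cocycle are both routine; the proof is essentially bookkeeping once the orientation convention is fixed, so I expect no real obstacle beyond stating that convention unambiguously.
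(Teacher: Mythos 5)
Your proposal is correct and follows essentially the same route as the paper: normalize $D$, trivialize the restriction of $L$ to each component, read off the gluing scalars at the nodes (whose product gives the element of $\mathbb{G}_m$, with the orientation fixing which branch is which), and for $\mathcal{O}(p-q)$ compute the gluing ratio via the nowhere-vanishing section $\frac{z-z(q)}{z-z(p)}$, i.e.\ the values $z(q)/z(p)$ at $0$ and $1$ at $\infty$. The paper handles the irreducible and reducible cases uniformly rather than splitting them, but that is a cosmetic difference.
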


\begin{proof} 
Let $\nu:\tilde{D}\rightarrow D$ be the normalization map and $\tilde{D}_i$ the irreducible component of $D$ that is mapped to $D_i$. Suppose we have $L\in \Pic^0(D)$. Then consider $L_i=L|_{\tilde{D}_i}$ with trivializations $\sigma_i: L_i \simeq \mathcal{O}_{\tilde{D}_i}$. Denote $0$ and $\infty$ on $\tilde{D}_i$ by $0_i$ and $\infty_i$. Then to get back to $L$ and $D$, we have identifications $\mu_i: (L_i)_{0_i}\simeq(L_{i-1})_{\infty_{i-1}}$. Then using the trivializations $\sigma_i$'s, each $\mu_i$ can be identified with an element of $\mathbb{G}_m$. Then to $L\in \Pic^0(D)$, the corresponding element of $\mathbb{G}_m$ is $\mu_1...\mu_n$. 

Consider the case where $L=\mathcal{O}(p-q)$ where $p$ and $q$ are on $D_i^{\circ}$. Then, we get a trivialization of $L_i$ by using the nowhere vanishing global section $\frac{z-z(q)}{z-z(p)}$. Then, we see that $\mu_i=\frac{z(q)}{z(p)}$ and $\mu_j=1$ for $j\ne i$ and $\psi(\mathcal{O}(p-q)) = \frac{z(q)}{z(p)}$. 
\end{proof}

In \cite{moduli}, Theorem $1.8$ gives a global Torelli theorem for Looijenga pairs. We will use the following consequence of the theorem. 

\begin{thm}[Theorem $1.8$, Remark $1.9$, Corollary 2.10 of \cite{moduli}]\label{}
Two marked Looijenga pairs $((Y,D),\mu,{p_i})$ and $((Y',D'),\mu',{p'_i})$ are isomorphic if and only if $\varphi_{((Y,D),\mu,{p_i})}=\varphi_{((Y',D'),\mu',{p'_i})}$ and $\mu(\Nef(Y))=\mu'(\Nef(Y'))$. 
\end{thm}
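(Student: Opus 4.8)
The plan is to deduce this statement from the global Torelli theorem of \cite{moduli} (Theorem~$1.8$), which directly gives the ``only if'' direction, together with Corollary~2.10 of \cite{moduli} for the converse, and so the real content here is to translate their formulation into the form stated. First I would recall that Theorem~$1.8$ of \cite{moduli} asserts that the marked period map is injective on isomorphism classes of marked Looijenga pairs whose marking is \emph{admissible}, i.e. compatible with the ample cone in the sense that $\mu$ sends the nef cone of $Y$ to the nef cone of $Y'$; this is precisely the condition $\mu(\Nef(Y)) = \mu'(\Nef(Y'))$ once one composes the two markings. Thus if $((Y,D),\mu,p_i)$ and $((Y',D'),\mu',p_i')$ have equal marked periods and $\mu(\Nef(Y))=\mu'(\Nef(Y'))$, then the composite $\mu^{-1}\circ\mu'\colon \Pic(Y')\to\Pic(Y)$ is an admissible isometry preserving the boundary classes, and Torelli produces an isomorphism $f\colon Y\to Y'$ with $f(D_i)=D_i'$, $f^* = \mu^{-1}\circ\mu'$, and $f(p_i) = p_i'$; the last point about the marked points follows because the marked period records exactly the position of the $p_i$ in $\Pic^0(D)$ via Definition~\ref{defn:markedperiod}, and once $f^*$ is pinned down the equality of periods forces $f$ to match the markings. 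This is the isomorphism required by Definition of isomorphism of marked pairs.

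For the ``only if'' direction, I would argue directly: if $f\colon Y\to Y'$ realizes an isomorphism of marked pairs, then $f^* = \mu^{-1}\circ\mu'$ immediately gives $\mu(\Nef(Y)) = \mu(f^*\Nef(Y')) = \mu'(\Nef(Y'))$ since $f^*$ carries $\Nef(Y')$ isometrically onto $\Nef(Y)$. For the period equality, one unwinds Definition~\ref{defn:markedperiod}: since $f(D_i) = D_i'$ we have $f^*(D_i') = D_i$, hence for $L \in \Pic(Y_0)$ the line bundle $(\mu')^{-1}(L)$ pulls back under $f$ to $f^*(\mu')^{-1}(L) = (f^*\circ(\mu')^{-1})(L) = \mu^{-1}(L)$ because $f^* = \mu^{-1}\circ\mu'$; restricting to $D$ and using $f|_D\colon D\to D'$ with $f(p_i) = p_i'$ and $(L\cdot D_i') = (f^*L\cdot D_i)$, the two defining formulas for $\varphi_{((Y,D),\mu,p_i)}(L)$ and $\varphi_{((Y',D'),\mu',p_i')}(L)$ are identified term by term via $f|_D^*$, which is the identity on $\Pic^0(D)\cong\Pic^0(D')$ after the canonical orientation-compatible identification of Lemma~\ref{lem:pic-mul}.

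The main obstacle I anticipate is purely bookkeeping rather than conceptual: one must be careful that the isomorphism $\Pic^0(D) \simeq \Pic^0(D')$ induced by $f|_D$ is compatible with the fixed isomorphisms to $\mathbb{G}_m$ coming from the chosen orientations (Lemma~\ref{lem:pic-mul}), so that ``equality of marked periods'' is a well-posed equation in $\Hom(\Pic(Y_0),\mathbb{G}_m)$; this requires that $f$ respects the chosen orientation of the boundary cycle, which holds because $f$ preserves the cyclic ordering of the components $D_i \mapsto D_i'$. A secondary point is to confirm that Corollary~2.10 of \cite{moduli} indeed supplies the surjectivity/realization half in exactly the generality needed (allowing the reference pair $(Y_0,D)$ to be generic), so that the admissible isometry $\mu^{-1}\circ\mu'$ is realized geometrically; granting the cited results, no further work is needed.
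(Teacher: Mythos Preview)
The paper does not give its own proof of this statement: it is stated as a direct citation of Theorem~1.8, Remark~1.9, and Corollary~2.10 of \cite{moduli}, with no argument supplied. Your proposal is therefore not competing with a proof in the paper but rather filling in the deduction from the cited references, and the outline you give---deriving the ``if'' direction from the injectivity of the marked period map in \cite{moduli} once $\mu^{-1}\circ\mu'$ is rendered admissible by the nef-cone hypothesis, and checking the ``only if'' direction by unwinding Definition~\ref{defn:markedperiod}---is the expected reduction and is correct in spirit.

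One small caution: your phrasing that Theorem~1.8 of \cite{moduli} asserts injectivity for markings ``compatible with the ample cone in the sense that $\mu$ sends the nef cone of $Y$ to the nef cone of $Y'$'' conflates the two pairs with the fixed reference pair $(Y_0,D)$; more precisely, both $\mu$ and $\mu'$ land in $\Pic(Y_0)$, and the hypothesis $\mu(\Nef(Y))=\mu'(\Nef(Y'))$ is exactly what is needed (via Corollary~2.10 of \cite{moduli}) to conclude that the composite $\mu^{-1}\circ\mu'$ identifies the nef cones and hence lifts to a geometric isomorphism. This is only a matter of wording, not a gap.
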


The notion of marked Looijenga pairs extends to families and gives rise to a marked period map. Meaning, for a family of Looijenga pairs together with a marking of $\cD =\cD_{1}+\cdots+\cD_{n}$ and by $\Pic(Y)$,
\[ ((\cZ,\cD),p_i,\mu)\rightarrow S,\]
there is a marked period map
\[ \Phi:S\rightarrow\Hom(\Pic(Y),\Pic^0(D))\]
where $s\in S$ gets mapped to the homomorphism
\[ L\mapsto (\mu^{-1}_s(L)|_D)^{-1}\otimes \cO\left(\sum (L\cdot D_i)p_i(s)\right)\]
where for $X_s$, the fiber over $s$, $\mu_s$ is the marking of $\Pic(X_s)$ by $\Pic(Y)$ and $p_{i}(s)$ the marking of $\mathcal{D}_{i,s}$

\begin{cons}\label{def:cons-excep}
Fix $((Y,D),p_i)$, a Looijenga pair with a marked boundary and a toric model $p:(Y,D)\rightarrow (\bar{Y},\bar{D})$ such that the exceptional curves are given by $\{E_{ij}\}$, where $E_{ij}$ is an exceptional curve corresponding to a point blown up on the component $D_i$. Fix a single $E_{ij}$. Let $v_{ij}$ be the point defined by $E_{ij}\cap D_i$ and consider a path $\gamma_{ij}$ in $D_i$ that connects $v_{ij}$ and $p_i$ (the marked point on $D_i$), with the orientation from $p_i$ to $v_{ij}$. Let $\tau(\gamma_{ij})$ be a tubular neighborhood of $\gamma_{ij}$. We can glue this to the complement in $E_{ij}$ of a small disk around $v_{ij}$ to get a new cycle (with boundary) $\tilde{\beta}$. 
\end{cons} 

The corresponding integral $\int_{\tilde{\beta}} \omega$ can be computed. Using the standard Stokes' theorem and the fact that the restriction of $\Omega$ to $E_{ij}\setminus D$ is zero (since it is holomorphic on $U$), it follows that
\begin{equation}\label{eq:B}
    \frac{1}{2\pi i}\int_{\tilde{\beta}} \omega= \int_{p_i}^{v_{ij}} \frac{dz}{z} = \log z(v_{ij})-\log z(p_i)
\end{equation}
where $z$ is the coordinate we choose on $D_i^{\circ} \simeq \mathbb{C}^{*}$.

\begin{lem}\label{lem:per-excep}
We have
\begin{equation} \label{eq:A}
    \exp\left(\frac{1}{2\pi i}\int_{\tilde{\beta}}\omega\right)=\varphi_{((Y,D),p_i,\mu)}(\cO_{Y}(E_{ij}))
\end{equation}
where we assume that $\mu:\Pic(Y)\rightarrow\Pic(Y)$ is the identity. 
\end{lem}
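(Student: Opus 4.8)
The plan is to compare the two sides of \eqref{eq:A} directly, using the integral computation in \eqref{eq:B} for the left-hand side and the explicit formula in Definition \ref{defn:markedperiod} for the right-hand side. Since $\mu$ is the identity, the marked period point evaluated on $\cO_Y(E_{ij})$ reads
\[
\varphi_{((Y,D),p_i,\mathrm{id})}(\cO_Y(E_{ij})) = \left(\cO_Y(E_{ij})|_D\right)^{-1} \otimes \cO_D\!\left(\sum_k (E_{ij}\cdot D_k)\, p_k\right).
\]
So the first step is to unwind both factors on the right. Since $E_{ij}$ is an exceptional curve over a point on $D_i$ coming from the toric model, it meets $D$ transversally at the single point $v_{ij} \in D_i^\circ$, and $E_{ij}\cdot D_k = \delta_{ik}$. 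Hence the divisor in the second factor is simply $p_i$, so the second factor is $\cO_D(p_i)$. For the first factor, $\cO_Y(E_{ij})|_D$ is a degree-one line bundle on $D$ (degree one on $D_i$, degree zero on the other components); to land in $\Pic^0(D) = \mathbb{G}_m$ we must twist, and indeed $\cO_Y(E_{ij})|_D \otimes \cO_D(-v_{ij})$ is the degree-zero piece, because the divisor class of $E_{ij}|_D$ on $D_i$ is represented by the point $v_{ij}$ (this is where one uses that $E_{ij}$ is a $(-1)$-curve meeting $D_i$ once at $v_{ij}$: $\cO_Y(E_{ij})|_{E_{ij}} = \cO(-1)$ and by adjunction / the intersection $E_{ij}\cdot D_i = 1$, $\cO_Y(E_{ij})|_{D_i} \cong \cO_{D_i}(v_{ij})$). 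Putting this together,
\[
\varphi_{((Y,D),p_i,\mathrm{id})}(\cO_Y(E_{ij})) = \cO_D(p_i - v_{ij})
\]
as an element of $\Pic^0(D) \cong \mathbb{G}_m$.

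**Next I would** invoke Lemma \ref{lem:pic-mul}: fixing the orientation of $D$ that gives the canonical isomorphism $\psi\colon \Pic^\circ(D)\xrightarrow{\sim}\mathbb{G}_m$, and the coordinate $z$ on $D_i$, we get $\psi(\cO_D(p_i - v_{ij})) = z(v_{ij})/z(p_i)$. (Care with the convention: Lemma \ref{lem:pic-mul} states $\psi(\cO(p-q)) = z(q)/z(p)$, so with $p = p_i$, $q = v_{ij}$ this is exactly $z(v_{ij})/z(p_i)$.) On the other hand, exponentiating \eqref{eq:B} gives
\[
\exp\!\left(\frac{1}{2\pi i}\int_{\tilde\beta}\omega\right) = \exp\big(\log z(v_{ij}) - \log z(p_i)\big) = \frac{z(v_{ij})}{z(p_i)}.
\]
The two expressions agree, which is the claim. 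I would make sure the orientation of $D$ used to trivialize $\Pic^\circ(D)$ in Lemma \ref{lem:pic-mul} is the same one implicit in the normalization of $\omega$ and in the direction of the path $\gamma_{ij}$ in Construction \ref{def:cons-excep} (path oriented from $p_i$ to $v_{ij}$), so that the signs in $\log z(v_{ij}) - \log z(p_i)$ match the exponent in $\psi$.

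**The main obstacle** is the identification $\cO_Y(E_{ij})|_{D_i} \cong \cO_{D_i}(v_{ij})$ together with getting all orientation and sign conventions to line up consistently across Definition \ref{defn:markedperiod}, Lemma \ref{lem:pic-mul}, and \eqref{eq:B}; the underlying geometry is elementary but it is easy to be off by a sign or an inverse. A clean way to handle the line-bundle restriction is to note that $E_{ij}$ as a divisor on $Y$ restricts, on $D$, to the effective Cartier divisor cut out set-theoretically by $E_{ij}\cap D = \{v_{ij}\}$ with multiplicity $E_{ij}\cdot D_i = 1$; hence $\cO_Y(E_{ij})|_D = \cO_D(v_{ij})$ on the nose, and then $\varphi(\cO_Y(E_{ij})) = \cO_D(v_{ij})^{-1}\otimes \cO_D(p_i) = \cO_D(p_i - v_{ij})$ with no further adjunction needed. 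The remaining work is purely bookkeeping of the fixed orientation, after which \eqref{eq:A} follows by comparing with \eqref{eq:B}.
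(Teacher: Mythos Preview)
Your proposal is correct and follows essentially the same approach as the paper: compute the left-hand side via \eqref{eq:B} to get $z(v_{ij})/z(p_i)$, unwind Definition~\ref{defn:markedperiod} on the right to obtain $\cO_D(p_i - v_{ij})$, and then invoke Lemma~\ref{lem:pic-mul} to identify this with $z(v_{ij})/z(p_i)$. Your write-up is in fact more explicit about the restriction $\cO_Y(E_{ij})|_D \cong \cO_D(v_{ij})$ and the orientation bookkeeping than the paper's own proof.
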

\begin{proof}
By equation \ref{eq:B}, the left hand side of equality \ref{eq:A} is equal to $\frac{z(v_{ij})}{z(p_i)}$. The marked period for $E_{ij}$ is given by:
\begin{align*}
\varphi_{(Y,D,p_i,\mu)}(\cO_{Y}(E_{ij}))&=(\cO_{Y}(E_{ij}))|_D^{-1}\otimes \mathcal{O}_Y\left(\sum_k (E_{ij}\cdot D_k) p_k\right)\\
&=\mathcal{O}(-v_{ij})\otimes \mathcal{O}(p_i)\\
&=\mathcal{O}(p_i-v_{ij})\\
&=\frac{z(v_{ij})}{z(p_i)}
\end{align*}
where the last equality comes from the canonical identification $\Pic^{\circ}(D)=\mathbb{G}_m$ in Lemma \ref{lem:pic-mul}.
\end{proof}

We now construct universal families as in \cite{moduli}. Let $(Y,D)$ be a Looijenga pair, $\pi:(Y,D)\rightarrow (\bar{Y},\bar{D})$ a toric model with exceptional divisors ${E_{ij}}$. Denote $T_{Y}^{*}:=\Hom(\Pic(Y),\mathbb{G}_m)$. Then there are sections (as in Proposition $2.9$ of \cite{moduli})
\[ p_i:T_{Y}^{*}\rightarrow T_{Y}^{*} \times D_i^{\circ}\] 
\[ q_{ij}:T_{Y}^{*}\rightarrow T_{Y}^{*}\times D_i^{\circ}\]
such that for $\varphi\in T_{Y}^{*}$, we have
\[ \varphi(E_{ij})=\cO_D(q_{ij}(\varphi))^{-1}\otimes O_D(p_i(\varphi))\in \Pic^0(D).\]
Let $\Pi:(\cY_{\left\{ {E_{ij}}\right\}},\cD)\rightarrow T_{Y}^{*}\times \bar{Y}$ be the blowup along the sections $\left\{q_{ij}\right\}$. This family then comes with a marking of $\Pic(Y)$ on each fiber. Then
\[ \lambda:((\cY_{\left\{E_{ij} \right\}},\cD),p_i,\mu)\rightarrow T_{Y}^{*}\]
is what \cite{moduli} calls a universal family. Note that these universal families are not unique since there is a choice of order of blowup and changing the order will give rise to a non-isomorphic family over the torus, though they will be birational to any other universal family. In order to avoid confusion, we call these families in this paper by the alternative term, \textit{almost-universal families}.

\subsection{Generalized marked pairs}\label{ssec:ger-mar}
The fibers of the mirror family are not necessarily smooth and this section presents adjustments to the definition of a Looijenga pair in a way that allows for the singularities that appear. The construction here is similar to the situation for the moduli space of polarized K3 surfaces with simple surface singularities (see \cite{Mor}). 

Assume that the Looijenga pair $(Y,D)$ is positive. As in Lemma $6.9$ of \cite{GHK}, being positive is equivalent to the existence of $c_i$ such that the divisor $-(K_Y+\sum c_iD_i)$ is big and nef. It follows that for $m\in \mathbb{N}$ sufficiently large, the linear system $|mR|$ defines a birational morphism with the exceptional locus given by all curves disjoint from $D$ (which is a configuration of $-2$ curves). After contracting the internal curves, we get a pair $(Y',D)$ where $Y'$ has du Val singularities and the push forward of $-(K_Y+\sum c_iD_i)$ is supported on $D$ and is ample. Motivated by this, we now consider generalized pairs $(Y,D)$ as in Definition \ref{generalized pair}.


Recall now Definition \ref{generalized marked pair} and Definition \ref{def:R(Y)-I(Y)}. Suppose $(Y,D)$ is a generalized pair such that $Y$
has a minimal resolution given by $X\rightarrow Y$ and a marking of $\Pic$ given by $\mu$. Note that when $Y$ has no internal $(-2)$ curves, $\mu$ gives a marking of all of $\Pic(X)$ so the regular definition of a marked pair is recovered.

\begin{defn} 
An isomorphism of generalized marked pairs $((Y,D),p_i,\mu)$ and $((Y',D'),p'_i,\mu')$ is given by an isomorphism $f:Y\rightarrow Y'$ that lifts to an isomorphism  $\tilde{f}:X\rightarrow X'$ between corresponding minimal resolutions such that $f(D_i)=D'_i$, $f(p_i)=f(p'_i)$ and $\tilde{f}$ induces an isomorphism of lattices $\tilde{f}^*:I(Y')\rightarrow I(Y)$ such that $\tilde{f}^*=\mu^{-1}\circ\mu'$
\end{defn}

\begin{defn}\label{defn:markedperiodgen}
The marked period point of a generalized marked pair $((Y,D),p_i,\mu)$ with minimal resolution $X\rightarrow Y$, is given by
\[ \varphi_{((Y,D),p_i,\mu)}:=\varphi_{((X,D),p_i,\tilde{\mu})},\]
\end{defn}
\noindent where the right-hand side is as defined in Definition \ref{defn:markedperiod}.


\begin{remark}
 The definition of the marked period does not depend on the choice of extension, $\tilde{\mu}$. Fix a generalized mark pair $((Y,D),p_i,\mu)$ with minimal resolution $f:X\rightarrow Y$ and extensions $\tilde{\mu},\tilde{\mu'}:\Pic(X)\rightarrow \Pic(Y_0)$. By definition, $\tilde{\mu}|_{I(Y)}=\tilde{\mu'}|_{I(Y)}$ and they only differ on $R(Y)$. However, since $R(Y)$ is generated by irreducible internal (-2)-curves on $X$ contracted by $f$ and thus have periods are equal to $1$, we have that $\varphi_{((X,D),p_i,\tilde{\mu})}=\varphi_{((X,D),p_i,\tilde{\mu'})}$.
\end{remark}

\begin{prop}[Global Torelli for Generalized Marked Pairs]\label{Torelli}
Two generalized marked pairs, $((Y_1,D_1),p_1,\mu_1)$ and $((Y_2,D_2),p_2,\mu_2)$, are isomorphic if and only if $\varphi_{((Y_1,D_1),p_1,\mu_1)}=\varphi_{((Y_2,D_2),p_2,\mu_2)}$.
\end{prop}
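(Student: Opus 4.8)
The plan is to reduce the Global Torelli statement for generalized marked pairs to the Global Torelli theorem for ordinary marked Looijenga pairs, which is already available to us (the cited Theorem from \cite{moduli}, reproduced above). The bridge is the minimal resolution: to a generalized marked pair $((Y_k,D_k),p_k,\mu_k)$ we associate the Looijenga pair $((X_k,\tilde{D}_k),p_k,\tilde{\mu}_k)$, where $X_k\to Y_k$ is the minimal resolution, $\tilde{D}_k$ is the strict transform of $D_k$ (which is anticanonical, as remarked just after Definition \ref{generalized pair}), and $\tilde{\mu}_k$ is any isometric extension of $\mu_k$ (the preceding Remark shows the period is independent of this choice). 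By Definition \ref{defn:markedperiodgen}, the hypothesis $\varphi_{((Y_1,D_1),p_1,\mu_1)}=\varphi_{((Y_2,D_2),p_2,\mu_2)}$ says precisely $\varphi_{((X_1,\tilde{D}_1),p_1,\tilde{\mu}_1)}=\varphi_{((X_2,\tilde{D}_2),p_2,\tilde{\mu}_2)}$.

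First I would handle the forward direction, which is essentially formal: an isomorphism of generalized marked pairs lifts by definition to an isomorphism $\tilde{f}:X_1\to X_2$ of the minimal resolutions compatible with boundaries, marked points, and (the restriction to $I(Y)$ of) the markings; one checks the lift respects some choice of extensions $\tilde\mu_k$, and then the definition of the marked period gives equality of periods. For the converse, the remaining input needed to invoke the ordinary Torelli theorem is the nef-cone condition $\tilde\mu_1(\Nef(X_1))=\tilde\mu_2(\Nef(X_2))$. Here is where the generic pair $(Y_0,D)$ fixed in Definition \ref{def:gen-mar} enters: a marking of $\Pic(Y)$ by $\Pic(Y_0)$ is required to send $C^{++}$ to $C^{++}$, and I would argue that the chamber structure of $C^{++}$ cut out by the root hyperplanes, together with the period data, pins down the image of $\Nef(X_k)$. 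Concretely, $\Nef(X_k)$ is the intersection of $C_{D}^{++}$ (or rather the analogue inside $C^{++}$) with the half-spaces $x\cdot\Delta\ge 0$ for internal $(-2)$-curves $\Delta\in\Delta_{X_k}$, and by the cited Proposition 3.4 of \cite{moduli} the set $\Phi_{X_k}=W_{X_k}\cdot\Delta_{X_k}$ of roots with period $1$ determines $W_{X_k}$ and hence the $W_{X_k}$-chamber that is $\mu_k(\Nef(X_k))$. Since $\varphi_{((X_1,\tilde D_1),\ldots)}=\varphi_{((X_2,\tilde D_2),\ldots)}$ forces $\tilde\mu_1(\Phi_{X_1})=\tilde\mu_2(\Phi_{X_2})$ inside $\Phi\subset\Pic(Y_0)$, the two nef cones have the same image, and ordinary Torelli yields an isomorphism $\tilde f:X_1\to X_2$ with $\tilde f^*=\tilde\mu_1^{-1}\circ\tilde\mu_2$ respecting boundaries and marked points.

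Finally I would descend $\tilde f$ to an isomorphism $f:Y_1\to Y_2$. The point is that $\tilde f$ carries the $(-2)$-curves contracted by $X_1\to Y_1$ to those contracted by $X_2\to Y_2$: these are exactly the internal $(-2)$-curves lying over the du Val points, and the identification $\tilde f^*=\tilde\mu_1^{-1}\circ\tilde\mu_2$ together with $\mu_k$ being defined only on $I(Y_k)=R(Y_k)^\perp$ means $\tilde f$ identifies $R(Y_1)$ with $R(Y_2)$; since effective $(-2)$-classes in $R(Y_k)$ are precisely the exceptional curves of the minimal resolution, $\tilde f$ descends. One then checks that $f$, $p_k$, and the restriction of $\tilde\mu_k$ to $I(Y_k)$ satisfy the isomorphism axioms for generalized marked pairs.

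\textbf{Main obstacle.} I expect the crux to be the converse direction's nef-cone bookkeeping: showing that equality of marked periods — which a priori only constrains $\Phi_{X_k}$, i.e. the roots with trivial period — actually forces $\tilde\mu_1(\Nef X_1)=\tilde\mu_2(\Nef X_2)$ as needed to feed into the ordinary Torelli theorem. This requires knowing that the internal $(-2)$-\emph{curves} (not merely $(-2)$-classes) are recovered from $\Phi_{X_k}$ via the $W_{X_k}$-action, i.e. that the nef chamber is the distinguished chamber among the $W_{X_k}$-translates, which is where one must use that $\mu_k$ preserves $C^{++}$ and is compatible with the ample cone; the du Val subtlety is that $W_{Y_k}$ (reflections in contracted curves) must be separated cleanly from the rest of $W_{X_k}$, and the decomposition $R(Y_k)\oplus I(Y_k)$ up to finite index is what makes this work.
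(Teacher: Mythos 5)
Your overall strategy — pass to minimal resolutions, note the period is independent of the choice of extension $\tilde\mu_k$, feed the resulting smooth marked pairs into the ordinary Global Torelli theorem, then descend the isomorphism by matching the contracted $(-2)$-curves — is exactly the paper's route. The forward direction and the final descent step are fine.

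The gap is at the step you yourself flag as the crux, and your proposed resolution of it does not work. Equality of marked periods gives you $\tilde\mu_1(\Phi_{X_1})=\tilde\mu_2(\Phi_{X_2})=\Phi'$, i.e. the same set of root hyperplanes and hence the same chamber decomposition of $C_D^{++}$; but knowing the Weyl group $W(\Phi')$ does \emph{not} single out which chamber is the image of the nef cone. There is no ``distinguished chamber among the $W$-translates'': $W(\Phi')$ acts (simply) transitively on the chambers, and indeed the smooth Torelli theorem you quote lists $\mu(\Nef(Y))=\mu'(\Nef(Y'))$ as a hypothesis \emph{in addition to} equality of periods precisely because the period does not determine it. So your assertion that ``the two nef cones have the same image'' is unjustified and in general false. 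The paper's fix is not to show the chambers coincide but to exploit the freedom in the choice of extension: by Lemma 2.15 and Theorem 3.2 of \cite{moduli}, each $\tilde\mu_i(\Nef(X_i))$ is the closure of a chamber of $C_D^{++}\setminus\bigcup_{\alpha\in\Phi'}\alpha^{\perp}$ and $W(\Phi')$ acts transitively on these, so one replaces $\tilde\mu_1$ by $w\circ\tilde\mu_1$ for a suitable $w\in W(\Phi')$. Since $\Phi'\subset\tilde\mu_i(R(Y_i))$ (Proposition 3.4 of \cite{moduli}), the reflections in $W(\Phi')$ act trivially on $\tilde\mu_1(I(Y_1))=\tilde\mu_1(R(Y_1))^{\perp}$ and preserve the marked period (which is $1$ on $\Phi'$); hence the underlying generalized marking $\mu_1$ and the period point are unchanged, the nef cones are now aligned, and ordinary Torelli applies. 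Your proof needs this adjustment step inserted in place of the claim that the nef cones already agree.
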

\begin{proof} 
Let $f_i: X_i\rightarrow Y_i$ be the minimal resolution of $Y_i$. If the two marked generalized marked pairs are isomorphic, then it is clear that the marked periods must coincide. 

Suppose the two generalized marked pairs have the same period. Consider the induced isometry on $I(Y_i)^{\perp}$ which gives an isomorphism of root systems. Let $\Phi$ be the set of roots of $\Pic(Y_0)$. Let $\Phi' = \{ \delta \mid \delta \in \Phi,\, \varphi (\delta) =1\}$. By Proposition 3.4 of \cite{moduli}, $\Phi'$ is contained in $\tilde{\mu_i}(R({Y_i}))$. 
By Lemma $2.15$ and Theorem $3.2$ of \cite{moduli}, each $ \tilde{\mu_i}(\operatorname{Nef}(X_i))$ is  the closure of a connected component of 
\[
C_{D}^{++} \setminus \bigcup_{\alpha \in \Phi'}{\alpha}^{\perp}
\]
and the action of the Weyl group $W(\Phi')$ on the set of connected components of $C_{D}^{++} \setminus \bigcup_{\alpha \in \Phi'}{\alpha}^{\perp}$ is transitive. 
Thus, there must exist $w\in W (\Phi')$ such that $w \circ \tilde{\mu_1}(\operatorname{Nef}(X_1))=\tilde{\mu_2}(\operatorname{Nef}(X_2))$. Composing an isometry with $w$ is still an isometry, and $w$ does not affect the isometry on the level of $I(Y_i)=R(Y_i)^{\perp}$. Thus, replacing $\tilde{\mu_1}$ with $w\circ \tilde{\mu_1}$ keeps the marked period points for $((X_i,D_i),p_i,\tilde{\mu_i})(i=1,2)$ the same but their Nef cones are identified. By the Global Torelli theorem, they are isomorphic marked tuples. Thus, the same is true for $((Y_i,D_i),p_i,\mu_i) (i=1,2)$. 
\end{proof}

We can also make sense of families of generalized marked pairs. To make this precise, we first quote a result from \cite{Mor}. First recall Definition $\ref{def:R(Y)-I(Y)}$ for $I(Y)$. 
\begin{lem}[Corollary $3$, \cite{Mor}]\label{lem:I-sheaf}
If $p:\cX\rightarrow S$ is a proper family of surfaces with only rational double points as singularities, then there is a sheaf $I(\cX/S)$ such that for each $s\in S$ and $U$ a sufficiently small neighborhood of $s\in S$ and $X_s$ the fiber of $s$, it follows that $\Gamma(U,I(\cX/S))=I(X_s)$.
\end{lem}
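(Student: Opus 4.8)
The plan is to \emph{define} $I(\cX/S)$ directly as a higher direct image and then verify the asserted local property. Since we work over $\bC$, use the complex-analytic topology on $S$ and set
\[ I(\cX/S):=R^2p_*\,\underline{\Z}_{\cX}, \]
the second higher direct image of the constant sheaf. As $p$ is proper, this is a constructible sheaf on $S$. (If one prefers a Picard-theoretic object, one may instead use the relative Picard sheaf $\underline{\Pic}_{\cX/S}$; for a family of rational surfaces $R^1p_*\cO_{\cX}=R^2p_*\cO_{\cX}=0$, so the exponential sequence provides a canonical isomorphism $\underline{\Pic}_{\cX/S}\cong R^2p_*\underline{\Z}_{\cX}$, and the two choices agree in the cases relevant here.)

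The first step computes stalks. By proper base change, $\big(R^2p_*\underline{\Z}\big)_s\cong H^2(X_s,\Z)$ for every $s\in S$, so it suffices to identify $H^2(X_s,\Z)$ with $I(X_s)=R(X_s)^{\perp}$ (notation as in Definition \ref{def:R(Y)-I(Y)}). Let $f_s:\widetilde{X}_s\to X_s$ be the minimal resolution and $E\subset\widetilde{X}_s$ its reduced exceptional locus. Then $E$ is a disjoint union of $ADE$ curve configurations; each connected component is a nodal union of $\mathbb{P}^1$'s whose dual graph is a tree, hence is simply connected, and so $H^1(E,\Z)=0$. Since $\operatorname{Sing}(X_s)$ is finite, $H^2(X_s,\Z)\cong H^2(X_s,\operatorname{Sing}(X_s);\Z)$; and since $f_s$ is an isomorphism away from $\operatorname{Sing}(X_s)$, the quotient spaces $X_s/\operatorname{Sing}(X_s)$ and $\widetilde{X}_s/E$ are canonically homeomorphic, so this group is $\cong H^2(\widetilde{X}_s,E;\Z)$. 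Feeding $H^1(E,\Z)=0$ into the long exact sequence of the pair $(\widetilde{X}_s,E)$ identifies the latter with $\ker\big(H^2(\widetilde{X}_s,\Z)\to H^2(E,\Z)\big)$, where $\alpha\mapsto(\alpha\cdot E_i)_i$ over the irreducible exceptional components $E_i$. This kernel is $\{\alpha:\alpha\cdot E_i=0\ \text{for all }i\}=R(X_s)^{\perp}=I(X_s)$, since $R(X_s)$ is the sublattice of $H^2(\widetilde{X}_s,\Z)=\Pic(\widetilde{X}_s)$ generated by the $[E_i]$. Note this uses nothing beyond the hypothesis that the singularities of $X_s$ are rational double points.

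The second step passes from stalks to sections: for $U$ a sufficiently small neighborhood of $s$, the restriction $\Gamma(U,I(\cX/S))\to I(\cX/S)_s$ is an isomorphism. This is a general feature of constructible sheaves on a Whitney-stratified space: near $s$ such a space is stratified-homeomorphic to a cone with cone point $s$, any constructible sheaf is conic for this structure, and global sections over such a conic neighborhood compute the stalk at the cone point. Applying this to $R^2p_*\underline{\Z}$ (constructible since $p$ is proper) and combining with the first step gives $\Gamma(U,I(\cX/S))=I(X_s)$, as required.

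The only genuinely delicate point is this last step, i.e.\ making rigorous that a ``sufficiently small'' $U$ recovers the stalk; this is exactly the local topological triviality encoded in constructibility of $R^2p_*\underline{\Z}$, while everything else reduces to proper base change and the elementary topology of $ADE$ configurations. As an alternative one can argue via Brieskorn's simultaneous resolution: pass to a finite base change $\pi:S'\to S$ killing the local monodromy of the singular points, so that the pulled-back family acquires a fiberwise-minimal simultaneous resolution $\widetilde{\cX}'\to S'$; inside $R^2\widetilde{p}'_*\underline{\Z}$ form the orthogonal complement of the locally constant sublattice generated by the relative exceptional curves, and descend it along $\pi$. The descent is effective because the covering group of $\pi$ acts through products of Weyl groups of the singularities, hence trivially on that orthogonal complement; on this route the main work becomes checking compatibility of the Weyl-group descent with the fiberwise identification above.
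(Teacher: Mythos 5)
The paper does not prove this statement; it is quoted verbatim as Corollary 3 of \cite{Mor}, and Morrison's own construction is exactly the one you give: take $I(\cX/S)=R^2p_*\underline{\Z}_{\cX}$, identify the stalks with $H^2(X_s,\Z)$ by proper base change, and identify $H^2(X_s,\Z)$ with the orthogonal complement of the exceptional classes via the long exact sequence of the pair $(\widetilde{X}_s,E)$ together with $H^1(E,\Z)=0$ for $ADE$ configurations. Your argument is correct, and the stalk-to-sections step is legitimately handled by constructibility of $R^2p_*\underline{\Z}$ (a proper map admits a fundamental system of conic neighborhoods over which sections of a constructible sheaf compute the stalk). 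One small caveat: your computation actually identifies $H^2(X_s,\Z)$ with $\{\alpha\in H^2(\widetilde{X}_s,\Z):\alpha\cdot E_i=0\}$, whereas the paper's Definition \ref{def:R(Y)-I(Y)} places $I(X_s)=R(X_s)^\perp$ inside $\Pic(\widetilde{X}_s)$; the two agree only when $\Pic(\widetilde{X}_s)=H^2(\widetilde{X}_s,\Z)$, which holds for the rational surfaces this paper cares about (and in Morrison's K3 setting $I$ is defined inside $H^2$ from the start), so your claim that the argument ``uses nothing beyond rational double points'' should be tempered by this normalization. The Brieskorn simultaneous-resolution route you sketch as an alternative is also viable and is closer in spirit to how one later compares $I(\cX/S)$ with the Picard lattices of the resolutions, but it is not needed for the statement as given.
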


Now fix a generic generalized pair $(Y_0,D)$ and let $\cX \rightarrow S$ be a proper family of rational surfaces with only rational double points as singularities. Then we consider the sheaf $I(\cX/S)$ as given in Lemma \ref{lem:I-sheaf}. Then a family of generalized marked pairs over $S$ is a morphism $f:(\cY,\cD)\rightarrow S$ together with sections $p_i:S\rightarrow \cD_i$ and an injection $\mu:I(\cY/S)\rightarrow {\underline{\Pic(Y_0)}}_S$ such that
\begin{enumerate}

\item $\cY/S$ is a flat family of surfaces,
\item $\cD_i$ is a Cartier divisor on $\cY/S$ for each $i$
\item For each $s\in S$, $\mu|_s$ extends to an isometry of $\Pic(Y_s)$ with $\Pic(Y_0)$, and
\item Each closed fiber $(\cY_s,\cD_s,p_i(s),\mu|_s)$ is a generalized marked pair with marking of the boundary given by $p_i(s)$ and marking of $\Pic$ by $\mu|_s$.
\item Furthermore, in the case $n=1,2$, we assume that we are given an orientation of $\mathcal{D}$, that is, an identification $\mathbb{Z} \times S \xrightarrow{\sim} R^{1}f_{*}\mathbb{Z}_{\cD}$.
\end{enumerate}

Notice that the sections $p_i$ and the orientation $\cD$ determine a canonical isomorphism $D\times S \xrightarrow{\sim} \cD$. This is used to define the marked period mapping for families over an arbitrary base $S$. Then given a family of generalized marked pairs, $((\cY,\cD),p_i,\mu)\rightarrow S$, we have an induced marked period map to $\Hom(\Pic(Y_0),\mathbb{G}_m)$.
\begin{lem}\label{noaut} 
There are no non-trivial automorphisms of generalized marked pairs $((Y,D),p_i,\mu)$.
\end{lem}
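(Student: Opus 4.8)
The plan is to reduce the statement, via the minimal resolution and a toric model, to the elementary fact that an automorphism of a smooth complete toric surface fixing its toric boundary pointwise is trivial. So let $f$ be an automorphism of a generalized marked pair $((Y,D),p_i,\mu)$.

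\textbf{Step 1: $f|_D = \operatorname{id}$.} By definition $f(D_i)=D_i$ for each $i$ and $f(p_i)=p_i$. When $n\ge 3$, each component $D_i\cong\mathbb{P}^1$ then carries three $f$-fixed points: the two (distinct) nodes $D_{i-1}\cap D_i$ and $D_i\cap D_{i+1}$, together with $p_i\in D_i^{\circ}$; an automorphism of $\mathbb{P}^1$ fixing three points is the identity, so $f|_{D_i}=\operatorname{id}$ and hence $f|_D=\operatorname{id}$. When $n=1,2$ one uses the orientation of $D$ that is part of the data (cf. item (5) in the definition of families of generalized marked pairs): since $f$ preserves it, $f$ cannot interchange the two branches of $D$ at a node, and passing to the normalization one again finds three fixed points on the relevant $\mathbb{P}^1$, giving $f|_D=\operatorname{id}$.

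\textbf{Step 2: lift to a toric model.} Let $X\to Y$ be the minimal resolution. Its exceptional $(-2)$-curves are disjoint from $D$, so the resolution is an isomorphism near $D$, and $f$ lifts uniquely to an automorphism $\tilde f$ of $X$ with $\tilde f|_D=\operatorname{id}$. By Proposition $1.3$ of \cite{GHK}, after a sequence of toric blowups we obtain $(X',D')\to(X,D)$ admitting a toric model $\pi\colon(X',D')\to(\bar Y,\bar D)$. Each toric blowup is centered at a node of the boundary; because $\tilde f$ fixes the boundary pointwise it fixes such a node together with the two independent branch tangent directions there, so its differential at that node is the identity; hence it lifts through the blowup acting as the identity on the new exceptional curve and on the strict transform of the boundary. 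Iterating, $\tilde f$ lifts to an automorphism $\tilde f'$ of $X'$ with $\tilde f'|_{D'}=\operatorname{id}$.

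\textbf{Step 3: descend to $\bar Y$ and conclude.} The morphism $\pi$ contracts a disjoint union of trees of rational curves, each meeting $D'$ in a single point of $D'^{\circ}$; since $\tilde f'$ fixes $D'$ pointwise it fixes the attaching point of each tree, hence preserves each tree, and therefore descends to an automorphism $\bar f\in\Aut(\bar Y)$ with $\bar f|_{\bar D}=\operatorname{id}$. Now $\bar f$ restricts to an automorphism of the open torus $\bar Y\setminus\bar D$, hence acts on characters by $\bar f^{*}\chi^{m}=c(m)\,\chi^{\phi(m)}$ for some lattice automorphism $\phi$ and some $c$ in the torus; preserving each toric divisor forces $\phi$ to fix every ray of the fan, and since the rays span, $\phi=\operatorname{id}$, so $\bar f$ is translation by $c$; the requirement that $\bar f$ fix each $\bar D_i$ pointwise then forces $c$ to lie in the intersection of the one-parameter subgroups through the ray generators, which is trivial (consecutive ray generators form a lattice basis). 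So $\bar f=\operatorname{id}$, whence $\tilde f'$ is the identity on the dense open set over which $\pi$ is an isomorphism, so $\tilde f'=\operatorname{id}$, and therefore $f=\operatorname{id}$.

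The main obstacle is really only the careful bookkeeping of Step 2: showing $\tilde f$ lifts through the toric blowups (the differential computation at the boundary nodes, where the pointwise identity on $D$ of Step 1 is essential) and that the trees contracted by $\pi$ are pinned down by their attaching points; the final toric statement in Step 3 is standard. Note that the Picard marking $\mu$ plays no active role beyond being part of the data of a generalized marked pair — the argument uses only the marked boundary and, for $n\le 2$, the orientation of $D$.
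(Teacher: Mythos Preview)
Your Steps 1 and 2 are fine (the differential at a node really is the identity, since $\tilde f$ restricts to the identity on each branch). The gap is in Step~3. You assert that ``since $\tilde f'$ fixes $D'$ pointwise it fixes the attaching point of each tree, hence preserves each tree''. But fixing the attaching point $q_{ij}=E_{ij}\cap D'_i$ does not by itself force $\tilde f'(E_{ij})=E_{ij}$: all you know is that $\tilde f'(E_{ij})$ is a $(-1)$-curve through $q_{ij}$ meeting $D'$ only there, and a priori there could be another such curve (for instance the strict transform of a line or conic through several of the blown-up points). Without knowing that $\tilde f'(E_{ij})$ is contracted by $\pi$, you cannot descend $\tilde f'$ to $\bar Y$, and the rest of the argument collapses.

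This is precisely where the Picard marking enters, contrary to your closing remark. An automorphism of a generalized marked pair satisfies $\tilde f^{*}|_{I(Y)}=\operatorname{id}$; when $Y$ is smooth this gives $\tilde f^{*}=\operatorname{id}$ on $\Pic(X)$, hence $\tilde f'^{*}[E_{ij}]=[E_{ij}]$, and the uniqueness of the effective representative of a $(-1)$-class then yields $\tilde f'(E_{ij})=E_{ij}$, repairing your descent. The paper's proof packages this differently and avoids the toric model: it uses the exact sequence $1\to K\to\Aut(X,D)\to H_X/W_X\to 1$ together with the marking condition $\tilde f^{*}|_{I(Y)}=\operatorname{id}$ to force $\tilde f$ into $K=\ker[\Aut(X,D)\to\Aut(\Pic(X))]$, and then invokes Proposition~2.6 of \cite{moduli} (the injection $K\hookrightarrow\Aut^{0}(D)$) together with your Step~1 to conclude. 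Either way, $\mu$ is doing real work.
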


\begin{proof}
Let $((Y,D),p_i,\mu)$ be a generalized marked pair with minimal resolution $((X,D),p_i,\tilde{\mu})$. Suppose $f\in \Aut(Y,D)$ fixes $D$ pointwise and $f^*$ induces the identity on $I(Y)$ which extends to an isometry on $\Pic(X)$ and consider a lift to an automorphism of $(X,D)$ given by $\tilde{f}$. Consider the following exact sequence
\[ 1\rightarrow \Hom(N'\rightarrow \mathbb{G}_m)\rightarrow \Aut(X,D)\rightarrow H_{X}/W_{X}\rightarrow 1,\]
where $H_X\subset \Aut(\Pic(X))$ is the subgroup that fixes the period point $\varphi_X$. If $\tilde{f}$ maps to an element in $H_X$ that is not contained in $W_X$, then it will not fix $R(Y)$ and therefore $I(Y)$, in contradiction with that $I(Y)$ is fixed. 
 Therefore, the automorphism $\tilde{f}$ must lie in $\Hom(N'\rightarrow\mathbb{G}_m)$, which is identified with
\[K:= \operatorname{ker}[\Aut(X,D)\rightarrow\Aut(\Pic(X))].\]
By Proposition $2.6$ in \cite{moduli}, the natural map $K\rightarrow \Aut^0(D)$ given by restriction is an injection. Then in order for $D$ to be fixed pointwise, $f$ must be the identity.

\end{proof}

\begin{prop} 
$T_{Y_0}^{*}=\Hom(\Pic(Y_0),\mathbb{G}_m)$ is the fine moduli space of generalized marked pairs $(Y,D,p_i,\mu)$ with a marking of $D$ and a marking of the Picard group by $\Pic(Y_0)$.
\end{prop}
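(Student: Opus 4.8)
The plan is to show that $T_{Y_0}^{*} = \Hom(\Pic(Y_0),\mathbb{G}_m)$ represents the moduli functor of generalized marked pairs by exhibiting a universal family over it and checking that it satisfies the universal property. The key inputs are already in place: Proposition \ref{Torelli} (Global Torelli for generalized marked pairs) guarantees that two pairs with the same marked period point are isomorphic, and Lemma \ref{noaut} guarantees that a generalized marked pair has no nontrivial automorphisms — together these say the moduli functor is a \emph{set-valued} functor (no stacky behavior) whose points inject into $T_{Y_0}^{*}$ via the marked period map. So the content that remains is (a) constructing a family over $T_{Y_0}^{*}$ whose marked period map is the identity, i.e. surjectivity of the period map on the level of objects, and (b) upgrading this to a statement about families over an arbitrary base $S$, i.e. that every family of generalized marked pairs over $S$ is pulled back from the universal one along its marked period map $S \to T_{Y_0}^{*}$.

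First I would construct the universal family. Fix a toric model of the minimal resolution $(X_0,D)$ of $(Y_0,D)$, giving exceptional curves $\{E_{ij}\}$; as in Construction \ref{def:cons-excep} and the discussion of almost-universal families, one gets sections $q_{ij}, p_i$ of $T_{X_0}^{*}\times D_i^{\circ}$ and, by blowing up the $q_{ij}$ over $T_{X_0}^{*}\times \bar Y$, an almost-universal family $(\cY_{\{E_{ij}\}},\cD)\to T_{X_0}^{*}$ of (smooth) marked Looijenga pairs with marked period map the identity. Note $T_{X_0}^{*}=T_{Y_0}^{*}$ since $\mu$ identifies $I(Y_0)$ with $\Pic(Y_0)$ and $Y_0$ is generic (no internal $(-2)$-curves), so $X_0 = Y_0$ up to the marking. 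Now I would run the contraction construction recalled at the start of Subsection \ref{ssec:ger-mar}: for a positive pair, a sufficiently large multiple of $R = -(K+\sum c_i D_i)$ is big and nef and contracts exactly the internal $(-2)$-curves, producing a generalized pair. The point is that this contraction happens in families — the sheaf $I(\cX/S)$ of Lemma \ref{lem:I-sheaf} records the fiberwise root data, and over the locus of $T_{Y_0}^{*}$ where a given set of roots becomes effective one contracts the corresponding $(-2)$-configuration — yielding a family $(\bar{\cY},\cD)\to T_{Y_0}^{*}$ of generalized pairs, still carrying the boundary sections $p_i$ and a marking $\mu: I(\bar{\cY}/T_{Y_0}^{*})\hookrightarrow \underline{\Pic(Y_0)}$, and with marked period map the identity by the period computation of Lemma \ref{lem:per-excep} applied fiberwise (the marked period of $E_{ij}$ is $z(v_{ij})/z(p_i)$, which is exactly the coordinate on the torus). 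This gives the candidate universal family.

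Then I would verify the universal property. Given a family $((\cY,\cD),p_i,\mu)\to S$ of generalized marked pairs, the marked period mapping (defined for families over arbitrary $S$ via the canonical trivialization $D\times S \xrightarrow{\sim} \cD$ coming from the sections and orientation) gives a morphism $\Phi: S \to T_{Y_0}^{*}$. I claim the family is canonically isomorphic to the pullback of the universal family along $\Phi$. Fiberwise this is immediate from Proposition \ref{Torelli}: for each $s$, the fibers $(\cY_s,\cD_s,p_i(s),\mu|_s)$ and $\Phi^*(\bar{\cY})_s$ have the same marked period point $\Phi(s)$, hence are isomorphic as generalized marked pairs, and by Lemma \ref{noaut} the isomorphism is unique. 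The remaining work is to promote this fiberwise family of unique isomorphisms to an isomorphism of families over $S$ — this is where I expect the main obstacle. The argument should run through the minimal resolutions $\tilde{\cY}\to S$ and $\Phi^*\tilde{\cY}\to S$, which are families of smooth Looijenga pairs with the same marked period, so that one can invoke the representability statement for the (smooth, marked) case from \cite{moduli} together with the fact that both families of resolutions agree with the pullback of the almost-universal family after a fiberwise Weyl-group correction of the marking that straightens the $\Nef$ cones (as in the proof of Proposition \ref{Torelli}); the contraction morphisms $\tilde{\cY}\to \cY$ and $\Phi^*\tilde{\cY}\to \Phi^*\bar{\cY}$ are then intrinsic (given by $|mR|$ relative to $S$), so descend the isomorphism. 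Uniqueness of the classifying map $\Phi$ follows from Lemma \ref{noaut} and the fact, built into Definition \ref{defn:markedperiodgen}, that $\Phi$ is recovered from the family as its marked period. Care is needed at the boundary walls of $T_{Y_0}^{*}$ where the set of effective roots jumps (equivalently, where internal $(-2)$-curves appear on the resolution) — there the contraction contracts a larger configuration, and one must check the family $\bar{\cY}$ is still flat with fibers having only du Val singularities; this flatness and the normality of the total space is the technical crux, and I would handle it with the explicit local model of a du Val family (as in the K3-with-rational-double-points setting of \cite{Mor}, cited for exactly this purpose) together with Lemma \ref{lem:I-sheaf}.
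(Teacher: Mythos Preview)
Your proposal is correct and follows essentially the same route as the paper: start from an almost-universal family of smooth marked Looijenga pairs over $T_{Y_0}^{*}$, contract the internal $(-2)$-curves fiberwise to produce a family of generalized marked pairs with marked period map the identity, and then invoke Lemma~\ref{noaut} (no automorphisms) together with the Torelli statement for fineness. The paper's proof is considerably terser than yours---it simply asserts that the contraction works in families, that the marking descends, and that ``the fact that the moduli space is fine follows from Lemma~\ref{noaut}''---whereas you spell out the verification of the universal property via minimal resolutions and the representability result of \cite{moduli}, and you correctly flag the flatness issue along the root hyperplanes as the technical point being swept under the rug.
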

\begin{proof}
Consider an almost-universal family $\cY\rightarrow T_{Y_0}^{*}$ of Looijenga pairs $(Y,D)$ with a marking of the Picard group by $\Pic(Y_0)$ as well as a marking of the boundary $D$. Given $s\in T^{*}_{Y_0}$, because $(\cY_s,\cD_s)$ is positive, all internal curves can be simultaneously contracted. This gives rise to a family, $(\check{\cY},\cD,p_i)\rightarrow T_{Y_0}^{*}$ where the fibers $\check{\cY}_s$ has at worst, du Val singularities. Furthermore, the original marking by $\Pic(Y_0)$ contains the data for a marking on the generalized family and the boundary marking descends as well. Thus, we get a family of generalized marked pairs $(\check{\cY},\cD,p_i,\mu)\rightarrow T_{Y_0}^{*}$. Furthermore, by construction, the period of the fiber over $\varphi\in T_{Y_0}^{*}$ is $\varphi$. Note that the choice of the almost-universal family does not affect the construction. The fact that the moduli space is fine follows from Lemma \ref{noaut}.
\end{proof}


\section{Review of Mirror Symmetry for Log Calabi-Yau Surfaces} 
We review notions and results related to the construction of the mirror family in \cite{GHK}. Let $(Y,D)$ be a positive Looijenga pair and denote $U=Y\setminus D$. To each pair, we will associate to it an integral affine manifold with singularities.
\begin{defn}
An \textit{integral affine manifold}  is a real manifold such that the transition functions are integral affine. 
\end{defn}
The spaces we consider do not have a global integral affine structure, so we consider integral affine manifolds with singularities. These are real manifolds $B$ such that there is an open subset $B_0\subset B$ that is an integral affine manifold such that the singular locus of $B$ (where the affine structure is not defined) given by $B\setminus B_0$ is a locally finite union of closed submanifolds of codimension at least two. There is a local system $\Lambda_B$ on $B_0$ given by flat integral vector fields. Given any simply connected subset $\tau \subset B_0$, we write $\Lambda_{\tau}$ for the stalk of $\Lambda$ at any point of $\tau$ (any two such stalks are canonically identified via parallel transport). We also use $\lambda_{\tau}$ to denote the local system of flat integral vector fields on $\tau$ since what we mean will be clear from the context. 
We  denote  by $\check{\Lambda}_B$ the  local system dual to $\Lambda$. 

Now, to a fixed Looijenga pair $(Y,D)$, we will associate an integral affine manifold, $B_{(Y,D)}$. Fix a lattice $M\simeq \Z^n$ and denote $M_{\R}=M\otimes_{\Z}\R$. Let $D= D_1+..+D_n$. For each node $p_{i,i+1}=D_i\cap D_{i+1}$, we take a rank $2$ lattice $M_{i,i+1}$ with a basis $v_i,v_{i+1}$. Then we take the cone $\sigma_{i,i+1}\subset {M_{i,i+1}}_{\R}$ generated by the two basis vectors. We then form $B_{(Y,D)}$ by gluing the $\sigma_{i,i+1}$ to $\sigma_{i-1,i}$ along the ray $\rho_i$. Note that $B_{(Y,D)}$ is homeomorphic to $\mathbb{R}^2$. Now we define the affine structure on $B_{(Y,D)}\setminus \{0\}$. Let $U_i$ be the interior of $\sigma_{i,i+1}\cup\sigma_{i-1,i}$. Then we define a chart $\psi_i:U_i\rightarrow M_{\R}$ given by:
\[ \psi_i(v_{i-1})=(1,0), \psi_i(v_i)=(0,1),\psi_i(v_{i+1})=(-1,-D_i^2).\]

\begin{remark} \label{rmk:irr-blo}
Note that the above construction even makes sense in the case where $n=1$, i.e., when the anticanonical cycle $D$ is an irreducible nodal curve. In this case, we only have one cone $\sigma_{1,1}$ whose two sides are identified. The subtle point is how we define the integral affine charts. Instead of using $D^{2}$, we use $D^{2} -2$, the degree of the normal bundle of the map from the normalization of $D$ to $Y$. However, this special case can complicate the arguments in the proofs in later sections of the paper. To bypass this technical complication, in this case, we usually pass to the blowup of $D$ at the node and replace $D$ with the reduced inverse image of $D$ under the blowup.  By Lemma 1.6 of \cite{GHK}, toric blowups will not change the underlying integral affine structure. Thus, in this way, we will obtain a unifying description for the integral affine manifolds associated with Looijenga pairs. 
\end{remark}

When $(Y,D)$ is clear from context, we will often drop the subscript and just denote the associated manifold $B$ instead of $B_{(Y,D)}$. Note that if the pair $(Y,D)$ is toric, the affine structure extends over the origin (see Lemma $1.8$ from \cite{GHK}). 

Denote by $B(\Z)$ the set of integer points on $B$. Then, we also have the identification $B(\Z) \simeq U^\mathrm{trop}(\Z)$. However, we will not use this perspective until Section \ref{sec:keel-yu}. \\

Let $P=\NE(Y)$ and $R=\bC[P]$ and $(Y,D)$ be positive.


The following notions from section $2.1$ of \cite{GHK} will be used and we reproduce them here.

\begin{defn}
A $\Sigma$-piecewise linear multivalued function on $B$ is a collection $\varphi=\{\varphi_i\}$ where $\varphi_i$ is a $\Sigma$-piecewise linear function on $U_i$ with values in $P_{\R}^{\operatorname{gp}}$. This is equivalent to giving a collection of $\rho^{-1}\Sigma$ piecewise linear functions $\varphi_i:\Lambda_{\R,\rho_i}\rightarrow P_{\R}^{\operatorname{gp}}$. Furthermore, two such functions $\varphi$ and $\varphi'$ are considered equivalent if $\varphi_i-\varphi'_i$ is linear for each $i$.
\end{defn}

From a multivalued function, $\varphi=\{\varphi_i\}$, we can form a principal $P_{\R}^{\operatorname{gp}}$ bundle $\pi:\mathbb{P}_0\rightarrow B_0$ with a section $\varphi:B_0\rightarrow \mathbb{P}_0$. This is constructed by gluing $U_i\times P_{\R}^{\operatorname{gp}}$ to $U_{i+1}\times P_{\R}^{\operatorname{gp}}$ along $U_i\cap U_{i+1}\times P_{\R}^{\operatorname{gp}}$ by
\[ (x,p)\mapsto (x,p+\varphi_{i+1}(x)-\varphi_i(x).\]
The section $\varphi$ is formed by local sections $x\mapsto (x,\varphi_i(x))$. Then we define a local system $\mathcal{P}$ on $B_0$ by
\[ \mathcal{P}:=\pi^*\Lambda_{\mathbb{P}_0}\]
which fits into the short exact sequence of local systems given by
\[ 0\rightarrow \underline{P}^{\operatorname{gp}}\rightarrow \mathcal{P}\xrightarrow{r} \Lambda\rightarrow 0\]
where $r$ is the derivative of $\pi$. 

In the case where $P=\NE(Y)$, there is a unique (up to a linear function) $\Sigma$-piecewise linear multivalued function
\[ \varphi: |\Sigma|\rightarrow P_{\R}^{\text{gp}}\]
where the bending parameter (as defined in Definition $1.11$ of \cite{GHK}) at $\rho_i$, denoted by $\kappa_{\rho_i}$, is given by $[D_{\rho_i}]$.

The canonical scattering diagram in \cite{GHK} is given by the following. Fix a ray $\mathfrak{d}\subset B$ with a rational slope with the origin at its endpoint. If $\mathfrak{d}$ is not a ray of $\Sigma$, then refine $\Sigma$ by adding $\mathfrak{d}$ and also other rays such that each maximal cell of $\Sigma'$ is affine isomorphic to the first quadrant in $\mathbb{R}^2$. This corresponds to a toric blowup $\pi:\tilde{Y}\rightarrow Y$. Now set $C$ to be the irreducible component corresponding to $\mathfrak{d}$. Let $\tau_{\mathfrak{d}}$ be the smallest cone containing $\mathfrak{d}$ and $m_{\mathfrak{d}}$ the primitive generator of the tangent space to $\mathfrak{d}$, pointing away from the origin. Then define
\[ f_{\mathfrak{d}}:=\exp \left( \sum_{\beta} k_{\beta}N_{\beta}z^{\pi_{*}(\beta)-\varphi_{\tau_{\mathfrak{d}}}(k_{\beta}m_{\mathfrak{d}})}\right)\]
where the sum is made over $\beta\in A_1(\tilde{Y})$ such that
\[ \beta\cdot \tilde{D}_i= 
\begin{cases} 
      k_{\beta}\leq 0 \text{ for } \tilde{D}_i=C\\
      0 \text{ for } \tilde{D}_i\ne C
        \end{cases}
\]
and $k_{\beta}>0$.  Heuristically, $N_{\beta}$ is the number of maps from affine lines to $\tilde{Y}\setminus \tilde{D}$ whose closures represent the class $\beta$. A class $\beta\in A_i(\tilde{Y})$ is called an $\mathbb{A}^1$ class if $N_{\beta}\ne 0$. The canonical scattering diagram is given by
\[ \mathfrak{D}^{\text{can}}:=\{ (\mathfrak{d},f_{\mathfrak{d}}):\mathfrak{d}\subset B, \text{ a ray of rational slope}\}.\]

\begin{defn}
A broken line $\gamma$ in $(B,\Sigma)$ for $q\in B_0(\Z)$ with endpoint $Q\in B_0$ is a proper continuous piecewise integral affine map $\gamma:(-\infty,0]\rightarrow B_0$ with finite domains of linearity, $L_1,...,L_n$ such that no segments are contained in a ray of $\mathfrak{D}$. Each $L_i$ is decorated by a monomial $m_i=c_iz^{C_i}w^{v_i}\in \Q[\Lambda_{L_i}\oplus P]$ such that $\gamma$ can bend only when it crosses a ray of $\mathfrak{D}$, each $L_i$ is compact expect for $L_1$, which goes to infinity and is parallel to $\rho_q$, $m_1=z^q$, $L_n$ ends at $s$ and each $v_i$ is non-zero and parallel to $L_i$ and points in the opposite direction to $L_i$. The other directions are determined by this. The monomial $m_{i+1}$ is required to be one of the monomial terms in $m_i\cdot f^{\langle n,v_i\rangle}$ where $f$ is the function attached to $\rho$, the ray that $L_i\cap L_{i+1}$ is on and $n$ is the unique primitive element vanishing on the tangent space to $\rho$ that is positive on $v_i$.
\end{defn}

Let $v(\gamma)=v_n$ and $c(\gamma)=c_nz^{c_n}$. 
\begin{thm}{(Theorem 2.34 and Corollary 6.11, \cite{GHK})} \label{thm:fiber-mirror}
Let $(Y,D)$ be positive. For $p_1,p_2,r \in B(\Z)$, there can only be finite many pairs of broken lines $(\gamma_1,\gamma_2)$ such that $\operatorname{Limits}(\gamma_i)=(p_i,s)$ and $v(\gamma_1)+v(\gamma_2)=s$. Define:
\[ \alpha(p_1,p_2,s)=\sum c(\gamma_1)c(\gamma_2)\]
Then we can define a multiplication rule given by
\[ \theta_{p_1}\theta_{p_2}=\sum_{r\in B(\Z)} \alpha(p_1,p_2,r)\theta_r.\]
This gives a commutative and associative $R$ algebra structure on $A$. The induced map 
\[ f:\Spec(A)\rightarrow \Spec(R)\]
is a flat affine family of Gorenstein semi-log canonical surfaces with smooth generic fiber.
\end{thm}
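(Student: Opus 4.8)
The plan is to treat the statement in three stages: the combinatorial finiteness underlying the definition of $\alpha(p_1,p_2,r)$; the assertion that the broken-line product makes $A$ a commutative associative $R$-algebra; and the geometric properties of $f\colon \Spec A\to \Spec R$. The crux, on which everything turns, is the \emph{consistency} of the canonical scattering diagram $\mathfrak{D}^{\mathrm{can}}$: in the relevant pro-nilpotent tropical vertex group, i.e.\ modulo powers of the monoid ideal of $P=\NE(Y)$, the ordered product of wall-crossing automorphisms around any small loop about a joint is the identity.

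Granting consistency, the algebra structure is a formal consequence of the standard broken-line formalism. First I would show that for a generic basepoint $Q$ the function $\theta_q$ built from broken lines asymptotic to $\rho_q$ is independent of $Q$ up to parallel transport, so the $\theta_q$ are well defined; then the $\theta_q$ glue to global regular functions on the scheme obtained by gluing the standard torus charts $\Spec(\bC[P][\Lambda_\tau])$, one per maximal cone $\tau$ of the refined fan, along the birational wall-crossing maps. Multiplication of theta functions is thereby realized inside the ring of global functions of that scheme, hence automatically commutative and associative, and unwinding definitions identifies the structure constants with the $\alpha(p_1,p_2,r)$ counted by pairs of broken lines. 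The real work is then proving $\mathfrak{D}^{\mathrm{can}}$ consistent, which I would do by matching it order by order (in the monoid ideal) with the scattering diagram produced by the Kontsevich--Soibelman/Gross--Siebert algorithm from the initial walls --- the rays $\rho_i$ carrying the bending functions with parameter $[D_{\rho_i}]$ --- which is consistent by construction; the content is to identify the functions attached to the non-initial rays with the $\mathbb{A}^1$-generating series $\exp\!\big(\sum_{\beta} k_{\beta}N_{\beta}z^{\pi_{*}(\beta)-\varphi_{\tau_{\mathfrak{d}}}(k_{\beta}m_{\mathfrak{d}})}\big)$, a tropical-vertex/Gross--Pandharipande--Siebert correspondence proved by degenerating $(Y,D)$ (after toric blowups, to a toric model), using deformation invariance of the relevant relative/log Gromov--Witten invariants, and a change-of-lattice trick that reduces multiple-cover contributions to the elementary two-wall scattering computation. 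As in \cite{GHK} this is first done over the completion $\widehat R$.

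For finiteness, and for the fact that each $\alpha(p_1,p_2,r)$ lies in $R$ and not merely in $\widehat R$, I would use positivity of $(Y,D)$: by Lemma~6.9 of \cite{GHK} there exist $c_i>0$ with $-(K_Y+\sum c_iD_i)$ big and nef, giving a strictly convex piecewise-linear function on $B$ whose value strictly increases at each bend of a broken line, while the exponents $C_i$ decorating the segments only grow in $P$. Together with $\operatorname{Limits}(\gamma_i)=(p_i,s)$ and $v(\gamma_1)+v(\gamma_2)=s$ this bounds the total degree, hence the number of bends, hence the number of contributing pairs, for each fixed $r$, and bounds the finitely many $r$ contributing to a fixed product; so $A=\bigoplus_{q\in B(\Z)}R\,\theta_q$ is an honest $R$-algebra. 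For the geometry: $A$ is free over $R$ on $\{\theta_q\}$, so $f$ is flat, and affine by construction. I would then examine the fiber over the torus-fixed point $0\in\Spec R$, where every $z^{\beta}$ with $\beta\neq 0$ vanishes: there $\theta_p\theta_q=\theta_{p+q}$ if $p,q$ span a cone of $\Sigma$ and $\theta_p\theta_q=0$ otherwise, so $A\otimes_R\bC$ is the coordinate ring of the cycle of $n$ coordinate planes $V_n$ (a cyclic chain of copies of $\mathbb{A}^2$ glued along axes), which is reduced, Cohen--Macaulay and Gorenstein with trivial dualizing sheaf (the cycle of axes being an anticanonical boundary), hence semi-log canonical. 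Since $\Spec R$ is irreducible with $0$ in the closure of every point, and both Gorenstein-ness and slc-ness propagate to nearby fibers from such a central fiber --- Gorenstein by cohomology and base change applied to $\omega_{\cX/\Spec R}$, slc by a local analysis of the total space near the central fiber (inversion of adjunction, compatible with the boundary) --- every fiber is a Gorenstein slc surface. Smoothness of the generic fiber I would obtain from a toric model $(\bar Y,\bar D)$ of $(Y,D)$: over $T_Y=\Pic(Y)\otimes\mathbb{G}_m\subset\Spec R$ an explicit description of the theta basis in toric coordinates identifies the fiber with a smooth affine log Calabi--Yau surface deformation equivalent to $U=Y\setminus D$, which is precisely the statement refined in the rest of this paper.

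The main obstacle is the consistency of $\mathfrak{D}^{\mathrm{can}}$: showing that the relative/log Gromov--Witten generating functions attached to the scattering rays satisfy the Kontsevich--Soibelman relations. This is where essentially all the enumerative geometry and the degeneration analysis are concentrated; by contrast, finiteness is a soft consequence of positivity, and the properties of the fibers are a formal consequence of flatness together with the explicit central fiber $V_n$.
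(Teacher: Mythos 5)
This statement is not proved in the paper at all: it is quoted verbatim from \cite{GHK} (their Theorem 2.34 and Corollary 6.11), so there is no in-paper argument to compare against. Your outline is a faithful reconstruction of the strategy actually used in \cite{GHK}: consistency of $\mathfrak{D}^{\mathrm{can}}$ via order-by-order comparison with the Kontsevich--Soibelman/Gross--Pandharipande--Siebert scattering produced from the initial walls (using toric models, deformation invariance of the relative invariants, and the change-of-lattice trick), the broken-line/chart-gluing formalism for associativity, positivity (Lemma 6.9 of \cite{GHK}) for finiteness and for descending from $\widehat{R}$ to $R$, flatness from freeness of $A$ over $R$, and propagation of the Gorenstein slc property from the central fiber $\mathbb{V}_n$ together with the toric-model analysis for smoothness of the generic fiber. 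The only points I would flag as slightly imprecise are cosmetic: $\mathbb{V}_n$ is a cyclic chain of affine planes only for $n\geq 3$ (the cases $n=1,2$ need the separate presentations recalled in Subsection \ref{ssec:tor-bl}), and the propagation of slc-ness to all fibers in \cite{GHK} leans on the $T^D$-equivariance to push an arbitrary point of the base toward the central fiber rather than on a purely local openness statement; neither affects the soundness of your plan.
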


For the rest of the paper, we assume that the pair $(Y,D)$ is positive. As mentioned before, this means that the intersection matrix for $D$ is not negative semi-definite. Lemma $6.9$ in \cite{GHK} states that the following are equivalent:
\begin{enumerate}
\item $(Y,D)$ is positive.
\item $Y\setminus D$ is the minimal resolution of an affine surface with, at worst, du Val singularities.
\item There exist positive integers $a_1,...,a_n$ such that for all $j$, $\left(\sum a_i D_i\right)\cdot D_j>0$.\end{enumerate}
There are several important implications of this assumption, also from the same Lemma. These include:
\begin{enumerate}
\item The cone $\NE(Y)_{\R}$ is rational polyhedral, generated by only finitely any classes of rational curves. Furthermore, every nef line bundle on $Y$ is semi-ample.
\item The subgroup $G\subset \Aut(\Pic(Y))$ that fixes $[D_i]$ is finite.
\item The union $R\subset Y$ of all curves disjoint from $D$ is contractible.
\end{enumerate}

In the language of \cite{GHK}, this means that we can set $P=\NE(Y)$ and the ideal $J$ is zero. The key takeaway is that the mirror construction gives an actual algebraic family over $\Spec (\bC[\NE(Y)])$ with smooth generic fibers.

\begin{defn}\label{def:relative-torus}
Let $(Y,D)$ be a pair with $D=D_1+...+D_n$ and $\mathbb{A}^{D} = \mathbb{A}^{n}$ be the affine space with one coordinate for each component $D_i$. The relative torus $T^D$ is the diagonal torus acting on $\mathbb{A}^{n}$ whose character group is the free module with basis $e_{D_1},...,e_{D_n}$. 
\end{defn}

\begin{defn}\label{def:weight-map}
There are two types of weights that will come into play associated with the action of $T^{D}$ on the canonical algebra generated by theta functions. One comes from the map $w:A_1(Y)\rightarrow \chi(T^D)$ given by
\[ C\mapsto \sum_i (C\cdot D_i) e_{D_i}\]
and the other comes from $w:B\rightarrow \chi(T^D)\otimes \R$, where $w$ is the unique piecewise linear map with $w(0)=0$ and $w(v_i)=e_{D_i}$ for $v_i$ the primitive generator of $\rho_{i}$. Together, we get a weight map 
\begin{align*}
w: B(\mathbb{Z}) \times \mathrm{NE}(Y) & \rightarrow \chi(T^{D}) \\
(q,C) & \mapsto w(q) + w(C)
\end{align*}
\end{defn}

The following theorem is a special case of Theorem 5.2 in \cite{GHK} when $(Y,D)$ is positive:
\begin{thm}[Theorem 5.2, \cite{GHK}] \label{thm:rel-equi}The relative torus $T^{D}$ acts equivariantly on the mirror family $\mathcal{X}=\mathrm {Spec}(A) \rightarrow \mathrm{Spec}(\mathbb{C}[\mathrm{NE}(Y)])$. Furthermore, each theta function $\theta_{q}$, $q \in B(\mathbb{Z})$, is an eigenfunction of the action of $T^{D}$ with character $w(q)$. 
\end{thm}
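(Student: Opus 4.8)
The plan is to prove that $T^D$ acts equivariantly on the mirror family with $\theta_q$ an eigenfunction of character $w(q)$, by tracking the $T^D$-action through the canonical scattering diagram and the broken-line description of the structure constants $\alpha(p_1,p_2,r)$ from Theorem \ref{thm:fiber-mirror}. The basic idea is that $T^D$ acts on $\Spec(\bC[\NE(Y)])$ through the cocharacter lattice via the weight map $w:A_1(Y)\to\chi(T^D)$, i.e.\ $t\cdot z^C = t^{w(C)}z^C$, and simultaneously acts on the fibers by rescaling each theta function $\theta_q$ by $t^{w(q)}$; I must check this is well-defined on $A$, i.e.\ compatible with the multiplication rule, and then that the induced family map is equivariant.

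First I would set up the $T^D$-action on $R=\bC[\NE(Y)]$ via the character $w\circ(\text{class map})$ and on the free $R$-module $A=\bigoplus_{q} R\cdot\theta_q$ by declaring $\theta_q$ to have weight $w(q)$; then $A$ becomes a $\chi(T^D)$-graded $R$-module (with $R$ itself graded). The content is that the multiplication $\theta_{p_1}\theta_{p_2}=\sum_r \alpha(p_1,p_2,r)\theta_r$ respects this grading, i.e.\ for every broken-line contribution $c(\gamma_1)c(\gamma_2)z^{C}\theta_r$ appearing in the expansion we have $w(p_1)+w(p_2)=w(C)+w(r)$ in $\chi(T^D)$. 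Here $c(\gamma_i)=c_iz^{C_i}$ and $C=C_1+C_2$, and I would prove the balancing identity $w(p_i) = w(v(\gamma_i)) + w(C_i)$ for a single broken line $\gamma_i$ with initial direction $q=p_i$ and final exponent $v(\gamma_i)$, by induction on the number of domains of linearity: the key computation is that when a broken line bends at a ray $\rho$ (a ray of $\Sigma$, so an actual boundary direction, after possibly a toric blowup as in Remark \ref{rmk:irr-blo}), picking up a monomial term from $f_\rho^{\langle n,v_i\rangle}$, the change in $z$-exponent is of the form $k\beta - \varphi_{\tau}(k\,m_\rho)$ with $\beta\cdot\tilde D_j = 0$ for $\tilde D_j\neq C$ and $=k_\beta\le 0$ for $\tilde D_j = C$, so that $w(\beta)$ exactly accounts for the jump in the piecewise-linear map $w:B\to\chi(T^D)\otimes\R$ across $\rho$ — this is precisely the statement that the bending parameters $\kappa_{\rho_i}=[D_{\rho_i}]$ are matched by $w(v_i)=e_{D_i}$. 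The $-\varphi_\tau(km_\rho)$ term contributes nothing to $w$ since $\varphi$ takes values in classes with zero intersection with all $D_i$ (it is built from $\kappa_{\rho_i}=[D_{\rho_i}]$ but each local linear piece $\varphi_i$ evaluated on a lattice vector is a combination of boundary classes whose $w$-weight I must check cancels the linear part of $w$ — more carefully, $w(q) - w(C)$ for the monomial $z^{q-\varphi(m)}$ should be tracked as a single consistent piecewise-linear object). Alternatively, and more cleanly, I would cite that $w:B\to\chi(T^D)_\R$ is the unique PL function with the prescribed values on generators and note that $q\mapsto w(q)$, composed appropriately, is exactly the $P^{\mathrm{gp}}$-component bookkeeping in the local system $\cP$ so that broken-line bending preserves it by construction of $\varphi$ from the $\kappa_{\rho_i}$.

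Once the grading is shown multiplicative, equivariance of $\mathcal X\to\Spec R$ is formal: the grading gives a coaction $A\to A\otimes\bC[\chi(T^D)]$ which is an $R$-comodule-algebra map over the coaction on $R$, hence a $T^D$-action on $\Spec A$ over the $T^D$-action on $\Spec R$; and $\theta_q$ spans a weight subspace of character $w(q)$ by definition. I would also remark that this is the positive-$(Y,D)$ specialization of the general Theorem 5.2 of \cite{GHK}, so at the level of rigor one may simply invoke that theorem after checking the hypotheses ($P=\NE(Y)$, $J=0$) are met — but I prefer to give the self-contained broken-line argument since it is short and makes the character $w(q)$ transparent.

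The main obstacle is the bookkeeping in the inductive balancing step: one must be careful that the $z$-exponents in broken-line monomials live in $\Lambda_{L_i}\oplus P$ with $P=\NE(Y)$ and that $w$ extends consistently to both summands so that the identity $w(p_i)=w(v(\gamma_i))+w(C_i)$ is even a well-typed statement — in particular, handling the passage across the origin region and the multivalued/PL nature of $\varphi$ and of $w:B\to\chi(T^D)_\R$ requires choosing representatives carefully (working in a fixed chart $U_j$ or on the universal cover of $B_0$), and the irreducible-boundary case $n=1$ needs the toric-blowup reduction of Remark \ref{rmk:irr-blo} before the combinatorics applies. None of this is deep, but it is where the proof must actually be written with care rather than waved through.
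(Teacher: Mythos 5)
The paper offers no proof of this statement: it is quoted verbatim as the positive-$(Y,D)$ specialization of Theorem 5.2 of \cite{GHK} (with $P=\NE(Y)$ and $J=0$), which is exactly the fallback you mention at the end of your second paragraph. Your self-contained broken-line argument is correct and is essentially the argument in \cite{GHK} itself: the decisive point, which you identify, is that the kink of the multivalued function $\varphi$ at $\rho_i$ is $[D_{\rho_i}]$ while the kink of the piecewise-linear weight map $w:B\to\chi(T^D)\otimes\R$ at $\rho_i$ is $e_{D_{i-1}}+D_i^2e_{D_i}+e_{D_{i+1}}=w([D_i])$, so the weight extends to a linear function on the local system $\cP$ and every wall-crossing monomial (and hence every broken-line contribution) is homogeneous.
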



\section{Compactifications via Polygons} \label{sec:compa}
\subsection{Generalized Half Spaces and Polygons in $B$} 
In this section, the convex geometry of $B$ is developed. Namely, the notion of a generalized half space is defined and we show how convex polytopes come from intersections of generalized half spaces. Most importantly, we construct the polytopes that will be used to compactify the mirror family. First, we set up the notion of convex polygons in $B$.
\begin{defn}
\label{def:polygons} A closed, connected subset $F$ in $B$
with $0$ in the interior is a \emph{polygon} in $B$
if for each maximal cell $\sigma$ in $\Sigma$, $F\cap\sigma$ is a polygon. Given a polygon $F$ on $B$, we say $F$ is\emph{
convex} if any immersed line segment on $B_0$ with endpoints in $F$ is contained in $F$. We say a polygon is rational if for every vertex $v$, we have $v \in B(\mathbb{Q})$ and each edge has rational slope.  We say a convex rational polygon is \emph{nonsingular} if for any vertex $v$ as the intersection of two edges $E_1$ and $E_2$, the primitive generators of $E_1$ and $E_2$ form a basis of $\Lambda_{v}$. 
\end{defn}

\begin{remark}
We emphasize that by our definition, a polygon in $B$ will always have $0$ in its interior.
\end{remark}

Recall the notion of the developing map on $B_0$. 
\begin{defn}
Let $q:\tilde{B}\rightarrow B_0$ be the universal cover. Then the universal cover inherits an integral affine structure pulled back from $B_0$. If we patch together integral affine charts on $B_{0}$, then up to integral linear functions, we obtain a canonical map called the \emph{developing map}  $\delta:\tilde{B} \rightarrow \mathbb{R}^2$. The fan $\Sigma $ induces a decomposition of $\tilde{B}$ into cones, which we  denote by $\tilde{\Sigma}$. 
\end{defn} 

\begin{lem} \label{devel-map}
The developing map is one-to-one if $(Y,D)$ is negative semidefinite. In this case, the closure of the image is a convex cone and is strictly convex if $(Y,D)$ is negative definite. If the developing map is not one-to-one, then it surjects onto $\mathbb{R}^{2}\setminus \{0\}$.
\end{lem}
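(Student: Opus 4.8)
\textbf{Proof proposal for Lemma \ref{devel-map}.}

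The plan is to analyze the monodromy of the integral affine structure on $B_0$ around the origin and relate it to the intersection matrix of $D$. The total monodromy around $0$ is the product $M = M_{\rho_1}\cdots M_{\rho_n}$ of the elementary transformations encoded in the charts $\psi_i$; concretely, passing from one cone to the next is given by a matrix of the form $\begin{pmatrix} 1 & 0 \\ -D_i^2 & 1\end{pmatrix}$ composed with a shift, so that $M$ is (conjugate to) the standard matrix built from the self-intersection numbers. The key classical input is that this monodromy matrix $M$ is, up to conjugation, of one of three types depending on whether the intersection matrix $(D_i\cdot D_j)$ is negative definite, negative semi-definite but not definite, or indefinite: in the negative definite case $M$ is elliptic (finite order, or at least with eigenvalues on the unit circle giving a rotation by less than a full turn); in the negative semi-definite non-definite case $M$ is parabolic (a single Jordan block, unipotent up to sign); and in the indefinite case $M$ is hyperbolic, which forces the developing map to wrap around more than once. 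This is exactly the trichotomy one finds in Looijenga's work on rational surfaces and in \cite{GHK}, so I would cite that rather than re-derive the linear algebra.

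First I would set up the developing map $\delta:\tilde B\to\R^2$ explicitly on the cone decomposition $\tilde\Sigma$: starting from a chosen maximal cone, develop successive cones using the gluing maps, obtaining a locally injective map whose image is a union of cones $\delta(\tilde\sigma_k)$ fanning out from the origin. Injectivity of $\delta$ is then equivalent to these developed cones never overlapping, i.e. to the total angle swept being at most $2\pi$ (in the developed picture). Second, in the negative semi-definite case I would invoke that the monodromy is conjugate to a (possibly signed) unipotent or finite-order transformation to conclude the developed cones sweep out a convex cone without overlap, giving injectivity; strict convexity in the negative definite case follows because the monodromy is then a genuine rotation by a positive angle $<2\pi$, so the closure of the image is a strictly convex cone (the two bounding rays are distinct). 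Third, in the remaining (indefinite, i.e. positive) case, the monodromy is hyperbolic, hence the developed cones must overlap and in fact cover: I would argue that a hyperbolic monodromy forces $\delta$ to be surjective onto $\R^2\setminus\{0\}$ — concretely, the iterated images of a single cone under the monodromy group cover all of $\R^2\setminus\{0\}$ because a hyperbolic matrix acting on rays has the property that forward and backward iterates of any proper cone exhaust the complement of the two eigen-rays, and adding finitely many developed cones covers those as well. One must be a little careful that $0$ itself is excluded, since the affine structure genuinely degenerates there.

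The main obstacle I expect is the surjectivity statement in the non-injective case: one needs to rule out the intermediate possibility that $\delta$ is non-injective yet has image a proper subset of $\R^2\setminus\{0\}$ (for instance, a half-plane that is covered twice). The way to handle this is to use that in this regime $(Y,D)$ is positive, so by the results quoted after Theorem \ref{thm:fiber-mirror} the cone $\NE(Y)_\R$ is rational polyhedral and more relevantly the monodromy is strictly hyperbolic with irrational eigen-ray slopes; then the orbit of any ray under $\langle M\rangle$ is infinite and accumulates on both eigen-rays from both sides, so the developed cones — being finitely many but reproduced by all powers of $M$ via going around the origin multiple times on $\tilde B$ — must cover every ray. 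I would phrase this as: the image $\delta(\tilde B)$ is open, is a union of cones stable under the monodromy action $\langle M\rangle$, contains a nonempty open cone, and any $\langle M\rangle$-stable open subset of $\R^2\setminus\{0\}$ containing a nonempty open cone is all of $\R^2\setminus\{0\}$ when $M$ is hyperbolic. The rest is routine: convexity of the image closure in the semi-definite case is immediate once injectivity is known, and strict convexity in the definite case follows from the monodromy being a nontrivial rotation.
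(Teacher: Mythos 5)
Your proposed trichotomy for the monodromy $M$ is wrong in two of the three cases, and the surjectivity argument collapses as a result. First, a negative definite cycle does \emph{not} have elliptic monodromy: contracting such a $D$ produces a cusp singularity, whose link is a torus bundle with \emph{hyperbolic} monodromy; the developing image is precisely the strictly convex cone spanned by the two eigen-rays of $M$. (A ``genuine rotation'' in ${\rm SL}_2(\mathbb{Z})$ has finite order and preserves no strictly convex cone, so it cannot produce the picture the lemma asserts.) Second, a positive pair need not have hyperbolic monodromy: for any toric pair, and also for the cubic surface with its triangle of $(-1)$-curves, $M$ is the \emph{identity}, yet the developing map is infinite-to-one onto $\mathbb{R}^2\setminus\{0\}$. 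So your surjectivity mechanism --- ``the orbit of any ray under $\langle M\rangle$ accumulates on both eigen-rays'' --- simply does not apply. Even your fallback claim is false as stated: when $M$ \emph{is} hyperbolic, the open cone spanned by its eigen-rays is an open, $\langle M\rangle$-stable set containing a nonempty open cone but is a proper subset of $\mathbb{R}^2\setminus\{0\}$; that is exactly what happens in the negative definite case.

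The paper avoids monodromy classification entirely. It first contracts the irreducible $(-1)$-curves in $D$ (a toric blowdown, which does not change $B$), after which the trichotomy becomes purely numerical: negative semi-definite means $D_i^2\le -2$ for all $i$, and positive means $D_i^2\ge 0$ for some $i$. In the semi-definite case one writes the developing map explicitly --- sending $\sigma_{i,i+1}$ to the cone on $(i+1,1)$ and $(i,1)$ when all $D_i^2=-2$, giving the open upper half-plane --- and checks that any $D_i^2<-2$ pinches this to a strictly convex cone; injectivity is read off directly. In the positive case, the chart $\psi_i$ at a component with $D_i^2\ge 0$ shows that two adjacent developed cones already cover a closed half-plane; since consecutive developed cones are attached along rays and sweep angle monotonically in one direction, each fundamental domain of $\tilde B\to B_0$ contributes angle $\ge\pi$, so the infinitely many translates sweep out unbounded total angle, forcing both non-injectivity and surjectivity onto $\mathbb{R}^2\setminus\{0\}$. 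If you want to salvage a monodromy-based proof you would need the correct dictionary (hyperbolic with invariant strictly convex cone in the definite case, parabolic in the semi-definite non-definite case, arbitrary in the positive case) together with a winding-number or total-angle argument that does not presuppose hyperbolicity.
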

\begin{proof}
Since developing maps are unique up to ${\rm SL}_{2}(\mathbb{Z})$-transformations,
it suffices to prove the lemma for any particular choice of the developing
map. We can assume that $D$ has no $-1$ curves by contracting irreducible $(-1)$ curves in $D$. Then $(Y,D)$ is negative semi-definite
if $D_{i}^{2}\leq-2$ for all $i$ and negative definite if, in addition,
$D_{i}^{2}<-2$ for at least one $i$. If $D_{i}^{2}=-2$ for all
$i$, the developing map that takes a lift of $\sigma_{i,i+1}$ to
the cone generated by $(i+1,1)$ and $(i,1)$ identifies $\tilde{B}$
with the upper half plane of $\mathbb{R}^{2}$. If $D_{i}^{2}<-2$
for some $i$, it follows that the developing map identifies
$\tilde{B}$ with a strictly convex cone in $\mathbb{R}^{2}$. 

If $(Y,D)$ is positive and contains no
$(-1)$-curves, $D_{i}^{2}\geq0$ for some $i$. Then the image under
the developing map, restricted to a lift of $\sigma_{i-1,i}\cup\sigma_{i,i+1}$,
will contain a half space in $\mathbb{R}^{2}$. It follows that the
developing map cannot be injective and is surjective onto $\mathbb{R}^{2}\setminus\{0\}$. 
\end{proof}

\begin{cor} \label{ray-escape}
A Looijenga pair $(Y,D)$ is positive if and only if every immersed,
directed ray on $B$ not passing through $0$ goes to infinity. 
\end{cor}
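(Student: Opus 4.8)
The statement follows from Lemma \ref{devel-map}, so the plan is to unwind what it means for a directed ray on $B$ to "go to infinity" in terms of the developing map $\delta:\tilde B\to\mathbb R^2$, and then dispatch each direction of the equivalence separately.

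First, for the forward direction, assume $(Y,D)$ is positive. By Lemma \ref{devel-map}, the developing map is not one-to-one (a positive pair cannot be negative semidefinite, after possibly contracting $(-1)$-curves in $D$, which does not affect $B$ by Remark \ref{rmk:irr-blo} / Lemma 1.6 of \cite{GHK}), and hence $\delta$ surjects onto $\mathbb R^2\setminus\{0\}$. I would take an immersed directed ray $\rho$ on $B_0$ not passing through $0$, lift it to a ray $\tilde\rho$ on $\tilde B$, and push it forward by $\delta$ to get an immersed directed ray $\delta(\tilde\rho)$ in $\mathbb R^2$. Since $\delta$ is locally an integral affine isomorphism and $\tilde\rho$ is a straight ray in the pulled-back affine structure, $\delta(\tilde\rho)$ is an honest straight ray (or segment) in $\mathbb R^2$; because it starts away from the origin and $\mathbb R^2\setminus\{0\}$ contains every such ray as a closed subset, $\delta(\tilde\rho)$ is a full ray going to infinity in $\mathbb R^2$. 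Pulling back along the covering map $q$ (which is proper on each fundamental domain) and along $\delta$ (a local homeomorphism, proper over compact subsets of $\mathbb R^2\setminus\{0\}$ once one stays away from the singular point), the ray $\rho$ must leave every compact subset of $B$, i.e. it goes to infinity. The only subtlety is to make sure the ray does not accumulate at $0\in B$; this is where one uses that $\rho$ does not pass through $0$ together with the fact that, in the positive case, the affine structure near $0$ is "concave" enough that a straight ray once it has positive distance from $0$ stays bounded away from it — concretely, on the lift the developing image is a straight ray missing the origin, so it has a positive distance from $0$, and this transfers back.

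For the converse, I would argue by contraposition: suppose $(Y,D)$ is not positive, i.e.\ (after contracting $(-1)$-curves in $D$) it is negative semidefinite. Then by Lemma \ref{devel-map} the developing map is injective and the closure of its image is a convex cone $K\subset\mathbb R^2$ (strictly convex in the negative definite case, a half-plane in the case all $D_i^2=-2$). A convex cone is closed under the operation "if two points are in it, the segment between them is in it," and more to the point, there exist directed rays in $K$ that stay in a bounded region — namely, take a ray parallel to a boundary ray of $K$ but starting at an interior point: in the half-plane case this is a ray that runs parallel to the edge and never escapes to the "far" part, and in the strictly convex case any ray eventually exits $K$, so one should instead produce the non-escaping ray differently. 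The cleanest formulation: in the negative semidefinite case $B$ is (affinely) a convex cone, and I want a directed ray in $B_0$ not through $0$ which does \emph{not} go to infinity. I would take the directed ray that, near $0$, circulates "around" the cone point — but a straight ray cannot circulate. So instead, the right statement to extract is that when $\delta$ is injective with image a strictly convex cone, a generic directed ray not through the origin, extended backward, hits the boundary of the cone in finite affine time, meaning it is not an infinite immersed ray in $B_0$ at all; and when the image is a half-plane, there is a directed ray parallel to the boundary line that stays at bounded distance from the boundary, and its reverse also stays bounded — so it "goes to infinity" only in one end. This shows the contrapositive: if every immersed directed ray not through $0$ goes to infinity, then $B$ cannot be a (half-)plane inside a proper convex cone, so $(Y,D)$ is positive.

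The main obstacle I anticipate is making the phrase "goes to infinity" precise and robust enough to handle the half-plane (negative semidefinite but not definite) borderline case, where the developing map is injective with image exactly a closed half-plane: here a straight ray in the half-plane direction does run to infinity, so one must carefully identify which rays fail to escape and check they are genuine immersed rays in $B_0$ (not passing through the cone point). I would resolve this by working entirely on $\tilde B$ via $\delta$: a directed immersed ray in $B_0$ not through $0$ corresponds, on $\tilde B$, to a directed straight ray whose image under $\delta$ is a straight ray in $\mathbb R^2$ avoiding the image of $0$; "escapes to infinity in $B$" then translates to "the $q$-image in $B$ leaves every compact set," and I would check that this holds iff the $\delta$-image leaves every compact subset of $\overline{\delta(\tilde B)}\setminus\{0\}$, which fails precisely when $\overline{\delta(\tilde B)}$ is a proper convex cone (semidefinite case) and holds when it is all of $\mathbb R^2\setminus\{0\}$ (positive case). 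Modulo this bookkeeping, both directions are immediate consequences of Lemma \ref{devel-map}.
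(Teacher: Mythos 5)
There is a genuine gap, and it sits at the heart of both directions: the mechanism that actually controls whether a ray on $B$ escapes is \emph{how many of the rays $\tilde\rho_j$ of the induced decomposition $\tilde\Sigma$ its lift $\tilde R$ crosses}, and your proposal never isolates this. The substitute you rely on --- transferring ``leaves every compact set'' back and forth through $\delta$ and the covering map $q$ --- is false. In the positive case $\delta$ is surjective with infinite fibers, and the deck transformations act by unbounded elements of ${\rm SL}_2(\mathbb{Z})$, so for a compact $K\subset B_0$ the set $\delta(q^{-1}(K))$ is unbounded in $\mathbb{R}^2$; hence $\delta(\tilde R)\to\infty$ in $\mathbb{R}^2$ does not by itself prevent $R$ from returning to $K$ infinitely often, and your forward direction does not close. (Your worry about accumulation at $0$, and the observation that the lattice distance to $0$ is positive, are beside the point: in the negative definite case lines at positive lattice distance wind around $0$ forever.) The paper's forward argument is different: since a straight ray in $\mathbb{R}^2$ missing the origin subtends bounded total angle while surjectivity of $\delta$ means the images $\delta(\tilde\sigma_j)$ sweep out unbounded angle, $\tilde R$ can cross only finitely many $\tilde\rho_j$ and is therefore eventually trapped in a single connected component of $\delta^{-1}(\sigma)$, inside which it visibly goes to infinity.

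Your converse has the same defect, and the criterion you propose to ``check'' at the end --- that $R$ escapes compacts of $B$ iff $\delta(\tilde R)$ escapes compacts of $\overline{\delta(\tilde B)}\setminus\{0\}$ --- is refuted by the half-plane (negative semidefinite, all $D_i^2=-2$) case: normalizing so that $\tilde\rho_j=\mathbb{R}_{\ge0}\cdot(j,1)$, the horizontal ray $\{(t,1):t\ge 0\}$ escapes to infinity in $\mathbb{R}^2$, yet it crosses every $\tilde\rho_j$ with $j\ge 1$ at the primitive point $(j,1)$, so in $B$ it passes through the finitely many primitive generators $v_i$ infinitely often and never leaves a compact set. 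Your other suggestion (a ray whose backward extension hits the boundary of the cone in finite affine time) produces an incomplete backward ray, not a forward-directed immersed ray that fails to escape, so it does not establish the contrapositive either. The paper's converse is precisely the crossing count: since $\delta$ is injective with convex-cone image and the $\delta(\tilde\rho_j)$ converge to the boundary rays of that cone, any ray that leaves its initial cone must cross infinitely many $\tilde\rho_j$, hence wraps around $0$ infinitely many times in $B$ and does not go to infinity. To repair your write-up you would need to introduce the crossing-number argument explicitly in both directions; Lemma \ref{devel-map} alone, filtered through compactness transfer, is not enough.
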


\begin{proof}
We use the developing map $\delta$. Let $R$ be an immersed directed
ray on $B$ and choose a lift $\tilde{R}$ of $R$ to the universal
cover $\tilde{B}$. By Lemma \ref{devel-map}, if $(Y,D)$ is positive, then
$\delta$ is surjective and therefore there exists a cone $\sigma$
in $\Sigma$ such that a connected component of $\delta^{-1}(\sigma)$
traps in which $\tilde{R}$ goes to infinity. Conversely, if
$(Y,D)$ is negative semidefinite, again by Lemma \ref{devel-map}, $\delta$ is
injective and the closure of its image is a convex cone. Let $R^{'}$
be an immersed ray with the endpoint in the interior of a cone $\sigma^{'}$
but escaping the cone $\sigma^{'}$. Let $\tilde{R}^{'}$ be a
lifting of $R^{'}$ to $\tilde{B}$. Let $\tilde{\Sigma}$ be the
decomposition of $\tilde{B} \setminus {0}$ into cones induced
by $\Sigma$. Since the image of $\delta$ is a convex cone and $\delta$
is injective, $\tilde{R}^{'}$ intersects rays in $\tilde{\Sigma}$
infinitely many times. Therefore, $R^{'}$ wraps around $0$ infinitely
many times and does not escape to infinity. 
\end{proof}

Let $\bar{L}\subset B_0$ be a line segment with rational slope and integral unit tangent vector $v$. For $p\in B_0$, we can view the vector $\overrightarrow{0p}$ as an element in $T_{p,B_0}$ and if $p\in B(\Z)$, then we can view it furthermore as an element of $\Lambda_p$. It is important to note that $\overrightarrow{0p} \wedge v$ is the same as long as we choose $p\in L$. If $\overrightarrow{0p}\wedge v=0$, then $\bar{L}$ must be on the ray spanned by $\overrightarrow{0p}$. Then, if $\overrightarrow{0p}\wedge v\ne 0$ for all $p \in \overline{L}$, it follows that $\bar{L}$ can be extended to an immersed line $L$ which does not pass through $0\in B$. The  distance of $L$ to $0$ is denoted by
\[ \operatorname{dist}(L,0):= |\overrightarrow{0p}\wedge v|\]
for any $p\in L$. Note that this is well-defined even when we cross charts since the transition functions are in $ \rm{SL}_2(\Z)$.  Given a line $L$, there are two sides to it. On one side, there is $0$ and on the other side, there is $\infty$. We always orient $L$ in a way such that $0$ will be on the left of $L$. In particular, the distance of $L$ from $0\in B$ will always be positive. 

Throughout the paper, assume that an immersed line has \emph{rational} slope. 
\begin{defn}
\label{def:Asymptotic directions of a line}
By Corollary \ref{ray-escape}, if $(Y,D)$ is positive, any immersed line $L$ not passing through $0$ has two asymptotic directions. Parametrize $L$ so that the orientation of $L$ agrees with that of $\mathbb{R}$ oriented from $-\infty$ to $+\infty$. We say the line \emph{ $L$
escapes or goes to infinity parallel} \emph{to $q$} (resp. \emph{comes from
infinity parallel to $q$}) if for a maximal cone $\sigma \ni q$ on $B$,
there exists $t_{\sigma}$ such that for any $t>t_{\sigma}$ (resp.
$t<t_{\sigma}$), $l(t)$ is contained in $\sigma$ and $l^{'}(t)=q$
(resp. $-l^{'}(t)=q$) via parallel transport within $\sigma$. We denote by $L_{\infty}$ (resp. $L_{-\infty}$)
the direction in which $L$ escapes to infinity (resp. comes from infinity).
\end{defn}

Given $x\in B_{0}$, consider all immersed straight segments $I_{px}$
with $p\in L$. Then the lemma below follows from Corollary \ref{ray-escape}:
\begin{lem}

\label{lem:minum of wedge exists}Suppose $(Y,D)$ is positive. Then given
a fixed $x\in B_{0}$, there are only finitely many values for $v\wedge\vec{px}$,
where $\vec{px}\in T_{p}B$ is the displacement vector along $I_{px}$ as we vary immersed straight segments $I_{px}$.
\end{lem}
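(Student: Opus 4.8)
The plan is to develop everything through the developing map $\delta\colon\tilde B\to\mathbb R^{2}$ and reduce the statement to a bound on how far an immersed straight segment can ``wind around'' the origin. Fix once and for all a lift $\tilde L\subset\tilde B$ of $L$ and a lift $\tilde x_{0}\in\tilde B$ of $x$. Then $\delta(\tilde L)=\ell\subset\mathbb R^{2}$ is a line not through the origin; since $L$ is straight, the lift of the primitive tangent $v$ along $\tilde L$ is flat and develops to the primitive direction vector $v_{0}$ of $\ell$, and $v_{0}\wedge\pi=c_{\ell}$ is a constant (with $|c_{\ell}|=\operatorname{dist}(L,0)$) as $\pi$ ranges over $\ell$. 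Given an immersed straight segment $I_{px}$ from $p\in L$ to $x$, lift it to a straight segment $\tilde I\subset\tilde B$ with initial point $\tilde p\in\tilde L$ (the lift of $p$ on $\tilde L$) and terminal point some lift $\tilde x$ of $x$. Because $\tilde I\subset\tilde B$ and $0\notin\delta(\tilde B)=\mathbb R^{2}\setminus\{0\}$ by Lemma~\ref{devel-map}, the developed segment $[\delta(\tilde p),\delta(\tilde x)]$ avoids the origin. Developing $\vec{px}$ to $\delta(\tilde x)-\delta(\tilde p)$ and using $\delta(\tilde p)\in\ell$ gives
\[
v\wedge\vec{px}=v_{0}\wedge\bigl(\delta(\tilde x)-\delta(\tilde p)\bigr)=v_{0}\wedge\delta(\tilde x)-c_{\ell}.
\]
So it suffices to show that only finitely many lifts $\tilde x$ of $x$ can be joined to $\tilde L$ by an immersed straight segment.

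Index the cones of $\tilde\Sigma$ as $\{\tilde\sigma_{j}\}_{j\in\mathbb Z}$ so that $\tilde\sigma_{j}$ and $\tilde\sigma_{j+1}$ share a ray $\tilde\rho_{j}$ and the deck group shifts the index by a fixed amount $n$. Since $\delta$ is orientation preserving, the lifted angles $\theta_{j}:=\arg\delta(\tilde\rho_{j})$ are strictly increasing. By Corollary~\ref{ray-escape}, positivity forces both ends of $L$ to escape to infinity, so $L$ meets only finitely many cones of $\Sigma$ and hence $\tilde L\subset\tilde\sigma_{a}\cup\cdots\cup\tilde\sigma_{b}$ for some $a\le b$. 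Positivity also forces $\theta_{j}\to\pm\infty$ as $j\to\pm\infty$: if instead $\theta_{j}$ converged to some $\theta_{\infty}$, the cones $\delta(\tilde\sigma_{j})$ would accumulate from one side onto the ray at angle $\theta_{\infty}$, and the lift of a suitable immersed straight ray pointed at that limiting direction would pass through infinitely many cones $\tilde\sigma_{j}$, contradicting that by Corollary~\ref{ray-escape} every immersed ray is eventually contained in a single cone.

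Now take any immersed straight segment $\tilde I$ from $\tilde p\in\tilde L$ (so $\tilde p\in\tilde\sigma_{j}$ with $a\le j\le b$) to a lift $\tilde x\in\tilde\sigma_{j'}$. Then $\tilde I$ crosses exactly the rays $\tilde\rho_{k}$ lying between $\tilde\sigma_{j}$ and $\tilde\sigma_{j'}$, so as the developed point traverses $[\delta(\tilde p),\delta(\tilde x)]$ its argument moves monotonically through every value $\theta_{k}$ with $k$ between $j$ and $j'$; but this argument changes by strictly less than $\pi$ in total, since the segment avoids the origin. Separating the cases $j'\ge j$ and $j'<j$, this confines $\theta_{j'}$ to a fixed bounded interval depending only on $\theta_{a},\dots,\theta_{b+1}$. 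Since $\theta_{j'}\to\pm\infty$, only finitely many $j'$ occur, so $\tilde x$ lies in one of finitely many cones of $\tilde\Sigma$; as the deck group acts freely, each cone of $\tilde\Sigma$ maps homeomorphically onto a cone of $\Sigma$ and so contains at most one lift of $x$. Hence there are finitely many reachable $\tilde x$, and by the displayed formula finitely many values of $v\wedge\vec{px}$.

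The one substantial point is the claim $\theta_{j}\to\pm\infty$ in the second paragraph — equivalently, that for a positive pair the cones of $\tilde\Sigma$ genuinely wind around the origin rather than accumulating at a finite angle; once this is in hand (it is the content of Corollary~\ref{ray-escape}/Lemma~\ref{devel-map}), the remainder is routine bookkeeping with the developing map.
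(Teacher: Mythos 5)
Your argument is correct, and it fills a genuine gap: the paper gives no proof of this lemma beyond the bare assertion that it ``follows from Corollary~\ref{ray-escape}''. Your reduction --- expressing $v\wedge\vec{px}$ as $v_{0}\wedge\delta(\tilde x)-c_{\ell}$ and then bounding the set of lifts $\tilde x$ reachable from a fixed lift of $L$ via the monotone, total-variation-less-than-$\pi$ behaviour of the argument along a developed segment avoiding the origin --- is a complete and correct way of making that deduction precise, and it is exactly the developing-map technique the paper itself uses for Lemma~\ref{devel-map} and Corollary~\ref{ray-escape}. The one point you rightly single out, the divergence $\theta_{j}\to\pm\infty$, does hold and your contradiction via Corollary~\ref{ray-escape} works; alternatively it follows directly from the proof of Lemma~\ref{devel-map}, since (after contracting $(-1)$-components of $D$) each fundamental domain of the deck action contains a lift of $\sigma_{i-1,i}\cup\sigma_{i,i+1}$ with $D_{i}^{2}\ge 0$, whose developed image contains a half-plane and hence sweeps an angle at least $\pi$, so $\theta_{j+n}-\theta_{j}\ge\pi$ for all $j$.
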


\begin{defn}
\label{def: generalized half space}The \emph{generalized
half space} $Z\left(L\right)$, or simply the\emph{ half space, }of
an immersed line $L$ in $B$ with $d(L)>0$ is the closure of all $x$
in $B_{0}$ such that $v\wedge\vec{px}\geq0$ for all straight immersed line
segments $I_{px}$, as in Lemma \ref{lem:minum of wedge exists}.
\end{defn}

The definition of $Z(L)$ is meant to emulate regular half spaces in usual euclidean geometry. However, we now give an important example to depict why $Z(L)$ can be more complicated than initially imagined. 
\begin{example}\label{ex:dp1} 
Let $(Y,D)$ be the pair where $Y$ is a del Pezzo surface of degree $1$ and $D\in |-K_Y|$ is an irreducible nodal curve where $D^2=1$. We show what $Z(L)$ looks like in $B_{(Y,D)}$. Let $L$ be a line coming from infinity parallel to $\rho_1$. We work on a toric blowup of $(Y,D)$. Since a toric blowup $(Y',D')\rightarrow (Y,D)$ does not change the integral affine structure, we draw the half-space in $B_{(Y',D')}$. We perform one toric blowup to get a pair $(Y',D')$ where $D'=\tilde{D}+E$ where $\tilde{D}$ is the strict transform of $D$ and $E$ is the exceptional curve from the blowup. The self-intersection numbers are:
 $\tilde{D}^2=-3$,  $E^2=-1$. Below is the developing map where $\rho_1$ and $\rho_2$ and correspond to $\tilde{D}$ and $E$ respectively:

\begin{minipage}{\linewidth} \begin{center}  \begin{tikzpicture}[scale=1.5,rotate=90] 
\draw [->] (0,0) -- (0,-1); \node [label=right:\small$\rho_{1}^{1}$] at (0,-1) {}; \draw [->] (0,0) -- (1,0); \node [label=right:\small$\rho_{2}^{1}$] at (1,0) {}; 


\draw [->] (0,0) -- (1,1); \node [label=right:\small$\rho_{1}^{2}$] at (1,1) {}; \draw [->] (0,0) -- (2,3); \node [label=right:\small$\rho_{2}^{2}$] at (2,3) {}; 

\draw [->] (0,0) -- (1,2); \node [label=left:\small$\rho_{1}^{3}$] at (1,1.9) {}; \draw [->] (0,0) -- (1,3); \node [label=left:\small$\rho_{2}^{3}$] at (1,2.9) {}; 


\draw [->] (0,0) -- (0,1); \node [label=left:\small$\rho_{1}^{4}$] at (0,1) {}; \draw [->,bu-red] (0.6,-1) -- (0.6,3); \draw [bu-red](0.6,3)--(0.6,4); \node [label=left:\small$L$] at (0.6,3.9){}; 
\end{tikzpicture} \captionsetup{font=footnotesize,justification=raggedright,format=plain} \captionof{figure}{The developing map for the degree $1$ del Pezzo} \label{fig:developing-map}
\end{center}
\end{minipage}\\

Let $L$ be an immersed line coming from infinity parallel to  $\rho_{1}$. As we can see in Figure \ref{fig:developing-map}, $L$ also goes to infinity parallel to $\rho_{1}$. As shown in Figure \ref{fig:bounded-half-space}, $L$ intersects itself twice and $
Z(L)$ is a bounded polygon in $B_{(Y',D')}=B_{(Y,D)}$.\\

\begin{minipage}{\linewidth} \begin{center}  \begin{tikzpicture}[scale=0.3]
\draw[domain=3*pi/2:2*pi,samples=700] plot ({deg(-(\x))}:{1.2-3.5*cos(\x r))});
\draw[domain=2*pi:3*pi,samples=700] plot ({deg(-(\x))}:{0.7-3*cos(\x r))}); \draw[domain=3*pi:3.5*pi,samples=700] plot ({deg(-(\x))}:{3.5-0.2*cos(\x r))}); \draw (0,3.5) -- (6,3.5);
\draw[domain=pi/2:pi,samples=700] plot ({deg(\x)}:{1.2-4*cos(\x r))});
\draw[domain=pi:1.5*pi,samples=700] plot ({deg(\x)}:{4.6-0.6*cos(\x r))});
\draw [->](0,-4.6) -- (6,-4.6); \node [label=right:\small$\rho_{1}$] at (3.9,0) {}; \draw [dashed](-1.2,0) -- (4,0); \end{tikzpicture} \captionsetup{font=footnotesize,justification=raggedright,format=plain} \captionof{figure}{The half space $Z(L)$ is a bounded polygon.} \label{fig:bounded-half-space} \end{center} \end{minipage}\\\\
\end{example}

The following construction will be used to prove facts about half spaces and polygons. 

\begin{cons}\label{def:construction of fan of cones meeting an immersed line}

Suppose that $(Y,D)$ is positive. Let $L\subset B$ be a line that does not pass through zero with the orientation such that $0$ is on the left. For $L$, there are two (not necessarily distinct) asymptotic rays,  $\rho_{\infty}$ and $\rho_{-\infty}$, parallel to $L_{\infty}$ and $L_{-\infty}$ respectively. We refine $\Sigma$ by adding these asymptotic rays if they are not already there. Now let $\bar{U}\subset B$ be the closure of the cones in   $\Sigma$ that meet $L$. Denote the distance from $0$ to $L$ by $d(L)$. Denote by $\sigma$ the cone containing the $-\infty$ asymptotic end of $L$. Note that one of the generating rays of $\sigma$ is $\rho_{-\infty}$. Denote the other generating ray of $\sigma$ by $\rho$. Now consider the developing map $\delta$ that takes  a lift of $\rho_{-\infty}$ and $\rho$ to the negative $y$-axis and the positive $x$-axis respectively. Then $\delta$ gives a canonical, multi-valued, orientation-preserving immersion
\[ h:\bar{U}\rightarrow \R^2.\]
which takes $L$ to the line $x=d(L)$ with upwards orientation and $p$ to $(d(L),0)$. Here $p$ is the point when $L$ first intersects $\rho$. We can see that:
\[ h(\bar{U})=\{x:x\ge 0\}.\]
We denote $\bar{H}:=h(\bar{U})$ and define
\[H:=\{x:x>0\}.\]
  Then when restricted to $H$, the inverse of $h$ is a single-valued immersion, we call it $g:H\rightarrow B_0$. Under $g$, $L$ is the image of an affine immersion $l:\R\rightarrow B_0$. Then we can get a lift $\tilde{l}:\R\rightarrow \tilde{B}$ by choosing a lift of $p$ on $\tilde{B}$.  Then $\delta\circ\tilde{l}:\R\rightarrow \tilde{B}$ gives a parametrized line. Now let $\tilde{H}$ be the union of cones in $\tilde{\Sigma}$ that meet the image of $\tilde{l}$. Then we get a linear isomorphism $\delta:\tilde{H}\rightarrow H$. Now if we pullback $\Sigma$ along $g$, we get a fan $\bar{\Sigma}$ whose support is $\bar{H}$. Then for each maximal cone $\bar{\sigma}\in \bar{\Sigma}$, $g:\bar{\sigma}\rightarrow\sigma$ is an isomorphism onto a cone of $\Sigma$.

\end{cons}

\begin{lem} \label {lem:half-space-convex}
If $(Y,D)$ is positive, then given an immersed line $L\subset B_0$, the generalized half space $Z(L)$ is a convex polygon.
\end{lem}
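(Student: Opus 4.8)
The plan is to use the chart $g : H \to B_0$ from Construction \ref{def:construction of fan of cones meeting an immersed line} to reduce the statement to a genuinely two-dimensional Euclidean computation on the half-plane $\bar H = \{x \ge 0\}$, where $L$ pulls back to the vertical line $x = d(L)$. First I would observe that $Z(L)$ is, by Definition \ref{def: generalized half space}, a closed set containing $0$ in its interior; the content of the lemma is that $Z(L) \cap \sigma$ is a polygon for each maximal cell $\sigma$ and that $Z(L)$ is convex in the sense of Definition \ref{def:polygons}, i.e.\ stable under immersed line segments with endpoints in $Z(L)$.

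\smallskip
For convexity, the key step is the following: suppose $x, y \in Z(L)$ and $I$ is an immersed segment in $B_0$ from $x$ to $y$; I want to show $I \subset Z(L)$. The idea is to compare $I$ with straight segments to points of $L$. Concretely, fix a point $w$ on $I$; I must show $v \wedge \vec{pw} \ge 0$ for all immersed straight segments $I_{pw}$ with $p \in L$ (using the definition of $Z(L)$ via the minimum in Lemma \ref{lem:minum of wedge exists}). Pick such a minimizing $p$. Then the concatenated path from $p$ to $x$ (along $I_{px}$), from $x$ to $w$ (along $I$), has the same ``winding'' data as a single straight segment $I_{pw}$ up to homotopy rel endpoints in $B_0 \setminus \{0\}$ — here I would use that the wedge $v \wedge \vec{pw}$ is computed via parallel transport and is a homotopy invariant of the path in $B_0$, which is where positivity of $(Y,D)$ enters (via Corollary \ref{ray-escape}, rays escape to infinity, so the relevant homotopy classes are controlled and the wedges take finitely many values). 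Combining $v \wedge \vec{px} \ge 0$ (since $x \in Z(L)$) with the analogous inequality coming from $y$, and a convexity estimate on the segment $I$ itself, gives $v \wedge \vec{pw} \ge 0$. A cleaner route: transport the whole picture to $H$ via $g$. If all of $x$, $y$, and the segment $I$ lie in the image of $g$, then $g^{-1}(I)$ is an immersed segment in the half-plane $\bar H$ with endpoints in the genuine half-plane $\{x \ge d(L)\}$, hence $g^{-1}(I) \subset \{x \ge d(L)\}$ by ordinary convexity, hence $I \subset Z(L)$; the general case is handled by noting $Z(L)$ is cut out inside each chart by finitely many linear inequalities, so it suffices to check the condition after passing to $g$.

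\smallskip
For the polygon property, I would argue that $Z(L) \cap \sigma$ for a maximal cone $\sigma \in \Sigma$ is defined by finitely many inequalities of the form $v \wedge \vec{px} \ge 0$ as $p$ ranges over $L \cap (\text{relevant cones})$; by Lemma \ref{lem:minum of wedge exists} only finitely many distinct linear functionals arise (since $v \wedge \vec{px}$ takes finitely many values and depends affine-linearly on $x$ within a chart for fixed homotopy class of $I_{px}$), so $Z(L) \cap \sigma$ is a finite intersection of affine half-planes intersected with $\sigma$, hence a polygon. That $0$ lies in the interior is immediate since $d(L) > 0$ forces $v \wedge \vec{0\,0} = 0$ strictly less than the positive distance — more precisely, $0 \in Z(L)$ and a neighborhood of $0$ maps into $\{x < d(L)\}$ under the developing chart, giving strict inequalities near $0$.

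\smallskip
\textbf{Main obstacle.} The delicate point is the interplay between the multivaluedness of the developing map and the definition of $Z(L)$ through a \emph{minimum} over all immersed straight segments $I_{px}$: one has to be careful that ``straight immersed segment'' behaves well under the chart $g$, that the minimum is actually attained (Lemma \ref{lem:minum of wedge exists} guarantees finiteness but one wants the minimizing segment to interact correctly with the segment $I$ in the convexity argument), and that the wedge $v \wedge \vec{pw}$ genuinely only depends on the data one expects and not on a choice of path winding around $0$. Handling the case where the segment $I$ (or $Z(L)$ itself, as in Example \ref{ex:dp1} where it can be bounded) wraps partway around $0$ and does not lie in a single copy of the chart $H$ is where the argument requires the most care; the resolution is to work cone-by-cone and use that each $Z(L) \cap \sigma$ is a finite intersection of honest half-planes, patching the local convexity statements using Corollary \ref{ray-escape}.
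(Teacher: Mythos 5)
Your proposal picks the right tool (the chart $g:H\rightarrow B_0$ of Construction \ref{def:construction of fan of cones meeting an immersed line}) and correctly flags the real difficulty (the multivaluedness of the developing map versus the global, immersed-segment notion of convexity in Definition \ref{def:polygons}), but the argument does not close. The step you need and do not supply is the identification $Z(L)^{c}=g\left(\{x\mid x>d(L)\}\right)$. The paper obtains it by first showing that any $x\notin Z(L)$ lies on a ray from $0$ that meets $L$: one takes a straight segment $I_{px}$ witnessing $v\wedge\overrightarrow{px}<0$ and uses that $d(I_{px},0)$ is a non-constant affine function of $p\in L$ to find $p$ with the spanned line through $0$; this places $x$ in the image of $g$ and lets one lift the offending segment into $\bar H$. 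Without this, your hypothesis ``if all of $x$, $y$, and the segment $I$ lie in the image of $g$'' is exactly the case you cannot reduce to. Note also that your inequality is on the wrong side: in the paper's conventions $0$ maps to the origin of $\bar H$ and $g(\{x>d(L)\})$ is the \emph{complement} of $Z(L)$, so points of $Z(L)$ lift (when they lift) to $\{x\le d(L)\}$, not to $\{x\ge d(L)\}$; as written your ``cleaner route'' would prove the segment stays in the complement.

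The fallback for the general case --- that $Z(L)\cap\sigma$ is cut out by finitely many linear inequalities, so one can ``patch the local convexity statements'' --- is the second gap: local convexity in each chart does not control an immersed segment that leaves $Z(L)$ and wanders through several cones (Example \ref{ex:dp1} shows how badly $Z(L)$ and $L$ can wrap). The paper's convexity argument avoids both issues: once $Z(L)^{c}=g(\{x>d(L)\})$ is known, a segment $I$ with endpoints $x,y\in Z(L)$ that exits $Z(L)$ must cross $L$ at some $p$, and then one of the two straight sub-segments $I_{px}$, $I_{py}$ points to the right of $L$, giving $v\wedge\overrightarrow{px}<0$ or $v\wedge\overrightarrow{py}<0$ and contradicting $x,y\in Z(L)$. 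Your version of this step leans on a homotopy invariance of $v\wedge\vec{pw}$ that does not hold --- the wedge depends on the particular immersed straight segment, which is precisely why Definition \ref{def: generalized half space} quantifies over all of them --- so the convexity argument as proposed does not go through.
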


\begin{proof}
 Given $x\notin Z(L)$, we want to show that the ray $\rho_x$ (generated by $\overrightarrow{0x}$) intersects $L$. Let $I_{px}$ be a line segment from $p\in L$ to $x$ that points to the right of $L$. Then $d(I_{px},0)$ is a non-constant affine function as we vary $p\in L$. That means there exists $p$ such that $d(I_{px},0)=0$, meaning that the immersed line spanned by $I_{px}$ goes through $0$. However, the immersed ray starting at $p$ in the $\overrightarrow{px}$ direction cannot contain zero since it would imply that $x$ is on the left-hand side of $L$. Thus, we see that $p$ is on the ray $\rho_x$. \\
For the rest of the proof, we use Construction \ref{def:construction of fan of cones meeting an immersed line} and the notation there. Now we wish to show that:

\begin{equation}
Z\left(L\right)^{c}=g\left(\left\{ x\mid x>d\left(L\right)\right\} \right).\label{eq:cl-half}
\end{equation}

As we have shown, if $x \notin Z(L)$, then $x$ is in the image of $g$. Let $I_{px}$ be a line segment that points to the right of $L$. Then because $I_{px}$ is in the image of $g$, we can lift it to a line segment $\tilde{I}_{px}\subset \bar{H}$ from a point $\bar{p}\in \{x=d(L)\}$ to a point $\bar{x}$ such that $\overrightarrow{\bar{p}\bar{x}}$ points to the right of $\{x=d(L)\}$. Thus, we get that $Z(L)^c\subset g(\{x:x>d(L)\})$. The other inclusion is immediate. \

Now given an immersed line segment $I$ with endpoints $x$ and $y$ in $Z(L)$, suppose that $I$ is not contained in $Z(L)$. Then $I$ intersects $L$ at a point $p$ such that either $v\wedge \overrightarrow{px} <0 $ or  $v\wedge \overrightarrow{py} <0 $, in contradiction with the assumption that both $x,y$ are in $Z(L)$. Hence the assumption is false and every immersed line segment with endpoints in $Z(L)$ is contained in $Z(L)$, i.e., $Z(L)$ is convex.   
\end{proof}

For the rest of this subsection, we assume that $(Y,D)$ is positive.

\begin{lem}\label{lem:polygon}
Let $F\subset B$ be a convex polygon. If $E$ is an edge of $F$ and $L$ is the immersed line generated by $E$, then $F\subset Z(L)$ and $F\cap L=E$. 
\end{lem}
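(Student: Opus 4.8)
The plan is to reduce everything to the concrete model provided by Construction \ref{def:construction of fan of cones meeting an immersed line} applied to the line $L$ generated by the edge $E$. First I would set up the immersion $g : H \to B_0$ from that construction, under which $L$ becomes the line $\{x = d(L)\}$ in $\mathbb{R}^2$ with the usual upward orientation and $0$ lies to the left. The key identity $Z(L)^c = g(\{x > d(L)\})$, established inside the proof of Lemma \ref{lem:half-space-convex}, is the tool I will lean on throughout: membership in $Z(L)$ is detected by the sign of $x$-coordinates in this linear picture.

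The inclusion $F \subset Z(L)$ I would prove by contradiction. Suppose some point $y \in F$ lies outside $Z(L)$. Pick a vertex $x_0$ of $E$ (so $x_0 \in F \cap L$); the straight segment $I$ from $x_0$ to $y$ lies in $F$ by convexity of $F$ (Definition \ref{def:polygons}). Now transport this segment into the $\mathbb{R}^2$-model via $g^{-1}$ near the part of $I$ lying in $Z(L)^c$: since $x_0$ maps to a point on $\{x = d(L)\}$ and $y$ maps to a point with $x$-coordinate $> d(L)$, the segment from $x_0$ starts on the line $L$ and immediately enters the region $\{x > d(L)\}$, i.e. it exits through $L$ on the $\infty$-side. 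But $E$ is an edge of $F$, meaning $F$ lies entirely on one side of the line spanned by $E$ locally; more precisely, a segment inside the convex polygon $F$ starting at a point of $E$ cannot cross $L$ transversally into $Z(L)^c$, because then points of $F$ arbitrarily close to $x_0$ on that segment would lie in $Z(L)^c$, forcing $E$ to not be a full edge of $F$ (the supporting line of $E$ would have $F$ poking through it). This contradiction gives $F \subset Z(L)$.

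For the equality $F \cap L = E$: the inclusion $E \subset F \cap L$ is immediate since $E$ generates $L$ and $E \subset F$. For the reverse, suppose $z \in F \cap L$ but $z \notin E$. Since $E$ is an edge of the convex polygon $F$ and both $E$ and $z$ lie on the immersed line $L$, the segment of $L$ between $z$ and $E$ has endpoints in $F$ (namely $z$ and an endpoint of $E$), hence by convexity of $F$ this whole sub-segment of $L$ lies in $F$; but then $E$ was not a maximal segment of $\partial F$ along $L$, contradicting that $E$ is an edge. Care is needed because $L$ is only an \emph{immersed} line and might wrap around; here I would use that $F$ is a polygon, so $F \cap \sigma$ is an honest polygon in each maximal cell $\sigma$, and the relevant sub-segment of $L$ passes through only finitely many cells, so the argument localizes to ordinary convex polygon geometry in $\mathbb{R}^2$ via the developing map.

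\textbf{Main obstacle.} The delicate point is not the Euclidean convexity, which is routine, but controlling the immersed (non-embedded) nature of $L$ and of $B_0$ itself: I must make sure that ``$z$ and $E$ on the line $L$'' really produces a segment with endpoints in $F$, rather than, say, the line $L$ re-entering a cell it has already traversed in a way that breaks the naive picture. The right way to handle this is to pull everything back along $g$ (equivalently, work in the universal cover $\tilde{B}$ with the developing map $\delta$, using Corollary \ref{ray-escape} to know $L$ escapes to infinity in both directions), so that $L$ and the portion of $F$ meeting it become genuine convex-geometry objects in $\mathbb{R}^2$, and then transport the conclusion back. Once that reduction is in place, both statements are immediate from the definition of an edge of a convex polygon.
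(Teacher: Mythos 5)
Your overall framework---working in the half-plane model of Construction \ref{def:construction of fan of cones meeting an immersed line} and using the identity $Z(L)^c = g(\{x > d(L)\})$ from the proof of Lemma \ref{lem:half-space-convex}---is the same as the paper's, but the execution of the first inclusion has a genuine gap. From $y\notin Z(L)$ you only know that \emph{some} preimage of $y$ under the immersion $g$ has $x$-coordinate $>d(L)$; you then lift the segment $I$ from a vertex $x_0\in E$ to $y$ and assert that this lift runs from a point of $\{x=d(L)\}$ into $\{x>d(L)\}$. Since $g$ is only an immersion, not an embedding (in Example \ref{ex:dp1} the line $L$ even self-intersects), the lift of $I$ obtained by continuing from your chosen lift of $x_0$ need not terminate at the ``bad'' preimage of $y$, and need not stay inside $\bar H$ at all; so the claimed crossing of $\{x=d(L)\}$ is not established. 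The concluding contradiction is also underpowered: ``$F$ poking through the supporting line of $E$'' is a local statement at $x_0$, whereas your segment might cross $L$ far from $E$. The paper avoids both problems by a different device: it shows the \emph{entire preimage} $g^{-1}(F)$ is convex (local convexity at the vertices suffices), notes that $0\in\operatorname{int}F$ pins $g^{-1}(F)$ inside $\{0\le x\le d(L)\}$, and concludes $F\cap Z(L)^c=g\bigl(g^{-1}(F)\cap\{x>d(L)\}\bigr)=\emptyset$.

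Your argument for $F\cap L=E$ has a smaller but real hole: from $z,p\in F$ and convexity you get that the sub-segment of $L$ from $p$ to $z$ lies in $F$, but to contradict maximality of $E$ you need it to lie in $\partial F$. A priori it could pass through $\operatorname{int}F$, and nothing you have said rules this out; one needs the extra observation that $F\subset Z(L)$ together with openness of $Z(L)^c$ forces $F\cap L\subset\partial F$. The paper's route is different: if the segment $I_{pq}\subset L$ leaves $E$, it does so through a vertex of $F$ where another edge $E'$ begins; applying the first part to $E'$ gives $F\subset Z(L')$, while the segment crosses $E'$ from left to right, so its endpoint $q$ would lie outside $Z(L')$---a contradiction.
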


\begin{proof}
By our definition of polygons, $0$ is in the interior of $F$. Let $g:H\rightarrow B_0$ be as defined in Construction \ref{def:construction of fan of cones meeting an immersed line}. Define:
\[ \bar{L}=\{x \mid x=d(L)\}.\]
We claim that $g^{-1}(F)$ is convex. Indeed, since $F$ is convex, $g^{-1}(F)$ is locally convex at any vertex $v$. Therefore $g^{-1}(F)$ is convex. Since $0$ is contained in the interior of $F$,  we have:
\[ g^{-1}(F)\subset \{x \mid 0\le x\le d(L)\}.\]
By Equation \ref{eq:cl-half}, 
\[ F\cap Z(L)^c = g ( g^{-1}(F) \cap \{ x \mid x>d(L) \}) = \emptyset. \] Hence, $F \subset Z(L)$.

To show that $F\cap L=E$, it remains to show that $F\cap L\subset E$. Let $q\in F\cap L$ and $p\in E$. Let $I_{pq}\subset L$ be the immersed line segment from $p$ to $q$. We want to show that  $I_{pq}$ is contained in $E$. If not, then $I_{pq}$ will cross an edge, $E'$, of $F$ at a vertex of $F$. Now let $L'$ be the immersed line generated by $E'$. We know that $F\subset Z(L')$ and that $I_{pq}\subset Z(L')$. Notice though that $I_{pq}$ must cross $E'$ from left to right, implying that $q\notin Z(L')$, which is in contradiction with that $I_{pq}\subset Z(L')$. Thus, it must be that $I_{pq}\subset E$. This implies that $F\cap L=E$, as desired.
\end{proof}

\begin{prop} \label{prop:edge-halfspace}
Let $F\subset B$ be a convex polygon. Then $F$ is equal to the intersection of the half spaces spanned by the edges. 
\end{prop}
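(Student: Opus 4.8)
Write $F' := \bigcap_{E} Z(L_E)$, where $E$ runs over the edges of $F$ and $L_E$ is the immersed line generated by $E$. By Lemma \ref{lem:polygon}, $F \subset Z(L_E)$ for every edge $E$, so immediately $F \subset F'$. The content is the reverse inclusion $F' \subset F$, and the plan is to show that any point $x$ outside $F$ fails to lie in $Z(L_E)$ for at least one edge $E$.

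\textbf{First step: reduce to a ray argument.} Suppose $x \in B_0 \setminus F$. Since $0$ is in the interior of $F$ (by our definition of polygon) and $F$ is convex, the ray $\rho_x$ generated by $\overrightarrow{0x}$ must exit $F$ through exactly one point of the boundary $\partial F$, and that exit point lies on some edge $E$ of $F$ (possibly at a vertex, in which case pick one of the two adjacent edges). Let $L = L_E$ be the immersed line generated by $E$ and let $v$ be its integral unit tangent vector. The point $x$ lies strictly beyond $E$ along $\rho_x$, i.e., on the far side of $L$ from $0$.

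\textbf{Second step: translate ``far side of $L$'' into the defining inequality of $Z(L)$.} Here I would invoke Construction \ref{def:construction of fan of cones meeting an immersed line} and the identity \eqref{eq:cl-half} from the proof of Lemma \ref{lem:half-space-convex}, namely $Z(L)^c = g(\{x \mid x > d(L)\})$ in the coordinates there: the point $x$, lying on $\rho_x$ strictly past the point where $\rho_x$ meets $L$, satisfies $x \in g(\{x > d(L)\})$ because $g^{-1}$ carries the segment of $\rho_x$ from $0$ to $x$ to a segment crossing the line $\{x = d(L)\}$ from left to right (this is exactly the computation already done inside Lemma \ref{lem:polygon}, applied in reverse: there it was shown $g^{-1}(F) \subset \{0 \le x \le d(L)\}$, so a point on $\rho_x$ beyond $F$ has $g$-coordinate $> d(L)$). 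Hence $x \notin Z(L)$, so $x \notin F'$. This proves $F' \subset F$, and combined with the first inclusion, $F = F'$.

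\textbf{Main obstacle.} The only delicate point is checking that the ray $\rho_x$ genuinely exits $F$ through an \emph{edge} and that ``$x$ beyond that edge'' is precisely the negation of the $Z(L_E)$-membership condition; this is really a matter of carefully reusing the developing-map coordinates of Construction \ref{def:construction of fan of cones meeting an immersed line} and the two displayed facts $Z(L)^c = g(\{x>d(L)\})$ and $g^{-1}(F)\subset\{0\le x\le d(L)\}$ from the preceding two proofs, rather than any new geometric input. A minor subtlety to address: when $\rho_x$ exits $F$ at a vertex, one must note that the argument works for either adjacent edge; and one should remark that only finitely many edges occur, so the intersection $F'$ is a genuine polygon (convexity of $F'$ also follows since each $Z(L_E)$ is convex by Lemma \ref{lem:half-space-convex} and a finite intersection of convex polygons meeting every maximal cell in a polygon is again such).
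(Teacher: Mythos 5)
Your proof is correct, and it reaches the conclusion by a somewhat different route than the paper. The paper's argument is soft and very short: by Lemma \ref{lem:polygon} one has $F\subset P:=\bigcap_E Z(L_E)$ and, since each edge satisfies $E=F\cap L_E\subset \partial Z(L_E)$, also $\partial F\subset\partial P$; these two inclusions (together with convexity of $P$ and $0\in F^{\circ}$) force $F=P$. You instead prove the reverse inclusion pointwise, showing that any $x\notin F$ is excluded by the half space of the edge through which the ray $\rho_x$ exits $F$ (convexity of $F$ guarantees the set of parameters along $\rho_x$ landing in $F$ is an interval, so $x$ really does lie strictly beyond the exit point $y$). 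Both arguments rest entirely on Lemma \ref{lem:polygon} and the developing-map bookkeeping; yours is longer but more explicit, and it spares you from justifying the topological step ``$F\subset P$ and $\partial F\subset\partial P$ imply $F=P$.'' Two small remarks on your execution. The parenthetical inference ``$g^{-1}(F)\subset\{0\le x\le d(L)\}$, so a point beyond $F$ has $g$-coordinate $>d(L)$'' is not valid as literally stated (an inclusion of sets says nothing about their complements); what actually carries the step is the sentence preceding it, namely lifting the segment of $\rho_x$ starting at the exit point $y\in L_E$ (whose $h$-image lies on $\{x=d(L)\}$) and observing that the lift moves into, and stays in, $\{x>d(L)\}$. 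Alternatively, you can bypass the developing map entirely here: since $\overrightarrow{yx}$ is a positive multiple of $\overrightarrow{0y}$ and the convention places $0$ on the left of $L_E$, the segment $I_{yx}$ satisfies $v\wedge\overrightarrow{yx}<0$, an open violation of the defining inequality of $Z(L_E)$ in Definition \ref{def: generalized half space}, so $x\notin Z(L_E)$ directly.
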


\begin{proof}
Let $E\subset F$ be an edge and $Z_E$ the corresponding half space obtained by the corresponding immersed line. Then define $P:=\bigcap_E Z_E$. By Lemma \ref{lem:polygon}, $F\subset P$ and $\partial(F)\subset \partial (P)$ which implies that $F=P$.
\end{proof}

By Lemma  \ref{lem:half-space-convex} and Proposition \ref{prop:edge-halfspace}, we have the following corollary:

\begin{cor}
A polygon $F$ is convex  if and only if it is the intersection of generalized half spaces. 

\end{cor}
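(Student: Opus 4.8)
This is a direct synthesis of Lemma~\ref{lem:half-space-convex} and Proposition~\ref{prop:edge-halfspace}, so the plan is short. For the ``only if'' direction, suppose $F\subset B$ is a convex polygon. Since $F$ is a polygon it has finitely many edges $E_1,\dots,E_k$; for each $E_j$ let $L_j$ be the immersed line it generates and $Z_j=Z(L_j)$ the associated generalized half space. Proposition~\ref{prop:edge-halfspace} then gives $F=\bigcap_{j=1}^k Z_j$, exhibiting $F$ as a finite intersection of generalized half spaces.

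For the ``if'' direction, suppose $F$ is a polygon and $F=\bigcap_{\alpha} Z(L_\alpha)$ for some family of immersed lines $L_\alpha$ with $\mathrm{dist}(L_\alpha,0)>0$. By Lemma~\ref{lem:half-space-convex}, each $Z(L_\alpha)$ is convex, i.e.\ contains every immersed line segment on $B_0$ with endpoints in $Z(L_\alpha)$. Now take any immersed line segment $I$ with endpoints $x,y\in F$. Then $x,y\in Z(L_\alpha)$ for every $\alpha$, so $I\subset Z(L_\alpha)$ for every $\alpha$, whence $I\subset\bigcap_\alpha Z(L_\alpha)=F$. Thus $F$ is convex.

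I do not expect any real obstacle here; all the work has already been done in Lemma~\ref{lem:half-space-convex} (each half space is a convex polygon) and Proposition~\ref{prop:edge-halfspace} (a convex polygon is cut out by the half spaces of its edges). The only point worth flagging is that the statement already presupposes $F$ is a polygon in the sense of Definition~\ref{def:polygons} (closed, connected, with $0$ in the interior and polygonal intersection with each maximal cell of $\Sigma$), so in the ``if'' direction one only needs to verify convexity and not re-derive the polygon structure. If one wanted a version that produces the polygon property as well, one would additionally observe that each $Z(L_\alpha)$ contains a neighbourhood of $0$ (because $0$ lies strictly to the left of $L_\alpha$), so a finite intersection still has $0$ in its interior and restricts to a polygon on each maximal cell; but this strengthening is not needed for the corollary as stated.
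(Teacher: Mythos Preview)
Your proposal is correct and matches the paper's own approach exactly: the paper states this corollary immediately after Lemma~\ref{lem:half-space-convex} and Proposition~\ref{prop:edge-halfspace} with no further argument, treating it as an immediate consequence of those two results in precisely the way you describe.
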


\subsection{Passing to toric blowups and blowdowns.} \label{ssec:tor-bl}
It will often be convenient to pass to a toric blowup or blowdown of the pair $(Y,D)$. We show here that our family is essentially unaffected by such operations. 
Let $\pi:(Y,D) \rightarrow (Y',D')$ be a toric blowdown of the pair $(Y,D)$ and $L\subset \Pic(Y)$ the subgroup generated by the $\pi$-exceptional divisors. By Lemma $1.6$ of \cite{GHK}, $\Sigma_{(Y,D)}$ is a refinement of $\Sigma_{(Y',D')}$ and $B_{(Y',D')}$ and $B_{(Y,D)}$ are isomorphic as integral affine manifolds. Let $P=\mathrm{NE}(Y)$ and $P'=\mathrm{NE}(Y')$. Then, the surjection $\pi_{*}: A_1(Y) \rightarrow A_1(Y')$, $C \mapsto \pi_{*}(C)$, induces a surjection $P+L \twoheadrightarrow P'$, which further induces a closed embedding $\mathrm{Spec}(\mathbb{C}[P']) \hookrightarrow \mathrm{Spec}(\mathbb{C}[P+L])$. Composing this closed embedding with the open embedding $\mathrm{Spec}(\mathbb{C}[P+L]) \hookrightarrow \mathrm{Spec}(\mathbb{C}[P])$, we obtain a locally closed embedding 
\[
\iota_\pi: \mathrm{Spec}(\mathbb{C}[P']) \hookrightarrow \mathrm{Spec}(\mathbb{C}[P]).
\]

Let $A,\,A'$ be the mirror algebra constructed from the canonical scattering diagram on $B_{(Y,D)}$, $B_{(Y',D')}$ respectively. Let $A_{L}$ be the localization of the  mirror algebra $A$ at $L$. By Proposition 3.10 of \cite{GHK}, there is a surjection $A_L\rightarrow A'$ given by $\theta_p\mapsto \theta_p$ and $z^{[C]}\mapsto z^{[\pi_* C]}$. Thus, we conclude that 
\[ \cX |_{\iota_\pi (\mathrm{Spec}(\mathbb{C}[P']))}=\cX' \]
where $\cX =  \mathrm{Spec}(A)$ and $\cX' = \mathrm{Spec}(A')$.

Moreover, consider the subtorus $T^L\subset T^D$ whose characters are generated by $e_{D_i}$ where $D_i$'s are $\pi$-exceptional divisors. The composition of the weight map $w: B(\mathbb{Z}) \times \NE(Y) \mapsto \chi(T^{D})$ with the projection map $e_{D_i} \mapsto e_{D_i}$ if $D_i$ is a $\pi$-exceptional divisor  and $e_{D_i} \mapsto 0$ otherwise, gives an action of $T^{L}$ on the mirror family. Then, using the identification above, we get the following diagram, where the horizontal arrows are isomorphisms.
\[
\begin{tikzcd}
T^L\times \cX' \arrow[r] \arrow[d]
& \cX|_{\mathrm{Spec}(\mathbb{C}[P+L])} \arrow[d ] \\
T^L\times \Spec(\bC[P']) \arrow[r]
& \mathrm{Spec}(\mathbb{C}[P+L])
\end{tikzcd}
\]

As we mentioned in Remark \ref{rmk:irr-blo}, for the cases where $D$ is an irreducible curve, we will pass to a toric blowup of $D$ at the node, without explicitly mentioning it each time. Thus, every argument we use in the proofs of this section also works for these special cases.\\

At the end of this subsection, we recall the description of the coordinate rings of the fiber $\mathbb{V}_n$ over the zero stratum $0$ of $\Spec(\bC[\NE(Y)])$ in the case where $n=1,2$ since we need them later in this section. 

When $n=1$, we cut $B$ along the unique ray $\rho =\rho_1 \in \Sigma$. Then, the set of linear coordinates $\psi$ identify $B\setminus \{\rho\}$ with a strictly convex cone $\sigma$ in $\mathbb{R}^{2}$. Let $w,w'$ be the direction of the two boundary rays of the closure of $\sigma$ in $\mathbb{R}^2$. Abusing the notation, we use $w+w'$ and  $2w+w'$ to also denote the two distinct directions in $B(\mathbb{Z})$ corresponding to the inverse image of $w+w'$ and $2w+w'$ in $B \setminus \{\rho\}$. Let $\nu$ be the primitive generator of $\rho_1$. Then, by the proof in Subsection of 6.2 in \cite{GHK}, $\theta_{\nu}$, $\theta_{w+w'}$ and $\theta_{2w+w'}$ generate the coordinate ring of $\mathbb{V}_1$ together with the relation
\begin{align}\label{eq:eq-V_1}
   \theta_{2w+w'} \cdot \theta_{\nu} \cdot \theta_{w+w'}     = \theta^{2}_{2w+w'} + \theta^{3}_{w+w'}
\end{align}
which we recognize as the homogeneous coordinate ring of an irreducible nodal cubic embedded in the weighted projective space $\mathbb{P}(3,1,2)$. 

When $n=2$, we can cut $B$ along the ray $\rho_1 \in \Sigma$. Then, the set of linear coordinates $\psi$ identify $B\setminus \{\rho_1\}$ with the union of two strictly convex cones $\sigma_1, \sigma_2$ in $\mathbb{R}$. Let $u,u'$ be the direction of the two boundary rays of the closure of $\sigma \cup \sigma_2$ in $\mathbb{R}^2$. Then, $\theta_{\nu_1}$, $\theta_{\nu_2}$, and $\theta_{u}$ generate the coordinate ring of $\mathbb{V}_2$ together with the relation:
\begin{align}\label{eq:eq-V_2}
    \theta_{\nu_1} \cdot \theta_{\nu_2} \cdot \theta_u = 
    \theta_{u}^{2}.
\end{align}
After coordinate changes, we can identify the coordinate ring of $\mathbb{V}_2$ with $\mathbb{C}[x_1,x_2,y]/(y^2 = x_{1}^{2}x^{2}_{2})$.

\subsection{Compactified Families} \label{ssec:compa-fam}
In toric geometry, an integral convex polytope in the character lattice of an algebraic torus gives rise to a polarized toric variety, as a compactification of the algebraic torus. In this section, we exhibit how polygons in $B_{(Y,D)}$ can be used to compactify the family $\cX\rightarrow\Spec(\bC[\NE(Y)])$.

Given a bounded, rational, convex polygon, $F\subset B_{(Y,D)}$, first consider a toric blowup $p:(\tilde{Y},\tilde{D})\rightarrow (Y,D)$ such that every vertex of $F$ is on a ray of the fan $\tilde{\Sigma}\subset B_{(\tilde{Y},\tilde{D})}$. For each ray $\tilde{\rho}_i \in \tilde{\Sigma}$, let $\tilde{\nu}_i$ be the primitive generator of $\tilde{\rho}_i$. 

Let $\tilde{V}\subset \tilde{A}[T]$ be the free $\bC[\NE(\tilde{Y})]$-submodule with basis $\theta_q T^m$ where $q\in mF\cap B(\Z)$. Let $L \subset \mathrm{Pic}(\tilde{Y})$ be the subgroup generated by $p$-exceptional divisors. Let $\tilde{A}_L$ be the localization of $\tilde{A}$ at $L$ and $\tilde{V}_L \subset \tilde{A}_L[T]$ the free  $\bC[\NE(\tilde{Y})+L]$-submodule with the same basis as $\tilde{V}$. Define the free $\bC[\NE(Y)]$-submodule $V \subset A[T]$ analogously to $\tilde{V}$. Then, $V$ is the image of $\tilde{V}_L$ under the natural surjection $\tilde{A}_L \rightarrow A$, induced by $p_{*}$. 

Since everything we want to claim  about $V$ can be deduced from $\tilde{V}$, for the rest of this subsection, we just assume that we start with a polygon $F$ such that every vertex is on a ray of the fan $\Sigma$, without explicitly mentioning passing to a toric blowup. \\

Recall from Definition \ref{def:weight-map} the weight map $w:B(\Z)\times \NE(Y)\rightarrow \chi(T^D)$.
By rescaling $F$ if necessary, we can assume that $F$ satisfies the following conditions: 
\begin{align}
    \mathrm{For \,each\, ray\,}  \, 
    \rho_i \in \Sigma,\,\rho_i \cap F = \frac{1}{a_i}\nu_i (a_i \in \Z_{>0}). \label{ass-poly}
\end{align}

Consider the associated Weil divisor $W_F =\sum_i a_i D_i$. Let $T^{W_F} \subset T^{D}$ be the 1-parameter subgroup corresponding to the homomorphism $\mathbb{Z}^{D} \rightarrow \mathbb Z$, $e_{D_i} \mapsto a_i$. 
We define the height function 
\[
g:B(\Z)\times \NE(Y) \rightarrow \chi (T^{W_{F}}) \simeq \mathbb{Z} 
\]
by composing the weight function with the homomorphism $\chi(T^{D}) \rightarrow \chi(T^{W_{F}})$ induced by the 1-parameter subgroup $T^{W_F} \subset T^{D}$. 

\begin{lem}\label{nefpoly}
The Weil divisor $W_F$ is nef. Moreover, we have $W_F \cdot D_k > 0$ if and only if $\rho_k$ passes through a vertex of $F$. 
\end{lem}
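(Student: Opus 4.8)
The plan is to verify that $W_F$ is nef curve by curve, and to read off each boundary intersection number $W_F\cdot D_k$ from the local affine geometry of $F$ near the ray $\rho_k$, in the spirit of the toric correspondence between polytopes and polarizations. Since $Y$ is a smooth projective surface, $W_F$ is nef if and only if $W_F\cdot C\geq 0$ for every irreducible curve $C\subset Y$.

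The first, easy, step disposes of every irreducible curve $C\subset Y$ that is not a component of $D$: for such $C$ one has $C\cdot D_i\geq 0$ for all $i$, hence $W_F\cdot C=\sum_i a_i\,(C\cdot D_i)\geq 0$ because every $a_i>0$. Thus nef-ness of $W_F$ is reduced to the inequalities $W_F\cdot D_k\geq 0$, which is exactly what the ``moreover'' part refines, so I would prove both assertions together.

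\textbf{The local computation at a ray.} By the running convention of this subsection, every vertex of $F$ lies on a ray of $\Sigma$, so within any two-dimensional cone $\sigma_{i,i+1}$ the boundary $\partial F$ contains no vertex in the interior of $\sigma_{i,i+1}$ and is therefore the straight segment joining $\tfrac{1}{a_i}\nu_i$ to $\tfrac{1}{a_{i+1}}\nu_{i+1}$, the $a_i$ being the integers from the normalization (\ref{ass-poly}). Fixing $k$, I would pass to the chart $\psi_k$, in which the primitive generators satisfy $\nu_{k-1}\mapsto(1,0)$, $\nu_k\mapsto(0,1)$, $\nu_{k+1}\mapsto(-1,-D_k^2)$, so that the three consecutive points where $F$ meets $\rho_{k-1},\rho_k,\rho_{k+1}$ become $\tfrac{1}{a_{k-1}}(1,0)$, $\tfrac{1}{a_k}(0,1)$, $\tfrac{1}{a_{k+1}}(-1,-D_k^2)$, traversed counterclockwise around $\partial F$ (the charts of $B_0$ are orientation preserving and $0$ lies in the interior of $F$). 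Convexity of $F$ forces at least a weak left turn at the middle point, i.e. nonnegativity of the determinant of the two consecutive edge vectors; a one-line calculation identifies that determinant, up to the positive factor $a_{k-1}a_k a_{k+1}$, with $a_{k-1}+a_{k+1}+D_k^2 a_k=W_F\cdot D_k$ (using $D_{k-1}\cdot D_k=D_k\cdot D_{k+1}=1$ and $D_j\cdot D_k=0$ otherwise). This yields $W_F\cdot D_k\geq 0$, with strict inequality precisely when the turn is a genuine corner, equivalently when $\rho_k$ passes through a vertex of $F$ --- which is the ``moreover'' statement.

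\textbf{Low-valence cases and the main obstacle.} It remains to cover $n=1,2$, in which $F$ cannot have all of its vertices on rays of the original $\Sigma$; here I would first pass to a toric blowup $p\colon(\tilde Y,\tilde D)\to(Y,D)$ (several, if needed) as in Remark \ref{rmk:irr-blo} and the running convention. One checks that $p^{*}W_F=\tilde W_F$ --- the coefficient attached to each exceptional component being forced by (\ref{ass-poly}) to be the sum of the coefficients of the two adjacent components --- so that, $p^{*}$ preserving intersection numbers with strict transforms, both conclusions descend from $\tilde Y$ to $Y$. I do not expect any deep input to be needed: the one place that requires care is the orientation bookkeeping behind ``$F$ convex $\Rightarrow$ weak left turn at each exit point'', which rests on the transition maps of $B_0$ lying in $\mathrm{SL}_2(\mathbb{Z})$ and on $0$ being interior to $F$ so that $\partial F$ is genuinely traversed counterclockwise and the determinant above carries the claimed sign.
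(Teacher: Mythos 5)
Your proof is correct and follows essentially the same route as the paper's: the identical chart computation at $p_k=\tfrac{1}{a_k}\nu_k$ showing that convexity of $\partial F$ there is equivalent to $a_{k-1}+a_kD_k^2+a_{k+1}=W_F\cdot D_k\geq 0$, with strictness exactly at vertices. The only differences are cosmetic — you spell out the (trivial) reduction of nefness to the boundary components and treat $n=1,2$ by toric blowup, where the paper simply dismisses the one-ray case.
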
 

\begin{proof}
We assume that the number of rays in $\Sigma$ is $\geq2$ since the statement holds trivially when $\Sigma$ has only one ray. Consider the points $p_i:=\frac{1}{a_i}\nu_i \in \rho_i$ and take straight line segments, $E_{i-1,i}$, that connect $p_{i-1}$ to $p_i$. We claimed that the angle formed at $p_k$ is convex (viewed from $0\in B_{(Y,D)}$) if and only if $(\sum_i a_i D_i)\cdot D_k\geq 0$, with a strict inequality if and only if the angle is strictly convex, which occurs when $\rho_k$ passes a vertex of $F$. To see this, work in the affine chart given by $\sigma_{k-1,k}\cup \sigma_{k,k+1}$ so that $p_{k-1}=\left(\frac{1}{a_{k-1}},0\right)$, $p_k=\left(0,\frac{1}{a_k}\right)$ and $p_{k+1}=\left(\frac{-1}{a_{k+1}},\frac{-D_k^2}{a_{k+1}}\right)$. Then, convexity at $p_k$ means the angle between $E_{k-1,k}$ and $E_{k,k+1}$ is less than or equal to $\pi$. This is equivalent to checking if
\[ \frac{1}{a_k}\frac{1}{a_{k+1}}+\frac{1}{a_{k-1}}\left(\frac{D_k^2}{a_{k+1}}+\frac{1}{a_k}\right)\ge0\]
which gives
\[ a_{k-1}+a_k D_i^2+a_{k+1}=(\sum_i a_i D_i)\cdot D_k \ge0.\]

\begin{minipage}{\linewidth} \begin{center}  \begin{tikzpicture} \fill [pattern= north west lines, pattern color= gray,opacity=0.2] (0,0) rectangle (2,4); \fill [pattern= north west lines, pattern color= gray,opacity=0.2] (0,0) -- (0,4)-- (-2,4) -- (-2,-2) -- (0,0);  \draw (0,0) -- (0,4); \draw (0,0) -- (-2,-2); \draw (0,0) -- (2,0); \draw [->] [blue] (0,1.5) coordinate (a) -- (-1.5,-1.5) coordinate (b); \draw [->] [blue](0,1.5)--(1.5,0)coordinate (c);
\draw (0,0) -- (0,4); \draw (0,0) -- (-2,-2); \draw (0,0) -- (2,0); \draw [->] [blue] (0,1.5)--(-1.5,-1.5); \draw [->] [blue](0,1.5)--(1.5,0); \draw [dotted,blue] (0,1.5)--(-1.5,3); \draw (0.5,-0.2) node {\tiny$\rho _{i-1}$}; \draw (2,-0.3) node {\tiny$\frac{1}{a_{i-1}}(1,0)$}; \draw (0.2,3) node {\tiny$\rho _{i}$}; \draw (0.5,1.7) node {\tiny$\frac{1}{a_{i}}(0,1)$}; \draw (-0.7,-1.7) node {\tiny$\frac{1}{a_{i+1}}(-1,-D^{2}_{i})$}; \draw (-0.3,-0.7)node {\tiny$\rho _{i+1}$};
\draw [dotted,blue] (0,1.5)--(-1.5,3); \draw (0.5,-0.2) node {\tiny$\rho _{i-1}$};
\pic ["$\theta$", draw=gray, <->, angle eccentricity=1.2, angle radius=0.4cm]     {angle=b--a--c}; \end{tikzpicture} \end{center} \end{minipage}\\ \\
Moreover, we obtain the strict inequality if and only if when the angle between $E_{k-1,k}$ and $E_{k,k+1}$ is less than $\pi$, i.e. when $p_k$ is a vertex of the polygon $F$. 
\end{proof}

\begin{lem}\label{lem:integralpolygonpoint}
 Let $g$ be as above. Then $q\in mF\cap B(\Z)$ if and only $g(q)\le m$. 
 \end{lem}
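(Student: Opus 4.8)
The plan is to understand the height function $g$ concretely on the generators $v_i$ of the rays $\rho_i$ and on the integral points of $B$, then translate the condition $g(q)\le m$ into membership in $mF$. First I would note that, by construction, $g$ is the composition $w$ followed by the projection $\chi(T^D)\to\chi(T^{W_F})\simeq\mathbb Z$ sending $e_{D_i}\mapsto a_i$. Hence on $B(\mathbb Z)$, $g$ is the unique piecewise-linear function with $g(0)=0$ and $g(v_i)=a_i$ for each primitive generator $v_i$ of $\rho_i$ (it is linear on each cone $\sigma_{i,i+1}$). On the other hand, after passing to a toric blowup so that every vertex of $F$ lies on a ray (as set up in Subsection \ref{ssec:compa-fam}), the rescaling assumption \eqref{ass-poly} says precisely that $F$ meets $\rho_i$ at the point $\tfrac{1}{a_i}v_i$. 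So the boundary $\partial F$, restricted to the cone $\sigma_{i,i+1}$, is the line segment joining $\tfrac{1}{a_i}v_i$ to $\tfrac{1}{a_{i+1}}v_{i+1}$, which is exactly the locus $\{x : g(x)=1\}$ inside that cone, since $g$ is linear there and takes the value $1$ at both endpoints.

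The key step is then the following identification: since $g$ is linear on each maximal cone $\sigma_{i,i+1}$ with $g(0)=0$, for a point $x$ in that cone we have $x\in F$ if and only if $g(x)\le 1$ — the segment $\{g=1\}$ cuts off from the cone exactly the part of $F$ lying in $\sigma_{i,i+1}$, with $0$ on the side $g\le 1$. Globally, $F=\bigcup_i (F\cap\sigma_{i,i+1})$, so $F=\{x\in B : g(x)\le 1\}$. Homogeneity of $g$ on each cone (it is linear there, hence $g(mx)=mg(x)$) gives $mF=\{x\in B: g(x)\le m\}$ for $m\in\mathbb Z_{>0}$. Intersecting with $B(\mathbb Z)$ yields $q\in mF\cap B(\mathbb Z)\iff g(q)\le m$, which is the claim. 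One should double-check the case $q=0$, where $g(0)=0\le m$ and indeed $0\in mF$ since $0$ is in the interior of $F$ by our standing convention on polygons.

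I would also need to verify that $g$ is genuinely well-defined and integer-valued on $B(\mathbb Z)$, i.e. that the piecewise-linear function with the prescribed values on the $v_i$ is consistent across the transition maps; this follows because $w:B\to\chi(T^D)\otimes\mathbb R$ is already well-defined as a piecewise-linear map in Definition \ref{def:weight-map}, and composing with a lattice homomorphism preserves integrality and piecewise-linearity. The main obstacle, I expect, is purely bookkeeping: making sure the normalization in \eqref{ass-poly} matches the normalization $g(v_i)=a_i$ on the nose (no stray factors of $a_i$), and handling the toric-blowup reduction carefully — after blowing up, some rays $\rho_i$ may not pass through a vertex of $F$, in which case the corresponding $a_i$ is determined by linear interpolation and Lemma \ref{nefpoly} tells us $W_F\cdot D_i=0$ there; one must check the formula $F=\{g\le 1\}$ is unaffected by this, which it is, since linearity of $g$ on each cone is all that was used. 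No genuinely hard analysis is involved; the statement is essentially the assertion that $g$ is the "support function" of the polygon $F$ normalized to value $1$ on $\partial F$.
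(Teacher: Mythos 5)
Your proposal is correct and follows essentially the same route as the paper: both identify $mF\cap\sigma_{i,i+1}$ as the triangle with vertices $0$, $\tfrac{m}{a_i}v_i$, $\tfrac{m}{a_{i+1}}v_{i+1}$, use linearity of $g$ on each maximal cone together with $g(v_i)=a_i$ to see that $g\le m$ exactly on that triangle and $g>m$ on its complement in the cone. The paper's proof is just a terser version of your cone-by-cone argument; your added remarks on well-definedness of $g$ and the normalization in \eqref{ass-poly} are correct but not needed beyond what the paper already sets up.
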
 
  \begin{proof}
 Consider $\sigma_{i,i+1}$, a maximal cone in $\Sigma_{(Y,D)}$. Then, $\sigma_{i,i+1}\cap mF$ is a convex triangle with vertices $0$, $\frac{m}{a_i} v_i$ and $\frac{m}{a_{i+1}} v_{i+1}$. Extending $g$ linearly on $\sigma_{i,i+1}$, the function $g$ has values
 \[ g(0)=0,\]
 \[g\left( \frac{m}{a_i} v_i \right)=g\left(\frac{m}{a_{i+1}}v_{i+1} \right)=m.\]
 Furthermore, $g$ is linear when restricted to a maximal cone. Thus, $g$ is bounded above by $m$ when restricted to $\sigma_{i,i+1}\cap mF$. Similarly, $g|_{(mF)^{c}}>m$.
 \end{proof}

\begin{lem}\label{lem:eq-hw}
Given $p,q \in B(\mathbb{Z})$, consider 
\[
\theta_p \cdot \theta_q = \sum_{r\in B(\mathbb{Z})} \alpha(p,q,r)\theta_r.
\]
If $z^{[C]} \theta_r$ is contained in the expansion of  $\theta_p \cdot \theta_q$, then we have 
\[
w(r,C) = w(p) + w(q) \,\,\mathrm{and} \,\, g(r,C) = g(p) + g(q).
\]
\end{lem}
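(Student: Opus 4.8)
The statement is that the multiplication of theta functions respects both the weight grading $w$ and the height grading $g$. Since $g$ is obtained from $w$ by composing with the homomorphism $\chi(T^D)\to\chi(T^{W_F})$ induced by the one-parameter subgroup $T^{W_F}\subset T^D$, the second equality $g(r,C)=g(p)+g(q)$ is a formal consequence of the first, $w(r,C)=w(p)+w(q)$. So the plan is to reduce everything to the weight statement and then invoke Theorem \ref{thm:rel-equi}.

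First I would recall from Theorem \ref{thm:rel-equi} that the relative torus $T^D$ acts equivariantly on the mirror family $\mathcal{X}=\Spec(A)\to\Spec(\mathbb{C}[\NE(Y)])$, and that each theta function $\theta_q$ is a $T^D$-eigenfunction with character $w(q)$, while each monomial $z^{[C]}$ has weight $w(C)$ (this is the definition of the weight map $w:A_1(Y)\to\chi(T^D)$ in Definition \ref{def:weight-map}, coming from the equivariant structure on $\mathbb{C}[\NE(Y)]$). Then the product $\theta_p\cdot\theta_q$ is a $T^D$-eigenvector of weight $w(p)+w(q)$ in the algebra $A$, which is graded by $\chi(T^D)$ via the torus action. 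Since the expansion $\theta_p\cdot\theta_q=\sum_{r}\alpha(p,q,r)\theta_r$ with $\alpha(p,q,r)=\sum c(\gamma_1)c(\gamma_2)\in\mathbb{C}[\NE(Y)]$ decomposes the product into terms $z^{[C]}\theta_r$, and each such term is itself an eigenvector of weight $w(r)+w(C)=w(r,C)$, only the terms with $w(r,C)=w(p)+w(q)$ can appear. This gives the first equality.

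For the second equality, I would simply apply the group homomorphism $\chi(T^D)\to\chi(T^{W_F})\simeq\mathbb{Z}$ that defines $g$ to both sides of $w(r,C)=w(p)+w(q)$; by definition $g(r,C)$ is the image of $w(r,C)$ and $g(p)+g(q)$ is the image of $w(p)+w(q)$, so the equality of heights follows immediately from additivity of the homomorphism.

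I do not expect a genuine obstacle here: the lemma is essentially a bookkeeping consequence of the $T^D$-equivariance of the mirror family established in \cite{GHK}. The only point requiring mild care is making sure that the monomial $z^{[C]}$ contributes weight exactly $w(C)$ under the $T^D$-action — i.e., that the coarser grading on $\mathbb{C}[\NE(Y)]$ used to state equivariance in Theorem \ref{thm:rel-equi} is the one given by $w:A_1(Y)\to\chi(T^D)$ — but this is precisely how the weight map was set up in Definition \ref{def:weight-map}, so no further argument is needed. Alternatively, if one prefers a direct combinatorial argument, one can verify the claim straight from the broken line definition: each bend of a broken line across a wall of $\mathfrak{D}^{\mathrm{can}}$ attaches a class $\beta$ with $\beta\cdot D_i=0$ for $D_i$ not the relevant boundary component and the remaining intersection numbers cancel against the change in the $w$-value of the exponent vector $v_i$, so that the total weight $w(q)+\sum_i(C\cdot D_i)e_{D_i}$ is preserved along each broken line; summing the two broken lines $\gamma_1,\gamma_2$ with $v(\gamma_1)+v(\gamma_2)=r$ then yields the conservation law. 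Either route closes the proof.
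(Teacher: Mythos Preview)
Your proof is correct and follows exactly the same approach as the paper: invoke Theorem~\ref{thm:rel-equi} for the weight equality, then observe that the height equality follows by applying the homomorphism $\chi(T^D)\to\chi(T^{W_F})$. The paper's proof is the same two-line argument, without the alternative broken-line sketch.
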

\begin{proof}
The equality between weights follows from Theorem \ref{thm:rel-equi}. The equality between heights follows from  the equality between weights. 
\end{proof}


\begin{lem}
The submodule  $V\subset A[T]$ is a subalgebra. 
\end{lem}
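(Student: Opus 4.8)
The plan is to show that $V$ is closed under multiplication by verifying that the product of two basis elements $\theta_p T^{m}$ and $\theta_q T^{m'}$ (with $p \in mF \cap B(\Z)$ and $q \in m'F \cap B(\Z)$) lies in $V$, i.e. expands as a $\bC[\NE(Y)]$-linear combination of basis elements $z^{[C]}\theta_r T^{m+m'}$ with $r \in (m+m')F \cap B(\Z)$. Since $A$ is already known to be an associative $R$-algebra by Theorem \ref{thm:fiber-mirror}, and $T$ is a central polynomial variable, the only thing to check is the polygon constraint on the indices $r$ appearing in the product $\theta_p \cdot \theta_q = \sum_r \alpha(p,q,r)\theta_r$ together with the $\NE(Y)$-coefficients.

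First I would fix the height function $g : B(\Z)\times \NE(Y)\rightarrow \Z$ constructed from $F$ via the weight map as in the paragraph preceding Lemma \ref{nefpoly}. The key input is Lemma \ref{lem:eq-hw}: if $z^{[C]}\theta_r$ appears in $\theta_p\cdot\theta_q$, then $g(r,C) = g(p) + g(q)$. Combined with Lemma \ref{lem:integralpolygonpoint}, which says $q' \in mF\cap B(\Z) \iff g(q') \le m$, this gives: since $p \in mF$ we have $g(p)\le m$, since $q \in m'F$ we have $g(q)\le m'$, and since the structure constants of the canonical scattering diagram lie in $\N[\NE(Y)]$ the only classes $C$ occurring are effective, so $g(r,C) = g(r) + g(C)$; but $g(C) = w(C)$ composed with the map $\chi(T^D)\to\Z$ sending $e_{D_i}\mapsto a_i$, and since $W_F = \sum a_i D_i$ is nef by Lemma \ref{nefpoly}, we have $g(C) = W_F \cdot C \ge 0$. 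Hence $g(r) \le g(r) + g(C) = g(r,C) = g(p)+g(q) \le m + m'$, so by Lemma \ref{lem:integralpolygonpoint} again, $r \in (m+m')F\cap B(\Z)$. Therefore $z^{[C]}\theta_r T^{m+m'}$ is one of the chosen basis elements of $V$, and the product lies in $V$.

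The main obstacle I anticipate is not the index bookkeeping above, which is essentially forced once Lemmas \ref{nefpoly}, \ref{lem:integralpolygonpoint}, and \ref{lem:eq-hw} are in hand, but rather making sure the right-hand side is actually a \emph{finite} $\bC[\NE(Y)]$-combination and that nonnegativity of the structure constants (so that $g(C) \ge 0$ genuinely applies to every $C$ in the expansion) is being invoked correctly — this is where positivity of $(Y,D)$ and the broken-line finiteness statement of Theorem \ref{thm:fiber-mirror} do real work. One should also note that the unit $\theta_0 = 1$ corresponds to $0 \in mF$ for all $m \ge 0$ and in particular $1 \cdot T^0 \in V$, so $V$ is a subalgebra with unit. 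Finally, since everything about $V$ was reduced to the analogous statement for $\tilde V$ over the toric blowup $\tilde Y$ in Subsection \ref{ssec:compa-fam}, it suffices to run the argument after the toric blowup, where by construction every vertex of $F$ lies on a ray of $\Sigma$ and the hypotheses of the cited lemmas are literally satisfied; the statement for $V$ then follows by applying the surjection $\tilde A_L \to A$.
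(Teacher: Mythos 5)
Your proof is correct and follows essentially the same route as the paper: apply Lemma \ref{lem:eq-hw} to get $g(r,C)=g(p)+g(q)$, use nefness of $W_F$ from Lemma \ref{nefpoly} to drop the $g(C)$ term, and conclude $r\in(m+m')F$ via Lemma \ref{lem:integralpolygonpoint}. The additional remarks on finiteness, the unit, and the toric-blowup reduction are consistent with what the paper has already established and do not change the argument.
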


\begin{proof}
Let $p\in nF$ and $q\in mF$ which correspond to $\theta_p$ and $\theta_q$. If $z^{[C]}\theta_r$ is in the expansion of $\theta_p\cdot \theta_q$, then we want to show that $r\in (n+m)F$. Equivalently, by Lemma \ref{lem:integralpolygonpoint},  we want to show that $g(r)\le n+m$. By Lemma \ref{lem:eq-hw}, we have $g(r,C)=g(p)+g(q)$. By Lemma \ref{nefpoly}, we have
\[ g(r)\le g(r) + W_F \cdot C = g(r) + g(C) = g(r,C)=g(p)+g(q)\le n+m.\]
Here, the last inequality follows from Lemma \ref{lem:integralpolygonpoint}.
\end{proof}

\begin{prop}\label{prop:fin-gen}
V is a finitely generated $\bC[\NE(Y)]$-algebra.
\end{prop}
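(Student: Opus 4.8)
The goal is to show that $V = \bigoplus_{q \in mF \cap B(\mathbb{Z}),\, m \geq 0} \bC[\NE(Y)] \cdot \theta_q T^m$ is finitely generated as a $\bC[\NE(Y)]$-algebra. The plan is to mimic the classical toric picture, where the section ring of an ample (or nef-and-big) polytope is generated in bounded degree, and then absorb the curve-class variables $z^{[C]}$ separately. First I would reduce, as in the rest of the subsection, to the case where every vertex of $F$ lies on a ray of $\Sigma$, so that the height function $g$ of Lemma \ref{lem:integralpolygonpoint} exactly cuts out the dilates $mF \cap B(\mathbb{Z})$. The core claim will be that there is an integer $d_0$ such that every generator of $V$ over $\bC[\NE(Y)]$ can be taken among the $\theta_q T^m$ with $m \leq d_0$, together with the finitely many monoid generators $z^{[C]}$ of $\bC[\NE(Y)]$ itself.

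The key step is a finite-generation statement for the monoid $\Xi := \{ (q,m) : q \in mF \cap B(\mathbb{Z}),\ m \geq 0 \}$ of ``lattice points under dilates of $F$,'' phrased so that it interacts correctly with the nonlinear multiplication $\theta_p \cdot \theta_q = \sum \alpha(p,q,r) z^{[C]} \theta_r$. The constraint from Lemma \ref{lem:eq-hw} is that $g(r,C) = g(p) + g(q)$, i.e. $g(r) + W_F \cdot C = g(p) + g(q)$, so multiplying $\theta_p T^n$ by $\theta_q T^m$ produces terms $z^{[C]} \theta_r T^{n+m}$ with $g(r) \leq n+m$ and the deficit $n + m - g(r) = W_F \cdot C$ nonnegative by Lemma \ref{nefpoly}. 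Working in each maximal cone $\sigma_{i,i+1}$ of $\Sigma$ at a time, the set $\{(q,m) : q \in \sigma_{i,i+1},\ g(q) \leq m\}$ is literally a rational polyhedral cone in $(\Lambda_{\sigma_{i,i+1}} \oplus \mathbb{Z})$: it is cut out by the two facet inequalities of $\sigma_{i,i+1}$ together with $g(q) \leq m$, where $g$ is linear on $\sigma_{i,i+1}$. By Gordan's lemma its lattice points form a finitely generated monoid, with generators in bounded height; taking the (finite) union over the finitely many maximal cones of $\Sigma$ gives a finite set of monoid generators $(q_1,m_1),\dots,(q_N,m_N)$ for $\Xi$, all with $m_j$ bounded by some $d_0$.

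It then remains to check that the corresponding elements $\theta_{q_j} T^{m_j}$, together with the algebra generators $z^{[C_1]},\dots,z^{[C_s]}$ of $\bC[\NE(Y)]$, generate $V$ as a $\bC$-algebra. Given any basis element $\theta_q T^m \in V$, write $(q,m)$ as a nonnegative integer combination $\sum_j c_j (q_j, m_j)$ in $\Xi$; I claim $\prod_j (\theta_{q_j} T^{m_j})^{c_j}$, when expanded via the multiplication rule of Theorem \ref{thm:fiber-mirror}, contains $\theta_q T^m$ as one of its terms with a nonzero coefficient in $\bC[\NE(Y)]$ — here one uses positivity/associativity of the structure constants together with the fact that $\theta_0 = 1$ and the leading term of a product of thetas along a common cone recovers the ``toric'' monomial, exactly as in Lemma 2.31 and the surrounding discussion in \cite{GHK}. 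Subtracting off that leading term (an element of $\bC[\NE(Y)]\cdot$ (other basis elements of strictly smaller height, or the same height with fewer curve-class factors), by Lemma \ref{lem:eq-hw}) and inducting on height — and within a fixed height, on the total $W_F \cdot C$ — one expresses $\theta_q T^m$ in the subalgebra generated by the $\theta_{q_j} T^{m_j}$ and the $z^{[C_i]}$. The finitely many $z^{[C_i]}$ handle precisely the ``deficit'' terms $z^{[C]}$ that appear in products but are not themselves of the form $\theta_{q} T^m$ with $q$ extremal.

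The main obstacle I expect is the last paragraph: unlike the toric case, the product $\theta_p \cdot \theta_q$ is not a single monomial, so one must argue carefully that the non-leading correction terms are genuinely ``smaller'' in a well-founded order and lie in the span one is inducting over — in particular that no term can escape $V$ (which is handled by the subalgebra lemma already proved) and that the induction on the pair $(\text{height}, W_F\cdot C)$ actually terminates. This is where one leans hardest on Lemma \ref{lem:eq-hw}, on $W_F$ being nef (Lemma \ref{nefpoly}), and on $\NE(Y)$ being finitely generated, all of which are available.
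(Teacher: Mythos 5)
Your overall architecture (Gordan's lemma applied cone-by-cone to the monoid of pairs $(q,m)$ with $g(q)\le m$, followed by a leading-term induction) is genuinely different from the paper's proof and could in principle be made to work, but as written it has a gap exactly at the step you flag, and the fix you propose for that step cannot work. By Lemma \ref{lem:eq-hw}, every term $z^{[C]}\theta_r T^{m}$ appearing in $\prod_j(\theta_{q_j}T^{m_j})^{c_j}$ satisfies $g(r)+W_F\cdot C=\sum_j c_j g(q_j)=g(q)$, so a correction term fails to have strictly smaller height precisely when $W_F\cdot C=0$. Since $W_F$ is only nef (Lemma \ref{nefpoly}), the set $\{C\in\NE(Y): W_F\cdot C=0,\ C\ne 0\}$ is in general nonempty (internal $(-2)$-curves, and boundary components $D_k$ whose rays miss the vertices of $F$), so such terms cannot be excluded on numerical grounds alone; and for these terms your secondary induction ``on the total $W_F\cdot C$ within a fixed height'' is vacuous, because fixed height forces $W_F\cdot C=0$, which is already the minimum. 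You would be cycling among the finitely many basis elements of a given height with coefficients in the ideal generated by $\{z^{[C]}: C\neq 0,\ W_F\cdot C=0\}$, and since $\bC[\NE(Y)]$ is not local and this ideal is not nilpotent, there is no Nakayama-type escape.

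The missing ingredient is that every pair of broken lines other than the straight pair contributes a class $C$ containing at least one $\mathbb{A}^1$-class $\beta$, and that $W_F\cdot\pi_*\beta>0$ for every $\mathbb{A}^1$-class (because $\pi^*W_F$ has strictly positive coefficients on every boundary component of any toric blowup). Granting this, every correction term has $W_F\cdot C>0$, the height strictly drops, and your induction terminates; but this is essentially Claim \ref{mult-str}, which the paper only establishes later, so you would need to prove it here. Note that the paper argues quite differently: it first shows that $V/JV$ (with $J=\NE(Y)\setminus\{0\}$) is finitely generated because $\Proj(V/JV)$ is a union of polarized toric varieties, and then lifts generators to $V$ using finiteness of the fibers of the $T^D$-weight map (after deforming away internal curves so that $\sum a_iD_i$ is ample) together with a decreasing induction on the order of curve classes --- precisely to sidestep the failure of naive graded Nakayama over $\bC[\NE(Y)]$ that your same-height terms run into.
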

\begin{proof}
The proof is the same as the proof for Proposition 6.6 of \cite{GHK}. We include it here for the convenience of the reader. Let $J= \NE(Y)\setminus \{0\}$ and consider $V/JV$. Then, $\Proj(V/JV)$ is given by the union of toric varieties corresponding to $\underline{F}$, the polyhedral decomposition of $F$ induced by that of $B$ given by $\Sigma \bigcap F$. It follows that $V/JV$ must be finitely generated, implying that there are a finite number of $\theta_qT^m$ in $V$ that generate $V/JV$. Denote the largest $m$ in this generating set by $N$. Next, we show that the finite set $G:=\{\theta_qT^m| m\le N\}$ actually generates $V$. 

Let $V'$ be the subalgebra generated by $G$.  Since the structure coefficients for the mirror algebra are deformation invariant by Lemma 3.9 of \cite{GHK},  we can choose $(Y,D)$ in its deformation class such that there are no internal curves. Then there exists, by the positivity assumption, $W=\sum a_iD_i$, $a_i>0$ such that $W$ is ample on $Y$. Fix a weight $a \in \chi(T^{D})$ of the relative torus $T^D$. For fixed $m$, define the set
\[ V_{m,a}:=\{z^{[C]}\theta_rT^m:z^{[C]}\theta_r\text{ is of weight } a,\,r\in mF\}.\]
We claim that only a finite number of $z^{[C]} \theta_q$ have the weight $a$, i.e., the weight map 
\begin{align*}
    w:B(\Z)\times \NE(Y) & \rightarrow \chi(T^D)
\end{align*}
has finite fibers. Let $\sigma\in \Sigma$ be a maximal cone. Consider the restriction of $w$ to $\sigma(\Z)\times \NE(Y)$. After the restriction, $w$ is linear and the domain is the set of integral points on a cone. It suffices to show that $\mathrm{ker}(w)\cap (\sigma(\mathbb{Z})\times \NE(Y))=0$. Let $(q,C)$ be such that $w(q,C)=0$. We write $q=bv_i+cv_{i+1}$. Then,
\[ w(q,C)=be_{D_i}+ce_{D_{i+1}}+\sum_j (C\cdot D_j)e_{D_j}=0.\] If
$C \in J$, then since $b,c\ge 0$, we must have that $C\cdot D_j\le 0$ for every $j$.  Then $C\cdot D_j\le 0$ implies that $C\cdot W\le 0$ which contradicts the fact that $W$ is ample. It follows that $(q,C)=0$. This shows that $w$ has finite fibers. 

Define the order $\ord([C])$ for $[C]\in \mathrm{NE}(Y)$ to be the maximal integer $k$ such that $[C]=[C_1]+...+[C_k]$ where $[C_i]\in J$. We show that $V_{m,a}\subset V'$ by a decreasing induction on the order of $[C]$. Since $V_{m,a}$ is finite, there is an upper bound on the possible orders.  Therefore, the claim is vacuously true for large orders. Consider $z^{[C]}\theta_r T^m\in V_{m,a}$. Since $G$ generates $V$ modulo $JV$, we can write
\[ \theta_rT^m=\alpha T^m +\beta T^m,\]
where $\alpha T^{m}$ is in $V'$ and  $\beta T^{m}$ is in $JV$. Then,
\[ z^{[C]}\theta_rT^m=z^{[C]}\alpha T^{m} +z^{[C]}\beta T^{m}\]
where the weight of $z^{[C]}\alpha$ is also $a$ and the weight of $z^{[C]}\beta$ is $a$ or $z^{[C]}\beta=0$. The latter implies that $z^{[C]}\theta_rT^m\in V'$. In the former case, $z^{[C]}\beta$ is the sum of terms $z^{[C']}\theta_q$ of weight $a$ where  $\mathrm{ord}([C']) > \mathrm{ord}([C])$ since $\beta \in JV$. By the induction hypothesis, $z^{[C]}\beta$ is in $V'$. Hence, $z^{[C]}\theta_r T^{m}$ is in $V'$. 
\end{proof}
 \begin{remark}
 The equivariant action of $T^{D}$ on $A$ naturally extends to $V$.
 \end{remark}
Thus, given any bounded, convex, rational polygon $F$, there is a canonical homogeneous coordinate ring associated with it. Indeed, a scaling of $F$ will satisfy assumption \ref{ass-poly}. Then, we can construct the corresponding homogeneous coordinate ring $V$ as defined above. Moreover, it is straightforward to see that the isomorphism class of $V$ as a finitely generated $\mathbb{C}[\mathrm{NE(Y)}]$-algebra does not depend on the particular scaling of $F$ we choose. \\

Let $\pi:(Y,D) \rightarrow (Y',D')$ be a toric blowup. We follow the notation used in Subsection \ref{ssec:tor-bl}. Given a polygon $F'$ in $B_{(Y',D')}$, we could also view it as a polygon in $B_{(Y,D)}$. Let $\bar{\cX}$ and $\bar{\cX'}$ be the compactified mirror family we get using $F$, viewed as a polygon in $B_{(Y,D)}$ and $B_{(Y',D')}$ respectively. Then, everything we said about $\cX$ and $\cX'$ in Subsection \ref{ssec:tor-bl} extends to $\bar{\cX}$ and $\bar{\cX'}$. In particular, we obtain the following lemma: 
\begin{lem} \label{lem:tor-bl-compa}
Following the notation used in Subsection \ref{ssec:tor-bl}, we have the  identification
\[ \bar{\cX} |_{\iota_\pi (\mathrm{Spec}(\mathbb{C}[P']))}= \bar{\cX'}  \]
and the following diagram commutes:
\[
\begin{tikzcd}
T^L\times \bar{\cX'} \arrow[r] \arrow[d]
& \bar{\cX}|_{\mathrm{Spec}(\mathbb{C}[P+L])} \arrow[d ] \\
T^L\times \Spec(\bC[P']) \arrow[r]
& \mathrm{Spec}(\mathbb{C}[P+L])
\end{tikzcd}
\]
\end{lem}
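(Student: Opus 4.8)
The plan is to reduce Lemma \ref{lem:tor-bl-compa} to the identifications already established for the uncompactified families in Subsection \ref{ssec:tor-bl}, by observing that the compactified family is built out of the uncompactified one in a functorial way. Concretely, recall that the homogeneous coordinate ring $V$ associated to a polygon $F$ is defined as a free $\mathbb{C}[\NE(Y)]$-submodule of $A[T]$ with basis $\{\theta_q T^m : q \in mF \cap B(\Z)\}$, and that $\bar{\cX} = \Proj(V)$ over $\Spec(\mathbb{C}[\NE(Y)])$; similarly $\bar{\cX'} = \Proj(V')$ over $\Spec(\mathbb{C}[P'])$ where $V'$ is built from the same polygon $F$ but using the mirror algebra $A'$ on $B_{(Y',D')}$. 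The first step is to check that the subalgebra structure on $V$ (via the multiplication rule $\theta_p \theta_q = \sum \alpha(p,q,r)\theta_r$, with the height bound from Lemma \ref{lem:integralpolygonpoint} and Lemma \ref{nefpoly} guaranteeing closure) is compatible with the surjection $\tilde{A}_L \to A$ and, in the present setting, with the surjection $A_L \to A'$ from Proposition 3.10 of \cite{GHK}: since this surjection sends $\theta_p \mapsto \theta_p$ and $z^{[C]} \mapsto z^{[\pi_* C]}$, it carries $V_L$ (the localization of $V$ at $L$, with the same $\theta_q T^m$ basis) onto $V'$. This is essentially a formal consequence of the fact that, after identifying $B_{(Y,D)} \cong B_{(Y',D')}$ as integral affine manifolds (Lemma 1.6 of \cite{GHK}), $F$ is literally the same polygon and $mF \cap B(\Z)$ is the same index set for both $V$ and $V'$.

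Next I would take $\Proj$ of the compatible ring maps. The localized ring $V_L$ is a $\mathbb{C}[P+L]$-algebra, so $\Proj(V_L)$ is a projective family over $\Spec(\mathbb{C}[P+L])$, and by the previous step it agrees with the restriction $\bar{\cX}|_{\Spec(\mathbb{C}[P+L])}$ of $\bar{\cX} = \Proj(V)$ over the open subscheme $\Spec(\mathbb{C}[P+L]) \hookrightarrow \Spec(\mathbb{C}[P])$. The surjection $V_L \twoheadrightarrow V'$ induces a closed immersion of $\Proj$'s lying over the closed immersion $\Spec(\mathbb{C}[P']) \hookrightarrow \Spec(\mathbb{C}[P+L])$, and because $V' = V_L \otimes_{\mathbb{C}[P+L]} \mathbb{C}[P']$ (the surjection $P+L \twoheadrightarrow P'$ just sets the exceptional-divisor classes to their pushforwards, matching $z^{[C]} \mapsto z^{[\pi_* C]}$ with no further relations on the $\theta$-basis), this $\Proj$ is exactly $\bar{\cX}'$. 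Composing the open and closed immersions, as in the definition of $\iota_\pi$, gives $\bar{\cX}|_{\iota_\pi(\Spec(\mathbb{C}[P']))} = \bar{\cX}'$, which is the first assertion.

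For the commuting square, I would simply transport the torus action. The action of $T^L$ on $\cX|_{\Spec(\mathbb{C}[P+L])}$ described in Subsection \ref{ssec:tor-bl} comes from grading the mirror algebra by the weight map composed with the projection $\chi(T^D) \to \chi(T^L)$; since $\theta_q$ is a $T^D$-eigenfunction with character $w(q)$ (Theorem \ref{thm:rel-equi}) and this grading is compatible with multiplication, it extends to $V$ exactly as in the Remark following Proposition \ref{prop:fin-gen}, hence to $\bar{\cX} = \Proj(V)$. The isomorphisms $\cX|_{\Spec(\mathbb{C}[P+L])} \cong T^L \times \cX'$ and $\Spec(\mathbb{C}[P+L]) \cong T^L \times \Spec(\mathbb{C}[P'])$ from Subsection \ref{ssec:tor-bl} are equivariant essentially by construction, and since $\bar{\cX}, \bar{\cX}'$ are obtained by applying $\Proj$ to the corresponding graded rings, the square for the compactified families is obtained by applying $\Proj$ fiberwise to the square for the uncompactified ones; commutativity is inherited.

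The main obstacle I anticipate is purely bookkeeping: making precise that "$\Proj$ commutes with the relevant base changes and localizations" in the graded/relative setting — i.e.\ that $\Proj(V_L) = \Proj(V) \times_{\Spec(\mathbb{C}[P])} \Spec(\mathbb{C}[P+L])$ and $\Proj(V_L \otimes_{\mathbb{C}[P+L]} \mathbb{C}[P']) = \Proj(V') $ — and that the $T^L$-action one writes on $V$ really is the one inducing the geometric action on $\bar{\cX}$ matching the $\cX$-level action. None of this is deep; the care needed is in verifying that localizing at $L$ and quotienting by the kernel of $P+L \twoheadrightarrow P'$ affect only the coefficient ring $\mathbb{C}[\NE(Y)]$ and never the $\theta_q T^m$ part of the free module, so that the polygon-combinatorics defining $V$ is untouched. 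Once that is in place, the lemma follows formally, exactly as its statement ("everything we said about $\cX$ and $\cX'$ $\dots$ extends") suggests.
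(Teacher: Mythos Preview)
Your proposal is correct and follows exactly the approach the paper indicates: the paper does not give a separate proof of this lemma but simply asserts that ``everything we said about $\cX$ and $\cX'$ in Subsection \ref{ssec:tor-bl} extends to $\bar{\cX}$ and $\bar{\cX'}$,'' relying on the discussion at the start of Subsection \ref{ssec:compa-fam} where $V$ is obtained as the image of $\tilde{V}_L$ under the surjection $\tilde{A}_L \to A$. Your write-up is a careful unpacking of precisely that, and the bookkeeping obstacle you flag (that localizing at $L$ and passing to $P'$ only touch the coefficient ring, not the $\theta_q T^m$ basis) is exactly the point the paper leaves implicit.
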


Now, we want to study the family $\bar{\cX}:=\Proj(V)\rightarrow\Spec(\bC[\NE(Y)])$ and show that it is a compactification of $\cX$. Moreover, we are specifically interested in the restriction of the family over the algebraic torus $T_Y:=\Pic(Y)\otimes \mathbb{G}_m\subset \Spec(\bC[\NE(Y)])$. \\

\begin{defn}
The \textit{boundary} of $\bar{\cX}$ is the subscheme $\cD$ defined by the equation $T=0$.
\end{defn}


\begin{prop}\label{trivialboundary} 
There is a natural identification $\bar{\cX}\setminus \cD=\cX$.  Suppose $F$ is bounded, convex and rational. Then $\cD|_{T_Y}\rightarrow T_Y$ is a trivial family of nodal elliptic curves with one irreducible component for each face of $F$. 
\end{prop}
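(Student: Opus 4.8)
The proof naturally splits into two parts, matching the two sentences of the statement. For the identification $\bar{\cX}\setminus\cD = \cX$, the idea is that inverting $T$ in $V$ should recover $A$: the localization $V_T$ is $\bigoplus_{m\in\Z}\bigoplus_{q\in mF\cap B(\Z)}\mathbb{C}[\NE(Y)]\cdot\theta_q T^m$, and taking the degree-zero part (i.e. $\Proj$ of the localization, or equivalently $\Spec$ of the degree-zero piece of $V_T$) should give back $A = \bigoplus_{q\in B(\Z)}\mathbb{C}[\NE(Y)]\cdot\theta_q$, since every $q\in B(\Z)$ lies in $mF$ for $m$ large enough (as $0$ is in the interior of $F$ and $F$ is bounded). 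I would make this precise by checking that $V_T \cong A[T^{\pm 1}]$ as graded algebras via $\theta_q T^m \mapsto \theta_q T^m$ — the key point being that the multiplication rule on $V$ restricted away from $T=0$ is literally the multiplication rule on $A$ (no extra terms get thrown away once $T$ is invertible), which follows because the structure constants $\alpha(p,q,r)$ are the same and the height/weight bookkeeping of Lemma \ref{lem:eq-hw} only constrains which terms survive the truncation, a constraint that disappears after localizing. Then $\Proj(V_T) = \Spec(A) = \cX$, and this is exactly the complement of $\{T=0\}=\cD$.

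**The boundary.** For the second part, I would compute $\cD$ by definition: $\cD = \Proj(V/TV)$, and $V/TV = \bigoplus_m \left(\bigoplus_{q\in mF\cap B(\Z)}\mathbb{C}[\NE(Y)]\cdot\bar\theta_q\right)$ with the induced (truncated) multiplication. The plan is to identify this with the homogeneous coordinate ring of the polarized \emph{toric} variety — or rather, the non-normal degeneration of one — attached to the polyhedral complex $\underline{F} = \Sigma\cap F$; this is exactly the identification already used in the proof of Proposition \ref{prop:fin-gen}, where $\Proj(V/JV)$ (with $J=\NE(Y)\setminus\{0\}$) is described as a union of toric varieties indexed by the maximal cells of $\underline F$. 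Restricting to the torus $T_Y$ means we are no longer killing $\NE(Y)$ but inverting the classes generating $\Pic(Y)$, so we need to track how the structure constants $\alpha(p,q,r)$ behave over $T_Y$. Here I would invoke the structure of the canonical scattering diagram: over the big torus $T_Y$, the scattering functions $f_{\mathfrak d}$ become invertible, and the relevant computation is that the restriction $\cD|_{T_Y}$ is a \emph{trivial} family — i.e. the structure constants, after the substitution making everything live over $T_Y$, are independent of the point of $T_Y$ up to the torus action, so $\cD|_{T_Y} \cong (\text{fixed curve})\times T_Y$. That fixed curve is the cycle of $\mathbb{P}^1$'s: one component for each maximal cell $\sigma\cap F$ of $\underline F$ (i.e., one for each face of $F$, since $F$ has $0$ in its interior and the cells of $\underline F$ correspond bijectively to the edges/faces of $F$), glued in a cycle matching the cyclic arrangement of the cones $\sigma_{i,i+1}$ around $0$, and the total curve is anticanonical hence arithmetic genus $1$, i.e. a nodal elliptic curve (a cycle of rational curves, or a nodal cubic when there is one component — cf. the explicit equations \eqref{eq:eq-V_1}, \eqref{eq:eq-V_2} in the cases $n=1,2$).

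**Executing the triviality.** Concretely I would argue as follows. The boundary ring $V/TV$ is graded, and degree-one generators are the $\theta_q T$ for $q$ a vertex of $F$ (or on an edge); by Lemma \ref{nefpoly} these are exactly the $q=\tfrac{1}{a_i}\nu_i$ such that $\rho_i$ passes through a vertex of $F$. The products $\theta_p\theta_q$ for $p,q$ such points, reduced mod $T$ (equivalently, mod the ideal $J$ plus the appropriate degree truncation), are governed by broken lines that stay close to the boundary, and over $T_Y$ these contributions organize into the relations cutting out a cycle of $\mathbb P^1$'s — the relations \eqref{eq:eq-V_1} and \eqref{eq:eq-V_2} being the $n=1,2$ prototypes, and the general case being a straightforward cyclic version ($\theta_{\nu_i}\theta_{\nu_j}$ for non-adjacent $i,j$ vanishes mod $T$, adjacent ones give the node relations). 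The triviality over $T_Y$ comes from the fact that the defining relations, written in terms of the $\theta$'s, have coefficients that are \emph{units} of $\mathbb{C}[\NE(Y)]$ after restricting to $T_Y$ — indeed by Theorem \ref{thm:rel-equi} the $\theta_q$ are $T^D$-eigenfunctions and one can use the torus $T^D$ (combined with the coordinates on $T_Y$) to absorb all coefficients, exhibiting an explicit trivialization $\cD|_{T_Y}\cong \mathbb{V}\times T_Y$ where $\mathbb{V}$ is the fixed nodal cycle.

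**Main obstacle.** I expect the genuine work to be in the triviality/structure statement for $\cD|_{T_Y}$ — i.e., showing that over the torus the reduced boundary ring really is (a twisted form of, then via the torus action, literally) the coordinate ring of a \emph{fixed} cycle of rational curves, and pinning down the component count (one per face of $F$). Checking $\bar{\cX}\setminus\cD=\cX$ is essentially formal localization; identifying the special fiber of $\cD$ abstractly as "some toric degeneration of $\underline F$" is already essentially in Proposition \ref{prop:fin-gen}; but controlling the family structure over all of $T_Y$ — as opposed to just the central fiber — requires actually understanding how the scattering diagram coefficients vary, and this is where I would spend the most care, likely reducing to the $n=1,2$ base cases \eqref{eq:eq-V_1}, \eqref{eq:eq-V_2} after a toric blowup and using Lemma \ref{lem:tor-bl-compa} to transfer conclusions between a pair and its toric blowup.
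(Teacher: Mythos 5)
Your proposal is correct and follows essentially the same route as the paper: formal localization at $T$ for $\bar{\cX}\setminus\cD=\cX$, then an analysis of $V/(T\cdot V)$ showing that products of theta functions from different faces of $F$ vanish and that within a face only straight broken lines survive, reducing to the Mumford-degeneration picture (with the $n=1$ case handled by the $\mathbb{V}_1$ computation). The one point to sharpen is the mechanism for discarding the scattering contributions over $T_Y$: it is not that the wall functions $f_{\mathfrak{d}}$ become invertible (every nonzero monomial $z^{[C]}$ is already a unit on $T_Y$), but rather — as in the paper's Claim \ref{mult-str} — that any bend at a wall or crossing of a ray through a vertex of $F$ produces a curve class with strictly positive $W_F$-degree, so by Lemma \ref{nefpoly} and the height additivity of Lemma \ref{lem:eq-hw} the resulting term has $g(s)<g(p)+g(q)$ and hence lies in $T\cdot V$.
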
 

\begin{proof} 
Let $V_T$ be the localization of $V$ with respect to $T$. On the affine scheme $\mathrm{Spec}(V_T)$, the subring of degree $0$ elements of $V_T$ is equal to $A$, which immediately gives $\bar{\cX}\setminus \cD=\cX$. We proceed to show to prove the triviality of $\cD$ over the torus $T_Y$. Consider the quotient ring $V/(T\cdot V)$. By Lemma \ref{lem:integralpolygonpoint}, $\theta_qT^m\in V$ if and only if $g(q)\le m$. Similarly, $\theta_q T^m\in T\cdot V$ if and only if $g(q)<m$. Then $V/(T\cdot V)$ is generated by $\theta_qT^{g(q)}$.  

Let $\Sigma_F$ be the fan over faces of $F$. This fan is a coarsening of the fan $\Sigma$. Given $p,q \in B(\mathbb{Z})$, consider 
\begin{equation}\label{eq:expan}
       \theta_p T^{g(p)}\cdot \theta_q T^{g(q)}=\sum_{s\in B(\Z)}\alpha(p,q,s)\theta_s T^{g(p)+g(q)} \quad \mod T\cdot V. 
\end{equation}
 We make the following claim:
\begin{claim} \label{mult-str} If two pairs of broken lines cross a ray passing through a vertex of $F$ or bend at a ray in the scattering diagram, then they do not contribute to the expansion \ref{eq:expan}. In particular,
if $p,q$ are not in the same $2$-cell of $\Sigma_F$, then
\[ \theta_p T^{g(p)}\cdot \theta_qT^{g(q)} \equiv 0 \mod T\cdot V \]
and if $p,q$ are in the same 2-cell of $\Sigma_F$, only straight broken lines can contribute to the expansion \ref{eq:expan}. 
\end{claim}
Let us prove this claim. Suppose that $p$ and $q$ are not in the same 2-cell of $\Sigma_F$. Then, for a non-zero $\alpha(p,q,s)$, it must be of the form $cz^{([D_{i_0}]+\beta)}$ where ${D_{i_0}}$ is the bending parameter for a ray passing through a vertex of $F$ and $\beta$ is in $\mathrm{NE}(Y)$. By Lemma \ref{nefpoly}, we have 
\[
(D_{i_0} + \beta) \cdot W_F \geq D_{i_0} \cdot W_F > 0. 
\]
By Lemma \ref{lem:eq-hw}, we have 
\[
g(s,D_{i_0}+C) = g(s) + (D_{i_0} + \beta) \cdot W_F = g(p) + g(p).
\]
Therefore, $g(s)<g(p) + g(q)$ and 
\[
z^{([D_{i_0}]+\beta)} \theta_s T^{g(p)+g(q)} \equiv 0 \mod T\cdot V.
\]
Now, suppose  $p,q$ are in the same 2-cell of $\Sigma_F$ and $cz^{[C]}\theta_s T^{g(p)+g(q)}$ is in the expansion \ref{eq:expan}.
If the two broken lines contributing $cz^{[C]}\theta_s$ bend at a wall, $[C]$ must be of the form $[C]= [C']+\beta$ where $[C']$ is a $\mathbb{A}^1$-class and $\beta \in \mathrm{NE}(Y)$.  
But $\beta \cdot W_F>0$ for all $\mathbb{A}^1$-classes $\beta$. Again, by Lemma \ref{lem:eq-hw}, 
\[
g(p)+g(q) = g(s,C) = g(s) + C \cdot W_F \geq g(s) + C' \cdot W_F > g(s).
\]
Therefore, $cz^{[C]}\theta_s T^{g(p)+g(q)}$ is trivial modulo $T\cdot V$. Thus, for $p,q$ in the same 2-cell of $\Sigma_F$, we see that the only broken lines that contribute to the expansion \ref{eq:expan} are the straight ones.  

Now, we are ready to prove the statement of the proposition. First, if $F$ is a 1-gon, by the same computation done in Subsection 6.2 of \cite{GHK}, to get the description of the coordinate ring of $\mathbb{V}_1$, we see that $(V/T \cdot V)$ is isomorphic to the homogeneous coordinate ring of an irreducible nodal rational curve embedded in the weighted projective plane $\mathbb{P}(3,1,2)$.
If $F$ is not a 1-gon, then Claim \ref{mult-str} implies that by restricting to each cone in $\Sigma_F$, modulo $T\cdot V$, we get the homogeneous coordinate ring for the trivial family of smooth rational curves over $T_Y$. Thus, $\cD\mid_{T_Y}$ is a trivial family of nodal elliptic curves with one component for each face of $F$. 
\end{proof}

\begin{lem}\label{lem:cartier} 
If $F$ is bounded, convex, rational, then, $\bar{\cX}\mid_{T_Y} \rightarrow T_Y$ is smooth along $(\cD \setminus \mathcal{N}) \mid_{T_Y}$ where $\mathcal{N} \subset \cD$ is the union of nodes  of $\cD$.
\end{lem}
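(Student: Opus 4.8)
The plan is to prove that each fiber $\bar{\cX}_t$, $t\in T_Y$, is regular at every point $x\in\cD_t\setminus\mathcal N$, and then to upgrade this to smoothness of the morphism via flatness. Since $V$ is a free $\bC[\NE(Y)]$-module, its base change to $\bC[\Pic(Y)]$ is again free, so $\bar{\cX}|_{T_Y}=\Proj\bigl(V\otimes_{\bC[\NE(Y)]}\bC[\Pic(Y)]\bigr)\to T_Y$ is flat; a flat morphism of finite presentation is smooth at a point as soon as its fiber there is smooth, so it suffices to treat fibers. The reduction uses the elementary fact that if $D\subset X$ is an effective Cartier divisor regular at a point $x$, then $X$ is regular at $x$: writing $D=V(f)$ near $x$ with $f$ a non-zero-divisor, $\mathfrak m_{D,x}/\mathfrak m_{D,x}^2=\mathfrak m_{X,x}/(\mathfrak m_{X,x}^2+(f))$ gives $\dim_{\kappa(x)}\mathfrak m_{X,x}/\mathfrak m_{X,x}^2\le\dim\cO_{D,x}+1=\dim\cO_{X,x}$, forcing regularity. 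Applying this with $X=\bar{\cX}_t$, $D=\cD_t$, it remains to verify for $x\in\cD_t\setminus\mathcal N$ that (i) $\cD_t$ is regular at $x$, and (ii) $\cD_t$ is an effective Cartier divisor of $\bar{\cX}_t$ near $x$.

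Point (i) is immediate from Proposition \ref{trivialboundary}: over $T_Y$ the boundary $\cD|_{T_Y}\to T_Y$ is a trivial family whose fiber is a cycle of $\mathbb P^1$'s, and its singular locus is precisely $\mathcal N$; so $\cD_t$ is a smooth curve off $\mathcal N$. For (ii) I work in an explicit affine chart of $\Proj V$. After replacing $F$ by a positive integer multiple — which changes neither $\bar{\cX}$ nor $\cD$ up to isomorphism, by the remark following Proposition \ref{prop:fin-gen} — we may assume each edge $E$ of $F$ carries at least two lattice points in its relative interior. Every point of $\partial F$ lies in $\{g=1\}$: on each maximal cone $\sigma_{i,i+1}$ the height function $g$ is linear and equals $1$ at $\tfrac1{a_i}\nu_i$ and $\tfrac1{a_{i+1}}\nu_{i+1}$, hence along the whole segment joining them. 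Thus for $q_0\in\partial F\cap B(\Z)$ we have $g(q_0)=1$, so $\theta_{q_0}T\in V_1$ and we get an affine open $D_+(\theta_{q_0}T)\subset\bar{\cX}$ with coordinate ring $R:=V_{(\theta_{q_0}T)}$. In $R$ the ideal of $\cD=V_+(TV)$ is principal, generated by $\tau:=\theta_0T/(\theta_{q_0}T)$ (degree $0$, where $\theta_0=1$): a general element of the homogeneous ideal is $Tv/(\theta_{q_0}T)^k$ with $v\in V_{k-1}$, and it equals $\tau\cdot v/(\theta_{q_0}T)^{k-1}$ with $v/(\theta_{q_0}T)^{k-1}\in R$. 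Since $T$ is a non-zero-divisor on $V\subset A[T]$, $\tau$ is a non-zero-divisor on $R$; restricting over $t\in T_Y$, the image of $\tau$ cuts out $\cD_t$ and is still a non-zero-divisor (its zero locus $\cD_t$ contains no component of $\bar{\cX}_t$, as $\bar{\cX}_t\setminus\cD_t=\cX_t$ is dense). Hence $\cD_t$ is Cartier on this chart.

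Finally, given $x\in\cD_t\setminus\mathcal N$ lying in the interior of a component $\cD_E$, the toric description of $\cD_E$ in Proposition \ref{trivialboundary} shows that the theta functions $\theta_{q_0}$ with $q_0\in\mathrm{relint}(E)\cap B(\Z)$ have no common zero on $\cD_E$ other than the two nodes at the ends of $E$; so some such $q_0$ has $\theta_{q_0}(x)\ne0$, i.e. $x\in D_+(\theta_{q_0}T)$, and (i)–(ii) hold at $x$. By the Cartier-divisor fact above, $\bar{\cX}_t$ is regular, hence smooth over $\bC$, at $x$; together with flatness of $\bar{\cX}|_{T_Y}\to T_Y$ this gives smoothness of the morphism along $(\cD\setminus\mathcal N)|_{T_Y}$. (The case $n=1$, where $\cD_t$ is an irreducible nodal cubic, is covered verbatim, $\cD_t\setminus\mathcal N\cong\mathbb G_m$ being smooth.)

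The main obstacle is the last step of (ii): checking that the charts $D_+(\theta_{q_0}T)$ with $q_0$ in the relative interiors of the edges genuinely cover $\cD\setminus\mathcal N$ while avoiding the nodes. This is exactly where one needs the explicit identification of each boundary component $\cD_E$ with a $\mathbb P^1$ (a chain of toric pieces) from Proposition \ref{trivialboundary}, together with the rescaling of $F$ that guarantees enough interior lattice points. A more computational alternative is to exhibit generators and relations for $R$ (and for $R\otimes\bC[\Pic(Y)]$) and apply the Jacobian criterion directly, but the divisor-theoretic route above is shorter and sidesteps the case analysis.
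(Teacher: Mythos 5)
Your route is genuinely different from the paper's. The paper does not argue fiber by fiber: it uses the one-parameter subgroup $T^{W_F}\subset T^D$ to degenerate each $s\in T_Y$ to the toric fixed point, checks on the explicit toric central fiber that $\cD^{\circ}_{s_0}$ is Cartier, and then propagates the relative-effective-Cartier-divisor property back along the $\mathbb{A}^1$-orbit via flatness and \cite[\href{https://stacks.math.columbia.edu/tag/062Y}{Tag 062Y}]{Sta}, before invoking equivariance of the $T^D$-action. Your argument instead verifies directly on each fiber that $\cD_t$ is a principal divisor in the degree-one charts $D_+(\theta_{q_0}T)$ and combines this with smoothness of $\cD_t$ off the nodes and the standard local-algebra fact that an ambient scheme is regular at a regular point of an effective Cartier divisor; flatness (from freeness of $V$) then upgrades fiberwise regularity to smoothness of the morphism. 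This is more elementary, avoids the degeneration and the Stacks input entirely, and is a perfectly viable alternative; what it loses is the relative Cartier statement over the orbit closure that the paper's method produces as a byproduct (and reuses later, e.g.\ in Lemma \ref{lem:rel-car}).

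There is, however, one step whose justification as written does not hold up: the claim that the image of $\tau$ in the fiber is a non-zero-divisor ``because its zero locus $\cD_t$ contains no component of $\bar{\cX}_t$, as $\cX_t$ is dense.'' Density of $\cX_t$ in $\bar{\cX}_t$ is equivalent to $\cD_t$ containing no irreducible component of $\bar{\cX}_t$, which is a weak form of exactly what you are trying to prove, so the argument is circular; and even granting density, ``zero locus contains no component'' only rules out minimal primes, not embedded primes of $\bar{\cX}_t$ supported along $\cD_t$, and you have no a priori Cohen--Macaulayness or $S_1$ statement for $\bar{\cX}_t$ at boundary points. Note that both the Cartier claim and your dimension count $\dim\cO_{\cD_t,x}+1=\dim\cO_{\bar{\cX}_t,x}$ rest on this step. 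Fortunately the gap is easy to close without any appeal to density: multiplication by $T=\theta_0T$ sends the free $\bC[\NE(Y)]$-basis element $\theta_qT^m$ of $V_m$ to the basis element $\theta_qT^{m+1}$ of $V_{m+1}$ (since $g(q)\le m\le m+1$), hence is a split injection of free modules and remains injective after base change to $\kappa(t)$; so $T$ is a non-zero-divisor on $V_t$ for every $t$, whence $\tau=T/(\theta_{q_0}T)$ is a non-zero-divisor on $(V_t)_{(\theta_{q_0}T)}$ and on each of its local rings. With that substitution the proof is correct.
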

\begin{proof}
We can assume that $F$ satisfies the condition \ref{ass-poly}. Let $W_F$ be the Weil divisor associated to $F$ and $T^{W_F} \subset T^{D}$ be the 1-parameter subgroup corresponding to $W_F$. By Lemma \ref{nefpoly}, for any effective class $C \in \mathrm{NE}(Y)$, the weight of $z^{[C]}$ is non-negative. Therefore, given any $s \in T_Y$, the limit of $s$ under the action of $T^{W_F}$ exists in $S=\mathrm{Spec}(\mathbb{C}[\mathrm{NE}(Y)])$ and we denote it by $s_0$.

Recall that $\Sigma_F$ is the fan over faces of $F$. Denote by $\delta_s: \mathbb{A}^{1} \rightarrow S$ the morphism given by $T^{W_F} \cdot s$. Denote $\mathcal{D} \setminus \mathcal{N}$ and $\bar{\cX} \setminus \mathcal{N}$ by $\mathcal{D}^{\circ}$and $\bar{\cX}^{\circ} $ respectively. First, we assume that $F$ is not a 1-gon. We claim that the fiber $\bar{\cX}_{s_0}$ over $s_0$ is isomorphic to the union of toric varieties with a polarization given by the polyhedral decomposition $\Sigma_F \cap F$. Indeed, since any $\mathbb{A}^1$-class has a positive intersection with $W_F$, there is no scattering restricted to $s_0 \in S$. Then, the claim follows from that the statement $W_F \cdot D_k >0$ if and only if $\rho_k$ passes through a vertex of $F$ in Lemma \ref{nefpoly}. In particular, we conclude that $\mathcal{D}^{\circ}_{s_0}$ is a Cartier divisor on $\bar{\cX}^{\circ}_{s_0}$ and $\cD^{\circ} \times_S \mathbb{A}^{1} \rightarrow \mathbb{A}^{1}$ is the trivial family. Notice that these conclusions still hold even when $F$ is a 1-gon.

By Proposition \ref{prop:fin-gen}, $f: \bar{\cX}^{\circ} \times_{\mathbb{A}^1}  S \rightarrow \mathbb{A}^1$ is of finite type. Since  $\mathbb{A}^1$ is Noetherian, by \cite[\href{https://stacks.math.columbia.edu/tag/01TX}{Tag 01TX}]{Sta}, $f$ is of finite presentation. Since  $\bar{\cX}^{\circ} \times_S  {\mathbb{A}^1} \rightarrow \mathbb{A}^1$ is flat and $\cD^{\circ} \times_S \mathbb{A}^1 \rightarrow \mathbb{A}^{1}$ is the trivial family, by \cite[\href{https://stacks.math.columbia.edu/tag/062Y}{Tag 062Y}]{Sta}, for each $x \in \mathcal{D}^{\circ}_{s_0}$, there exists an open neighborhood $U$ with $ x \in U \subset \bar{\cX}^{\circ} \times_S {\mathbb{A}^1}$ and a relative effective Cartier divisor $\cD' \subset U$ such that $ \cD^{\circ} \cap U = \cD'$ and $\cD'_{s_0} = \cD^{\circ}_{s_0} \cap U$. 
 
 Since $T^{W_F}$ can push $s$ to be arbitrarily close to $s_0$, the action of $T^{D}$ on $\bar{\cX}$ is equivariant, and $\cD^{\circ} \times_S \mathbb{A}^1 \rightarrow \mathbb{A}^{1}$ is the trivial family, we conclude that for each $x \in \cD^{\circ}_s$, $\cD^{\circ}_s \subset \bar{\cX}^{\circ}$ is Cartier near $x$. Hence, we conclude that $\bar{\cX}^{\circ}_s$ is smooth along $\cD^{\circ}_s$. Since $s$ is an arbitrary point in $x\in T_Y$, the statement of the lemma follows.
\end{proof}

Now, we are ready to prove one of the main results in this section, which is the following proposition.


\begin{prop} \label{compa-fiber} 
Fix a bounded, nonsingular, rational convex polygon, $F\subset B_{(Y,D)}$ and let $(\bar{\cX},\cD)\rightarrow S = \Spec(\bC[\NE(Y)])$ be the corresponding family. Then, the generic fiber over $T_Y$ is smooth and the restriction $\bar{\cX}\mid_{T_Y}\rightarrow T_Y$ is smooth along $\cD \mid_{T_Y}$. Let $(\bar{\cX}_t,\cD_t)$ be the fiber of $(\bar{\cX},\cD)\rightarrow T_Y$ over $t\in T_Y$. Then $\cD_t\subset \bar{\cX}_t$ is an anti-canonical cycle of rational curves contained in the smooth locus and $\bar{\cX}_t$ has, at worst, du Val singularities. In particular, $(\bar{\cX}_t, \cD_t)$ is a generalized pair in the sense of Definition \ref{generalized pair}. 
\end{prop}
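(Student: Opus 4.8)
The plan is to deduce the statement by combining the results already established with one new local analysis at the nodes of $\cD$. By Proposition~\ref{trivialboundary} we have $\bar{\cX}\setminus\cD=\cX$, and $\cD|_{T_Y}\to T_Y$ is the trivial family whose fibre is a cycle of $\mathbb P^1$'s (an irreducible nodal rational curve when $F$ is a $1$-gon), with one component per face of $F$. By Theorem~\ref{thm:fiber-mirror} the affine family $\cX\to S$ is flat with Gorenstein semi-log-canonical fibres and smooth generic fibre; since $S=\Spec(\bC[\NE(Y)])$ is the affine toric variety with dense torus $T_Y=\Pic(Y)\otimes\mathbb G_m$, the generic point of $T_Y$ is the generic point of $S$, so $\cX_t$ is smooth for general $t\in T_Y$. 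By Lemma~\ref{lem:cartier}, $\bar{\cX}|_{T_Y}\to T_Y$ is smooth along $(\cD\setminus\mathcal N)|_{T_Y}$, and (from its proof) $\cD\setminus\mathcal N$ is relatively Cartier there. What remains is therefore: (i) smoothness of $\bar{\cX}_t$ at the finitely many nodes $\mathcal N_t\subset\cD_t$, for every $t\in T_Y$ — this is where the hypothesis that $F$ is \emph{nonsingular} enters; and (ii) that $\bar{\cX}_t$ has at worst du Val singularities and $\cD_t\in|-K_{\bar{\cX}_t}|$ (affineness of $\bar{\cX}_t\setminus\cD_t=\cX_t=\Spec(A_t)$ being automatic, and normality of $\bar{\cX}_t$ being granted by \cite{GHK}).

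Part (i) is the step I expect to be the \textbf{main obstacle}, and I would treat it as in the proof of Lemma~\ref{lem:cartier}. Fix $t\in T_Y$ and a node $p\in\cD_t$ over a vertex $v$ of $F$, and degenerate $\bar{\cX}$ over the arc $\delta_s\colon\mathbb A^1\to S$ attached to the one-parameter subgroup $T^{W_F}$: the central fibre $\bar{\cX}_{s_0}$ is the union of polarized toric surfaces indexed by the $2$-cells of $\Sigma_F\cap F$, glued along the toric curves over the vertex rays of $F$, and near $p$ it is the union of two toric germs meeting along the curve over the ray through $v$, with $p$ the torus-fixed point at the outer end of that curve. This entire configuration — the two germs and their gluing — is governed by the tangent cone of $F$ at $v$, which is spanned by the primitive generators of the two edges of $F$ at $v$; the nonsingularity of $F$ says precisely that these two generators form a $\Z$-basis of $\Lambda_v$. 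A direct toric computation with these charts then shows that under this basis condition the total space of the degeneration $\bar{\cX}\times_S\mathbb A^1\to\mathbb A^1$ is smooth at $p$ and its central fibre is reduced with an ordinary double-curve crossing there; hence the nearby fibres are smooth at $p$. Finally the $T^D$-equivariance argument from the end of the proof of Lemma~\ref{lem:cartier} — pushing an arbitrary $s\in T_Y$ arbitrarily close to $s_0$, and using that the smooth locus of the morphism is open and $T^D$-invariant — propagates smoothness at the nodes to all $t\in T_Y$. Together with the first paragraph this gives that $\bar{\cX}_t$ is smooth in a neighbourhood of $\cD_t$ for every $t\in T_Y$, and smooth everywhere for general $t$. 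The subtle point is the bookkeeping: the individual toric germs need not be smooth, but the basis condition forces the smoothing of their union to be the trivial smoothing of a node.

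For (ii): since $\cD_t$ is disjoint from $\cX_t$ and the mirror carries a nowhere-vanishing volume form (\cite{GHK}), the line bundle $\cO_{\bar{\cX}_t}(K_{\bar{\cX}_t}+\cD_t)$ is trivial on $\cX_t$; since $\bar{\cX}_t$ is now smooth along $\cD_t$, adjunction together with the triviality of the dualizing sheaf of a cycle of rational curves of arithmetic genus $1$ shows it is also trivial on $\cD_t$. That the class $K_{\bar{\cX}_t}+\cD_t$ itself vanishes — the anti-canonical statement — I would obtain by degenerating once more to the central fibre $\bar{\cX}_{s_0}$, one of the $\mathbb V_n$-type degenerate surfaces of \cite{GHK}, whose boundary cycle is anti-canonical by a direct toric computation (the dualizing sheaf of a normal-crossing union of toric surfaces is $\cO$ of minus the outer boundary), and then spreading the resulting triviality of $\omega_{\bar{\cX}/S}(\cD)$ out over $T_Y$ by $T^D$-equivariance, using that $\cD$ is by now relatively Cartier in a neighbourhood of itself (Lemma~\ref{lem:cartier} together with~(i)); alternatively it follows from standard log Calabi–Yau surface theory applied to the smooth rational general fibre together with upper semicontinuity. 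Granting $\cD_t\in|-K_{\bar{\cX}_t}|$, the du Val property is then immediate: $\bar{\cX}_t$ is normal, Gorenstein and rational (its invariants $q=p_g=P_2=0$ are forced by the degeneration to the union-of-toric central fibre), so $H^2(\cO_{\bar{\cX}_t})=H^0(\cO_{\bar{\cX}_t}(-\cD_t))^{\vee}=0$; feeding this into the Leray spectral sequence for the minimal resolution shows that all singularities of $\bar{\cX}_t$ are rational, and a rational Gorenstein surface singularity is du Val. Equivalently, the minimal resolution $(\tilde X_t,\tilde D_t)$ — with $\tilde D_t$ the reduced preimage of $\cD_t$, mapped isomorphically since $\bar{\cX}_t$ is smooth near $\cD_t$ — is a Looijenga pair, and the discrepancies over it, equal to the difference of the anti-canonical classes $K_{\tilde X_t}$ and $-\tilde D_t$, all vanish. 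This is exactly the assertion that $(\bar{\cX}_t,\cD_t)$ is a generalized pair in the sense of Definition~\ref{generalized pair}.
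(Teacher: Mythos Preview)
Your overall structure mirrors the paper's, but the two ``obstacle'' steps are handled differently and each has a genuine gap.

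\textbf{Smoothness at the nodes.} You degenerate via $T^{W_F}$ and assert that ``a direct toric computation'' shows the three-fold $\bar{\cX}\times_S\mathbb A^1$ is smooth at $p$. But this total space is \emph{not} toric: as $t$ leaves $0\in\mathbb A^1$, every monomial $z^{[C]}$ with $W_F\cdot C>0$ becomes nonzero, and this includes all $\mathbb A^1$-classes $\beta$ (they satisfy $W_F\cdot\beta>0$). Hence the scattering functions $f_{\rho_j}$ contribute for $t\neq 0$ and there is no toric local chart near $p$ on which your computation can run. The paper sidesteps this by choosing a different degeneration tailored to the fixed vertex: after a toric blowup making $D_i$ contractible, it passes to the one-dimensional stratum $\Spec(\bC[\NE(Y)]/Q)$ on which only the monomials $z^{m[D_i]}$ survive. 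Over that stratum every $\mathbb A^1$-class lies in $Q$, so the family is a \emph{pure} Mumford degeneration smoothing exactly the double curve over $\rho_i$; here the nonsingularity of $F$ at $v$ translates directly into smoothness near the node. A one-parameter subgroup $T^{H'}$ with $H'=\pi^*\pi_*H$ (so $H'\cdot D_i=0$, $H'\cdot D_k>0$ for $k\neq i$) pushes any $t\in T_Y$ toward this stratum, and openness plus $T^D$-equivariance concludes as in your sketch. Your $T^{W_F}$ does not land on a scattering-free locus, so the toric step is unjustified as written.

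\textbf{Normality and du Val.} You take normality of $\bar{\cX}_t$ as ``granted by \cite{GHK}'', but \cite{GHK} only gives Gorenstein semi-log-canonical, which does not imply normal; your Leray/rationality route to du Val then has no starting point. The paper supplies the missing step by showing the singularities are \emph{isolated}: if the singular locus of $\cX_t$ had codimension one, the closure of the singular locus of the one-parameter family would meet the central fibre $\mathbb V_n$ along an interior double curve, forcing the total space to be non-normal there --- contradicting the explicit local equation (Equation~0.5 of \cite{GHK}), which is visibly normal. Isolated singularities plus SLC then give $R_1+S_2$, hence normality, and the paper finishes du Val via Proposition~7.2 of \cite{HKY}. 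Your degeneration argument for the triviality of $\omega_{\bar{\cX}_t}(\cD_t)$ and $H^1=0$ is close in spirit to what the paper does (it cites Lemma~8.42 of \cite{GHKK} for this transfer from the central fibre), so that portion of (ii) is fine.
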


\begin{proof} 

Recall that in Proposition \ref{trivialboundary}, we proved that $\cD\mid_{T_Y}$ is a trivial family of nodal elliptic curves with one component for each face of $F$. By Lemma \ref{lem:cartier},  to show that $\bar{\cX}\mid_{T_Y}$ is smooth along $\cD \mid_{T_Y}$,   it remains to check that $\bar{\cX}\mid_{T_Y}$ is smooth near every node of $\cD \mid_{T_Y}$. 

Fix a node of $\cD \mid_{T_Y}$, which corresponds to a vertex of the polygon, say $v_i$. Then, since every vertex lies on a ray by our assumption, $v_i$ also corresponds to a boundary divisor $D_i\subset D$. By replacing $(Y,D)$ with a toric blowup as in Lemma \ref{lem:tor-bl-compa}, we could assume that $D_i$ is contractible (i.e. with negative self-intersection) and  the number, $n$, of irreducible components in $D$ is $\geq 3$. We emphasize that by this procedure we do not change the polygon $F$. The original compactified mirror family now embeds into the compactified mirror family after the toric blowup as in Lemma \ref{lem:tor-bl-compa}, but for the cleanliness of the notation, we only implicitly acknowledge this.  

Now, let $\pi:(Y,D)\rightarrow (Y',D')$ be the (weighted) toric blowdown that contracts $D_i$ and $Q\subset \NE(Y)$ be the corresponding toric monomial ideal generated by all curve classes $z^{[C]}$ for $C$ not blowdown to a point under $\pi$. Thus, the only monomials not in $Q$ are all of the form $z^{[mD_i]}$. Now, consider  $f:\operatorname{Proj}(V/Q)\rightarrow \operatorname{Spec}(\bC[\NE(Y)]/Q)$. This family is a purely Mumford degeneration since all the $\mathbb{A}^1$-classes that can appear are contained in $Q$.  In  the family given by $f$, the double curve corresponding to the ray $v_i$ is smoothed out by  the construction of the Mumford degeneration. Since the polygon is smooth, the family $f$ is smooth near the node in consideration. 

To further get the smoothness near the node in consideration for $\bar{\cX} \mid_{T_Y} \rightarrow T_Y$, we use the equivariant action of $T^{D}$ on the restricted compactified family. Let $H =\sum_j a_j D_j \,(a_j > 0)$ be a $D$-ample divisor. By rescaling $H$ if necessary, we can assume that $\pi_{*}H$ is Cartier. Let $H' = \pi^{*} \pi_{*}H = \sum_j a'_jD_j$. Consider the one-parameter subgroup $T^{H'} \subset T^{D}$ given by $\chi(T^{D}) \rightarrow \mathbb{Z}$, $e_{D_j} \mapsto a'_j$. We claim that $H' \cdot D_i =0$ and $H' \cdot D_k >0$ for $k \neq i$. Indeed, notice that $a_j = a_j'$ for $j \neq i$. The claim holds when $k \neq i-1,i,i+1$ just by our assumption that $n \geq 3$. The fact that $H' \cdot D_i =0$ follows from the fact that $\pi^{*}(a_j\pi_{*}D_{j}) \cdot D_i =0$ for $j=i-1,i+1$. Write $ \pi^{*}\pi_{*}(a_{i-1}D_{i-1}) = a_{i-1}D_{i-1} + aD_i $ and  $ \pi^{*}\pi_{*}(a_{i+1} D_{i+1}) = a_{i+1}D_{i+1} + b{D_i}$. Then, it follows from  $(\pi^{*}\pi_{*}D_{j}) \cdot D_i =0$ for $j=i-1,i+1$, that $H' \cdot D_i =0$, $a_{i-1} + aD_{i}^{2} =0$ and $a_{i-1} + bD_{i}^{2} =0$. Since 
\[
H\cdot D_i = a_i D_{i}^{2} + a_{i-1} + a_{i+1} >0 
\]
and $D^{2}_i <0$, we conclude that $a+b > a_i$. Therefore, 
\[
H' \cdot D_{i-1}   = a_{i-1}D_{i-1}^{2} + (a+b) + a_{i-2} >  a_{i-1}D_{i-1}^{2} + a_i + a_{i-2} = H \cdot D_{i-1} >0.
\]
Similarly, we conclude that $H' \cdot D_{i+1}$ >0. Thus, the claim holds. Hence, for any point in $T_Y$, we can use the 1-parameter subgroup $T^{H'}$ to push it near the boundary stratum $\mathrm{Spec}(\mathbb{C}[\mathrm{NE}(Y)]/Q)$. Since the condition of being smooth near the node is open in the base, we conclude that the entire family $\bar{\cX}\mid_{T_Y} \rightarrow T_Y$ is smooth near the node in consideration. 

By repeating the previous arguments for each node of $\cD \mid_{T_Y}$, we conclude that $\bar{\cX}\mid_{T_Y} \rightarrow T_Y$ is smooth along $\cD \mid_{T_Y}$. Thus, combining this result with Corollary $6.11$ of \cite{GHK}, we obtain that each fiber $(\bar{\cX}_t,\cD_t)$ in  $\bar{\cX}\mid_{T_Y} \rightarrow T_Y$ is Gorenstein and semi-log canonical and the generic fiber is smooth. 

 Recall that relative sheaf $\omega_{\cX /S}$ is trivial by Proposition 2.31 \cite{GHK}. Let $0$ be the unique fixed toric point of $S$ and $(X_0,D_0)$ the fiber over $0$.  Consider the 1-parameter $T^{H} \subset T^{D}$ given by $\chi(T^{D}) \rightarrow \mathbb{Z},$ $e_{D_i} \mapsto a_i$. Then, given any $t \in T_Y$, we can push $t$ arbitrarily close to $0$ using $T^H$. Notice that $\omega_{X_0}(D_0)$ is trivial. By Lemma 8.42 of \cite{GHKK}, $\omega_{\bar{\cX}_t}(\cD_t)$ is trivial and $H^1(\bar{\cX}_t,\cO_{\bar{\cX}_t}) =0$ since $(X_0,D_0)$ satisfies these conditions. 

Next, we show that $\cD_t$ is anti-canonical and $\bar{\cX}_t$ has at worst du Val singularities. We first show that $\bar{\cX}_t$ has isolated singularities.  Taking the closure of the orbit $T^{H}\cdot t$, we obtain a morphism $\mathbb{A}^1 \rightarrow \Spec(\mathbb{C}[\NE(Y)])$. Then, the pullback of $\cX \rightarrow \Spec(\mathbb{C}[\NE(Y)])$ along $\mathbb{A}^1$ has the central fiber $\mathbb{V}_n$. We use an argument from the proof for Proposition 7.4 of \cite{HKY}. Suppose that the singular locus of $\cX_t$ is of codimension $1$. Then, the intersection of the  closure of the singular locus of $\cX \mid _{\mathbb{G}_m}$ (where $\mathbb{G}_m \subset \mathbb{A}^1$) with the central fiber has codimension at most $1$. Then, the total space $\cX \mid _{\mathbb{A}^1}$ is not normal at the generic point of some double curve in $\mathbb{V}_n$. However, if we base change to the completion of $\mathbb{C}[\NE(Y)]$ at the maximal monomial ideal $\mathfrak{m}$,  a neighborhood in $\cX$ of the interior double curve is isomorphic to the toric Mumford degeneration with the explicit equation given by Equation 0.5 of \cite{GHK}, which is clearly normal. Hence, a contradiction occurs and $\cX_t$ and therefore $\bar{\cX}_t$ has isolated singularities. Now, it follows from Proposition 7.2 of \cite{HKY} that $\bar{\cX}_t$ have, at worst, du Val singularities. 

Let $\tilde{X}_t$ be the minimal resolution of $\bar{\cX}_t$. Since $\bar{\cX}_t$ has at worst du Val singularities and is smooth along $\cD_t$, the strict transform $\tilde{D}_t$ of $\cD_t$ in $\tilde{X}_t$ is anti-canonical and $H^1(\tilde{X}_t,\cO_{\tilde{X}_t}) = H^1(\bar{\cX}_t,\cO_{\bar{\cX}_t})=0$. By the classification of surfaces, $\tilde{X_t}$ is rational. Therefore, $(\bar{\cX}_t,\cD_t)$ is a generalized pair in the sense of Definition \ref{generalized pair}.

\end{proof}

\begin{defn}
Since $(Y,D)$ is positive, by Corollary \ref{ray-escape}, for any line segment on $B$ that can be extended to an immersed line $L$ not passing through $0$, there exists a ray with endpoint $0\in B$ which is generated by the direction parallel to that which $L$ escapes to $\infty$. We call this ray the \emph{escape ray} of the line segment.  
\end{defn}

\begin{prop}\label{prop:paralleledgeiso} 
Let  $F,F' \subset B$ be two bounded, rational, convex polygons. Suppose two edges $E \subset F$ and $E'  \subset F'$ have the same escape ray and $mE$, $m'E'\,(m,m'\in \mathbb{N})$ share two adjacent  points $p_1,p_2 \in  B(\mathbb{Z})$. We further require $p_1,p_2$ to be interior if one of $F,F'$ is a 1-gon. Let $(\bar{\cX},\cD)$ and $(\bar{\cX}',\cD')$  be the compactified mirror families we get using $F$ and $F'$ respectively. Let $\cD^{c}_E$ be the sum of boundary components other than $\cD_E$ if $F$ is not a 1-gon. Otherwise,  let $\cD^{c}_E$ be the complement of the node of $\cD$. Define  $\cD'^{c}_{E'}$ similarly. Then,  the rational map $\bar{\cX} \dashrightarrow \bar{\cX'}$ restricts to an isomorphism
\[ (\bar{\cX} \setminus \cD_E^c)|_{T_Y} \rightarrow (\bar{\cX}' \setminus \cD'^{c}_{E'})|_{T_Y}.\]
\end{prop}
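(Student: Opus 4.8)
The plan is as follows. By Proposition~\ref{trivialboundary} there is a canonical identification $\bar{\cX}\setminus\cD=\cX=\bar{\cX}'\setminus\cD'$, and the rational map $\bar{\cX}\dashrightarrow\bar{\cX}'$ of the statement is the unique one extending $\mathrm{id}_{\cX}$. Since the two open subschemes in question are separated over $T_Y$ and each contains $\cX|_{T_Y}$ as a dense open, it suffices to exhibit \emph{any} isomorphism between them restricting to $\mathrm{id}_{\cX|_{T_Y}}$: it then automatically coincides with the given rational map. Thus the real content is that the partial compactification $(\bar{\cX}\setminus\cD^{c}_{E})|_{T_Y}$ of $\cX|_{T_Y}$ is determined by the immersed line $L\subset B$ carrying $E$, independently of the rest of the polygon.

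First I would reduce to the case of nested polygons. Replacing $F$ by $mF$ and $F'$ by $m'F'$ changes neither $\bar{\cX}$, $\bar{\cX}'$, $\cD_E$ nor $\cD'_{E'}$ (it only passes to a Veronese subring of the homogeneous coordinate ring; cf.\ the remark after Proposition~\ref{prop:fin-gen}), and replaces $E,E'$ by $mE,m'E'$. After this rescaling, $E$ and $E'$ both lie on the single immersed line $L$ through $p_1,p_2$ — the hypothesis that they share an escape ray is precisely what prevents $\overline{p_1p_2}$ from extending inside $B$ to two distinct immersed lines — and both contain $\overline{p_1p_2}$. By Lemma~\ref{lem:polygon}, $F,F'\subset Z(L)$; let $F''$ be the convex hull of $F\cup F'$, a bounded rational convex polygon with $0$ in its interior, with $F\cup F'\subset F''\subset Z(L)$ and with $F''\cap L$ a face of $F''$ containing $\overline{p_1p_2}$, hence an edge $E''$ on $L$ with $E,E'\subset E''$ and $\operatorname{cone}(E)=\operatorname{cone}(E'')=\operatorname{cone}(E')$ (the cone spanned in $B$ with apex $0$). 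It then suffices to prove: for bounded rational convex polygons $F\subset F''$ with $0$ interior and edges $E\subset E''$ on a common line, the rational map restricts to an isomorphism $(\bar{\cX}_F\setminus\cD^{c}_{E})|_{T_Y}\xrightarrow{\ \sim\ }(\bar{\cX}_{F''}\setminus\cD^{c}_{E''})|_{T_Y}$; applying this to $(F,E)\subset(F'',E'')$ and to $(F',E')\subset(F'',E'')$ and composing gives the proposition. (When a $1$-gon occurs, the requirement that $p_1,p_2$ be interior keeps us away from the node of $\cD$, where the local picture is read off from the explicit equation~\eqref{eq:eq-V_1}, and similarly from~\eqref{eq:eq-V_2} when $n=2$.)

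Next I would compute both partial compactifications intrinsically. Using Proposition~\ref{trivialboundary} — that $\theta_qT^{g_F(q)}\bmod T$ is the toric section of the polarized complex $\Sigma_F\cap F$, nonvanishing exactly on the strata $\cD_{E^{\star}}$ with $q\in\operatorname{cone}(E^{\star})$ — one checks that the affine opens $D_+(\theta_qT^{g_F(q)})$ for $q\in\operatorname{relint}\operatorname{cone}(E)\cap B(\Z)$, together with $\cX=D_+(T)$, cover $\bar{\cX}_F\setminus\cD^{c}_{E}$, and that $\bigl((V_F)_{\theta_qT^{g_F(q)}}\bigr)_0$, base-changed over $T_Y$, equals $\{\,a\in A_{T_Y}[\theta_q^{-1}]\mid \operatorname{ord}_{\cD_E}(a)\ge 0\,\}$, where $A_{T_Y}=A\otimes_{\mathbb C[\NE(Y)]}\mathbb C[\Pic(Y)]$ and $\operatorname{ord}_{\cD_E}$ is the divisorial valuation along $\cD_E$ on $\mathbb C(\bar{\cX}_F)=\operatorname{Frac}(A)$. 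Here $g_F$ denotes the height function attached to $F$. The key lemma is then that $\operatorname{ord}_{\cD_E}$ depends only on $L$: one has $\operatorname{ord}_{\cD_E}(z^C)=0$ for $C$ effective, $\operatorname{ord}_{\cD_E}(\theta_r)$ is minus an ``$L$-height'' of $r$ which is manifestly a function of $L$ alone, and $\operatorname{ord}_{\cD_E}$ is a monomial valuation, i.e.\ $\operatorname{ord}_{\cD_E}\bigl(\sum_i z^{C_i}\theta_{r_i}\bigr)=\min_i\operatorname{ord}_{\cD_E}(\theta_{r_i})$ with no cancellation. Granting this, $\operatorname{ord}_{\cD_E}=\operatorname{ord}_{\cD_{E''}}$, hence $\bigl((V_F)_{\theta_qT^{g_F(q)}}\bigr)_0=\bigl((V_{F''})_{\theta_qT^{g_{F''}(q)}}\bigr)_0$ over $T_Y$ for every $q\in\operatorname{relint}\operatorname{cone}(E)$; these identifications are compatible with the embeddings of $\cX$ and with one another, so they glue to the desired isomorphism over $\cX|_{T_Y}$.

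The main obstacle is the key lemma, i.e.\ controlling $\operatorname{ord}_{\cD_E}$. To establish it I would follow the strategy of the proof of Proposition~\ref{compa-fiber}: the relative torus $T^D$ acts equivariantly on $\bar{\cX}_F$ (Theorem~\ref{thm:rel-equi}) and on the birational map, and a suitable one-parameter subgroup sends every point of $T_Y$ into the fibre over $0\in S$; over a neighbourhood of that fibre there is no scattering, so $\bar{\cX}_F$ is a purely toric Mumford degeneration, which near $\cD_E$ is the explicit one attached to the cone $\operatorname{cone}(E)$ with the integral-affine charts of $B$ and the bending data of $\varphi$ — from this one reads off $\operatorname{ord}_{\cD_E}$ as a monomial valuation determined by $L$. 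The no-cancellation statement is then a broken-line bookkeeping argument parallel to Claim~\ref{mult-str} (using the weight/height identity of Lemma~\ref{lem:eq-hw}): every non-leading term of a product of theta functions carries an extra effective class $C$ with positive pairing against the relevant boundary, hence strictly larger $\operatorname{ord}_{\cD_E}$. Finally one transports these local computations back over all of $T_Y$ via the $T^D$-action and treats the $1$- and $2$-gon cases near the node separately using~\eqref{eq:eq-V_1}--\eqref{eq:eq-V_2}; these last points are routine but must be carried out.
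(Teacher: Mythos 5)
Your route is genuinely different from the paper's, and considerably heavier. The paper's proof is a short chart-based argument: it takes the affine opens $U=D_+(X_{p_1}X_{p_2})\subset(\bar\cX\setminus\cD_E^c)|_{T_Y}$ and $U'=D_+(X'_{p_1}X'_{p_2})$, notes via Claim \ref{mult-str} that $X_{p_1}X_{p_2}$ vanishes on every boundary component except $\cD_E$ (so $U$, resp.\ $U'$, already contains the entire boundary of the partial compactification), observes via Proposition \ref{thetamarking} that $\vartheta_{p_1}/\vartheta_{p_2}$ is regular on and a coordinate for both boundaries, and concludes that $U\dashrightarrow U'$ is an isomorphism because it is one both on the dense open part $\cX|_{T_Y}$ and on the boundary. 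Neither your convex-hull reduction nor any valuation-theoretic characterization of the partial compactification is needed; the single function $\vartheta_{p_1}/\vartheta_{p_2}$ on the two shared adjacent lattice points does all the work your ``$L$-height'' is meant to do.

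The substantive problem with your version is that the key lemma carries the entire mathematical content and is only sketched, and its central assertion is stronger than anything the paper establishes. The claim that $\operatorname{ord}_{\cD_E}$ is ``monomial'', i.e.\ $\operatorname{ord}_{\cD_E}\bigl(\sum_i z^{C_i}\theta_{r_i}\bigr)=\min_i\operatorname{ord}_{\cD_E}\bigl(z^{C_i}\theta_{r_i}\bigr)$ for arbitrary finite sums, does not follow from a bookkeeping argument ``parallel to Claim \ref{mult-str}'': that claim concerns products of two theta functions modulo $T\cdot V$, not general linear combinations. For $r$ outside the cone over $E$ the leading term of $\theta_r$ along $\cD_E$ is a sum of monomials coming from several broken lines, and you must rule out cancellation between the leading terms of distinct $\theta_{r_i}$'s; without this, agreement of the two valuations on generators does not give agreement of the valuations, which is exactly what your identification of the two subrings requires. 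Two smaller points: $\operatorname{cone}(E)=\operatorname{cone}(E'')$ is false whenever $E\subsetneq E''$ (only the inclusion of relative interiors is true, and is what you actually use), and your description of the charts as $\{\operatorname{ord}_{\cD_E}\ge0\}$ silently uses normality of $\bar\cX|_{T_Y}$ along $\cD_E^{\circ}$, which does follow from Lemma \ref{lem:cartier} and Proposition \ref{compa-fiber} but must be invoked. If you replace the key lemma by the paper's direct observation about $\vartheta_{p_1}/\vartheta_{p_2}$, the rest of your scaffolding becomes unnecessary.
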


\begin{proof} For the convenience of notation, we assume that $F$ and $F'$ satisfy assumption \ref{ass-poly}. Let $V,V'$ be the homogeneous coordinate rings we get from $F$ and $F'$. Consider $X_{p_i} = \vartheta_{p_i} T^{m} \in V $ and  $X'_{p_i} = \vartheta_{p_i} T^{m'} \in V'(i=1,2)$. Then, $X_{p_1}\cdot X_{p_2}$ and $X'_{p_1}\cdot X'_{p_2}$ vanishes on $\cD^{c}_{E}$ and $\cD'^{c}_{E'}$ respectively by Claim \ref{mult-str}. Let $U \subset (\bar{\cX} \setminus \cD^{c}_{E})|_{T_Y}$ and $U' \subset (\bar{\cX}' \setminus \cD'^c_{E'})|_{T_Y}$ be the affine charts given by $X_{p_1}\cdot X_{p_2}\neq 0$ and $X'_{p_1}\cdot X'_{p_2}\neq 0$ respectively. By Proposition \ref{trivialboundary}, $(\bar{\cX} \setminus \cD)|_{T_Y}$ and $(\bar{\cX}' \setminus \cD')|_{T_Y}$ are isomorphic to $\cX|_{T_Y}$. Moreover, $U$ and $U'$ contains the boundary of $(\bar{\cX} \setminus \cD^{c}_{E})|_{T_Y}$ and $(\bar{\cX}' \setminus \cD'^{c}_{E'})|_{T_Y}$ respectively. Thus, to prove the proposition, it suffices to show that the rational map $U \dashrightarrow U'$ is an isomorphism. Notice that $\vartheta_{p_1} / \vartheta_{p_2}$ are regular on  the boundaries  of both $(\bar{\cX} \setminus \cD_E^c)|_{T_Y}$ and $(\bar{\cX}' \setminus \cD'^{c}_{E'})|_{T_Y}$. Therefore, the rational map $U \dashrightarrow U'$ induces an isomorphism between the boundaries of $(\bar{\cX} \setminus \cD_E^c)|_{T_Y}$ and $(\bar{\cX}' \setminus \cD'^{c}_{E'})|_{T_Y}$. Since $U \dashrightarrow U'$ is also an isomorphism away from the boundaries, it is actually an isomorphism. 
\end{proof}

\subsection{The Canonical Compactification}\label{ssec:cano-poly} 
We are now ready to construct the polygons that will be used to compactify the mirror family. 

\begin{defn}  \label{def:para-poly}
Let $W=\sum_i a_{i}D_{i}$ be a Weil divisor with $a_{i}>0$ such that $W \cdot D_i \geq 0$ for each $i$. Let $L_i$ be the immersed line going to infinity which is parallel to $\rho_i$ and lattice distance $a_{i}$ from $\rho_{i}$. Then, we define the polygon associated with $W$ to be:
\begin{align}
    P(W):=\bigcap_{i} Z(L_{i}). \label{inter-hal}
\end{align}
We call such polygon $P(W)$ the \emph{parallel polygon} associated with the Weil divisor $W$. 
\end{defn}

Let us first consider the cases where $(Y,D)$ has a toric model. For each $i$, let $k_i$ be the number of non-toric blowups along $\bar{D}_i$ for the given toric model $p:(Y,D)\rightarrow (\bar{Y},\bar{D})$.  Denote the toric fan for $\bar{Y}$ by $\bar{\Sigma}$. The Weil divisor, $\bar{W}=\sum a_{i}\bar{D}_{i}$, determines a polytope
\[ P(\bar{W}):=\{m\in \bar{M}:\bar{v}_i \wedge m\ge -a_i \text{ for all } i\}\]
where $\bar{v}_i$ is the first lattice point on the ray $\bar{\rho_i}$. Then, for every $\bar{\rho}_i$, there is an edge $\bar{F}_i\subset P(\bar{W})$ such that $\bar{F}_i$ is parallel to $\bar{\rho}_i$ and is at distance $a_i$ away from it on the right. Furthermore, by standard toric geometry, the length $l(\bar{F}_i)$ of the edge $\bar{F}_{i}$ is $\bar{W}\cdot \bar{D}_i$. We have the following lemma immediately:

\begin{lem} \label{lem:edge-length}
$W\cdot D_{j}>0$ if and only if $l(\bar{F}_{j})>k_{j}a_{j}$,
where $k_{j}$ is the number of non-toric blow up along $\bar{D}_{j}$.
\end{lem}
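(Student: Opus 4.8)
The statement relates an intersection number on $Y$ to a length in the toric polytope $P(\bar W)$ downstairs. The plan is to reduce everything to the toric picture on $(\bar Y,\bar D)$ via the toric model $p\colon(Y,D)\to(\bar Y,\bar D)$, and then compute. First I would record the relation between intersection numbers upstairs and downstairs: if $p$ performs $k_j$ non-toric blowups on $\bar D_j$ with exceptional curves $E_{j,1},\dots,E_{j,k_j}$, then the strict transform $D_j$ satisfies $D_j = p^{*}\bar D_j - \sum_{\ell} E_{j,\ell}$, while $W = \sum_i a_i D_i = p^{*}\bar W - \sum_{j,\ell} a_j E_{j,\ell}$ (using $D_i = p^*\bar D_i$ for components not blown up, and the given formula on the others). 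Intersecting, and using the projection formula $p^{*}\bar W\cdot E_{j,\ell}=0$, $p^{*}\bar W\cdot p^{*}\bar D_j = \bar W\cdot\bar D_j$, and $E_{j,\ell}\cdot E_{j,\ell'} = -\delta_{\ell\ell'}$ together with $E_{j,\ell}\cdot p^*\bar D_j = 1$, I get
\[
W\cdot D_j \;=\; \bar W\cdot\bar D_j \;-\; k_j a_j.
\]

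With this identity in hand, the lemma is immediate: by standard toric geometry (already invoked in the paragraph preceding the statement), the edge $\bar F_j\subset P(\bar W)$ parallel to $\bar\rho_j$ has lattice length $l(\bar F_j) = \bar W\cdot\bar D_j$. Therefore $W\cdot D_j > 0$ if and only if $\bar W\cdot\bar D_j > k_j a_j$, i.e. if and only if $l(\bar F_j) > k_j a_j$. That is exactly the claim. So the proof is essentially the one-line combination of the blowup intersection computation with the toric length formula.

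The only genuine point requiring care — and what I expect to be the main (if modest) obstacle — is bookkeeping the decomposition $W = p^{*}\bar W - \sum_{j,\ell} a_j E_{j,\ell}$ correctly, in particular making sure the coefficient of each exceptional curve $E_{j,\ell}$ is exactly $a_j$ (the coefficient of the component $\bar D_j$ being blown up), and that cross-terms $E_{j,\ell}\cdot E_{j',\ell'}$ with $j\neq j'$ or with $E$'s over distinct points vanish. This is routine since all blown-up points are distinct smooth points of $\bar D$, so the exceptional curves are disjoint $(-1)$-curves, each meeting exactly one component of the strict transform of $\bar D$ transversally once. One should also note the degenerate case $W\cdot D_j = 0$ is consistent: then $l(\bar F_j) = k_j a_j$, matching the ``only if'' direction. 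I would present the computation in a short displayed alignment and then conclude.
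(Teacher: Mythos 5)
Your proposal is correct and rests on exactly the same key identity as the paper, namely $W\cdot D_j=\bar W\cdot\bar D_j-k_ja_j$, combined with the toric fact $l(\bar F_j)=\bar W\cdot\bar D_j$. The only difference is the route to that identity: the paper computes $W\cdot D_j=a_{j+1}D_{j+1}\cdot D_j+a_jD_j^2+a_{j-1}D_{j-1}\cdot D_j$ directly and observes that non-toric blowups leave the off-diagonal intersection numbers unchanged while dropping $D_j^2$ to $\bar D_j^2-k_j$; you instead expand $W=p^*\bar W-\sum_{i,\ell}a_iE_{i,\ell}$ and $D_j=p^*\bar D_j-\sum_\ell E_{j,\ell}$ and apply the projection formula. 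Your route is, if anything, slightly cleaner since it does not implicitly assume $D_j$ meets exactly two other components. One small slip: you list $E_{j,\ell}\cdot p^*\bar D_j=1$ among the facts used, but by the projection formula (which you also invoke) this intersection is $0$; it is $E_{j,\ell}\cdot D_j=1$ for the \emph{strict} transform. With the correct value the expansion gives $W\cdot D_j=\bar W\cdot\bar D_j+\sum_{\ell}a_jE_{j,\ell}^2=\bar W\cdot\bar D_j-k_ja_j$, which is the identity you state, so the conclusion stands; just fix that one auxiliary equality.
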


\begin{proof}
This can be directly computed:
\begin{align*}
W\cdot D_{j} & =a_{j+1}D_{j+1}\cdot D_{j}+a_{j}D_{j}^{2}+a_{j-1}D_{j-1}D_{j}\\
 & =a_{j+1}\bar{D}_{j+1}\cdot\bar{D}_{j}+a_{j}(\bar{D}_{j}^{2}-k_{j})+a_{j-1}\bar{D}_{j-1}\bar{D}_{j}\\
 & =\bar{W}\cdot\bar{D}_{j}-k_{j}a_{j}
\end{align*}
where $\bar{W}=\sum a_i\bar{D}_i$.
\end{proof}

\begin{defn}
Recall that $(Y,D)$ is positive if and only if there exists $a_i >0$ $(1\leq i \leq n)$ such that $W \cdot D_j >0$ for each $j$ where $W=\sum_{i=1}^{n} a_iD_i $. We call such Weil divisor $W$ a \emph{$D$-ample Weil divisor}. 
\end{defn}

\begin{lem}\label{lem:const-par-poly}
Suppose $(Y,D)$ has a toric model and $W$ is a $D$-ample Weil divisor. Then, $P(W)$ is a bounded, convex, nonsingular, integral polygon with $0$ in its interior. Moreover, for each $i$, $Z(L_i)$ is a supporting half space of $P(W)$. 
\end{lem}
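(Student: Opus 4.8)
The plan is to reduce the statement to the already-understood toric picture by comparing $P(W)$ with the honest toric polytope $P(\bar W)$ in $\bar M$. First I would fix a toric model $p:(Y,D)\to(\bar Y,\bar D)$ and the toric fan $\bar\Sigma$, and recall from the discussion preceding the lemma that $P(\bar W)=\{m\in\bar M_\R : \bar v_i\wedge m\ge -a_i\ \text{for all }i\}$ is a bounded convex lattice polytope (this uses that $\bar W$ is ample on $\bar Y$, which follows from $W$ being $D$-ample via Lemma~\ref{lem:edge-length}, since $W\cdot D_j>0$ forces $\bar W\cdot\bar D_j>0$). Its edge $\bar F_i$ parallel to $\bar\rho_i$ sits at lattice distance $a_i$ and has length $\bar W\cdot\bar D_i$. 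The integral affine manifold $B_{(Y,D)}$ and $B_{(\bar Y,\bar D)}$ are identified (toric blowups do not change the affine structure), and under this identification the immersed line $L_i$ is exactly the line carrying $\bar F_i$; so by Lemma~\ref{lem:half-space-convex} each $Z(L_i)$ is a convex polygon in $B$ and $P(W)=\bigcap_i Z(L_i)$ by definition.

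The key point is that $Z(L_i)$, as a subset of $B_{(Y,D)}$, coincides with the honest toric half-space $\{m:\bar v_i\wedge m\ge -a_i\}$ except possibly near the rays $\bar\rho_i$, and that intersecting all of them cuts $P(\bar W)$ down along each edge. Concretely, on each maximal cone $\sigma_{j,j+1}$ the affine chart $\psi_j$ of $B_{(Y,D)}$ agrees with the toric chart of $\bar\Sigma$, so $P(W)\cap\sigma_{j,j+1}$ is computed by exactly the same linear inequalities as $P(\bar W)$, hence is a polygon; this gives that $P(W)$ is a (bounded, since contained in $P(\bar W)$) polygon with $0$ in its interior. Convexity of $P(W)$ is then immediate from Lemma~\ref{lem:half-space-convex} together with the corollary that a polygon is convex iff it is an intersection of generalized half-spaces. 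For nonsingularity I would argue at each vertex $v$: a vertex of $P(W)$ is the intersection of two consecutive $Z(L_i)\cap Z(L_{i+1})$, which meet along the ray $\rho_{i}$ after possibly one of the edges has been shortened by a non-toric blowup; in the chart $\psi_i$ the two edge directions are the toric primitive generators (from $\bar F_{i-1}$ and $\bar F_i$, i.e.\ the primitive generators of the two boundary rays of the toric cone), which form a $\Z$-basis of $\Lambda_v$ because $\bar Y$ is smooth — here one uses Lemma~\ref{lem:edge-length} to see that after the $k_j$ non-toric blowups the relevant edge still has positive length $W\cdot D_j>0$, so the vertex does lie on $\rho_j$ and the local picture is the smooth toric cone corner.

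Finally, that each $Z(L_i)$ is a supporting half-space of $P(W)$ amounts to showing $P(W)\cap L_i$ is a full edge (not empty, not a single point): by Lemma~\ref{lem:polygon} it suffices to exhibit an edge of $P(W)$ lying on $L_i$, and the edge $F_i$ obtained from $\bar F_i$ by removing the length-$k_ia_i$ portion eaten by the non-toric blowups is nonempty precisely because $l(\bar F_i)=\bar W\cdot\bar D_i>k_ia_i$ by Lemma~\ref{lem:edge-length}, the latter inequality being the $D$-ampleness $W\cdot D_i>0$. The main obstacle I anticipate is the bookkeeping at the rays $\rho_i$: one must check carefully that passing between the toric chart and the chart $\psi_i$ of $B_{(Y,D)}$ does not distort the inequality defining $Z(L_i)$, i.e.\ that the "wedge" definition of the generalized half-space really reproduces the toric linear inequality on the cones adjacent to $\rho_i$ — this is where Construction~\ref{def:construction of fan of cones meeting an immersed line} and the fact (Corollary~\ref{ray-escape}) that $L_i$ genuinely escapes to infinity in $B$ get used, and it is the step that requires the most care rather than the convexity or boundedness claims, which are formal once the chartwise comparison is in place.
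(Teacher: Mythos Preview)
There is a genuine error in your reduction. You write that ``$B_{(Y,D)}$ and $B_{(\bar Y,\bar D)}$ are identified (toric blowups do not change the affine structure)'', but the toric model $p:(Y,D)\to(\bar Y,\bar D)$ is built from \emph{non-toric} blowups, not toric ones. Non-toric blowups lower the self-intersection $D_i^2=\bar D_i^2-k_i$, and the affine charts $\psi_i$ of $B_{(Y,D)}$ are defined using $D_i^2$, so the two integral affine manifolds are \emph{not} the same: $B_{(\bar Y,\bar D)}$ is $\R^2$ with no singularity, whereas $B_{(Y,D)}$ has a genuine singularity at the origin whenever some $k_i>0$. Consequently your claim that ``on each maximal cone $\sigma_{j,j+1}$ the affine chart $\psi_j$ of $B_{(Y,D)}$ agrees with the toric chart of $\bar\Sigma$'' is false, and the inequalities cutting out $P(W)$ in $B_{(Y,D)}$ are not literally the toric inequalities for $P(\bar W)$. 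The immersed line $L_i$ in $B_{(Y,D)}$ is not the same object as the toric line carrying $\bar F_i$ in $\R^2$; they live in different affine manifolds.

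The paper supplies exactly the missing bridge you need: it invokes Symington's construction to realize $B_{(Y,D)}$ as obtained from $\R^2$ by excising, for each $j$, a triangle of base length $k_ja_j$ on the edge $\bar F_j$ (this is possible precisely because $l(\bar F_j)>k_ja_j$ by Lemma~\ref{lem:edge-length}) and regluing. One then checks the resulting affine manifold $B'$ satisfies $D_i^2 w_i+w_{i-1}+w_{i+1}=0$ at each ray, hence is isomorphic to $B_{(Y,D)}$, and the residual polygon $P(W)'$ is identified with $P(W)$. All the properties you want (bounded, convex, nonsingular, integral, $0$ interior, each $Z(L_i)$ supporting) are then read off from the toric side. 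Your intuition about comparing with $P(\bar W)$ and using Lemma~\ref{lem:edge-length} is exactly right, but the comparison has to go through this cut-and-glue construction rather than a direct identification of affine manifolds.
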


\begin{proof} 
We argue  that $P(W)$ is as claimed indirectly by applying a construction of Symington \cite{Sym}. By Lemma \ref{lem:edge-length}, we can draw a triangle whose base has length $k_j a_j$ and is on the edge $\bar{F}_{j}$ such that the base of the triangle avoids the vertices of $\bar{F}_j$. For each triangle, cut out the cone spanned by the interior of the triangle. Then, we glue the two rays of each cone and thus get an integral affine manifold $B'$ with the singularity at the origin $0$. Moreover, if we denote what is remaining of $P(\bar{W})$ after cutting and gluing by $P(W)'$, then $P(W)'$ is a bounded, convex, nonsingular, integral polygon on $B'$ with $0$ in its interior and each edge corresponding to an edge in $P(\bar{W})$. 

The integral affine structure of $B'$ is defined by the requirement that each edge in $P(W)'$ remains straight. Consider a triangle that has its base on an edge $\bar{F_j}$ with endpoints $a$ and $a'$. Then, $a$ and $a'$ are identified in $B'$ and a piecewise linear function, $f$, on $B'$ is linear if and only if:
\[ f(a+v)+f(a-v)=2f(a)\]
for any vector $v$ that is parallel to $\bar{F}_j$. 

Let $F'_i$ be the edge
of $P(W)'$ corresponding to the edge $\bar{F}_i$ in $\bar{P}(W)$. For each $i$, let $R_{i}$ be the forward ray spanned by $F'_i$, i.e., the ray containing $F'_i$ whose endpoint is the intersection point of $\bar{F}_i$ and $\bar{F}_
{i-1}$ and whose orientation agrees with the canonical orientation of the immersed line spanned by $F'_i$.   Then, we parallel transport $R_i$ along $F'_{i+1}$. As we do this, the distance between $R_i$ and $0$ decreases, and at some point, the distance will be $0$, which means we get a ray $\rho'_i\in B'$ emanating from $0 \in B'$ and parallel to $F'_i$. Then the collection of rays $\rho'_i$ gives a fan $\Sigma'$ in $B'$. Notice that $\rho'_i$ is parallel to $F'_i$. 

Note that there is a natural orientation preserving bijection $\delta$ between  $\Sigma$ and $\Sigma'$ which sends $\rho_i$ to $\rho_i'$ and is linear on each cone in $\Sigma^{'}$. We show that this bijection is an isomorphism between integral affine manifolds. Denote by $v_{i}$ the vertex shared by
$F_{i}^{'}$ and $F_{i+1}^{'}$. Let $w_{i},w_{i+1},w_{i-1}$ be the
primitive vectors in $T_{v_{i}}B^{'}$ parallel to $\rho_{i}^{'}$,
$\rho_{i+1}^{'}$ and $\rho_{i-1}^{'}$ respectively. It follows immediately
from our definition of the integral affine linear structure of $B^{'}$
that 
\[
D_{i}^{2}w_{i}+w_{i-1}+w_{i+1}=0.
\]
Hence, $\delta$ is an isomorphism of integral affine linear manifolds. It is clear that under this identification, $P(W)'$ is identified with $P(W)$ and therefore the statement of the lemma  holds for $P(W)$ since it holds for $P(W)'$. 
\end{proof}

\begin{prop}\label{constructpoly}
For any positive Looijenga pair $(Y,D)$ and a $D$-ample Weil divisor  $W$ on $Y$, the  polygon   $P(W)$ is bounded, convex, nonsingular, integral, and with $0$ in its interior. Moreover, each half space in the intersection \ref{inter-hal} is a supporting half space for $P(W)$.  
\end{prop}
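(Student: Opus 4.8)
The plan is to reduce to the case already handled in Lemma~\ref{lem:const-par-poly}, where $(Y,D)$ admits a toric model, by passing to a toric blowup. Recall that a toric blowup $\pi\colon(\tilde Y,\tilde D)\to(Y,D)$ does not change the underlying integral affine manifold, so $B_{(Y,D)}=B_{(\tilde Y,\tilde D)}$ and the half spaces $Z(L_i)$, hence $P(W)$, are literally the same subsets of $B$ whether computed upstairs or downstairs. By Proposition~1.3 of \cite{GHK} we may take $\pi$ so that $(\tilde Y,\tilde D)$ has a toric model, and after refining $\pi$ further we may also assume every vertex of $P(W)$ lies on a ray of $\tilde\Sigma$.

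First I would record the soft properties of $P(W)$ directly from the convex geometry of \S\ref{sec:compa}. By Lemma~\ref{lem:half-space-convex} each $Z(L_i)$ is a convex polygon, in particular contains $0$ in its interior; hence $P(W)=\bigcap_iZ(L_i)$ contains $0$ in its interior, is convex (an immersed segment with endpoints in $P(W)$ lies in each $Z(L_i)$, hence in $P(W)$), meets each maximal cell in a polygon, and is rational since $W$ is integral. Boundedness is the one point that uses positivity of $(Y,D)$: by Corollary~\ref{ray-escape} every immersed ray from $0$ escapes to infinity inside some maximal cone $\sigma_{k,k+1}$, and in the chart $\psi_{k+1}$ the cone $\sigma_{k,k+1}$ is the first quadrant while $L_{k+1}$ is the vertical line $\{x=a_{k+1}\}$, so such a ray leaves $Z(L_{k+1})$ and hence $P(W)$ (when the escape direction is one of the $\rho_\ell$ themselves, the same argument in the neighbouring cone applies); a convex polygon with $0$ in its interior containing no ray from $0$ is bounded.

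It remains to get nonsingularity and the supporting-half-space claim, and for this I would transport the question to $(\tilde Y,\tilde D)$. For each $\pi$-exceptional component $E$ one has $E^2=-1$, and tracing through the toric model shows $E$ is the strict transform of a toric $(-1)$-curve over which no non-toric blowup occurs. I would then define a Weil divisor $\tilde W=\sum_ia_i\tilde D_i+\sum_E\tilde a_EE$ on $\tilde Y$ by choosing each $\tilde a_E>0$ so that the line parallel to $\rho_E$ at lattice distance $\tilde a_E$ is a supporting line of the bounded convex polygon $P(W)$; then $Z(\tilde L_E)\supseteq P(W)$, the extra half spaces are redundant in \eqref{inter-hal}, and $P(\tilde W)=P(W)$. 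Since $\tilde W\cdot\tilde D_i=W\cdot D_i>0$ on the strict transforms of the original components and $\tilde W\cdot\tilde D_j\ge0$ on every component, $\tilde W$ is an admissible divisor in the sense of Definition~\ref{def:para-poly}, and adjusting the $\tilde a_E$ within the allowed range makes it $\tilde D$-ample. Now Lemma~\ref{lem:const-par-poly}, applied to $(\tilde Y,\tilde D)$ and $\tilde W$, gives that $P(\tilde W)$ is bounded, convex, nonsingular, integral, with $0$ in its interior, and that each $Z(\tilde L_j)$ is a supporting half space; since $P(\tilde W)=P(W)$ and $\tilde L_i=L_i$ for each original component (same parallel line, same distance $a_i$), this is exactly the assertion of the proposition, including that each half space in \eqref{inter-hal} is supporting.

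The main obstacle I anticipate is the construction of $\tilde W$ in the previous paragraph: one must arrange $P(\tilde W)=P(W)$ and $\tilde W$ $\tilde D$-ample \emph{simultaneously}, and the rays $\rho_E$ introduced only to produce a toric model need not pass through vertices of $P(W)$, in which case $P(\tilde W)=P(W)$ forces $\tilde W\cdot E=0$, i.e.\ $\tilde W$ fails to be $\tilde D$-ample along $E$. Reconciling this is the crux; if no clean simultaneous choice exists, the fallback is to prove a version of Lemma~\ref{lem:const-par-poly} allowing $\tilde W\cdot\tilde D_j\ge0$ with equality, tracking via Lemma~\ref{lem:edge-length} the degeneration of the corresponding edge of the toric polytope $P(\bar{\tilde W})$ to length zero in Symington's cut-and-glue, and checking that nonsingularity survives this degeneration.
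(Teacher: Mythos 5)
Your overall strategy — pass to a toric blowup $(\tilde Y,\tilde D)$ admitting a toric model, note that $B$ and hence $P(W)$ are unchanged, and reduce to Lemma~\ref{lem:const-par-poly} — is exactly the paper's, and your treatment of the soft properties (convexity, $0$ in the interior, boundedness via Corollary~\ref{ray-escape}) is fine; the paper dismisses these as straightforward. But the primary construction you propose for $\tilde W$ cannot be salvaged, for precisely the reason you flag: by the edge-length computation (Lemma~\ref{lem:edge-length}, equivalently Lemma~\ref{nefpoly}), $\tilde W\cdot E>0$ forces the polygon $P(\tilde W)$ to have an edge of positive lattice length parallel to $\rho_E$, whereas $P(\tilde W)=P(W)$ forces that edge to be absent whenever $\rho_E$ was introduced only to reach a toric model. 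So there is no "allowed range" in which to adjust $\tilde a_E$; the two requirements are incompatible except in degenerate coincidences, and a proof that hedges on this point has a genuine gap.

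Your one-sentence "fallback" is in fact the paper's actual argument, and it needs to be promoted to the main line and executed. The paper takes $W'=\pi^{*}W$, so that automatically $W'\cdot \tilde D_i=W\cdot D_i>0$ on strict transforms and $W'\cdot E=0$ on every $\pi$-exceptional component, and then runs Symington's cut-and-glue on the toric polytope $P(\bar W')$ with the degenerate feature that for each exceptional $E$ the triangle to be excised has base equal to the \emph{entire} edge parallel to the ray of $p_{*}(E)$ (rather than a proper subsegment avoiding the vertices, as in Lemma~\ref{lem:const-par-poly}). One then checks that what survives the cutting is $P(W')=P(W)$, that the supporting half spaces of the result are exactly those indexed by the original rays of $\Sigma_{(Y,D)}$, that convexity holds locally at each vertex because $\Sigma_{(\tilde Y,\tilde D)}$ refines $\Sigma_{(Y,D)}$ and Lemma~\ref{lem:const-par-poly} gives local convexity in the refined fan, and that nonsingularity persists because wedge products of integral vectors are preserved under parallel transport. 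These verifications are short but they are the content of the general case; without them your proof is a correct reduction plus an unproven lemma. (Two incidental inaccuracies that are not load-bearing: exceptional components of an iterated toric blowup need not have self-intersection $-1$, and you do not need every vertex of $P(W)$ to lie on a ray of $\tilde\Sigma$ for this proposition.)
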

\begin{proof}
If $(Y,D)$ has a toric model, then the statement of this Proposition follows immediately from Lemma \ref{lem:const-par-poly}. 
For the general case, let $\pi:(Y',D') \rightarrow (Y,D)$ be a toric blowup such that $(Y',D')$ has a toric model $p:(Y',D')\rightarrow (\bar{Y}', \bar{D}')$. Let $W' = \pi^{*}W$ and $D'_i$ be the strict transform of $D_i$ for each $i$. Furthermore, let us assume that $\pi$ is minimal, i.e., $\pi$ does not factor through a non-trivial toric blowdown $(Y',D') \rightarrow (Y'', D'')$ such that $(Y'',D'')$ has a toric model.  Then, the Weil divisor $\bar{W}' = p_*(W')$ is a $\bar{D'}$-ample divisor. 

By Lemma 1.6 of \cite{GHK}, $\Sigma_{(Y',D')}$ is a refinement of $\Sigma_{(Y,D)}$ and $B_{(Y,D)}$ and $B_{(Y',D')}$ are isomorphic as integral affine manifolds. Let $P(W')$ be the parallel polygon on $B_{(Y',D')}$ corresponding to $W'$. Consider the toric parallel polygon $P(\bar{W'})$ in $\mathbb{R}^{2}$. If we apply Symington's construction \cite{Sym} as in Lemma \ref{lem:const-par-poly}, then for each $j$, since $W' \cdot D'_j = W\cdot D_j >0$, we draw a triangle whose base is in the interior of the edge parallel to the ray corresponding to $p_*(D'_j)$.  If $E$ is an exceptional divisor of $\pi$, we have $W'\cdot E=0$. Then, we draw a triangle whose base is the entire edge parallel to the ray corresponding to $p_*(E)$. 
Then, after deleting the cones spanned by the interior of the triangles, we draw and identify the sides of each cone. As shown in Lemma \ref{lem:const-par-poly}, we get an integral affine manifold $B'$ isomorphic to $B_{(Y',D')}$. The remaining part of $P(\bar{W}')$ on $B'$ is the parallel polygon $P(W')$ and the supporting half spaces of $P(W')$ are precisely half spaces corresponding to original rays in $\Sigma_{(Y,D)}$. Since $B_{(Y',D')}$ has the same integral affine structure as $B_{(Y,D)}$, $P(W)=P(W')$. The statements that $P(W)$ is bounded, integral, and contains $0$ in its interior is straightforward. By Lemma \ref{lem:const-par-poly} and the fact that $\Sigma_{(Y',D')}$ is a refinement of $\Sigma_{(Y,D)}$, near each vertex of $P(W)$, the chopping down of $P(W)$ gives a convex polygon. Therefore, $P(W)$ itself is convex. That $P(W)$ is nonsingular follows from the parallel construction of the polygon and the fact that wedge products between integral vector fields are preserved under parallel transport.
\end{proof}

Thus, we have shown that for each  $D$-ample Weil divisor $W$, we get a bounded, nonsingular, integral, convex polygon $P(W)$ which then gives a compactification $\bar{\cX}(P(W)) \rightarrow \mathrm{Spec}(\mathbb{C}[\mathrm{NE}(Y)])$ of the mirror family $\cX \rightarrow \mathrm{Spec}(\mathbb{C}[\mathrm{NE}(Y)])$. As we will show in Proposition \ref{weilnomatter}, the compactification is actually canonical, in the sense that it is not affected by the choice of the $D$-ample Weil divisor $W$. 

\begin{lem}\label{lem:tor-bl-poly} 
Let $(Y,D)$ be a positive Looijenga pair and $W$ a $D$-ample Weil divisor on $Y$. Let $\pi:(Y,D) \rightarrow (Y',D')$ be a toric blowdown and $W' = \pi_{*}(W)$. If $D_j$ is not an exceptional divisor of $\pi$, then $L_j$ is the supporting half space of both $P(W)$ and $P(W')$. Moreover, in this case, if we let $F_j$ and $F'_j$ be the edges lying on $L_j$ in $P(W)$ and $P(W')$ respectively, then $F_j$ is contained in $F'_j$.
\end{lem}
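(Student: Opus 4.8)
The plan is to reduce everything to explicit convex geometry on $B$, exploiting that $\pi : (Y,D) \to (Y',D')$ is a toric blowdown and hence induces an isomorphism of integral affine manifolds $B_{(Y,D)} \xrightarrow{\sim} B_{(Y',D')}$ (Lemma~1.6 of \cite{GHK}), under which $\Sigma_{(Y,D)}$ refines $\Sigma_{(Y',D')}$. Write $W = \sum_i a_i D_i$; since $D_j$ is not $\pi$-exceptional, $D_j$ is the strict transform of a component $D'_j$ of $D'$ and $W' = \pi_*W$ has the same coefficient $a_j$ on $D'_j$. By Definition~\ref{def:para-poly}, the defining line $L_j$ for $P(W)$ is the immersed line going to infinity parallel to $\rho_j$ at lattice distance $a_j$ from $\rho_j$, and likewise for $P(W')$; under the affine identification of the two $B$'s, the ray $\rho_j$ and the lattice distance $a_j$ are unchanged, so $L_j$ is literally the same immersed line in both manifolds. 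Hence $Z(L_j)$ is the same half space appearing in both intersections \ref{inter-hal}.

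The first real step is to check that $L_j$ is a \emph{supporting} half space for $P(W')$ — this is part of Proposition~\ref{constructpoly} applied to $(Y',D')$ and the $D'$-ample divisor $W'$ (note $W'$ is $D'$-ample since $W'\cdot D'_k = W\cdot D_k>0$ for the non-exceptional $D'_k$, and the exceptional configuration has been contracted), so there is an edge $F'_j \subset P(W')$ lying on $L_j$. Similarly $L_j$ is supporting for $P(W)$ with edge $F_j$, by Proposition~\ref{constructpoly}. By Lemma~\ref{lem:polygon}, $F_j = P(W)\cap L_j$ and $F'_j = P(W')\cap L_j$. So the containment $F_j \subset F'_j$ is equivalent to the containment $P(W) \cap L_j \subset P(W')\cap L_j$, and since both $P(W)$ and $P(W')$ are intersections of the half spaces $Z(L_i)$ over their respective rays (Proposition~\ref{prop:edge-halfspace}, or directly Definition~\ref{def:para-poly}), it suffices to compare those intersections.

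The key observation is that the rays of $\Sigma_{(Y,D)}$ split into two types: those coming from rays of $\Sigma_{(Y',D')}$ (the strict transforms of components of $D'$) and the extra rays $\rho_e$ introduced by the toric blowup, corresponding to $\pi$-exceptional divisors $E$. For the first type, $W$ and $W'$ assign the same coefficient, so the half spaces $Z(L_i)$ agree. For the second type, there is simply no corresponding half space in the definition of $P(W')$, so
\[
P(W) \;=\; \Bigl(\bigcap_{i \text{ non-exc.}} Z(L_i)\Bigr) \cap \Bigl(\bigcap_{e \text{ exc.}} Z(L_e)\Bigr) \;\subseteq\; \bigcap_{i \text{ non-exc.}} Z(L_i) \;=\; P(W').
\]
Intersecting both sides with $L_j$ and invoking Lemma~\ref{lem:polygon} gives $F_j = P(W)\cap L_j \subseteq P(W')\cap L_j = F'_j$, which is the claim. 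One subtlety to dispatch along the way is that for an exceptional divisor $E$ we have $W\cdot E = \pi^*W'\cdot E = 0$ (indeed $W = \pi^*W'$ up to the fact that $\pi$ is a blowdown of $(-1)$-curves along $D$, and $\pi^*W'\cdot E = 0$ for exceptional $E$), which is exactly why, in Symington's construction used in the proof of Proposition~\ref{constructpoly}, the triangle cut off along the edge parallel to $\rho_e$ is the \emph{entire} edge — so $\rho_e$ does not pass through a vertex of $P(W)$ and cutting it does not create new supporting constraints beyond those from the non-exceptional rays. This is also the content we need to know that $P(W')$, viewed back inside $B_{(Y,D)}$, is obtained from $P(W)$ only by "unchopping" at the exceptional rays, so the non-exceptional edges $F_j$ can only grow.

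The main obstacle is purely bookkeeping: making precise that "$P(W)$ and $P(W')$ have the same half spaces except at exceptional rays" respects the affine identification, in particular that the immersed line $L_j$ really is carried to the corresponding immersed line and that the lattice-distance normalization is preserved — this is where one must cite Lemma~1.6 of \cite{GHK} carefully and use that wedge products of integral vector fields are parallel-transport invariant (as already invoked in the proof of Proposition~\ref{constructpoly}). Once that identification is pinned down, the containment is immediate from the displayed inclusion and Lemma~\ref{lem:polygon}; no genuinely new geometric input is required.
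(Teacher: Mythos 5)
Your core argument is correct and is essentially a streamlined version of the paper's. The paper makes the same comparison by writing $\pi^{*}(W')=\sum_i a'_iD_i$ and checking that $a'_i=a_i$ for non-exceptional $i$ while $a'_k>a_k$ for exceptional $k$, which identifies $P(W')$ with the parallel polygon $P(\pi^{*}W')$ on $B_{(Y,D)}$ and hence gives $P(W)\subseteq P(W')$; you obtain the same inclusion directly from the observation that, under the affine identification of Lemma 1.6 of \cite{GHK}, $P(W')$ is cut out by exactly the sub-collection of half spaces $Z(L_i)$ indexed by the non-exceptional rays. Both routes then conclude via Proposition \ref{constructpoly} and Lemma \ref{lem:polygon} that $F_j=P(W)\cap L_j\subseteq P(W')\cap L_j=F'_j$. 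Your version saves the intersection-number computation, at the price of having to verify separately that $W'$ is $D'$-ample so that Proposition \ref{constructpoly} applies to $(Y',D',W')$.

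On that last point and on your closing aside, two claims are wrong and should be removed or corrected, although neither is load-bearing for the displayed inclusion. First, $W'\cdot D'_k=W\cdot D_k$ is not an equality in general: by the projection formula $W'\cdot D'_k=\pi^{*}\pi_{*}(W)\cdot D_k$, and $\pi^{*}\pi_{*}(W)=W+\sum_e(a'_e-a_e)D_e$ with strictly positive coefficients on the contracted components, so one only gets $W'\cdot D'_k\geq W\cdot D_k>0$ (strict when $D_k$ meets a contracted component). The conclusion that $W'$ is $D'$-ample survives. Second, the parenthetical ``$W\cdot E=\pi^{*}W'\cdot E=0$'' and ``$W=\pi^{*}W'$'' are false: since $W$ is $D$-ample and the contracted curve $E=D_k$ is a component of $D$, we have $W\cdot E>0$, and by Proposition \ref{constructpoly} the half space $Z(L_e)$ at an exceptional ray \emph{is} a genuine supporting constraint for $P(W)$, with an edge of positive length lying on $L_e$ -- this is precisely why $P(W)$ is strictly smaller than $P(W')$ near $\rho_e$. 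The ``entire-edge triangle'' phenomenon in Symington's construction that you invoke applies to the divisor $\pi^{*}W'$, for which $\pi^{*}W'\cdot E=0$, i.e.\ to $P(W')=P(\pi^{*}W')$, not to $P(W)$. Once these remarks are deleted, the proof stands.
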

\begin{proof}
 Write $\pi^{*}(W')=\sum_i a'_iD_i$. We want to show that $a_i = a'_i$ if $D_i$ is not an exceptional divisor and $a'_i > a_i$ if $D_i$ is. It suffices to prove this for the case where $\pi$ has only one exceptional divisor $D_k$. In this case, $a_i = a'_i$ for $i\neq k$ is trivial. Since \[
\pi^{*}(W') \cdot D_k = -a'_k + a_{k-1} +a_{k+1} = 0 < W \cdot D_k = -a_k + a_{k-1} +a_{k+1},
\]
we have $a_k < a'_k$.

By Proposition \ref{constructpoly}, for each $j$, $L_j$ is a supporting half space of $P(W)$ and it is also a supporting half space for $P(W')$ if and only if $D_j$ is \emph{not} an exceptional divisor of $\pi$. If this is the case, by the numerical condition we just proved in the previous paragraph, it is easy to see that $F_j$ is contained in $F'_j$. 
\end{proof}

Let $E\subset P(W)$ be an edge and denote the corresponding boundary divisor by $\cD_E$. Let $\cD_E^{\circ} \subset \cD_E$ be the complement of the nodes of $\cD$.
\begin{prop}\label{thetamarking}
Let $p,q\in E\subset P(W)$ be consecutive lattice  points on the same edge. If $n=1$, we require these two lattice points are in the interior of the edge. Then, the rational function
$\vartheta_{p}/\vartheta_{q}$ is regular generically on $\mathcal{D}_{E}$ and restricts to a generator of the ring of regular function for each fiber in $\cD_E^{\circ}\mid_{T_Y}\rightarrow T_Y$.
\end{prop}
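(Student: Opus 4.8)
The plan is to reduce the statement to the explicit local computation over the deepest toric stratum and then spread it out to all of $T_Y$ using the torus action, exactly as in the proof of Proposition \ref{compa-fiber}. First I would observe that by Proposition \ref{trivialboundary} the boundary divisor $\cD_E \mid_{T_Y}$ is the trivial family over $T_Y$ with fiber a smooth rational curve (the component of the nodal cycle attached to the edge $E$; in the $1$-gon case, $\cD$ itself with its node removed is $\mathbb{G}_m$, which is why $p,q$ are required interior). So $\vartheta_p/\vartheta_q$ being a generator of the coordinate ring of each such fiber $\cong \mathbb{G}_m$ is equivalent to saying that, after multiplying out, $\vartheta_p/\vartheta_q$ restricts to $\lambda z^{\pm 1}$ for a coordinate $z$ and a nowhere-vanishing function $\lambda$ on $T_Y$. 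Regularity of $\vartheta_p/\vartheta_q$ generically on $\cD_E$ follows from Claim \ref{mult-str}: since $p,q$ are adjacent lattice points on the same edge, after passing to a toric blowup making each vertex of $F=P(W)$ a ray (Lemma \ref{lem:tor-bl-poly}, Lemma \ref{lem:tor-bl-compa}) the products $\vartheta_p T^{g(p)}\cdot (\text{rest of a covering set})$ only involve straight broken lines modulo $T\cdot V$, so on the chart $\vartheta_q \ne 0$ the function $\vartheta_p/\vartheta_q$ is regular along $\cD_E$.

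Next I would compute the restriction explicitly over the big toric degeneration. Take the Weil divisor $W=\sum a_i D_i$ as in the proof of Proposition \ref{compa-fiber}, with associated one-parameter subgroup $T^{W}\subset T^{D}$; for any $t\in T_Y$ the limit $t_0 = \lim T^{W}\cdot t$ lies over the zero stratum $0\in S$, and by the argument there the fiber $\bar{\cX}_{t_0}$ is the union of toric varieties with polarization the polyhedral decomposition $\Sigma_F\cap F$, with no scattering. On the toric piece corresponding to the maximal cone $\sigma$ of $\Sigma_F$ containing the edge $E$, the theta functions $\vartheta_r$ for $r$ in that cone are just the toric monomials $z^r$, so $\vartheta_p/\vartheta_q = z^{p-q}$, and since $p,q$ are \emph{adjacent} integral points of the edge $E$, the vector $p-q$ is a primitive generator of the tangent direction of $E$; hence $z^{p-q}$ is a coordinate on the corresponding $\mathbb{G}_m$, i.e.\ a generator of its ring of regular functions. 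This settles the claim over $t_0$.

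Finally I would propagate this from $t_0$ to all $t\in T_Y$. Because $\cD_E^{\circ}\mid_{T_Y}\to T_Y$ is a trivial $\mathbb{G}_m$-family and $T^{D}$ acts equivariantly on $\bar{\cX}$ (Remark after Proposition \ref{prop:fin-gen}, extending Theorem \ref{thm:rel-equi}), the function $\vartheta_p/\vartheta_q$ is a $T^{D}$-eigenfunction with character $w(p)-w(q)$; restricted to the trivialized family $\cD_E^{\circ}\mid_{T_Y}\cong T_Y\times \mathbb{G}_m$ it is therefore of the form $\lambda(t)\, z^{k}$ for some character $\lambda$ of $T_Y$ and some $k\in\Z$ constant on the connected base $T_Y$. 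Specializing along the orbit closure $T^{W}\cdot t$ to $t_0$ and using the computation of the previous paragraph forces $k=\pm 1$ and $\lambda$ nowhere vanishing, so $\vartheta_p/\vartheta_q$ generates the ring of regular functions of each fiber of $\cD_E^{\circ}\mid_{T_Y}\to T_Y$, as claimed. The one point requiring care — and the main obstacle — is the bookkeeping needed to make the toric blowup reduction compatible with the identification of $\cD_E$ and its marked points under Lemma \ref{lem:tor-bl-compa} and Proposition \ref{prop:paralleledgeiso}, so that "adjacent lattice points on $E$" is preserved and the $1$-gon exceptional case (where one must stay in the interior to avoid the node of $\mathbb{V}_1$, cf.\ equation \eqref{eq:eq-V_1}) is handled uniformly; the geometric content of the proposition is otherwise the elementary toric fact that consecutive lattice points of an edge give a monomial coordinate on the corresponding one-dimensional torus orbit.
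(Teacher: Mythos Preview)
Your approach is correct in spirit but considerably more elaborate than what the paper does, and in the process you end up implicitly relying on the very fact that makes the shortcut work. The paper's proof is essentially one line: Proposition \ref{trivialboundary} (via Claim \ref{mult-str}) already shows that on $V/(T\cdot V)$ only straight broken lines contribute, so restricted to each cone of $\Sigma_F$ the boundary ring is literally the toric homogeneous coordinate ring of a $\mathbb{P}^1$; this is exactly a Mumford degeneration over the \emph{whole} base $S=\Spec(\bC[\NE(Y)])$, and in that setting $\vartheta_p,\vartheta_q$ restrict to adjacent toric monomials, so $\vartheta_p/\vartheta_q$ is the coordinate on $\cD_E^\circ$ fiberwise. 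No degeneration to a special point or $T^D$-spreading is needed.

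Your route---degenerate $t\in T_Y$ to $t_0$ via $T^W$, compute there, and propagate back by equivariance---works, but the specialization step has a small gap: you identify $\vartheta_p/\vartheta_q$ on $\cD_E^\circ\mid_{T_Y}\cong T_Y\times\mathbb{G}_m$ as $\lambda(t)z^k$ and then ``specialize to $t_0$'', but $t_0\notin T_Y$, so you must first know that the trivialization and the coordinate $z$ extend compatibly to the orbit closure. Justifying that amounts to knowing the boundary is a trivial $\mathbb{G}_m$-family over the closure, i.e.\ the Mumford degeneration statement again. Once you grant that, the degeneration argument is redundant: the Mumford description already identifies $\vartheta_p/\vartheta_q$ with $z^{p-q}$ globally. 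So your proof is not wrong, but it circles back to the paper's direct observation; you could shorten it to the single sentence that Claim \ref{mult-str} makes the boundary ring toric, whence consecutive lattice points give the affine coordinate.
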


\begin{proof}
When $n\geq 2$, by Proposition \ref{trivialboundary},  $\mathcal{D}_E \rightarrow \Spec(\bC[\NE(Y)])$ is a purely Mumford degeneration as in Section 1.3 of \cite{GHK}. Then, the statement of the proposition follows.

When $n=1$, again, by Proposition \ref{trivialboundary},  $\mathcal{D}_E^{\circ} \rightarrow \Spec(\bC[\NE(Y)])$ is a Mumford degeneration and statement of the proposition still holds for this case.
\end{proof}

\begin{prop}\label{weilnomatter}
Let $W$ and $W'$ be two $D$-ample Weil divisors. Then the compactified families $\bar{\cX}(P(W))$ and $\bar{\cX}(P(W'))$ are isomorphic over the torus $T_Y$.
\end{prop}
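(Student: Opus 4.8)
The plan is to reduce, by elementary moves on the divisor, to situations covered by Proposition~\ref{prop:paralleledgeiso}, and then patch. Two preliminary remarks. First, the construction is rescaling-invariant: for $N\in\Z_{>0}$ one has $P(NW)=N\cdot P(W)$, because multiplication by $N$ is an automorphism of $(B,\Sigma)$ scaling all lattice distances by $N$; hence $V(P(NW))$ is the $N$-th Veronese subalgebra of $V(P(W))$ and so $\bar{\cX}(P(NW))=\bar{\cX}(P(W))$ over $S$. Second, the $D$-ample Weil divisors are exactly the integer points of an open convex polyhedral cone $\mathcal{C}\subset\R^{n}_{>0}$. Thus, after replacing $W,W'$ by $NW,NW'$ with $N\gg0$, the straight segment from $NW$ to $NW'$ lies in $\mathcal{C}$ at distance $\gg0$ from $\partial\mathcal{C}$, and rounding coordinates produces a lattice path $NW=W^{(0)},W^{(1)},\dots,W^{(r)}=NW'$ inside $\mathcal{C}$ with $W^{(k+1)}-W^{(k)}=\pm D_{j_k}$ and with all intersection numbers $W^{(k)}\cdot D_i$ as large as we wish. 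It therefore suffices to prove the statement for a single elementary move $W'=W+D_j$.

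Assume then $W'=W+D_j$, with $W,W'$ both $D$-ample. For $i\neq j$ the defining line $L_i$ of $P(W')$ coincides with that of $P(W)$, while for $i=j$ it is the parallel line one lattice unit farther from $0$, so directly from the definition $Z(L'_j)\supseteq Z(L_j)$ and hence $P(W)\subseteq P(W')$. Let $F_i\subset P(W)$ and $F'_i\subset P(W')$ be the edges parallel to $\rho_i$; by Proposition~\ref{constructpoly} these polygons are bounded, convex, nonsingular and integral, and since $W,W'$ are $D$-ample every $F_i,F'_i$ has positive length. For $i\neq j$, the edges $F_i$ and $F'_i$ lie on the common immersed line $L_i$ with $F_i\subseteq F'_i$, so (taking $m=m'=1$) they share at least two adjacent lattice points, interior ones after the preliminary rescaling when $n\le2$. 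For $i=j$, rescaling $F_j$ by $a_j+1$ and $F'_j$ by $a_j$ places both on the immersed line parallel to $\rho_j$ at lattice distance $a_j(a_j+1)$; a direct computation in the chart $\psi_j$ — using that $F_j$ runs from $L_{j-1}\cap L_j$ to $L_j\cap L_{j+1}$ and $F'_j$ from $L_{j-1}\cap L'_j$ to $L'_j\cap L_{j+1}$ with $L_{j\pm1}$ unchanged — shows the two scaled segments overlap in a lattice segment of length $a_j\,(W'\cdot D_j)$, which we have arranged to be at least $2$. In every case the escape ray of both $F_i$ and $F'_i$ is $\rho_i$.

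Proposition~\ref{prop:paralleledgeiso} now applies to each pair $(F_i,F'_i)$: the canonical birational map $f\colon\bar{\cX}(P(W))\dashrightarrow\bar{\cX}(P(W'))$ induced by the common open subscheme $\cX=\bar{\cX}(P(W))\setminus\cD=\bar{\cX}(P(W'))\setminus\cD'$ of Proposition~\ref{trivialboundary} restricts to an isomorphism $(\bar{\cX}(P(W))\setminus\cD_{F_i}^c)|_{T_Y}\xrightarrow{\ \sim\ }(\bar{\cX}(P(W'))\setminus\cD_{F'_i}^c)|_{T_Y}$ for every $i$, carrying $\cD_{F_i}$ to $\cD_{F'_i}$. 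Since $\bigcup_i\bigl(\bar{\cX}(P(W))\setminus\cD_{F_i}^c\bigr)$ is precisely the complement of the node locus $N$ of $\cD$ (a node lies in $\cD_{F_i}^c$ for every $i$), and likewise for $W'$, the map $f$ is an isomorphism $\bar{\cX}(P(W))|_{T_Y}\setminus N\xrightarrow{\ \sim\ }\bar{\cX}(P(W'))|_{T_Y}\setminus N'$.

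Finally, $f$ extends over the nodes. By Proposition~\ref{compa-fiber}, over $T_Y$ both total spaces are flat over the smooth base $T_Y$ with fibres having at worst du Val (hence normal) singularities, so both total spaces are normal; moreover the fibres are smooth along $\cD,\cD'$, so the total spaces are smooth along $N,N'$, which are unions of copies of $T_Y$ and have codimension $2$. In a smooth local chart $\cong\mathbb{A}^2\times T_Y$ near a node — where $\cD,\cD'$ are the trivial family of standard nodes by Proposition~\ref{trivialboundary} — the coordinate functions of the target extend across the codimension-$2$ node section by normality, so $f$ and $f^{-1}$ extend to morphisms, which are mutually inverse on a dense open and therefore mutually inverse everywhere. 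Hence $f$ extends to an isomorphism $\bar{\cX}(P(W))|_{T_Y}\xrightarrow{\ \sim\ }\bar{\cX}(P(W'))|_{T_Y}$, and composing the isomorphisms along the path $W^{(0)},\dots,W^{(r)}$ and invoking rescaling-invariance proves the proposition. The crux of the argument is the edge analysis of the elementary move — in particular the overlap computation for the edge parallel to $\rho_j$, and ensuring all relevant edges are long enough (with interior lattice points when $n\le2$) after the preliminary rescaling; the reduction to elementary moves and the codimension-$2$ extension across the node locus are then routine.
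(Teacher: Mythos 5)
Your proof is correct and follows essentially the same route as the paper's: the paper disposes of $n=1$ by rescaling invariance and, for $n\geq 2$, simply observes that for each ray $\rho_i$ the edges of $P(W)$ and $P(W')$ with escape ray $\rho_i$ satisfy the hypotheses of Proposition \ref{prop:paralleledgeiso} and concludes from there. Your reduction to elementary moves $W\mapsto W+D_j$ and your explicit gluing and codimension-two extension across the node locus are extra scaffolding that the paper omits; they are harmless and in fact useful (the containment argument cleanly verifies the shared-adjacent-lattice-points hypothesis that the paper only asserts, and the paper's one-line conclusion silently assumes the same extension over the nodes that you spell out).
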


\begin{proof}
When $n=1$, the statement of the proposition follows trivially since rescaling the polygon does not change the homogeneous coordinate ring we get. Assume then, that $n\geq 2$. Fix a ray $\rho_i$. By the construction of parallel polygons, there are edges $E\subset P(W)$ and $E'\subset P(W')$ that have the same escape ray as $\rho_i$ and satisfy the conditions of Proposition \ref{prop:paralleledgeiso}. Since this is true for every $\rho_i$, the result follows from Proposition \ref{prop:paralleledgeiso}.
\end{proof}

\begin{defn}\label{def:cano-camp}
We call the compactification of the mirror family using the parallel polygon associated with a $D$-ample divisor the \emph{canonical compactification} and use $(\bar{\cX},\cD)$ specifically to denote the canonical compactification  for the rest of this paper. The canonical compactification is not unique over the entire base $\Spec(\mathbb{C}[\NE (Y)])$. But over the torus $T_Y \subset \Spec(\mathbb{C}[\NE (Y)])$, the locus we mainly care about in this paper, the canonical compactification is indeed unique by Proposition \ref{weilnomatter}. 
\end{defn}

\begin{prop}\label{can-comp-fiber}
Consider the restriction of the canonical compactification $(\bar{\cX}, \cD)\mid _{T_Y} \rightarrow T_Y$. Then, the statements in Proposition \ref{trivialboundary} and Proposition \ref{compa-fiber} hold for the restricted, canonically compactified mirror family. In particular, for each $t\in T_Y$, $(\bar{\cX}_t,\cD_t)$ is a generalized marked pair. 
\end{prop}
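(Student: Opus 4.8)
The plan is to observe that Proposition~\ref{can-comp-fiber} is essentially a bookkeeping consequence of the general results already established for compactifications by arbitrary polygons, once we verify that the parallel polygon $P(W)$ meets all the required hypotheses. By Proposition~\ref{constructpoly}, for any $D$-ample Weil divisor $W$ on a positive Looijenga pair $(Y,D)$, the polygon $P(W)$ is bounded, convex, nonsingular, integral, and has $0$ in its interior. In particular $P(W)$ is bounded, rational and convex (so that Proposition~\ref{trivialboundary} applies) and moreover nonsingular (so that Proposition~\ref{compa-fiber} applies). I would therefore simply take $F=P(W)$ and invoke those two propositions.

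Concretely, first I would apply Proposition~\ref{trivialboundary}: it gives the natural identification $\bar{\cX}\setminus\cD=\cX$ and shows that $\cD\mid_{T_Y}\to T_Y$ is a trivial family of nodal elliptic curves with one irreducible component for each face of $P(W)$. Then I would apply Proposition~\ref{compa-fiber}: since $P(W)$ is bounded, nonsingular, rational and convex, the generic fiber over $T_Y$ is smooth, $\bar{\cX}\mid_{T_Y}\to T_Y$ is smooth along $\cD\mid_{T_Y}$, each $\cD_t\subset\bar{\cX}_t$ is an anti-canonical cycle of rational curves contained in the smooth locus, and $\bar{\cX}_t$ has at worst du Val singularities; hence $(\bar{\cX}_t,\cD_t)$ is a generalized pair in the sense of Definition~\ref{generalized pair}.

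It remains to promote ``generalized pair'' to ``generalized marked pair'', i.e.\ to equip each fiber with a marking of the boundary $\cD_t$ and a marking of $\Pic(\bar{\cX}_t)$ by $\Pic(Y)$. The marking of the boundary is supplied canonically by the construction: by Proposition~\ref{trivialboundary} the family $\cD\mid_{T_Y}$ is trivial with components indexed by the edges of $P(W)$, and for consecutive lattice points $p,q$ on an edge $E$ the ratio $\vartheta_p/\vartheta_q$ restricts to a coordinate on each fiber of $\cD_E^\circ\mid_{T_Y}$ by Proposition~\ref{thetamarking}; combined with the orientation built into the description of $B_{(Y,D)}$ this pins down the sections $p_i$. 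The marking of the Picard group is inherited from the fact, whose proof is the content of Section~\ref{sec:per-int}, that the fibers over $T_Y$ are deformation equivalent to $(Y,D)$, so that passing to the minimal resolution and using the sheaf $I(\cX/S)$ of Lemma~\ref{lem:I-sheaf} yields the required marking. I would cite that section for this input rather than reprove it here.

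The one point that requires care is the case $n=1$ (and to a lesser extent $n=2$), where $D$ is an irreducible nodal curve and $B_{(Y,D)}$ is most conveniently described after a toric blowup at the node, as in Remark~\ref{rmk:irr-blo}. Here I would apply Lemma~\ref{lem:tor-bl-compa}: the canonically compactified family of the blown-up pair restricts, over the appropriate torus, to the canonically compactified family of $(Y,D)$, so the conclusions of Propositions~\ref{trivialboundary} and~\ref{compa-fiber} descend, and the extra orientation datum demanded in the definition of a family of generalized marked pairs for $n=1,2$ is exactly the identification fixed in that setup. I expect no substantive obstacle beyond this translation, since all the real work is already contained in Propositions~\ref{constructpoly}, \ref{trivialboundary} and~\ref{compa-fiber} together with the deferred input of Section~\ref{sec:per-int}.
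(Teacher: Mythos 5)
Your proposal is correct and follows essentially the same route as the paper, which likewise just observes that the parallel polygon satisfies the hypotheses of Propositions \ref{trivialboundary} and \ref{compa-fiber} and invokes them (your citation of Proposition \ref{constructpoly} for the polygon's properties is in fact the more apt reference than the paper's citation of Proposition \ref{weilnomatter}). Your additional care about the ``marked'' part of the conclusion, via forward reference to the marking constructions of Sections \ref{sec:marking-boundary} and \ref{sec:per-int}, goes beyond the paper's one-line proof but is consistent with how the paper actually supplies those markings later.
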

\begin{proof}
By Proposition \ref{weilnomatter}, we see those parallel polygons satisfy the conditions in Proposition \ref{trivialboundary} and Proposition \ref{compa-fiber} and their statements hold for the restricted, canonically compactified mirror family.
\end{proof}

The following lemma shows that the canonical compactifications are compatible with toric blowups or blowdowns. 
\begin{lem} \label{lem:compa-para-tor-bl}
Let $(Y,D)$ be a positive Looijenga pair and $W$ a $D$-ample Weil divisor on $Y$.  Let $\pi:(Y,D) \rightarrow (Y',D')$ be a toric blowdown. Let $\bar{\cX}$ and $\bar{\cX}'$ be canonical compactifications of the mirror families $\cX$ and $\cX'$ respectively. Consider the locally closed embedding $\iota_{\pi}: \mathrm{Spec}([\mathrm{NE}(Y')]) \hookrightarrow \mathrm{Spec}([\mathrm{NE}(Y)])$ as in Lemma \ref{lem:tor-bl-compa}. Then, given any $t \in T_{Y'}$, $\bar{\cX}_{\iota_{\pi}(t)}$ is a toric blowup of $\bar{\cX}'_t$ with exceptional divisors in bijection with edges in $P(W)$ parallel to rays $\rho_i$'s such that $D_i$ is an exceptional divisor of $\pi$. 
\end{lem}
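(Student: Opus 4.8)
The plan is to assemble the statement from three ingredients already in hand: first, the behaviour of the parallel polygons under a toric blowdown from Lemma \ref{lem:tor-bl-poly}; second, the identification of compactified mirror families under a toric blowup from Lemma \ref{lem:tor-bl-compa}; and third, the classical toric picture, since away from scattering the Proj of the homogeneous coordinate ring degenerates to a union of toric varieties glued along a polyhedral decomposition of the polygon. Concretely, I would fix a $D$-ample Weil divisor $W$ on $Y$, set $W' = \pi_*(W)$, and observe (as in the proof of Lemma \ref{lem:tor-bl-poly}) that $\pi^{*}(W') = W + \sum_k c_k D_k$ where the sum is over the $\pi$-exceptional divisors $D_k$ and each $c_k > 0$; then $P(\pi^{*}W')$ and $P(W)$ are the \emph{same} polygon in $B_{(Y,D)} \cong B_{(Y',D')}$ (the half spaces $Z(L_j)$ for non-exceptional $D_j$ are shared, and those for exceptional $D_k$ are not supporting for $P(W')$, hence contribute no edge). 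So the canonical compactification $\bar{\cX}$ of $\cX$ and $\bar{\cX}'$ of $\cX'$ are built from polygons that agree as subsets of the (common) integral affine manifold, with $\Sigma_{(Y,D)}$ a refinement of $\Sigma_{(Y',D')}$.

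Next I would invoke Lemma \ref{lem:tor-bl-compa}: over the locally closed subscheme $\iota_\pi(\Spec(\bC[\mathrm{NE}(Y')]))$ we have $\bar{\cX}|_{\iota_\pi(\Spec(\bC[P']))} = \bar{\cX}'$ as families, since the two compactified families are constructed from the same polygon $F = P(W) = P(W')$ viewed in $B_{(Y,D)}$ and $B_{(Y',D')}$ respectively. Restricting to a fibre over $t \in T_{Y'} \subset \iota_\pi(\Spec(\bC[P']))$ then gives $\bar{\cX}_{\iota_\pi(t)} \cong \bar{\cX}'_t$ \emph{as abstract surfaces}; the content of the lemma is the refined statement that this isomorphism realises $\bar{\cX}_{\iota_\pi(t)}$ as the toric blowup of $\bar{\cX}'_t$ along a collection of boundary points, one for each exceptional divisor of $\pi$. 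To see the blowup structure I would compare the boundary divisors on the two sides: by Proposition \ref{trivialboundary}, $\cD_t \subset \bar{\cX}'_t$ and $\cD_{\iota_\pi(t)} \subset \bar{\cX}_{\iota_\pi(t)}$ each have one irreducible component per edge of the relevant polygon, but the polygon is literally the same; the point is that the \emph{fan} $\Sigma_{(Y,D)}$ subdivides $\Sigma_{(Y',D')}$ by adding exactly the rays $\rho_k$ dual to the $\pi$-exceptional $D_k$, and each such ray hits the polygon $P(W)$ at a lattice point in the relative interior of an edge $F_j$ of $P(W')$ (this is exactly the interior-point statement of Lemma \ref{lem:tor-bl-poly}, $F_j \subset F'_j$). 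Passing to $V/(T\cdot V)$ and using Claim \ref{mult-str} (straight broken lines only contribute to the boundary multiplication), the effect of adding such a ray on the Proj is precisely to insert a new $\mathbb{P}^1$-component of $\cD$ "between" the two adjacent components already present — which over $T_Y$ is the toric picture of blowing up a node, i.e. a toric blowup of the surface at a smooth point of the boundary. I would phrase this last identification fibrewise by pulling back along the one-parameter subgroup $T^{W}\subset T^{D}$ to the central fibre $\mathbb{V}_n$-type stratum, where scattering vanishes and the whole comparison reduces to the standard toric statement that a star subdivision of a fan by a ray through a lattice point of a polygon edge corresponds to a torus-equivariant blowup.

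The main obstacle I anticipate is not the abstract identification $\bar{\cX}_{\iota_\pi(t)} \cong \bar{\cX}'_t$ — that follows formally from Lemma \ref{lem:tor-bl-compa} — but rather pinning down that this isomorphism is, fibrewise over $T_Y$, a toric blowup centred at the \emph{right} points, with the exceptional locus a chain of $(-1)$- and $(-2)$-curves matching the added rays, and in particular that no scattering contributions spoil the toric local model near those boundary points. The way around this is to use the equivariance of the $T^D$-action (Theorem \ref{thm:rel-equi}) and the fact, already exploited in the proof of Proposition \ref{compa-fiber}, that any $\mathbb{A}^1$-class has strictly positive pairing with a $D$-ample divisor, so that near the boundary stratum the family is a purely Mumford (toric) degeneration; hence a neighbourhood of each relevant boundary point in $\bar{\cX}_t$ is analytically isomorphic to its counterpart in the toric degeneration, where the blowup statement is the classical one. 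With that local model in place, globalising over $T_Y$ is immediate since the condition of being a toric blowup along a specified section is closed and the $T^{W}$-orbit closures sweep out all of $T_Y$ towards the central stratum.
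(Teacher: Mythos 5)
Your proposal goes wrong at its central claim that $P(\pi^{*}W')$ and $P(W)$ are the same polygon. They are not: by the computation in Lemma \ref{lem:tor-bl-poly}, writing $\pi^{*}(W')=\sum_i a'_iD_i$ one has $a'_i=a_i$ for non-exceptional $D_i$ but $a'_k>a_k$ for each $\pi$-exceptional $D_k$, and by Proposition \ref{constructpoly} \emph{every} half space $Z(L_i)$ in the defining intersection of $P(W)$ --- including those for the exceptional rays, taken at the smaller distance $a_k$ --- is a supporting half space of $P(W)$. Hence $P(W)\subsetneq P(W')$, and $P(W)$ is obtained from $P(W')$ by chopping off a corner along each exceptional ray, acquiring one new edge parallel to each $\rho_k$ with $D_k$ exceptional. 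These new edges are exactly what produce the exceptional divisors of the blowup in the statement; if the polygons were equal, Lemma \ref{lem:tor-bl-compa} would give $\bar{\cX}_{\iota_\pi(t)}\cong\bar{\cX}'_t$ on the nose and the lemma's assertion about exceptional divisors would be empty.

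The mechanism you substitute for this --- that the refinement of $\Sigma_{(Y',D')}$ by the exceptional rays inserts new $\mathbb{P}^1$-components of $\cD$ through the interiors of existing edges --- also fails: by Proposition \ref{trivialboundary}, over $T_Y$ the boundary $\cD$ has one irreducible component per \emph{face} of the polygon, not per $1$-cell of the polyhedral decomposition $\Sigma\cap F$; subdividing an edge by a ray changes the central fibre but not the boundary of the fibres over the torus. You have also misread the containment $F_j\subset F'_j$ in Lemma \ref{lem:tor-bl-poly}: it says the non-exceptional edges of $P(W)$ are shortened sub-segments of those of $P(W')$ (precisely because corners get chopped), not that new rays pass through edge interiors. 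The correct route, which is the paper's, is to compare two compactifications of the \emph{same} family $\cX$: one by $P(W)$ and one by $P(W')$ (the latter identified with $\bar{\cX}'$ over $\iota_\pi(\Spec(\bC[\NE(Y')]))$ via Lemma \ref{lem:tor-bl-compa}), and then use the shared non-exceptional edges together with Proposition \ref{prop:paralleledgeiso}, Proposition \ref{trivialboundary} and Proposition \ref{compa-fiber} to exhibit the birational map as a toric blowup whose exceptional divisors correspond to the extra edges of $P(W)$.
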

\begin{proof}
Let $W'=\pi_{*}W$. Denote  by $\bar{\cX}^{W'}$ the compactification of $\cX$ using $P(W')$. Then, by Lemma \ref{lem:tor-bl-compa}, we have the identification $\bar{\cX}^{W'}_{\iota_{\pi}(t)} \simeq \bar{\cX}'_t$. Thus, to prove the lemma, it suffices to show that $\bar{\cX}_{\iota_{\pi}(t)}$ is a toric blowup of $\bar{\cX}^{W'}_{\iota_{\pi}(t)}$ with exceptional divisors in bijection with edges in $P(W)$ parallel to rays $\rho_i$'s such that $D_i$ is an exceptional divisor of $\pi$. That $\bar{\cX}_{\iota_{\pi}(t)}$ is a toric blowup of $\bar{\cX}^{W'}_{\iota_{\pi}(t)}$ follows from Proposition \ref{trivialboundary}, Proposition \ref{compa-fiber}, and Proposition \ref{prop:paralleledgeiso}. The statement about exceptional divisors of $\bar{\cX}_{\iota_{\pi}(t)} \rightarrow \bar{\cX}^{W'}_{\iota_{\pi}(t)}$ follows from Lemma \ref{lem:tor-bl-poly}.
\end{proof}



\subsubsection{Parallel configurations.}
Now, let us introduce the notion of parallel configurations. 
\begin{defn}
Given a convex polygon $0\in F \subset B_{(Y,D)}$ and an edge $L$ of $F$, the \emph{forward ray} $R_L$ of $L$ is the ray we get by extending $L$ in the forward direction (given by the canonical orientation of $L$).
 We say $F$ has a \emph{correct corner} in $\sigma_{i,i+1}$ if there is a unique vertex $v\in F$ such that $v$ is in the interior of $\sigma_{i,i+1}$, $F$ has no vertex on $\rho_i$ or $\rho_{i+1}$, the forward ray of one edge at $v$ goes to infinity in $\sigma_{i+1,i+2}$ parallel to $\rho_{i+2}$, and the forward ray of the other edge at $v$ goes to infinity in $\sigma_{i,i+1}$ parallel to $\rho_{i+1}$. 
\end{defn}

\begin{defn}
Given a parallel polygon $0\in P(W) \subset B_{(Y,D)}$, we say $P(W)$ is a \emph{parallel configuration} for $(Y,D)$ if $P(W)$ has a correct corner in $\sigma_{i,i+1}$ for each $1\leq i\leq n$. 
\end{defn}

\begin{prop}\label{existenceofparallelconfig} 
Let $(Y,D)$ be positive, $n\ge 2$, $D_i^2\ge 0$ for some $i$. Then, $(Y,D)$ has a parallel configuration. 
\end{prop}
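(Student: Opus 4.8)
The plan is to reduce the statement to the toric-model case and then construct the correct corners explicitly on the toric parallel polygon $P(\bar W)$, transporting the construction back through Symington's cut-and-glue procedure as in Lemma~\ref{lem:const-par-poly} and Proposition~\ref{constructpoly}. First I would pass to a toric blowup $\pi:(Y',D')\to(Y,D)$ with a toric model $p:(Y',D')\to(\bar Y',\bar D')$, choosing $\pi$ minimal; by Lemma~\ref{lem:compa-para-tor-bl} and Lemma~\ref{lem:tor-bl-poly} it suffices to produce a parallel configuration for a suitable $W$, since having a correct corner in $\sigma_{i,i+1}$ is a condition that can be read off on the polygon $P(W)=P(W')$ viewed in $B_{(Y,D)}\cong B_{(Y',D')}$, and edges parallel to the rays $\rho_i$ surviving the blowdown are unchanged. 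So we may assume $(Y,D)$ itself has a toric model $p:(Y,D)\to(\bar Y,\bar D)$, with $k_i$ non-toric blowups on $\bar D_i$; the hypothesis $D_i^2\ge 0$ for some $i$ means $\bar D_i^2 = D_i^2 + k_i \geq k_i$, giving us room on that edge.

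Next I would choose the coefficients $a_i>0$ of $W=\sum a_i D_i$ carefully so that $P(\bar W)$, before cutting, already exhibits the desired corner shapes. Recall from the proof of Lemma~\ref{lem:const-par-poly} that the edge $\bar F_i$ of $P(\bar W)$ is parallel to $\bar\rho_i$, at lattice distance $a_i$, with length $\bar W\cdot\bar D_i$, and that Symington's construction cuts a triangle of base length $k_i a_i$ out of (the interior of) $\bar F_i$ and glues the two sides of the resulting cone. A \emph{correct corner} in $\sigma_{i,i+1}$ requires: a single vertex $v$ interior to $\sigma_{i,i+1}$, no vertices of $P(W)$ on $\rho_i$ or $\rho_{i+1}$, and the two edges at $v$ having forward rays escaping parallel to $\rho_{i+1}$ (in $\sigma_{i,i+1}$) and to $\rho_{i+2}$ (in $\sigma_{i+1,i+2}$) respectively. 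Translating through the gluing: the edge $\bar F_{i+1}$ of $P(\bar W)$ must survive the cutting at both of its endpoints (so that after gluing its forward extension genuinely lands in the next chamber), which forces $\bar W\cdot\bar D_{i+1} > k_{i+1}a_{i+1} + (\text{room consumed at the two adjacent corners})$; and the cut on $\bar F_i$ must be positioned so that exactly one vertex of $P(W)$ lands strictly inside $\sigma_{i,i+1}$. I would make this precise by writing the linear inequalities the $a_i$ must satisfy — these are of the form $(\sum a_j D_j)\cdot D_{i+1}$ large compared to $a_i, a_{i+1}, a_{i+2}$ — and observing that since $(Y,D)$ is positive one can scale up a $D$-ample divisor: replacing $W$ by $mW_0 + W_1$ for a fixed $D$-ample $W_0$ and a perturbation $W_1$, all the strict inequalities are satisfied for $m\gg 0$ provided one underlying inequality holds, and that inequality is exactly where $D_i^2\ge 0$ and $n\ge 2$ are used (it guarantees $\bar D_i^2\ge k_i$, i.e.\ the edge $\bar F_i$ is long enough relative to the triangle cut from it to leave a genuine interior vertex after gluing rather than collapsing onto a ray).

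The main obstacle I expect is the bookkeeping at the corners where two cut triangles are adjacent: when we cut triangles out of $\bar F_i$ and $\bar F_{i+1}$ that share the vertex $\bar F_i\cap\bar F_{i+1}$ of $P(\bar W)$, we must ensure the two triangles' bases stay in the \emph{interiors} of their respective edges (as Lemma~\ref{lem:const-par-poly} requires) while still leaving the right combinatorial type near the glued point, and we must check that after gluing, the polygon $P(W)$ really has a vertex \emph{interior} to $\sigma_{i,i+1}$ and none on the bounding rays $\rho_i,\rho_{i+1}$ — this is a constraint relating the cut positions on three consecutive edges $\bar F_{i-1},\bar F_i,\bar F_{i+1}$ simultaneously. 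I would handle this by noting the cut positions along each $\bar F_i$ can be chosen freely within an open interval of positive length (of length $\bar W\cdot\bar D_i - k_ia_i > 0$, which we have arranged to be large), so there is enough freedom to satisfy all the local conditions at all $n$ corners at once; finiteness of the conditions and openness of each makes the common solution set nonempty. Combining this with the fact (from the proof of Proposition~\ref{constructpoly}) that the glued manifold $B'$ is canonically isomorphic to $B_{(Y,D)}$ carrying $P(W)'$ to $P(W)$, the resulting polygon is a parallel configuration for $(Y,D)$.
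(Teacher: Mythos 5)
Your reduction to the toric model and the appeal to Symington's construction could in principle be organized into a proof, but the core of the argument is missing, and the gap sits exactly where the hypothesis $D_i^2\ge 0$ has to do its work. Working in the chart $\psi_j$ on $\sigma_{j-1,j}\cup\sigma_{j,j+1}$, the edge $F_j$ lies on $\{m_1=a_j\}$ and $F_{j+1}$ on $\{-D_j^2m_1+m_2=a_{j+1}\}$, so the vertex $F_j\cap F_{j+1}$ sits at $(a_j,\,a_{j+1}+D_j^2a_j)$; the correct-corner conditions amount to the \emph{cyclic} system of homogeneous inequalities $a_j>0$ and $a_{j+1}+D_j^2a_j>0$ for all $j$ modulo $n$. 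Neither of your two mechanisms can solve this system. Rescaling $W\mapsto mW_0+W_1$ does nothing, because the inequalities are homogeneous of degree one in the $a_j$ (equivalently, a correct corner is invariant under rescaling the polygon). And "finitely many open conditions, hence a common solution" is false in general; indeed this particular system is inconsistent whenever $D_j^2\le-1$ for all $j$, since then $a_{j+1}>-D_j^2a_j\ge a_j$ forces $a_2<a_3<\cdots<a_{n+1}=a_1<a_2$. Your stated use of the hypothesis --- "$\bar D_i^2\ge k_i$, so $\bar F_i$ is long enough relative to the cut triangle" --- is a non sequitur: the length of $\bar F_i$ is $\bar W\cdot\bar D_i$, not $\bar D_i^2$, and the surviving length $\bar W\cdot\bar D_i-k_ia_i=W\cdot D_i$ is positive for \emph{any} $D$-ample $W$ by Lemma \ref{lem:edge-length}. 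So as written your argument never genuinely invokes $D_i^2\ge0$ and would yield a parallel configuration for every positive pair with $n\ge 2$, which the paper's own cubic-surface example ($n=3$, all $D_i^2=-1$) shows is false.

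The paper's proof is much shorter and needs neither the toric model nor the cut-and-glue picture: it defines the coefficients by the explicit recursion $a_2=1$ and $a_{j+1}=1$ if $D_j^2\ge0$, $a_{j+1}=(-D_j^2)a_j+1$ otherwise, which makes every inequality $a_{j+1}+D_j^2a_j>0$ hold by construction as $j$ runs forward around the cycle. The single closing inequality $a_2+D_1^2a_1>0$ is then automatic precisely because $D_1^2\ge0$ (it would fail for positive integers $a_j$ if $D_1^2\le-1$); this, together with the observation that the line parallel to $\rho_2$ is embedded when $D_1^2\ge0$, is the actual role of the hypothesis. If you want to salvage your route, you must replace the openness/scaling step by an explicit solution of the cyclic system such as this one.
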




\begin{proof}
Without loss of generality, suppose $D_1^2\ge 0$. Then, using the development map, we see that any immersed line $L$ goes to infinity parallel to $\rho_2$ with positive lattice distance $a_2$ from $\rho_2$ is actually an embedded line on $B$ contained in the union $\sigma_{1,2} \cup \sigma_{n,1}$. We set $a_2 =1$. Suppose we have defined $a_i$, then we set $a_{i+1} =1$ if $-D_i^{2} \leq 0$ and $a_{i+1} = (-D_i^{2})a_i+1$ if $-D_i^{2} >0$. All our indices are modulo $n$. Then, $W=\sum_i a_iD_i$ is a $D$-ample divisor and using the standard chart for $\sigma_{i-1,i} \cup \sigma_{i,i+1}$, it is straightforward to see that $P(W)$ has a correct corner in $\sigma_{i-1,i}$ for each $i$. Thus, $P(W)$ is a parallel configuration for $(Y,D)$.
\end{proof}

\begin{prop}\label{existenceofparallelconfigtwo}
Let $(Y,D)$ be positive with $n= 1$. Then there exists a parallel configuration for $(Y,D)$ if and only if $D^2\ge 2$.
\end{prop}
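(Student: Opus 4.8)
The plan is to reduce everything to the combinatorics of a single immersed line in $B_{(Y,D)}$. Since $n=1$, the fan $\Sigma$ has a single ray $\rho_1$, and $B_{(Y,D)}$ together with $\rho_1$ and the parallel polygon $P(aD)=Z(L_a)$ of a $D$-ample divisor $W=aD$ ($a\in\Z_{>0}$) depend only on $\ell:=D^2$; positivity forces $\ell\ge 1$. I would first record the monodromy: cutting $B$ along $\rho_1$ and using the chart of Remark~\ref{rmk:irr-blo} (with $\ell-2$ playing the role of a self-intersection number), the monodromy of the affine structure around $0$ is conjugate to $M=\begin{pmatrix}0&-1\\1&2-\ell\end{pmatrix}$, with characteristic polynomial $\lambda^{2}-(2-\ell)\lambda+1$; thus $M$ has finite order $6,4,3$ for $\ell=1,2,3$ and is parabolic for $\ell=4$, hyperbolic for $\ell\ge 5$.

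The second step is to show that for $n=1$ every parallel polygon $P(aD)$ is a ``monogon'', with a single edge and a single vertex. I would get this from the Symington cut-and-glue description in the proof of Proposition~\ref{constructpoly}: passing to a toric blowup $\pi\colon(Y',D')\to(Y,D)$ with a toric model, the edges of $P(\bar W')$ surviving to edges of $P(aD)$ are exactly those parallel to the ray of $p_{*}D'_i$ with $\pi^{*}(aD)\cdot D'_i>0$, and since $\pi^{*}(aD)\cdot E=0$ for every $\pi$-exceptional $E$ while $aD$ has only one non-exceptional component, exactly one edge survives. Consequently the two ``forward ray'' conditions in the definition of a correct corner in $\sigma_{1,1}$ are automatic (the unique edge is parallel to $\rho_1$), and $P(aD)$ is a parallel configuration if and only if its unique vertex is not on $\rho_1$.

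Third, I would locate that vertex. Normalising the developing map so that a lift of $\rho_1$ has direction $(1,0)$ and $L_a$ develops to the line $\{y=-a\}$ (with $0$ on the correct side), going once around $0$ acts by $M$ and $M\{y=-a\}=\{x=a\}$, so the first two developed copies of $L_a$ cross at $(a,-a)$. The claim is that the unique vertex of $P(aD)$ is the image of this point, i.e.\ lies on the translate of $\rho_1$ in direction $(1,-1)$. For $\ell\in\{1,2,3\}$ this is immediate: there $B\setminus\{0\}$ is the quotient of $\R^{2}\setminus\{0\}$ by the free action of $\langle M\rangle$, and $P(aD)$ is the image of the $M$-invariant polygon $\bigcap_{k=0}^{r-1}M^{k}\{y\ge -a\}$ (a hexagon, square, or triangle), whose vertices are precisely the $M$-orbit of $(a,-a)$. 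For $\ell\ge 4$, where the developing map is no longer a covering of $\R^{2}\setminus\{0\}$, I would instead invoke Construction~\ref{def:construction of fan of cones meeting an immersed line} applied to $L_a$: unfolding the cones met by $L_a$ via $g\colon H\to B_0$ and using $Z(L_a)^{c}=g(\{x>d(L_a)\})$, the extremal self-overlap of $g$ that cuts off the single corner of $Z(L_a)$ is the crossing of the first two developed copies of $L_a$, again $(a,-a)$, while convexity of $Z(L_a)$ (Lemma~\ref{lem:half-space-convex}) forces all other potential corners to merge into it.

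Finally, by the previous steps $P(aD)$ is a parallel configuration iff $(1,-1)$ is not a positive multiple of any $M^{k}(1,0)$; since $M\in \mathrm{SL}_{2}(\Z)$ the vectors $M^{k}(1,0)$ are primitive, so this fails exactly when $(1,-1)=M^{k}(1,0)$ for some $k$. Writing $a_k$ for the first coordinate of $M^{k}(1,0)$, one has $a_0=1$, $a_1=0$, $a_{k+1}=(2-\ell)a_k-a_{k-1}$; for $\ell=1$ this sequence is $6$-periodic with $a_5=1$ and in fact $M^{5}(1,0)=(1,-1)$, so the vertex of $P(aD)$ lies on $\rho_1$ and no parallel configuration exists, while for $\ell\ge 2$ the signs of $a_k$ alternate for $|k|\ge 2$ so that $|a_{k+1}|=(\ell-2)|a_k|+|a_{k-1}|>|a_k|$ grows (for $\ell\ge 4$) or the period forces $a_k=1\Rightarrow k\equiv0$ (for $\ell=2,3$), so $a_k=1$ only at $k=0$, where $M^{0}(1,0)=(1,0)\ne(1,-1)$; hence the vertex avoids $\rho_1$ and $P(aD)$ is a parallel configuration. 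I expect Step~3 for $\ell\ge 4$ to be the main obstacle: once the monodromy has infinite order, $B\setminus\{0\}$ is not a finite quotient of the punctured plane, $L_a$ wraps around $0$ in a complicated way, and pinning down that the unique vertex of $P(aD)$ is exactly the first self-crossing $(a,-a)$ requires the careful unfolding of Construction~\ref{def:construction of fan of cones meeting an immersed line} together with convexity of generalized half spaces.
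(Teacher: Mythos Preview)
Your approach is correct and follows the same underlying idea as the paper: reduce to a single half-space and read off the answer from the developing map. The paper's own proof, however, is a two-sentence sketch: a parallel configuration for $n=1$ is necessarily a $1$-gon, hence equals $Z(L)$ for some immersed line $L$ with escape ray $\rho_1$; then ``by considering the developing map, $Z(L)$ has the correct corner if and only if $D_1^2\ge 2$.'' No monodromy matrix, no recursion, no case split on the order of $M$.

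Your explicit computation via $M=\begin{pmatrix}0&-1\\1&2-\ell\end{pmatrix}$ and the orbit of $(1,0)$ is a perfectly valid way to make ``by considering the developing map'' precise, and your verification for $\ell\le 3$ via finite monodromy is clean. The concern you flag in Step~3 for $\ell\ge 4$ is real if one insists on your formulation, but it is also where you are working harder than necessary. The paper implicitly has in mind the picture after one toric blowup at the node (as drawn in Example~\ref{ex:dp1} for $\ell=1$ and Figure~\ref{fig:para-poly} for $\ell=3$): in that two-ray fan with self-intersections $(\ell-4,-1)$, one can simply follow the line $L$ through the successive charts and see directly whether the two asymptotic ends of $L$ meet in the interior of a maximal cone (giving a correct corner) or whether $Z(L)$ acquires a vertex on $\rho_1$. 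For $\ell\ge 2$ the former happens after at most one wrap; for $\ell=1$ the line wraps further and the resulting $1$-gon has its vertices on $\rho_1$, exactly as drawn in Figure~\ref{fig:bounded-half-space}. So your Steps~3--4, and in particular the appeal to Construction~\ref{def:construction of fan of cones meeting an immersed line} and the growth argument for $|a_k|$, can be replaced by a direct chart-by-chart trace of $L$ in the blown-up picture; that is what the paper's terse ``by considering the developing map'' is gesturing at.
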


\begin{proof}
A polygon with a parallel configuration for $(Y,D)$ is a $1$-gon so it must be the half space $Z(L)$ for some $L$. Let $L$ be an immersed line with escape ray $\rho_1$. Then by considering the developing map, $Z(L)$ has the correct corner if and only if $D_1^2\ge 2$.
\end{proof}

\begin{lem}\label{lem:int_numbers}
If $(Y,D)$ is positive and $D_i^2\ne -1$ for all $i$, then $D_j^2\ge 0$ for some $j$.
\end{lem}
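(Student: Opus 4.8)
The plan is to prove the contrapositive: assuming $D_i^2\le -2$ for every $i$ --- which, given the hypothesis $D_i^2\neq -1$, is exactly the failure of the conclusion --- I would show that $(Y,D)$ is not positive, i.e.\ that the intersection matrix $Q=(D_i\cdot D_j)$ is negative semi-definite.

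First I would record the shape of $Q$ for an anticanonical cycle. For $n\ge 3$ it is the cyclic matrix with $D_i^2$ on the diagonal, entries $1$ on the two cyclic off-diagonal bands, and zeros elsewhere; for $n=2$ the two components meet in two nodes so $D_1\cdot D_2=2$; and for $n=1$ it is the $1\times 1$ matrix $(D_1^2)$. The key step is then an elementary rearrangement of the associated quadratic form. Writing $b_i:=-D_i^2\ge 2$, for $n\ge 3$ and $x\in\mathbb{R}^n$ one has
\[
-x^{T}Qx \;=\; \sum_i b_i x_i^2-2\sum_i x_i x_{i+1} \;=\; \sum_i (b_i-2)x_i^2 \;+\; \sum_i (x_i-x_{i+1})^2 \;\ge\; 0,
\]
with indices taken modulo $n$; the cases $n=1$ (where $Q=(-b_1)$ is already negative definite) and $n=2$ (where $-x^{T}Qx=b_1x_1^2-4x_1x_2+b_2x_2^2\ge 2(x_1-x_2)^2$) are simpler still. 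In every case $Q$ is negative semi-definite, contradicting the definition of positivity, so the conclusion $D_j^2\ge 0$ for some $j$ follows.

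I do not expect any genuine obstacle here: this is essentially the computation used implicitly in the proof of Lemma~\ref{devel-map} to conclude that a pair with all $D_i^2\le -2$ and no $(-1)$-curve in $D$ is negative semi-definite, so in principle one could instead simply cite that step. The only thing to be careful about is handling the short cycles $n=1$ and $n=2$ separately from the generic case, since the intersection matrix has a slightly different shape there (notably $D_1\cdot D_2=2$ when $n=2$).
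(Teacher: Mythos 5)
Your proof is correct, but it takes a different route from the paper's. The paper argues via the linear characterization of positivity (item (3) of Lemma 6.9 of \cite{GHK}, restated just before Definition \ref{def:relative-torus}): assuming $D_j^2\le -2$ for all $j$, it takes positive integers $a_i$ with $(\sum_i a_iD_i)\cdot D_j>0$, rewrites this as $a_{j-1}+a_{j+1}>a_j|D_j^2|\ge 2a_j$, and sums over $j$ to get the contradiction $2\sum_j a_j>2\sum_j a_j$. You instead work directly with the definition of positivity (non-negative-semi-definiteness of $(D_i\cdot D_j)$) and exhibit the explicit sum-of-squares identity
\[
-x^{T}Qx=\sum_i (b_i-2)x_i^2+\sum_i (x_i-x_{i+1})^2,
\]
which I have checked is correct for $n\ge 3$, as are your separate treatments of $n=1$ and $n=2$ (where $D_1\cdot D_2=2$). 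Your approach has the advantage of not invoking the equivalence of the various positivity conditions, and of making explicit the negative semi-definiteness claim that the paper also uses without proof in Lemma \ref{devel-map}; the paper's summation trick is shorter once that equivalence is granted, and as written it silently assumes the cyclic adjacency structure of $n\ge 3$ (though the $n=1,2$ cases go through with the obvious modifications). Either argument is acceptable.
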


\begin{proof}
Suppose that $(Y,D)$ is positive and $D_i^2\le -2$ for all $i$. By the positivity, there exists positive integers $b_i$ such that $\left(\sum  b_i D_i\right)\cdot D_j>0$ for all $j$. Then for every $j$, we have $a_{j+1}+a_{j-1}>-a_j D_j^2$. Summing up all such inequalities, we get
\[ 2\sum a_j>\sum a_j |D_j|^2.\]
However, since $|D_j|^2\ge 2$ for all $j$, no such collection of $a_i$'s can exist.  
\end{proof}

\begin{lem}\label{blowdown}
Let $(Y,D)$ be a positive pair. Then, there exists a toric blow-down $(Y,D)\rightarrow (Y',D')$ such that either $(Y',D')$ has a parallel configuration or $(Y',D')$ is a del Pezzo surface of degree $1$ together with an irreducible anti-canonical cycle.
\end{lem}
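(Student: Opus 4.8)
The plan is to argue by induction on the rank of $\Pic(Y)$, contracting boundary $(-1)$-curves one at a time until one of the two advertised outcomes is forced.

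I would first record the cases in which no contraction is needed, so that we may take $(Y',D')=(Y,D)$. If $n\ge 2$ and $D_i^2\ge 0$ for some $i$, then Proposition \ref{existenceofparallelconfig} produces a parallel configuration for $(Y,D)$. If $n=1$, positivity forces $D^2\ge 1$: when $D^2\ge 2$, Proposition \ref{existenceofparallelconfigtwo} gives a parallel configuration; when $D^2=1$ we have $-K_Y=D$ with $K_Y^2=D^2=1$ and $D$ an irreducible curve of positive self-intersection, hence nef, so $(Y,D)$ is a del Pezzo surface of degree $1$ together with an irreducible anticanonical cycle. (Here $-K_Y=D$ is genuinely ample once one knows $Y$ has no internal $(-2)$-curve, since by the positivity characterisation any curve $C$ with $D\cdot C=0$ is disjoint from $D$ and is such a $(-2)$-curve; this is the only place where one might instead have to record $Y$ as a weak del Pezzo of degree $1$, which suffices for all later uses.)

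The substantive case is $n\ge 2$ with $D_i^2<0$ for every $i$. Since $(Y,D)$ is positive but no $D_i^2\ge 0$, Lemma \ref{lem:int_numbers} forces $D_i^2=-1$ for some $i$, say $i=1$; as $D_1$ is a smooth rational curve meeting the remaining components of $D$ only in nodes, its contraction is a toric blowdown $\pi\colon(Y,D)\to(Y_1,D_1)$ onto a smooth surface. Two facts drive the induction. First, positivity is preserved, because $Y_1\setminus D_1=Y\setminus D$ ($\pi$ being an isomorphism away from $D$), so the log Calabi--Yau complement---and hence the characterisation of positivity through it---is unchanged. Second, the rank of $\Pic$ drops by one and the boundary shrinks: for $n\ge 3$ the new boundary has $n-1$ components, while for $n=2$ contracting $D_1$ turns $D_2$ into an irreducible nodal curve $D'$ with $(D')^2=D_2^2+4$, and positivity of $(Y_1,D_1)$ then forces $D_2^2\ge -3$, i.e.\ $(D')^2\in\{1,2,3\}$. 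Applying the inductive hypothesis to $(Y_1,D_1)$ and composing the resulting toric blowdown with $\pi$ completes the argument; the recursion terminates because the rank of $\Pic$ strictly decreases, and at each stage we land in one of the cases already handled---$n\ge 2$ with some non-negative self-intersection, or $n=1$ (where $D^2\ge 2$ yields a parallel configuration and $D^2=1$ yields the degree-$1$ del Pezzo).

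I do not anticipate a genuine obstacle. The delicate points are purely arithmetic: checking in the $n=2\to n=1$ step that collapse to a single irreducible boundary component can never yield $(D')^2\le 0$, so that the only way to escape a parallel configuration is to arrive exactly at a degree-$1$ del Pezzo with irreducible anticanonical cycle; and the cosmetic business of internal $(-2)$-curves in the del Pezzo identification, handled as above.
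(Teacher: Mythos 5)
Your proposal is correct and follows essentially the same route as the paper: when no component has non-negative self-intersection, Lemma \ref{lem:int_numbers} supplies a boundary $(-1)$-curve, which is contracted, and the process is iterated until Proposition \ref{existenceofparallelconfig} or Proposition \ref{existenceofparallelconfigtwo} applies or one lands at the degree-$1$ del Pezzo case. Your additional remarks (preservation of positivity via the unchanged complement, termination via Picard rank, and the weak del Pezzo caveat at $D^2=1$) only make explicit points the paper's proof leaves implicit.
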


\begin{proof}
Suppose $n\ge 2$. If there exists $i$ such that $D_i^2\ge 0$, then by Proposition \ref{existenceofparallelconfig}, there exists a parallel configuration. Otherwise, $D_j^2<0$ for all $j$ and by Lemma \ref{lem:int_numbers}, there exists $i$ such that $D_{i}^{2}=-1$. We then blowdown $D_i$ to obtain a new boundary, $D'$. If $D'$ has  only one component, then $D'^2>0$ by positivity. By Proposition \ref{existenceofparallelconfigtwo}, either $(Y',D')$ has a parallel configuration or is the case of the degree $1$ del Pezzo surface together with an irreducible anti-canonical cycle. If there is more than one component in $D'$, then again, there is either a component such that $D'^{2}_{i} \ge 0$, in which case there is a parallel configuration, or there is a component we can blow down. We continue to blow down until the pair we get either has a parallel configuration or is the degree $1$ del Pezzo surface together with an irreducible anti-canonical cycle.

\end{proof}

While many pairs in consideration do have a parallel configuration, some well-known pairs do not. For example, a cubic surface obtained by blowing up toric $\mathbb{P}^2$ twice on each boundary gives a pair $(Y,D)$ where $D$ has $3$ components, each with self-intersection $-1$. There is no parallel configuration for this pair. To get a sense of what these polygons then look like, consider the divisor $W=D_1+D_2+D_3$. Then, $P(W)$ is a convex polygon where the only vertices are given by the primitive lattice point on the $\rho_i$ and the edges connect these vertices. Another example is when $Y$ is a degree $1$ del Pezzo and $D$ is an irreducible nodal curve with self-intersection $1$ as seen in Figure \ref{fig:bounded-half-space}.


\subsubsection{Self-intersection numbers of boundary divisors of the canonical compactification.} \label{ssec:bou-int-nu}

Fix a $D$-ample Weil divisor $W = \sum_i a_iD_i$. And in this subsection, we use $P(W)$ to get the canonical compactification. Now, we want to show the canonical compactification of a smooth fiber in $\cX \rightarrow \mathrm{Spec}(\mathbb{C}[\mathrm{NE}(Y)])$  looks like the original pair $(Y,D)$ by showing that the self-intersection numbers of the boundary divisors of the compactification match with those in $D$. In Section \ref{sec:per-int}, we will show that a compactified smooth fiber is deformation equivalent to the original pair $(Y,D)$. 

Here, we state a theorem that we will use frequently to compute the self-intersection numbers of boundary divisors in the compactification. 
\begin{thm}\label{thm:cst-eu}
Suppose $f:X \rightarrow Y$ is a projective morphism and $\mathcal{F}$ is a coherent sheaf on $X$, flat over $Y$. Suppose $Y$ is locally Noetherian, then the Euler characteristic of $\mathcal{F}$ is constant in the fibers of $f$. 
\end{thm}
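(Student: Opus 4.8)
The plan is to reduce to an affine Noetherian base and then invoke the theory of cohomology and base change. Since the statement is local on $Y$, I would first assume $Y=\Spec A$ with $A$ Noetherian, so that it suffices to prove that the function
\[
y \longmapsto \chi(X_y,\mathcal{F}_y) := \sum_{i\ge 0}(-1)^i \dim_{k(y)} H^i(X_y,\mathcal{F}_y)
\]
is locally constant on $\Spec A$; this gives constancy on every connected component, in particular on $Y$ when $Y$ is connected, which is what the statement means.

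The main step is to produce a bounded complex $K^\bullet = (0\to K^0 \to \cdots \to K^N\to 0)$ of finitely generated \emph{projective} $A$-modules, together with functorial isomorphisms $H^i(X_{A'},\mathcal{F}_{A'}) \cong H^i(K^\bullet\otimes_A A')$ for every $A$-algebra $A'$. This is the standard Grothendieck complex: take a finite cover of $X$ by opens affine over $\Spec A$, form the associated \v{C}ech complex --- a bounded complex of flat $A$-modules (flatness of $\mathcal{F}$ over $A$) whose formation commutes with base change (the members of the cover being affine over $\Spec A$) --- and replace it, compatibly with all base changes, by a quasi-isomorphic bounded complex of finitely generated projectives; see the theory of cohomology and base change in \cite{Sta}, or the proof of Theorem III.12.2 in Hartshorne's \emph{Algebraic Geometry}. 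Granting this, I would set $A'=k(y)$ to obtain
\[
\chi(X_y,\mathcal{F}_y) = \sum_i (-1)^i \dim_{k(y)} H^i\bigl(K^\bullet\otimes_A k(y)\bigr),
\]
and then use the elementary fact that for a bounded complex $V^\bullet$ of finite-dimensional $k$-vector spaces one has $\sum_i(-1)^i \dim_k H^i(V^\bullet) = \sum_i(-1)^i\dim_k V^i$ (split each $V^i$ into cycles and a complement, and cycles into boundaries and cohomology). This yields
\[
\chi(X_y,\mathcal{F}_y) = \sum_i(-1)^i \operatorname{rank}_{k(y)}\bigl(K^i\otimes_A k(y)\bigr),
\]
a finite alternating sum of ranks of finitely generated projective $A$-modules. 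Since the rank of a finitely generated projective module is a locally constant function on the spectrum of the ring, the right-hand side is locally constant in $y$, which completes the argument.

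The main obstacle is the construction of the complex $K^\bullet$ representing the cohomology of $\mathcal{F}$ and all its base changes --- this is where flatness of $\mathcal{F}$ over $Y$ is genuinely used, and it is the only part requiring more than linear algebra over a field. I would either cite it directly or, for a self-contained treatment, reproduce the argument of Hartshorne III.9.8--III.9.9: fix a very ample line bundle for $f$, build $K^\bullet$ from \v{C}ech cohomology on a finite affine cover, and control its terms via flat base change for the affine maps in the cover. Everything downstream --- passing to residue fields, the Euler-characteristic identity in the Grothendieck group of a field, and local constancy of ranks of projectives --- is routine.
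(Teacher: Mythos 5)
Your proof is correct and is the standard argument (reduce to an affine Noetherian base, represent $Rf_*\mathcal{F}$ universally by a bounded complex of finitely generated projective $A$-modules, and observe that the Euler characteristic is the alternating sum of their locally constant ranks); this is essentially the proof of Hartshorne III.9.9 / EGA III.7.9.4. The paper itself quotes this theorem as a known result and gives no proof, so there is nothing to compare against; your write-up correctly identifies the one nontrivial input (the Grothendieck complex commuting with arbitrary base change, which is where flatness is used) and the rest is routine.
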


Let $T^W \subset T^D$ be the 1-parameter subgroup given by the map $\chi(T^{D}) \rightarrow \mathbb{Z}$, $e_{D_i} \mapsto a_i$. Let $F$ be a polygon satisfying the conditions as in Proposition \ref{compa-fiber}. Denote by $\bar{\cX}(F)$ the compactification of the mirror family $\cX \rightarrow S$ using $F$. Recall that $S=\mathrm{Spec}(\mathbb{C}[\mathrm{NE}(Y)])$. For each edge $E$, let $\cD_E \subset \bar{\cX}(F)$ be the Weil divisor corresponding to $E$.   Denote by $X_0$ the central fiber of $\bar{\cX}(F)$ over the unique toric fixed point $0 \in S$.

\begin{lem}\label{lem:rel-car}
Given a point $s \in T_Y$, the limit $s_0$ of $s$ under the action of the 1-parameter subgroup exists in $S$ and $\bar{\cX}(F)_{s_0}$ is isomorphic to $X_0$. Denote by $\delta_s: \mathbb{A}^{1} \rightarrow S$ the morphism given by $T^W \cdot s$. Suppose that $\bar{\cX}(F)_s$ is smooth and $\cD_{E,0}$ is a Cartier divisor on $X_0$. Then, $\cD_E \times_S \mathbb{A}^{1}$ is a relatively Cartier divisor on $\bar{\cX}(F) \times_S \mathbb{A}^{1}$ for each edge $E$ of $F$. Moreover, $\chi(\bar{\cX}(F)_s, \cO(\cD_{E,s})) = \chi(X_0, \cO(\cD_{E,0}))$.
\end{lem}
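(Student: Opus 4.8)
The plan is to exploit the $T^W$-equivariance of the compactified family $\bar{\cX}(F) \to S$ together with the properness and the explicit description of the central fiber $X_0$ as a union of toric varieties. First I would verify the claim that the limit $s_0 = \lim_{t\to 0} T^W(t)\cdot s$ exists in $S$ and that $\bar{\cX}(F)_{s_0} \simeq X_0$: since $W$ is $D$-ample, every effective class $C \in \NE(Y)$ pairs nonnegatively with $W$ (this is essentially Lemma \ref{nefpoly} applied to $W_F$, or can be seen directly since $W = \sum a_i D_i$ with $a_i>0$ and $W\cdot D_j>0$), so the weight of $z^{[C]}$ under $T^W$ is $\geq 0$, hence $T^W\cdot s$ extends to a morphism $\delta_s:\mathbb{A}^1 \to S$ with $\delta_s(0) = 0$, the unique toric fixed point. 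The fiber over $0$ is then $X_0$ by definition. Moreover, since there is no scattering over $0$ (all $\mathbb{A}^1$-classes have strictly positive $W$-weight), $X_0$ is the union of toric varieties polarized by the polyhedral decomposition $\Sigma_F \cap F$, exactly as in the proof of Lemma \ref{lem:cartier}.

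Next I would establish that $\cD_E \times_S \mathbb{A}^1$ is a relative Cartier divisor on $\bar{\cX}(F)\times_S \mathbb{A}^1 \to \mathbb{A}^1$. The hypotheses are tailor-made for this: $\bar{\cX}(F)_s$ is smooth, so $\cD_{E,s}$, being a Weil divisor on a smooth surface, is automatically Cartier; and $\cD_{E,0}$ is Cartier on $X_0$ by hypothesis. By Proposition \ref{prop:fin-gen}, $\bar{\cX}(F)\times_S\mathbb{A}^1 \to \mathbb{A}^1$ is of finite type over the Noetherian base $\mathbb{A}^1$, hence of finite presentation (\cite[\href{https://stacks.math.columbia.edu/tag/01TX}{Tag 01TX}]{Sta}), and it is flat. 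The locus in $\mathbb{A}^1$ over which $\cD_E \times_S \mathbb{A}^1$ fails to be relatively Cartier is closed (this is the content of \cite[\href{https://stacks.math.columbia.edu/tag/062Y}{Tag 062Y}]{Sta}, used the same way as in Lemma \ref{lem:cartier}): for a flat, finitely presented family, the set of points $x$ in a fiber where a closed subscheme is a relative effective Cartier divisor is open, so the set of points of $\mathbb{A}^1$ for which this holds at every point of the fiber is open. This open set contains both $0$ (by the Cartier hypothesis on $\cD_{E,0}$, after checking $\cD_E \times_S\mathbb{A}^1$ restricts to $\cD_{E,0}$ over $0$ — which follows from the trivial-family statements in Proposition \ref{trivialboundary} and the analysis in Lemma \ref{lem:cartier}) and the generic point $\eta \in \mathbb{A}^1$ whose image in $S$ is $s$ (by smoothness of $\bar{\cX}(F)_s$). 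But wait — I should be careful: a priori $\delta_s$ need not be injective, and the generic fiber of $\delta_s$ maps to the $T^W$-orbit of $s$, all of whose points have smooth total space by the $T^W$-equivariance. So the open locus contains $\mathbb{A}^1 \setminus \{0\}$ and $\{0\}$, hence all of $\mathbb{A}^1$, giving the relative Cartier statement.

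Finally, for the Euler characteristic equality, I would apply Theorem \ref{thm:cst-eu} (constancy of Euler characteristic in flat proper families) to the coherent sheaf $\cO_{\bar{\cX}(F)\times_S\mathbb{A}^1}(\cD_E\times_S\mathbb{A}^1)$ on $\bar{\cX}(F)\times_S\mathbb{A}^1$, which is flat over $\mathbb{A}^1$ (it is an invertible sheaf on a space flat over $\mathbb{A}^1$, and being a relative Cartier divisor's associated line bundle it is flat). This gives $\chi(\bar{\cX}(F)_{\delta_s(t)}, \cO(\cD_{E,\delta_s(t)}))$ constant in $t \in \mathbb{A}^1$; evaluating at the generic point (image $s$) and at $0$ (fiber $X_0$, divisor $\cD_{E,0}$) yields $\chi(\bar{\cX}(F)_s, \cO(\cD_{E,s})) = \chi(X_0, \cO(\cD_{E,0}))$.

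I expect the main obstacle to be the bookkeeping in the relative Cartier step — specifically, cleanly checking that $\cD_E \times_S \mathbb{A}^1$ restricted over $0 \in \mathbb{A}^1$ is exactly $\cD_{E,0}$ (as opposed to $\cD_{E,0}$ with some embedded or thickened structure coming from the degeneration), so that the Cartier hypothesis at $0$ genuinely applies. This should follow from the trivial-family description of the boundary in Proposition \ref{trivialboundary} together with the fact, used in Lemma \ref{lem:cartier}, that $\cD^\circ \times_S \mathbb{A}^1 \to \mathbb{A}^1$ is the trivial family and the degeneration over $0$ is a controlled (Mumford) degeneration, so the fiber of $\cD_E \times_S \mathbb{A}^1$ over $0$ has no extra components or embedded points. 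Everything else is a direct invocation of the cited Stacks Project results and Theorem \ref{thm:cst-eu}, following the template already set by Lemma \ref{lem:cartier}.
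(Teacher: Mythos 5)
Your overall strategy --- existence of the limit from non-negativity of the $T^W$-weights of all $z^{[C]}$, fiberwise Cartier-ness of $\cD_E$ (at the special point by hypothesis, away from it by smoothness and $T^W$-equivariance), the Stacks Project criterion \cite{Sta} to promote this to a relative effective Cartier divisor on $\bar{\cX}(F)\times_S\mathbb{A}^1$, and constancy of the Euler characteristic in the resulting flat projective family over $\mathbb{A}^1$ --- is exactly the paper's argument. There is, however, one incorrect intermediate claim: $\delta_s(0)$ is in general \emph{not} the unique toric fixed point $0\in S$. The limit $s_0=\delta_s(0)$ satisfies $z^{[C]}(s_0)=z^{[C]}(s)\neq 0$ for every effective class $C$ with $C\cdot W=0$, and such classes exist whenever $Y$ carries internal $(-2)$-curves (these are disjoint from $D$, hence orthogonal to $W=\sum a_iD_i$); so $s_0$ typically lies in a positive-dimensional boundary stratum rather than at $0$, and the hypothesis that $\cD_{E,0}$ is Cartier on $X_0$ does not apply verbatim to the fiber over $s_0$. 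This is precisely why the lemma asserts $\bar{\cX}(F)_{s_0}\simeq X_0$ as a separate claim: it holds because the only classes entering the structure constants of $V$ are the $\mathbb{A}^1$-classes and the kinks $[D_i]$, all of which pair strictly positively with $W$ and therefore specialize to zero at $s_0$ exactly as they do at $0$, so the homogeneous coordinate rings over $s_0$ and over $0$ agree. Your parenthetical about the absence of scattering is the right mechanism, but you apply it to the fiber over the toric fixed point, whereas it is needed for the fiber over $s_0$. Once this is repaired, the rest of your argument --- including your (correct, and slightly more careful than the paper's) remarks about the generic point of $\delta_s$ landing in the $T^W$-orbit of $s$ and about the scheme-theoretic fiber of $\cD_E\times_S\mathbb{A}^1$ over $0\in\mathbb{A}^1$ --- goes through as in the paper.
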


\begin{proof}
We use an argument similar to the one we used in the proof of Lemma \ref{lem:cartier}. Given any $C\in \mathrm{NE}(Y)$, the weight of $z^{[C]}$ under the action of $T^W$ is non-negative. Hence, the limit $s_0$ exists. We claim that $\cX_{s_0}$ is isomorphic to $\mathbb{V}_n$ and therefore $\bar{\cX}(F)_{s_0}$ is isomorphic to $X_0$. This follows from that  the intersections of all $\mathbb{A}^1$-classes with $W$ is positive and that $W \cdot D_i >0$ for all $i$. In particular, $\cD_{E,s_0}$ is a Cartier divisor on $\bar{\cX}(F)_{s_0}$ since $\cD_{E,0}$ is one on $X_0$.

Since $\bar{\cX}(F)_s$ is smooth and the action of $T^{D}$ is equivariant on $\bar{\cX}(F) \rightarrow S$,  $\bar{\cX}(F)_{\delta_s(x)}$ is smooth for any $x \in \mathbb{A}^1 \setminus\left\{0\right\}$. Thus,  $\cD_{E, \delta_s(x)} $ is a Cartier divisor on $\bar{\cX}(F)_{\delta_s(x)}$ for $x \in \mathbb{A}^1 \setminus \left\{0\right\}$. By Proposition \ref{prop:fin-gen}, $f: \bar{\cX}(F) \times_{\mathbb{A}^1}  S \rightarrow \mathbb{A}^1$ is of finite type. Since  $\mathbb{A}^1$ is Noetherian, by \cite[\href{https://stacks.math.columbia.edu/tag/01TX}{Tag 01TX}]{Sta}, $f$ is of finite presentation. Since  $\bar{\cX}(F) \times_S  {\mathbb{A}^1} \rightarrow \mathbb{A}^1$ is also flat and $\cD_{E,\delta_s(x)}$ is an effective Cartier divisor on $\bar{\cX}(F)_{\delta_s(x)}$ for each $x \in \mathbb{A}^1$, by \cite[\href{https://stacks.math.columbia.edu/tag/062Y}{Tag 062Y}]{Sta}, $\cD_E \times_S \mathbb{A}^1$ is a relative effective Cartier divisor on $\bar{\cX}(F) \times_S {\mathbb{A}^1}$. By Proposition \ref{prop:fin-gen}, $f$ is projective. Therefore, since we have the isomorphism $\bar{\cX}(F)_{s_0} \simeq X_0$, $\chi(\bar{\cX}(F)_s, \cO(\cD_{E,s})) = \chi(X_0, \cO(\cD_{E,0}))$ by Theorem \ref{thm:cst-eu}.
\end{proof}

\begin{lem}\label{boundaryselfint} 
If $\bar{\cX}_t$ is the fiber of $\bar{\cX}$ over $t \in T_Y$ such that $\bar{\cX}_t$ is smooth, then $\cD_{i,t}^2=D_i^2$.
\end{lem}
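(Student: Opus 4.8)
The plan is to compute $\cD_{i,t}^{2}$ via an Euler characteristic, which is constant in the family by Lemma \ref{lem:rel-car}, and then to evaluate that Euler characteristic on the central toric fibre, where it becomes a combinatorial quantity read off from the integral affine structure near $\rho_i$.

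First I would reduce to the case $n\ge 3$. The statement is invariant under toric blowups: if $\pi\colon(\tilde Y,\tilde D)\to(Y,D)$ is the blowup at a node of $D$, then by Lemma \ref{lem:compa-para-tor-bl} the fibre over $\iota_\pi(t)$ of the canonical compactification of the mirror of $(\tilde Y,\tilde D)$ is obtained from $\bar{\cX}_t$ by blowing up the node of $\cD_t$ associated with the corresponding vertex of the parallel polygon; this blowup drops the self-intersections of the two adjacent boundary components by $1$ and produces a new boundary curve of square $-1$, exactly mirroring $\tilde D_j^{2}=D_j^{2}-1$ and the appearance of the exceptional $(-1)$-curve. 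Since blowing up a point of the smooth surface $\bar{\cX}_t$ keeps it smooth, the lemma for $(\tilde Y,\tilde D)$ implies it for $(Y,D)$. Performing finitely many such blowups (and, when $n=1$, first passing to the blowup of Remark \ref{rmk:irr-blo}) we may assume $n\ge 3$, so that $\cD_t=\sum_{j=1}^{n}\cD_{j,t}$ is a cycle of $n\ge 3$ smooth rational curves with $\cD_{i,t}\cdot\cD_{j,t}=1$ for $j=i\pm 1$ and $0$ otherwise; in particular $\cD_{i,t}\cdot\cD_t=\cD_{i,t}^{2}+2$.

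Next I would run Riemann--Roch on the smooth rational surface $\bar{\cX}_t$. By Proposition \ref{compa-fiber} we have $\cD_t\in|-K_{\bar{\cX}_t}|$ and $H^{1}(\bar{\cX}_t,\cO_{\bar{\cX}_t})=0$, so $\chi(\cO_{\bar{\cX}_t})=1$ and
\[
\chi\bigl(\bar{\cX}_t,\cO(\cD_{i,t})\bigr)=\chi(\cO_{\bar{\cX}_t})+\tfrac12\,\cD_{i,t}\cdot\bigl(\cD_{i,t}-K_{\bar{\cX}_t}\bigr)=1+\tfrac12\bigl(\cD_{i,t}^{2}+\cD_{i,t}\cdot\cD_t\bigr)=\cD_{i,t}^{2}+2 .
\]
Since the parallel polygon $P(W)$ is nonsingular (Proposition \ref{constructpoly}), the central fibre $X_0=\bar{\cX}(P(W))_0$ over the toric fixed point $0\in S$ is a union of smooth toric surfaces glued along the polyhedral complex $\Sigma\cap P(W)$, so $\cD_{i,0}$ is a Cartier divisor on $X_0$ and Lemma \ref{lem:rel-car} applies, giving
\[
\cD_{i,t}^{2}+2=\chi\bigl(\bar{\cX}_t,\cO(\cD_{i,t})\bigr)=\chi\bigl(X_0,\cO(\cD_{i,0})\bigr).
\]
Hence it remains to prove $\chi(X_0,\cO(\cD_{i,0}))=D_i^{2}+2$.

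This last point is the core of the argument, and I expect it to be the main obstacle -- not for any conceptual reason, but because one must organize the bookkeeping carefully. I would write $X_0=\bigcup_{j}Z_j$, where $Z_j$ is the toric surface of the cell $\sigma_{j,j+1}\cap P(W)$; the $Z_j$ all share the torus fixed point coming from the central vertex $0$ and are glued cyclically along the $\mathbb{P}^{1}$'s $C_j$ corresponding to the edges $\rho_j\cap P(W)$. Twisting the normalization (Mayer--Vietoris) sequence of $X_0$ by the Cartier sheaf $\cO(\cD_{i,0})$ turns $\chi(X_0,\cO(\cD_{i,0}))$ into an alternating sum of lattice-point counts on the polygons $\sigma_{j,j+1}\cap P(W)$ and their common edges. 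Because $\cD_{i,0}$ is supported on the components whose cell polygons meet the edge $e_i$ of $P(W)$ parallel to $\rho_i$, this sum is concentrated near $\rho_i$, and the only affine-geometric input is the bending $\psi_i(v_{i+1})=(-1,-D_i^{2})$ of the integral affine structure at $\rho_i$; carrying out the count yields $D_i^{2}+2$. As a sanity check and an alternative route for the last step, one can instead reduce first -- via toric blowups and blowdowns (Lemma \ref{lem:compa-para-tor-bl}, Lemma \ref{blowdown}) -- to a pair carrying a parallel configuration, for which $e_i$ lies in only one or two cells of $\Sigma\cap P(W)$ and the lattice-point count becomes short.
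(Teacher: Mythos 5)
Your overall strategy -- Riemann--Roch on the smooth fibre, constancy of $\chi(\cO(\cD_{i,t}))$ via Lemma \ref{lem:rel-car}, and a combinatorial evaluation on the central toric fibre -- is exactly the paper's strategy, and the first two steps are fine. The gap is in the step you yourself flag as the core: you assert that because $P(W)$ is nonsingular, $X_0$ is a union of \emph{smooth} toric surfaces and hence $\cD_{i,0}$ is Cartier, so that Lemma \ref{lem:rel-car} applies and $\chi(X_0,\cO(\cD_{i,0}))$ can be computed by lattice-point counts. Nonsingularity of $P(W)$ in the sense of Definition \ref{def:polygons} only constrains the vertices of $P(W)$ itself, not the vertices of the cells of $\Sigma\cap P(W)$, and these cells can be singular lattice polytopes. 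This actually happens: for the degree $1$ del Pezzo with irreducible nodal $D$, the cell $\sigma_{1,2}\cap P(W)$ is the triangle with vertices $(0,0),(2,0),(0,1)$, whose toric surface is $\mathbb{P}(1,1,2)$; there $\cD_0$ is not Cartier (only $2\cD_0$ is), and the paper is forced to run the Euler-characteristic argument with $2\cD_0$ and a separate computation on $\mathbb{P}(1,1,2)\cup\mathbb{P}(1,1,2)$. Your reduction to $n\ge 3$ by toric blowups does not cure this, because toric blowups refine $\Sigma$ without changing $P(W)$ or the affine structure, and the refined cells remain singular (e.g.\ rescaling the triangle above to $(0,0),(6,0),(2,2)$ still leaves a vertex of index $3$). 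So in exactly the exceptional case the argument as written does not go through.

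For comparison, the paper avoids this by restricting the direct central-fibre computation to pairs admitting a \emph{parallel configuration}, where the edge $F_i$ meets only two cells and the relevant components $Y_1,Y_2$ are smooth ruled surfaces or $\mathbb{P}^2$'s (so Riemann--Roch on each component replaces your lattice-point count), and then propagating to general pairs by toric \emph{blowdowns} in the opposite direction to yours (Lemma \ref{blowdown} plus Lemma \ref{lem:compa-para-tor-bl}), extending from the subtorus $\iota_\pi(T_{Y'})$ to all of $T_Y$ by $T^{D}$-equivariance. Since Lemma \ref{blowdown} only guarantees a blowdown to either a parallel configuration or the degree $1$ del Pezzo, the latter must be handled by hand, which is where the $2\cD_0$ computation enters. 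To repair your proof you would need to (i) replace $\cO(\cD_{i,0})$ by a Cartier multiple $\cO(m\cD_{i,0})$ on $X_0$ and redo Riemann--Roch on $\bar{\cX}_t$ for $m\cD_{i,t}$, or (ii) restrict the direct computation to parallel configurations and add the blowdown/equivariance bootstrap together with the del Pezzo case.
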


\begin{proof} 
Let us first consider the case where $P(W)$ is a parallel configuration. It is straightforward to see that $\cD_{i,0}$ is a Cartier divisor on $X_0$, the compactified fiber over $0\in \Spec(\bC[\NE(Y)])$, for each $i$. Then, $\chi(\bar{\cX}_t, \cO(\cD_{i,t})) = \chi(X_0, \cO(\cD_{i,0}))$ by Lemma \ref{lem:rel-car}.  We break up the argument into two sub-cases: one for $n\geq 2$ and the other where $n=1$. Recall that $n$ is the number of irreducible components in $D$.

\emph{Case 1: $P(W)$ is a parallel configuration and $n\geq 2$.} Each irreducible component in $X_0$ is a toric ruled surface. Fix the edge $F_i$ which is parallel to $\rho_i$ and lies in two  maximal cells, $\sigma_{i-2,i-1}$ and $\sigma_{i-1,i}$. For $j=i-1,i$, denote $P(W) \cap\sigma_{j-1,j}$ by $Q_{j-1,j}$ and the corresponding toric variety by $X(Q_{j-1,j})$. Denote $R_i=F_i\cap Q_{i-1,i}$ and $R'_i=F_i\cap Q_{i-2,i-1}$.

Let $Y_1 = X(Q_{i-1,i}) $ and $Y_2 = X(Q_{i-2,i-1})$ and $Y=Y_1 \cup Y_2$. The 1-cell $R_i$ corresponds to a section $C$ of the ruled toric surface $Y_1$ and has self-intersection number $D_i^2$. By Riemann-Roch,
\[ \chi \left( Y_1,\cO(\cD_{i,0})\mid_{Y_1}\right)=1+\frac{1}{2}C\cdot(C-K_{Y_1})=1+\frac{1}{2}(2C^2+2)=2+D_i^2.\]
In $Y_2$, $R_i'$ corresponds to a fiber of the ruled toric surface $Y_2$ and thus has self-intersection number $0$. Again, by Riemann-Roch,
\[
\chi \left( Y_2,\cO(\cD_{i,0})\mid_{Y_2}\right) = 2.
\]
Denote the 1-cell $Q_{i-2,i-1} \cap Q_{i-1,i}$ by $\varrho_{i-1}$. Then, $X(\varrho_{i-1}) = Y_1 \cap Y_2$ is a rational curve and we have
\[
\cO(\cD_{i,0}) \mid_{X(\varrho_{i-1})} \simeq \cO_{\mathbb{P}^{1}}(1).
\]
Thus, $\chi(X(\varrho_{i-1}), \cO(\cD_{i,0})\mid_{X(\varrho_{i-1})})= 2$ by Riemann-Roch.

Recall that if $X$ is the union of two closed subvarieties $X_1, X_2$ and $\mathcal{F}$ is a locally free sheaf on $X$, then we have the following short exact sequence
\[
0 \rightarrow \mathcal{F} \otimes \cO_{X}  \rightarrow (\mathcal{F}\otimes \cO_{X_1}) \oplus (\mathcal{F} \otimes \cO_{X_2}) \rightarrow \mathcal{F} \otimes \cO_{X_1 \cap X_2} \rightarrow 0.
\]
Taking the Euler characteristic, we get 
\begin{equation}
    \chi(X, \mathcal{F}) = \chi(X_1, \mathcal{F}\mid_{X_1}) + \chi (X_2, \mathcal{F}\mid_{X_2}) - \chi(X_1 \cap X_2, \mathcal{F}\mid_{X_1 \cap X_2}).  
\end{equation} \label{eq:ses}
By Equation \ref{eq:ses}, we get 
\begin{align*}
   & \chi(Y, \cO(\cD_{i,0})\mid_{Y_1 \cup Y_2}) \\ 
  = & \chi(Y_1,\cO(\cD_{i,0})\mid_{Y_1}) + \chi(Y_2,\cO(\cD_{i,0})\mid_{Y_2}) - \chi(X(\varrho_{i}), \cO(\cD_{i,0})\mid_{X(\varrho_{i-1})}) \\
  = & D_i^{2}+2. 
\end{align*}
Let $Z$ be the union of irreducible components in $X_0$ that are not $Y_1$ or $Y_2$. Then, $\cO(\cD_{i,0})\mid_{Z}$ is the trivial line bundle. In particular, we have
\[
\chi(Z, \cO(\cD_{i,0}) \mid_{Z}) = 1 \,\, \mathrm{and} \,\, \chi(Y \cap Z, \cO(\cD_{i,0}) \mid_{Y \cap Z})=1.
\]
Applying Equation \ref{eq:ses} again, we obtain that 
\begin{align*}
& \chi(X_0, \cO(\cD_{i,0})) \\
= & \chi(Y, \cO(\cD_{i,0}) \mid_{Y}) + \chi(Z, \cO(\cD_{i,0}) \mid_{Z}) - \chi(Y \cap Z, \cO(\cD_{i,0}) \mid_{Y \cap Z})\\
= & D_i^2+2.
\end{align*}

\begin{figure}
\begin{minipage}{\linewidth} \begin{center} 
\vspace{1 cm}
\begin{tikzpicture}[scale=0.7] 
  \draw [-stealth](0,0) -- (4,0) node[anchor=west] {$\rho_1$};
  \draw [-stealth](0,0)-- (0,4) node[anchor=south,yshift=.1cm] {$\rho_2$};
  \draw [-stealth] (0,0) --(-4,4) node[anchor=south] {$\rho_1$};
    \draw [red](2,0) --(0,2);
     \draw [red](0,2) --(-2,4);
       \draw [red](-2,2) --(-2,4);
      \filldraw [black] (2,0) circle (2pt);
  \filldraw [black] (-2,2) circle (2pt);
  \end{tikzpicture}
 \caption{$P(W)$ for $(Y,D)$ where $n=1$ and $D^2 =3$.}\label{fig:para-poly}
\end{center}\end{minipage}
\end{figure}  
Now, fix a generic $t\in T_Y$ such that $\bar{\cX}_t$ is smooth. By Riemann-Roch, on $\bar{\cX}_t$, $\chi \left(\bar{\cX}_t, \cO(\cD_{i,t})\right)=\cD_{i,t}^2+2$. Then, since 
\[
\chi \left(\bar{\cX}_t, \cO(\cD_{i,t})\right) = \cD_{i,t}^2+2 = \chi(X_0, \cO(\cD_{i,0})) =  D_i^2+2
\]
we conclude that $\cD_{i,t}^2 = D_i^2$.

\emph{Case 2: $P(W)$ is a parallel configuration and $n =1$.} By Proposition \ref{existenceofparallelconfigtwo}, we have $D^2 \geq 2$.  We  pass to a toric blowup at the node of $D$ and draw $P(W)$ in $B_{(Y',D')}$ where $(Y',D')$ is the toric blow-up at the node of $D$. Let $\bar{\cX}'(P(W))$ be the compactification of the mirror family $\cX'$ associated to $(Y',D')$ using $P(W)$. By Lemma \ref{lem:tor-bl-compa}, we can embed $\bar{\cX}$ into $\bar{\cX}'(P(W))$. The fiber over $0\in \mathrm{Spec}(\bC[\mathrm{NE}(Y')])$ consists of two irreducible components, $X_1$ and $X_2$, whose moment map is contained in $\sigma_{2,1}$ and $\sigma_{1,2}$ respectively. Denote by $C_i$, the irreducible component of $X_1 \cap X_2$, whose moment map is contained in $\rho_i$. Denote the central fiber by $X_0 = X_1 \cup X_2$. Let $\cD_0 = \cD \mid_{X_0}$ and $H_i = \cD_0 \cap X_i$. 

If $D^2 \geq 3$, $X_1$ is isomorphic to a smooth ruled toric surface and $X_2$ isomorphic to $\mathbb{P}^2$. The intersection of the only edge $F_1$ of $P(W)$ with $\sigma_{2,1}$ consists of two segments $R_1$ and $R'_1$ with $R_1$ parallel to $\rho_1$.  $X(R_1)$ is a toric section of $X_1$ with self-intersection $D^2 -3$ and  $X(R'_1)$ is a fiber of $X_1$. Thus,
$H^{2}_{1} = (X(R_1)+X(R'_1))^2 = D^2-1$. If $D^2 = 2$, $X_1$ and $X_2$ are both isomorphic to $\mathbb{P}^2$, $H_1$ is an irreducible toric boundary divisor on $X_1$ and $H^2_{1} = D^2 -1$ still holds.     
By Riemann-Roch, 
\begin{align*}
    \chi (X_1, \cO(\cD_0)\mid_{X_1}) & = 1+\frac{1}{2} H_1 (H_1 - K_{X_1}) 
    = 1+\frac{1}{2} (2H^{2}_{1} +2 ) 
     = D^2+1
\end{align*}
 and 
 \[
  \chi (X_2, \cO(\cD_0) \mid_{X_2}) = 1+\frac{1}{2}H_2 (H_2 - K_{X_2}) = 3.
 \]

Consider the irreducible components  $C_i$ of $X_1 \cap X_2$. On $C_i$, $ \cO(\cD_0)\mid_{C_i} \simeq \cO_{\mathbb{P}^1}(1)$. By Riemann-Roch, 
$\chi (C_i, \mathcal{O}(\cD_0)\mid_{C_i}) =2.$
On $C_1 \cap C_2$, $\cO(\cD_0)\mid_{C_1 \cap C_2} $ is the trivial line bundle over a point.
By Equation \ref{eq:ses} and Riemann-Roch,
\begin{align*}
   & \chi(X_1 \cap X_2, \mathcal{O}(\cD_0)) \\
     = & \chi (C_1, \mathcal{O}(\cD_0)\mid_{C_1}) + \chi (C_2, \mathcal{O}(\cD_0)\mid_{C_2}) - \chi(C_1 \cap C_2,  \mathcal{O}(\cD_0)\mid_{C_1 \cap C_2})\\  =& 2+2-1=3 . 
\end{align*}
Applying Equation \ref{eq:ses} again, 
\begin{align*}
   & \chi(X_0, \mathcal{O}(\cD_0)) \\
     = & \chi (X_1, \mathcal{O}(\cD_0)\mid_{X_1}) + \chi (X_2, \mathcal{O}(\cD_0)\mid_{X_2}) - \chi(X_1 \cap X_2,  \mathcal{O}(\cD_0)\mid_{X_1 \cap X_2})\\  =& D^2 +1 . 
\end{align*}
Now, fix a generic $t\in T_Y$ such that $\bar{\cX}_t$ is smooth. By Riemann-Roch, on $\bar{\cX}_t$, $\chi \left(\bar{\cX}_t, \cO(\cD_{i,t})\right)=\cD_{i,t}^2+1$. Then, since 
\[
\chi \left(\bar{\cX}_t, \cO(\cD_{i,t})\right) = \cD_{i,t}^2+1 = \chi(X_0, \cO(\cD_{i,0})) =  D_i^2+1
\]
we conclude that $\cD_{i,t}^2 = D_i^2$.

\begin{figure}
\begin{minipage}{\linewidth} \begin{center} 
\vspace{1 cm}
\begin{tikzpicture}[scale=0.7] 
  \draw [-stealth](0,0) -- (4,0) node[anchor=west] {$\rho_1$};
  \draw [-stealth](0,0)-- (0,4) node[anchor=south,yshift=.1cm] {$\rho_2$};
  \draw [-stealth] (0,0) --(-4,4) node[anchor=south] {$\rho_1$};
    \draw [red](2,0) --(-2,2);
      \filldraw [black] (2,0) circle (2pt);
  \filldraw [black] (-2,2) circle (2pt);
  \end{tikzpicture}
 \caption{$P(W)$ for the degree $1$ del Pezzo.}\label{fig:dP1-poly}
\end{center}\end{minipage}
\end{figure}

Now, suppose $\pi: (Y,D) \rightarrow (Y',D')$ is a toric blowdown and that we know that the statement of the lemma holds for smooth fibers in $\bar{\cX}'\rightarrow T_{Y'}$. Let $\iota_{\pi}: \Spec(\bC[\NE(Y')]) \hookrightarrow \Spec(\bC[\NE(Y)])$ be the locally closed embedding as in Lemma \ref{lem:tor-bl-compa}. Then, given $t\in T_{Y'}$ such that $\bar{\cX}'_t$ is smooth, by Lemma \ref{lem:compa-para-tor-bl}, the statement of the lemma also holds for $\bar{\cX}_{\iota_{\pi}(t)}$ since it holds for $\bar{\cX}'_t$.  Using the equivariant action of $T^{D}$ on $\bar{\cX} \mid_{T_Y} \rightarrow T_Y$, we know the statement of Lemma has to hold for any smooth fiber in $\bar{\cX} \mid_{T_Y} \rightarrow T_Y$. In particular, if  $(Y',D')$ has a parallel configuration, then by our previous computation, the statement holds for smooth fibers $\bar{\cX}'\rightarrow T_{Y'}$ and therefore also holds for smooth fibers in $\bar{\cX} \mid_{T_Y} \rightarrow T_Y$. 

If $(Y,D)$ does not blow-down to a pair that has a parallel configuration, it must blow-down to the case of the degree $1$ del Pezzo surface together with an irreducible anti-canonical divisor. By our discussion in the previous paragraph, it suffices to prove that the statement of the lemma holds for smooth fibers in the canonically compactified mirror family associated with the degree $1$ del Pezzo surface. 

Let $(Y,D)$ be a degree $1$ del Pezzo surface with an irreducible nodal anticanonical divisor. Consider $W=D$ and the associated polytope, $P(W)$. Again, we pass to a toric blowup $(Y',D') \rightarrow (Y,D)$ at the node of $D$ as we did in the parallel case where $n=1$ and follow notation we used there. The fiber $X_0$ over $0\in \mathrm{Spec}(\bC[\mathrm{NE}(Y')])$ consists of two irreducible components $X_1$ and $X_2$, each of which is a copy of $\mathbb{P}(1,1,2)$.  

\begin{figure}
\begin{minipage}{\linewidth} \begin{center} 
\vspace{1 cm}
\begin{tikzpicture} [scale = 1.5]
  \draw [-stealth](0,0) -- (1,0);
  \draw [-stealth](0,0)-- (0,1) ;
  \draw [-stealth] (0,0) --(-1,-2) ;
  \node [label= left: $\tau_1$] at (0.7, -0.3) {};
   \node [label= right: $\tau_2$] at (-0.7, 0.2) {};
    \end{tikzpicture}
 \caption{The toric fan for $\mathbb{P}(1,1,2)$.}
\end{center}\end{minipage}
\end{figure}

 Note that due to the quotient singularity in $\mathbb{P}(1,1,2)$, $\cD_0$ is not Cartier but $2\cD_0$ is. By Lemma \ref{lem:rel-car}, it suffices to compute $\chi(X_0, \mathcal{O}(2\cD_0))$.

First, notice that $\mathcal{O}(2\cD_0)\mid_{X_i} = \mathcal{O}(2\cD_0 \cap X_i)$ is generated by its global sections since its support function is convex. Thus, higher cohomology groups of $\mathcal{O}(2\cD_0)\mid_{X_i}$ vanish and we have 
\[
\chi(X_i, \mathcal{O}(2\cD_0)\mid_{X_i}) = H^{0}(X_i, \mathcal{O}(2\cD_0)\mid_{X_i}) =4
\]
which is equal to the number of lattice points in the convex polytope associated with $\mathcal{O}(2\cD_0) \mid_{X_i}$ as shown in Figure \ref{fig:pol-lin}.

\begin{figure}
\begin{minipage}{\linewidth} \begin{center} 
\vspace{1 cm}
\begin{tikzpicture} 
  \draw (0,0) -- (4,0);
  \draw (0,0)-- (0,2) ;
  \draw  (4,0) --(0,2) ;
\filldraw [black] (0,0) node [anchor=north] {(0,0)} circle (2pt); 
\filldraw [black] (2,0) node [anchor=north] {(1,0)} circle (2pt); 
\filldraw [black] (4,0) node [anchor=north] {(2,0)} circle (2pt); 
\filldraw [black] (0,2) node [anchor=east] {(0,1)} circle (2pt); 

    \end{tikzpicture}
 \caption{The convex polytope of the line bundle $\mathcal{O}(2\cD_0) \mid_{X_i}$.} \label{fig:pol-lin}
\end{center}\end{minipage}
\end{figure}

Next, consider the two irreducible curves $C_1, C_2$ of $X_1 \cap X_2$.  We claim that 
\[
 \mathcal{O}(2\cD_0) \mid_{C_2} \simeq \mathcal{O}_{\mathbb{P}^{1}}(1)
\]
and 
\[
 \mathcal{O}(2\cD_0) \mid_{C_1} \simeq \mathcal{O}_{\mathbb{P}^{1}}(2).
\]
Indeed, denote the intersection point of $D_2$ with $C_1$ and $C_2$ by $r$ and ${q}$ respectively. Denote by $U_{\tau_i} (i=1,2)$ the affine chart corresponding to the cone $\tau_i$ in the fan for $X_2 \simeq \mathbb{P}(1,1,2)$. Then, on the affine chart $U_{\tau_1}$, $\mathcal{O}(-2\cD_0) \mid_{U_{\tau_1}}$ is generated by the toric monomial $x=z^{(0,-1)}$. Since $C_2 \cap U_{\tau_1} = \mathrm{Spec}(\mathbb{C}[x])$, we conclude that $\mathcal{O}(-2\cD_{0})\otimes O_{C_2}$ is the ideal sheaf for $q$. Let $y=z^{(-1,0)}$. On the affine chart $U_{\tau_2}$,  $\mathcal{O}(-2\cD_0) \mid_{U_{\tau_2}}$ is generated by the toric monomial $y^2=z^{(-2,0)}$. Since $C_1 \cap U_{\tau_2} =  \mathrm{Spec}(\mathbb{C}[y])$, we conclude that   $\mathcal{O}(-2\cD_{0})\otimes O_{C_1}$ is the ideal sheaf for $2r$.

Now, let us compute $\chi (X_1 \cap X_2, \mathcal{O}(2 \cD_0)\mid_{X_1 \cap X_2})$. We have $X_1 \cap X_2 =C_1 \cup C_2$. Applying Equation \ref{eq:ses} gives
\begin{align*}
    & \chi (X_1 \cap X_2, \mathcal{O}(2 \cD_0)\mid_{X_1 \cap X_2}) \\
 = & \chi(C_1, \mathcal{O}(2 \cD_0)\mid_{C_1}) +  \chi(C_1, \mathcal{O}(2 \cD_0)\mid_{C_2}) - \chi(C_1 \cap C_2, \mathcal{O}(2 \cD_0)\mid_{C_1 \cap C_2}) \\ = & 2+3-1=4. 
\end{align*}
Apply Equation \ref{eq:ses} again, we get 
\begin{align*}
    & \chi(X_0, \mathcal{O}(2\cD_0)) \\
     = & \chi (X_1, \mathcal{O}(2\cD_0)\mid_{X_1}) + \chi (X_2, \mathcal{O}(2\cD_0)\mid_{X_2}) - \chi(X_1 \cap X_2,  \mathcal{O}(2\cD_0)\mid_{X_1 \cap X_2})\\  =& 4+4-4=4. 
\end{align*}
Thus, by Riemann-Roch, we have
\begin{align*}
    \chi (\bar{\cX}_t, O(2\cD_t)) & = \frac{1}{2}2\cD_t \cdot(2\cD_t +\cD_t) +1 = 3\cD_t^2 +1  =\chi(X_0, \mathcal{O}(2\cD_{0})) = 4
\end{align*}
and we obtain that $\cD_t^2 =1$ as desired. 
\end{proof}

\section{The Boundary and the Marked
Period}\label{sec:marking-boundary}

The global theta functions provide a way to consistently give a marking on the boundary divisor of the compactified family. Later, in Section \ref{sec:per-int}, we will use this marking on the boundary to compute the marked periods of the fibers in the compactified mirror family. 

\begin{cons}[Marking the boundary via the canonical coordinates]
\label{cons:marking} Given a non-self-intersecting line segment $\bar{L}$ in $B\setminus\left\{0\right\}$, we give an order on the set of lattice points contained in  $\bar{L}$ by letting $a \succ b$ if $a$ succeeds $b$, following the canonical orientation of the immersed line containing $\bar{L}$.

Fix a parallel polygon $F$ associated with a $D$-ample Weil divisor $W$. Denote by $F_i$ the edge parallel to the ray $\rho_i$. Rescaling the polygon $F$ by a positive integral multiple if necessary, we can assume that for each edge $F_i$, there exists a maximal cone $\sigma$ in $\Sigma$ that contains two consecutive lattice points $a_i \succ b_i$ on $F_i$. If $F$ is a 1-gon, we further require the two consecutive lattice points to be contained in the interior of $\sigma$. Let $F_{i,\sigma} = \sigma \cap F_i$. We can extend $F_{i,\sigma}$ in the forward direction (given by the canonical orientation of $F_i$) to a ray $R$ that escapes in the cone $\sigma_{i-1,i}$, which is parallel to $\rho_i$. Given a ray $\rho_j$ in $\Sigma$ that intersects $R$, let $\delta_j = 
\left\langle n_{\rho_j} \,,\,\overrightarrow{b_ia_i} \right\rangle $ where $n_{\rho_j}\in \check{\Lambda}_{\rho_j}$ is the unique primitive element annihilating $\rho_j$ that is positive on the parallel transport of $\overrightarrow{b_ia_i}$. Let 
\begin{align}
   f_{\cD_i} = z^{\sum_{\rho_j} \delta_j \cdot D_j} \cdot \frac{\theta_{a_i}}{\theta_{b_i}} \label{eq:diff-mar}
\end{align}
 where the sum is over all rays, $\rho_j,$ that intersect $R$. By Proposition \ref{thetamarking}, $f_{\cD_i}$ restricts to a coordinate on $\cD^{\circ}_i \mid_{T_Y}$ where $\cD^{\circ}_i \mid_{T_Y}$ is the complement of the nodes.  Then, we can define a section $p_i$ of $\cD^{\circ}_i \mid_{T_Y} \rightarrow T_Y$ by letting $ f_{\cD_i} = -1 $. These choices of sections ${p_i}$'s give us a marking of $\cD\mid_{T_Y}$. 

By the definition of the $\Sigma$-piecewise linear function $\varphi$ and Proposition \ref{prop:paralleledgeiso}, $f_{\cD_i}$ and the marking $p_i$ are independent of the particular choice of the parallel polygon $F$ and the consecutive lattice points $a \succ b$. By Lemma \ref{extendsection}, $p_i$ extends to a section on $\cD_i$. We call the coordinate $f_{\cD_i}$ the \emph{canonical coordinate} on the boundary $\cD_i$. 
\end{cons}
\begin{lem}\label{lem:rel-tor-mar}
The relative torus $T^{D}$ acts on $(\bar{\cX}\mid_{T_Y},\cD\mid_{T_Y})$ by changing the canonical marking $p_i$ $(1\leq i \leq n)$ as defined in Construction \ref{cons:marking}.
\end{lem}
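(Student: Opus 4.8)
The plan is to reduce the whole statement to one weight computation on the boundary. First I would record that $T^D$ genuinely acts on $\bar{\cX}\mid_{T_Y}\to T_Y$: by Theorem \ref{thm:rel-equi} the relative torus acts equivariantly on $\cX\to S$ with $\theta_q$ of weight $w(q)$, and by the remark following Proposition \ref{prop:fin-gen} this extends to the compactification $\bar{\cX}$, with the grading variable $T$ of weight $0$; since $T^D$ acts on $S=\Spec(\bC[\NE(Y)])$ through the weight map (so that $z^{[C]}$ has weight $w(C)$), it preserves the dense torus $T_Y\subset S$ and hence restricts to $\bar{\cX}\mid_{T_Y}$. The boundary $\cD=\{T=0\}$ is $T^D$-invariant, and as $T^D$ is connected it preserves each component $\cD_i$ and each smooth locus $\cD_i^{\circ}\mid_{T_Y}$, while the marking $\mu$ of $\Pic$ and the component labelling are visibly unchanged. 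So the real content of the lemma is to see precisely how the action moves the canonical sections $p_i$.

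Second I would compute the $T^D$-weight $\chi_i\in\chi(T^D)$ of the canonical coordinate $f_{\cD_i}=z^{C_i}\cdot\theta_{a_i}/\theta_{b_i}$ from Construction \ref{cons:marking}. Since $\theta_{a_i}$, $\theta_{b_i}$ are eigenfunctions of weights $w(a_i)$, $w(b_i)$ and $z^{C_i}$ (with $C_i=\sum_j\delta_j D_j\in A_1(Y)$) is an eigenfunction of weight $w(C_i)$, the function $f_{\cD_i}$ is an eigenfunction of weight $\chi_i=w(a_i)-w(b_i)+w(C_i)$. Because $a_i\succ b_i$ are consecutive lattice points on the edge $F_i$, whose forward ray $R$ escapes parallel to $\rho_i$, the displacement $a_i-b_i$ is the primitive generator $\nu_i$ of $\rho_i$; as $a_i,b_i$ sit in a common maximal cone on which $w$ is linear, $w(a_i)-w(b_i)=w(\nu_i)=e_{D_i}$. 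It then remains to check $w(C_i)=0$, i.e. that $C_i=\sum_j\delta_j D_j$ lies in $D^{\perp}$: the twisting exponent $C_i$ is exactly the total bending accumulated by the multivalued function $\varphi$ (whose bending parameter at $\rho_j$ is $[D_j]$) along the ray $R$, and $w\circ\varphi$ has the same bending parameter $w([D_j])$ at each $\rho_j$ as $w$ itself, so $w\circ\varphi-w$ is globally linear and $w(C_i)$, being its bending, vanishes; equivalently one identifies the restriction of $f_{\cD_i}$ to the toric boundary curve $\cD_{i,0}$ of the central fibre with the monomial of exponent $\nu_i$, of weight $w(\nu_i)$. Either way $\chi_i=e_{D_i}$, a basis vector of $\chi(T^D)$.

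Third I would conclude. For $t\in T^D$ and $s\in T_Y$, the isomorphism $\bar{\cX}_s\xrightarrow{\ \sim\ }\bar{\cX}_{t\cdot s}$ induced by $t$ restricts on $\cD_i^{\circ}$ to a map scaling the canonical coordinate by $e_{D_i}(t)$, so it carries $p_i(s)=\{f_{\cD_i}=-1\}$ to the point $\{f_{\cD_i}=-e_{D_i}(t)\}$ over $t\cdot s$. Hence $t$ sends the canonical marking $(p_1,\dots,p_n)$ over $s$ to the marking over $t\cdot s$ cut out by $f_{\cD_i}=-e_{D_i}(t)$, which coincides with the canonical marking $(p_i(t\cdot s))$ if and only if $e_{D_i}(t)=1$ for all $i$; since the $e_{D_i}$ form a basis of $\chi(T^D)$, the relative torus in fact acts simply transitively on the set of boundary markings. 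Thus the only effect of the $T^D$-action on the generalized marked pair $((\bar{\cX}\mid_{T_Y},\cD\mid_{T_Y}),p_i,\mu)$ is to change the marking $p_i$, which is the assertion of the lemma.

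The main obstacle is the vanishing $w(C_i)=0$: this requires unwinding the bookkeeping relating the integers $\delta_j=\langle n_{\rho_j},\nu_i\rangle$, the bending parameters $[D_j]$ of $\varphi$, and the weight map $w$ so as to see $\sum_j\delta_j D_j\in D^{\perp}$ (or, alternatively, pinning down the restriction of $f_{\cD_i}$ to a boundary stratum of the central fibre $\mathbb{V}_n$). Everything else is formal manipulation of the equivariant $T^D$-action and the definitions in Construction \ref{cons:marking}.
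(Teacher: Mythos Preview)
Your overall plan matches the paper's: show that the canonical coordinate $f_{\cD_i}$ is a $T^D$-eigenfunction of weight $e_{D_i}$, then deduce that the $T^D$-action moves the sections $p_i$ by the characters $e_{D_i}$. The framing in your first and third paragraphs is fine.

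The problem is in your second paragraph, where you split $\chi_i=w(a_i)-w(b_i)+w(C_i)$ and assert each summand separately. Neither $w(a_i)-w(b_i)=e_{D_i}$ nor $w(C_i)=0$ holds in general. The displacement $a_i-b_i$ agrees with $\nu_i$ only after parallel transport into $\sigma_{i-1,i}$, but $w$ is merely \emph{piecewise} linear on $B$, so $w(a_i)-w(b_i)$ (computed with the linear piece on the cone containing $a_i,b_i$) need not equal $w(\nu_i)$. For instance, if $a_i,b_i\in\sigma_{i-2,i-1}$ then in the standard chart $a_i-b_i=(-1,-D_{i-1}^2)$, so $w(a_i)-w(b_i)=-e_{D_{i-2}}-D_{i-1}^2 e_{D_{i-1}}$, while the only ray crossed is $\rho_{i-1}$ with $\delta_{i-1}=1$, giving $C_i=D_{i-1}$ and $w(C_i)=e_{D_{i-2}}+D_{i-1}^2 e_{D_{i-1}}+e_{D_i}\neq 0$. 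The two errors cancel and the sum is $e_{D_i}$, but your argument for $w(C_i)=0$ misidentifies what vanishes: you correctly observe that $w\circ\varphi$ and $w$ have the same bending at each $\rho_j$, so $w\circ\varphi-w$ has no bending; but $w(C_i)$ is the accumulated bending of $w\circ\varphi$ along $R$, not of $w\circ\varphi-w$. The correct conclusion from your observation is that the accumulated bending of $w\circ\varphi$ along $R$ equals that of $w$ along $R$, i.e.\ $w(C_i)=e_{D_i}-(w(a_i)-w(b_i))$, which is exactly $\chi_i=e_{D_i}$.

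The paper avoids the bookkeeping entirely. Since Construction~\ref{cons:marking} records that $f_{\cD_i}$ is independent of the chosen consecutive lattice points, one takes $p\succ q$ on the forward ray $R_{F_i}$ already inside the terminal cone $\sigma_{i-1,i}$; there are then no further rays to cross, so by Proposition~\ref{prop:paralleledgeiso} one has $f_{\cD_i}=\theta_p/\theta_q$ on $\cD_i^\circ\mid_{T_Y}$ with no twist, and the weight is $w(p)-w(q)=w(\nu_i)=e_{D_i}$ directly. Your parenthetical ``equivalently one identifies the restriction of $f_{\cD_i}$ to the toric boundary curve $\cD_{i,0}$\dots'' is essentially this argument and is the one to keep.
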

\begin{proof}
The forward ray $R_{F_i}$ of $F_i$ escapes in the cone $\sigma_{i-1,i}$, parallel to $\rho_i$ on the right. Take two consecutive lattice points $p \succ q$ on $R_{F_i} \cap \sigma_{i-1,i}$. Then, by Proposition \ref{prop:paralleledgeiso}, restricted to $\cD^{\circ}_i \mid_{T_Y}$, $f_{\cD_i} = \frac{\theta_p}{\theta_q}$. It follows from the definition of the weight function and Theorem \ref{thm:rel-equi} that restricted to $\cD^{\circ}_i \mid_{T_Y}$, $f_{\cD_i}$ is an eigenfunction of $T^{D}$ with weight $e_{D_i}$. 
\end{proof}

\begin{lem}\label{extendsection}
Recall that $P=\NE(Y)$ and $S=\Spec(\mathbb{C}[\NE(Y)])$. Let $(\bar{\cX},\cD)\rightarrow S$ be the compactification of the mirror family using a parallel polygon $F$. Consider the degeneration $f_i: \cD_i \rightarrow S$. Let $p \succ q$ be two consecutive integer points in the same 1-cell $\varrho$ in the polyhedral decomposition of the edge $F_i$ given by $\Sigma \cap F_i$. Consider the section $s:T_Y\rightarrow \cD_i \mid_{T_Y}$ of $\cD_i \mid_{T_Y} \rightarrow {T_Y}$ given by $\theta_p/\theta_q=- 1$. Then $s$ extends to a section over the full base $S$. Furthermore, at the unique toric fixed point $0\in S$, it meets an interior point of the boundary component corresponding to $\varrho$.
\end{lem}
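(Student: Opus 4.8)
\textbf{Proof proposal for Lemma \ref{extendsection}.}

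The plan is to reduce the statement to an explicit local computation near the toric fixed point $0 \in S$, using the fact that the canonical compactification degenerates the boundary divisor $\cD_i$ to a purely Mumford (toric) degeneration over the stratum of $S$ governed by the $D$-ample weight $W$. First I would invoke the one-parameter subgroup $T^W \subset T^D$ associated with $W = \sum a_j D_j$; as shown in Lemma \ref{lem:rel-car} (and Lemma \ref{lem:cartier}), for any $s \in T_Y$ the limit $s_0 := \lim T^W \cdot s$ exists in $S$ and $\bar{\cX}(F)_{s_0} \simeq X_0$, the central fiber over $0$. Since the weight of every $z^{[C]}$, $C \in \NE(Y)$, is non-negative under $T^W$ and strictly positive on every $\mathbb{A}^1$-class, there is no scattering over $s_0$, so $\cD_i \times_S \mathbb{A}^1 \to \mathbb{A}^1$ (via $\delta_s$) is the trivial family; equivalently, restricted to the closed stratum $\Spec(\mathbb{C}[\NE(Y)]/Q)$ where $Q$ is the toric monomial ideal of classes not contracted under the relevant structure, the degeneration $f_i \colon \cD_i \to S$ is the standard Mumford degeneration with fibers the cycle of $\mathbb{P}^1$'s given by the polyhedral decomposition $\Sigma \cap F_i$ of the edge $F_i$.

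Next I would work out the section explicitly. On $\cD_i^\circ \mid_{T_Y}$ the canonical coordinate $f_{\cD_i}$ is, by Proposition \ref{thetamarking} and Proposition \ref{prop:paralleledgeiso}, a generator of the ring of regular functions of each fiber, and for consecutive lattice points $p \succ q$ in a single $1$-cell $\varrho$ of $\Sigma \cap F_i$ the ratio $\theta_p / \theta_q$ is precisely such a coordinate (this is exactly what Proposition \ref{thetamarking} says, with the $n=1$ interior caveat which we are not in since $\varrho$ can be chosen in the interior, or handled separately). In the Mumford model, the closed stratum of $\cD_i$ corresponding to $\varrho$ is a copy of $\mathbb{P}^1$ with an affine coordinate $t_\varrho$ coming from the monomials $z^q, z^p$ on $\varrho$, and $\theta_p/\theta_q$ specializes to $c \cdot t_\varrho$ for a nowhere-zero function $c$ on the base (a Laurent monomial in the $z^{[C]}$, valued in $T_Y$ and extending to a unit near $0$ because the relevant bending/scattering corrections vanish over the stratum). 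Hence the locus $\{\theta_p/\theta_q = -1\}$ is cut out, near $0$, by $\{c \cdot t_\varrho = -1\}$, which is the graph of a section $t_\varrho = -c^{-1}$ of the $\mathbb{P}^1$-bundle-like chart over a neighborhood of $0$; this section avoids $t_\varrho = 0, \infty$, i.e. avoids the two nodes bounding the component $X(\varrho)$ of $\cD_{i,0}$. Patching this with the already-defined section $s$ over $T_Y$ via the fact that $\cD_i \to S$ is separated and the two agree on the (dense) overlap $T_Y$, we get the desired extension $s \colon S \to \cD_i$, and the computation above shows it meets an interior point of the component of $\cD_{i,0}$ labelled by $\varrho$.

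The main obstacle I anticipate is controlling the unit $c$, i.e. verifying that $\theta_p/\theta_q$ extends to a \emph{nowhere-vanishing} regular function on the relative $\mathbb{G}_m$-torsor inside $\cD_i^\circ$ over a full neighborhood of $0$ in $S$, rather than merely over $T_Y$. This requires understanding which monomials $z^{[C]}\theta_r$ appear in $\theta_p$ and $\theta_q$ and that, after restriction to $T \cdot V = 0$ and use of the purely Mumford structure (so that only the "straight" broken lines survive, as in Claim \ref{mult-str}), the ratio is a single Laurent monomial in the $z^{[C]}$ times $t_\varrho$; the bending-parameter bookkeeping in Construction \ref{cons:marking} (the prefactor $z^{\sum \delta_j D_j}$) is exactly what guarantees this, so the argument is: pick $p,q$ in the same $1$-cell $\varrho$ so that \emph{no} wall is crossed between them, whence $\theta_p/\theta_q = z^{D^{(p,q)}} t_\varrho$ for an explicit divisor $D^{(p,q)}$ supported on boundary classes, and $z^{D^{(p,q)}}$ is a unit near $0$ because each such class pairs to $0$ with $W$. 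I would also need to check separatedness/uniqueness of the extension, but this is immediate since $\cD_i \to S$ is projective (Proposition \ref{prop:fin-gen}) and $T_Y$ is dense in $S$.
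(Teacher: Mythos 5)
Your proposal follows essentially the same route as the paper's (very terse) proof: the paper simply cites Claim \ref{mult-str} together with the fact that $\varphi$ is affine (does not bend) on $\varrho$, and your write-up is a correct unpacking of exactly those two ingredients — the Mumford-degeneration structure of $\cD_i$ over the deep stratum, the identification of $\theta_p/\theta_q$ with a toric coordinate $t_\varrho$ on the component $X(\varrho)$, and the observation that the locus $\{\theta_p/\theta_q=-1\}$ therefore avoids the nodes.

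One justification in your final paragraph is wrong as stated, though the conclusion survives. You write that the prefactor $z^{D^{(p,q)}}$ ``is a unit near $0$ because each such class pairs to $0$ with $W$.'' Pairing to zero with $W$ does not make a monomial a unit at the unique toric fixed point $0\in S$: \emph{every} nontrivial monomial $z^{[C]}$, $C\in\NE(Y)\setminus\{0\}$, vanishes there, so if $D^{(p,q)}$ were a nonzero effective class the equation $z^{D^{(p,q)}}t_\varrho=-1$ would push the section to $t_\varrho=\infty$, i.e.\ into a node, and the lemma would fail. The correct reason the prefactor is trivial (not merely a unit) is the one the paper gives: since $p\succ q$ lie in the \emph{same} $1$-cell $\varrho$, no ray of $\Sigma$ separates them, so no kink of $\varphi$ (which would contribute a factor $z^{[D_j]}$) and no wall of the scattering diagram intervenes, whence $\theta_p/\theta_q$ restricts to the bare monomial $z^{p-q}$ on the slab with $D^{(p,q)}=0$. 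With that substitution your argument is complete.
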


\begin{figure}
\begin{minipage}{\linewidth} \begin{center} 
\vspace{1 cm}
\begin{tikzpicture} 
\draw (-4,-2) node[anchor=north] {}
  -- (2,4) node[anchor=north] {}
  -- (2,-2) node[anchor=south] {}
  -- cycle;
  \draw (0,0) -- (2,0) node[anchor=west] {$\rho_1$};
  \draw (0,0)-- (0,2) node[anchor=south,yshift=.1cm] {$\rho_2$};
  \draw (0,0) --(-2,-2) node[anchor=north] {$\rho_3$};
  \filldraw [black] (2,1) circle (2pt);
  \filldraw [black] (2,2) circle (2pt);
    \filldraw [black] (0,-2) circle (2pt);
  \filldraw [black] (1,-2) circle (2pt);
    \filldraw [black] (-3,-1) circle (2pt);
  \filldraw [black] (-2,0) circle (2pt);
\end{tikzpicture}
 \caption{An example of the marking on the boundary in the case  where $Y$ is $\mathbb{P}^2$ and $D$ its toric boundary.}\label{fig:marking}
\end{center}\end{minipage}
\end{figure}

\begin{proof}
The lemma follows from Claim \ref{mult-str} and the fact that restricted to $\varrho$, $\varphi$ does not bend and is affine.
\end{proof}

\begin{lem}\label{lem:toricperiod} 
For $i\in \{1,...,n\}$ with $n\geq1,$  let $(S_i,p_i,G_i,G'_i)$ be an ordered collection of smooth complete toric surfaces $S_i$ with a choice of two adjacent toric boundary components $G_i$ and $G'_i$ that intersect at the $0$-stratum $p_i$. The indices are modulo $n$. Gluing $G_i\subset S_i$ to $G'_{i+1}\subset S_{i+1}$ by identifying the structure tori so that $p_i$ is glued to $p_{i+1}$, we obtain a  surface $S$ with normal crossing boundary cycle $D$  given by
\[ D:= \bigcup_i \partial (S_i)\setminus (G_i\cup G'_i).\]
Each irreducible component $Z\subset D$ has a structure torus $\mathbb{G}_m\subset Z$ so there is a distinguished point $-1_Z\in \mathbb{G}\subset Z$. Then, given  $L\in \Pic(S)$, 
\[ L|_D=\cO\Bigl(\sum_{Z\subset D}(L\cdot Z)(-1_Z)\Bigr).\]
\end{lem}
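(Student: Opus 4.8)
The plan is to reduce the statement to a one-dimensional computation on each component $Z \subset D$, combined with the gluing combinatorics that produce $S$ from the pieces $S_i$. First I would recall the structure of $\operatorname{Pic}(S)$: since $S$ is obtained by gluing the toric surfaces $S_i$ along boundary curves, a line bundle $L$ on $S$ is determined by line bundles $L_i = L|_{S_i}$ together with gluing isomorphisms along the identified boundary curves $G_i \simeq G'_{i+1}$. On each toric surface $S_i$, the restriction $L_i|_{\partial S_i}$ to the toric boundary is, up to the choice of torus-fixed trivializations, the divisor $\sum_{F \subset \partial S_i} (L_i \cdot F)\, p_F$ where $p_F$ is a torus-fixed point on the component $F$; this is the standard description of restriction of toric line bundles to the toric boundary. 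The key subtlety is that the distinguished point $-1_Z$, rather than a torus-fixed point, is the natural base point compatible with the gluing: the gluing identifies the structure tori of $G_i$ and $G'_{i+1}$, and the induced identification of $\operatorname{Pic}^{\circ}$ of the boundary cycle $D$ (via Lemma \ref{lem:pic-mul}) is normalized precisely so that the relevant transition functions are measured at $-1$.

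The core of the argument is then the following. Orient $D$ and use the canonical isomorphism $\operatorname{Pic}^{\circ}(D) \simeq \mathbb{G}_m$ from Lemma \ref{lem:pic-mul}. Given $L \in \operatorname{Pic}(S)$, restrict to $D$ to get $L|_D \in \operatorname{Pic}(D)$; since $D$ is an anticanonical cycle and the components meet the smooth toric surfaces transversally, $L|_D$ has a well-defined multidegree $(L \cdot Z)_{Z \subset D}$, and I want to pin down its class in $\operatorname{Pic}^{\circ}$ after twisting by $\mathcal{O}\!\left(\sum_Z (L\cdot Z)(-1_Z)\right)$. Equivalently, I must show that $M := L|_D \otimes \mathcal{O}\!\left(-\sum_Z (L\cdot Z)(-1_Z)\right)$ is trivial in $\operatorname{Pic}^{\circ}(D)$, i.e.\ maps to $1 \in \mathbb{G}_m$. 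To do this I would trivialize $M$ on the normalization $\tilde{Z}$ of each component: on each $\tilde{Z}_i \simeq \mathbb{P}^1$, $M|_{\tilde{Z}_i}$ has degree zero and hence a global trivialization, which I choose using the torus-fixed structure coming from $S_i$ (more precisely, the section whose divisor is the difference between the torus-fixed points and the points $-1_{Z}$). Then the only data remaining is the gluing maps $\mu_i$ at the nodes, each identified with an element of $\mathbb{G}_m$ as in the proof of Lemma \ref{lem:pic-mul}, and the product $\mu_1 \cdots \mu_n$ is the class of $M$. The gluing of $G_i \subset S_i$ to $G'_{i+1} \subset S_{i+1}$ by identifying structure tori means each $\mu_i = 1$, because the trivializations chosen on either side of the node agree under the torus identification — this is exactly where the normalization "$-1_Z$" rather than a torus-fixed point is forced, since the node $p_i$ is a torus-fixed point of both $G_i$ and $G'_{i+1}$ and the transition across it is trivial in the torus coordinates.

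The step I expect to be the main obstacle is bookkeeping the trivializations and gluing isomorphisms so that the cancellation $\mu_1\cdots \mu_n = 1$ is genuinely forced rather than an artifact of sign conventions; in particular one must check that the point $-1_Z$ is the correct universal base point across all components simultaneously, consistently with the chosen orientation of $D$ and the identification $\operatorname{Pic}^{\circ}(D)\simeq\mathbb{G}_m$. A clean way to organize this is to first treat the case $n = 1$ (a single toric surface $S_1$ with its two adjacent boundary components glued, giving an irreducible nodal $D$), verify the formula there by an explicit computation in torus coordinates on $S_1 \simeq$ a toric surface, using that $L_1 \cdot G_1 = L_1 \cdot G'_1$ is not required but the self-gluing identifies the fiber of $L_1$ at the two torus-fixed endpoints of the glued curves — and then observe that the general case is the "cyclic concatenation" of $n$ such local pictures, so the product of transition functions telescopes. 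An alternative, perhaps more conceptual route: identify $\operatorname{Pic}(S) \to \operatorname{Pic}(D)$ with the composite of restriction maps and use that for each toric $S_i$ the diagram relating $\operatorname{Pic}(S_i)$, restriction to $\partial S_i$, and the torus-fixed-point description commutes; then the formula is just additivity of these toric restrictions plus the observation that the gluing changes the base point from torus-fixed to $-1_Z$ by a divisor supported away from the nodes, which lies in the kernel $\operatorname{Pic}^{\circ}(D) \cap (\text{trivial})$. Either way, once the $n=1$ local computation is done, the global statement should follow formally.
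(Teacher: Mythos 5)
Your overall strategy (trivialize the degree-zero bundle $M=L|_D\otimes\cO\bigl(-\sum_Z(L\cdot Z)(-1_Z)\bigr)$ on the normalization of each component and show the product of the transition functions at the nodes is $1$) is a legitimate route, dual to what the paper actually does: the paper reduces to the case where each $L|_{S_i}$ is ample and exhibits an explicit global section — the sum of the boundary lattice-point monomials of the moment polytope — whose restriction to each component $Z$ is $(X+Y)^{L\cdot Z}$ and hence vanishes precisely at $(L\cdot Z)(-1_Z)$. But your write-up has a genuine gap exactly at the step you flag as "the main obstacle": you never perform the computation that singles out $-1_Z$, and the justification you offer for it is not correct as stated. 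The fact that the gluing identifies structure tori and that the nodes are torus-fixed points does not by itself force the base point to be $-1$; if anything, "triviality in torus coordinates" would naively point to the identity element $1_Z$. The reason $-1$ appears is a concrete leading-coefficient computation: on a component $Z$ with coordinate $z$ and $L|_Z\cong\cO(a\cdot 0+b\cdot\infty)$, $a+b=d=L\cdot Z$, the rational function $(z+1)^d/z^a$ realizing the linear equivalence with $d\cdot(-1_Z)$ has leading coefficient $1$ at \emph{both} torus-fixed points ($z^{-a}$ near $0$ and $z^{b}$ near $\infty$), so all the gluing constants $\mu_i$ of Lemma \ref{lem:pic-mul} are $1$; with base point $+1_Z$ one would instead pick up a factor $(-1)^d$ at one end. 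This binomial computation (equivalently, that $(X+Y)^d$ is monic in both homogeneous coordinates) is the entire content of the lemma, and it is absent from your proposal.

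Two smaller points. First, your identification of the nodes of $D$ is off: the point $p_i=G_i\cap G'_i$ is glued to $p_{i+1}$ and lies on the identified curve $G_i=G'_{i+1}$, which is \emph{not} a component of $D$; the nodes of $D$ are the other torus-fixed points of the $S_i$, including the endpoints $q_i\in G_i$, $q'_{i+1}\in G'_{i+1}$ opposite to $p_i$, $p_{i+1}$, which get identified because the torus identification matching $p_i$ with $p_{i+1}$ matches $q_i$ with $q'_{i+1}$. Second, your opening claim that $L|_{\partial S_i}$ "is" $\sum_F(L\cdot F)p_F$ for torus-fixed points $p_F$ is vacuous componentwise (any degree-$d$ bundle on $\mathbb{P}^1$ is $\cO(d\cdot p)$ for any $p$); the statement only acquires content once the trivializations at the nodes are tracked, which again comes back to the missing computation. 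Once you insert the explicit $(z+1)^d/z^a$ calculation (or, as the paper does, the explicit section $(X+Y)^d$), your argument closes up; as written it assumes the conclusion at the decisive step.
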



\begin{proof}
Let $\pi: \amalg_i S_i \rightarrow S$ be the normalization map. It  follows from Proposition $3.1$ of \cite{FS} that the pullback map  $\pi^{*}: \Pic(S)\rightarrow \oplus_i \Pic(S_i)$ is injective. Then we can write $L=\cO_S(\sum_i C_i)$ such that $\tilde{C}_i = \pi^{*}{C_i}$ is a Cartier divisor  on $S_i$. 

Let $L_i$ be the pullback of $L$ to $S_i$. Then, $L_i = O_{S_i}(\tilde{C}_i)$. We can write $\tilde{C}_i = A_i-B_i$ where $A_i$ and $B_i$ are ample divisors, so it will be enough to prove the lemma for the case where  $L_i$ is ample for each $i$. Then, the polarized pair $(S_i,L_i)$ is given by a lattice polytope $Q_i$. Take a linear combination $f$ of monomials given by lattice points on the boundary $\partial Q_i$ such that the restriction of $f$ to a boundary component $Z_{\rho} \subset D$ with $\rho$ a boundary of $Q_i$, has the form $(X+Y)^{L_i \cdot Z_{\rho}}$ where $X,Y$ are homogeneous coordinates on $Z_{\rho}$. Then, the  zero scheme given by $f=0$ on $Z_{\rho}$ is $(L_i \cdot Z_{\rho}) (-1_{Z_{\rho}})$ where $-1_{Z_{\rho}} \in \mathbb{G}_m \subset Z_{\rho}$ is the point on defined by $X+Y=0$. Let $C= \sum_i C_i$. Then, we conclude that
\[L|_D=\cO_D(C|_D)=\cO\Bigl(\sum_{Z\subset D} \left(L\cdot Z\right)\left(-1_Z\right)\Bigr).\]
 \end{proof}

\begin{lem}\label{lem:num-tri}
Let $((\bar{\cX} \mid_{T_Y},\cD \mid_{T_Y}),p_i)\rightarrow T_Y$  be the family where the marking $p_i$ is given as in Construction \ref{cons:marking}. Fix a component, $\cD_i\subset \cD$, and let $\cD_{i,t} = \cD_i \mid_{\bar{\cX}_t}$. When $n\ge 2$, we define:
\[ \psi_i(t):=(\mathcal{O}(\cD_i)|_{\cD_t})^{-1}\otimes \cO(p_{i-1}(t)+D_{i}^2p_i(t)+p_{i+1}(t))\]
When $n=1$, we define
\[ \psi_1(t)=\psi(t):=(\cO(\cD)|_{\cD_t})^{-1}\otimes \cO( D^2 p(t)) \]
Then, $\psi_i$ defines an element in $\Pic^0(\cD_t)$ for $t\in T_Y$. 
\end{lem}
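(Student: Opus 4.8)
The claim is that the element $\psi_i(t)$ lies in $\Pic^0(\cD_t)$, i.e., that it has total degree zero on the cycle of rational curves $\cD_t$. Since $\Pic^0(\cD_t)$ is the kernel of the total degree map $\Pic(\cD_t)\to \Z^n$ (recording the degree on each component $\cD_{j,t}$), the plan is to compute the degree of $\psi_i(t)$ on each component $\cD_{j,t}$ and check it vanishes. The two tensor factors in the definition are $(\cO(\cD_i)|_{\cD_t})^{-1}$ and $\cO(p_{i-1}(t)+D_i^2 p_i(t)+p_{i+1}(t))$ (with the obvious modification when $n=1$), so it suffices to show these two line bundles have the same multidegree on $\cD_t$.

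\textbf{Key steps.} First, I would compute the multidegree of $\cO(p_{i-1}(t)+D_i^2 p_i(t)+p_{i+1}(t))$. Since the marking points $p_j(t)$ lie in the smooth locus $\cD_j^{\circ}$, each $\cO(p_j(t))$ has degree $1$ on the component $\cD_{j,t}$ and degree $0$ on every other component. Hence the second factor has degree $1$ on $\cD_{i-1,t}$, degree $D_i^2$ on $\cD_{i,t}$, degree $1$ on $\cD_{i+1,t}$, and degree $0$ on all other components (with the cyclic/irreducible conventions for $n=1,2$). Second, I would compute the multidegree of $\cO(\cD_i)|_{\cD_t}$. By the adjunction/intersection description of the boundary cycle $\cD_t$, the divisor class $\cD_{i,t}$ on $\bar{\cX}_t$ restricts to $\cD_{j,t}$ with degree $\cD_{i,t}\cdot \cD_{j,t}$; this is $1$ when $j=i\pm 1$ (since $\cD_t$ is a cycle of rational curves meeting transversally at nodes), $\cD_{i,t}^2$ when $j=i$, and $0$ otherwise. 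Third, I invoke Lemma \ref{boundaryselfint}, which gives $\cD_{i,t}^2 = D_i^2$, so the two multidegrees agree componentwise. Therefore $\psi_i(t)$ has degree zero on every component of $\cD_t$ and lies in $\Pic^0(\cD_t)$. The $n=1$ case is identical, using the single self-intersection $\cD_t^2 = D^2$ and the normalization degree conventions from Remark \ref{rmk:irr-blo}.

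\textbf{Main obstacle.} The only subtle point is justifying the intersection-theoretic computation of $\cO(\cD_i)|_{\cD_t}$ cleanly, and making sure the self-intersection numbers match — this is exactly where Lemma \ref{boundaryselfint} is needed, and it is the step that does the real work. One must also be careful that $\bar{\cX}_t$ may have du Val singularities, so I would either note that $\cD_t$ lies in the smooth locus (by Proposition \ref{compa-fiber}) and carry out the intersection theory there, or pass to the minimal resolution $\tilde{X}_t$ where $\tilde{D}_t$ is anticanonical and the adjunction formula $\cD_{i,t}\cdot(\cD_{i,t}+K)=-2$ for the rational component $\cD_{i,t}$ makes the bookkeeping transparent. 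The case distinctions $n\geq 3$, $n=2$, $n=1$ require the standard conventions (for $n=2$ the two components meet at two nodes; for $n=1$ the single component is nodal), but these are routine once the $n\geq 3$ case is set up.
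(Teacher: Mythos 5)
Your proposal is correct and follows essentially the same route as the paper: establish that $\cD_i$ restricts as a Cartier divisor, get $\cD_{i,t}^2 = D_i^2$ from Lemma \ref{boundaryselfint} (passing to the minimal resolution when $\bar{\cX}_t$ has du Val singularities away from $\cD_t$), and conclude the multidegrees of the two tensor factors cancel. The paper leaves the componentwise degree comparison implicit, whereas you spell it out, but the substance is identical.
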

\begin{proof}
Since $\bar{\cX}\mid_{T_Y}$ is smooth along $\cD \mid_{T_Y}$ by Proposition \ref{compa-fiber}, $\cD_i \mid_{T_Y}$ is a Cartier divisor on $\bar{\cX}\mid_{T_Y}$ by \cite[\href{https://stacks.math.columbia.edu/tag/062Y}{Tag 062Y}]{Sta}. If $\bar{\cX}_t$ is smooth, then ${\cD}^{2}_{i,t} = D^{2}_i$ by Lemma \ref{boundaryselfint}.  If $\bar{\cX}_t$ is singular, since $\bar{\cX}_t$ is still smooth along $\cD_t$, we still have ${\cD}^{2}_{i,t} = D^{2}_i$ after passing through a minimal resolution of $\bar{\cX}_t$. Therefore, $\psi_i$ defines an element in $\Pic^0(\cD_t)$ for $t\in T_Y$. 
\end{proof}

\begin{prop}\label{prop:boundaryperiod-para} 
Following the notations and definitions of Lemma 
\ref{lem:num-tri}. Suppose that $(Y,D)$ has a parallel configuration. Then $\psi_i(t)=z^{D_i}(t)$ if $n \geq 2$ and $\psi(t)=z^{D}(t)$ if $n=1$.
\end{prop}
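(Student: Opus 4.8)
The idea is to degenerate to the toric fiber and use equivariance of the $T^D$-action to transport the computation back. Concretely, let $W=\sum a_iD_i$ be the $D$-ample Weil divisor whose parallel polygon is a parallel configuration, and let $T^W\subset T^D$ be the one-parameter subgroup with $e_{D_i}\mapsto a_i$. Since every effective class pairs non-negatively with $W$ (Lemma \ref{nefpoly}) and $W\cdot D_i>0$ for all $i$, the limit of any $t\in T_Y$ under $T^W$ is the central toric point $0\in S$, and the fiber $\bar\cX_0=X_0$ is the union of toric surfaces built from the polyhedral decomposition $\Sigma\cap P(W)$ exactly as in the proof of Lemma \ref{boundaryselfint}. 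The first step is therefore to identify $(X_0,\cD_0)$ together with its boundary marking $p_i(0)$: by Construction \ref{cons:marking}, $p_i$ is cut out on $\cD_i^\circ$ by $f_{\cD_i}=-1$ where $f_{\cD_i}$ is (a monomial twist of) a ratio $\theta_{a_i}/\theta_{b_i}$ of consecutive theta functions on the edge $F_i$; at the central fiber the theta functions restrict to toric monomials on the toric component, so $p_i(0)$ is precisely the distinguished point $-1_{\cD_{i,0}}$ in the structure torus of $\cD_{i,0}$. This puts us exactly in the situation of Lemma \ref{lem:toricperiod}.

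The second step is to apply Lemma \ref{lem:toricperiod} to the glued toric surface $X_0$ with its anticanonical cycle $\cD_0$. That lemma gives, for any $L\in\Pic(X_0)$,
\[
L|_{\cD_0}=\cO\Bigl(\sum_{Z\subset\cD_0}(L\cdot Z)(-1_Z)\Bigr).
\]
Taking $L=\cO(\cD_i)$ and using $\cD_{i,0}^2=D_i^2$ (established in the parallel-configuration case inside the proof of Lemma \ref{boundaryselfint}) together with $\cD_i\cdot\cD_{i\pm1,0}=1$ and $\cD_i\cdot\cD_{j,0}=0$ otherwise, we get
\[
\cO(\cD_i)|_{\cD_0}=\cO\bigl((-1_{\cD_{i-1,0}})+D_i^2(-1_{\cD_{i,0}})+(-1_{\cD_{i+1,0}})\bigr),
\]
so the element $\psi_i(0)$ of Lemma \ref{lem:num-tri}, which is this line bundle inverse tensored with $\cO(p_{i-1}(0)+D_i^2p_i(0)+p_{i+1}(0))$, is trivial — i.e.\ $\psi_i(0)=1=z^{D_i}(0)$ since $z^{D_i}$ evaluates to $1$ at the toric point $0$. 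The $n=1$ case is identical after passing to the toric blowup at the node (Remark \ref{rmk:irr-blo}, Lemma \ref{lem:tor-bl-compa}) and using $\cD_0^2=D^2$.

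The third and final step is to propagate from $t$ arbitrary in $T_Y$ to the limit point $0$ using equivariance. By Lemma \ref{lem:rel-tor-mar} the canonical coordinate $f_{\cD_i}$ is a $T^D$-eigenfunction of weight $e_{D_i}$ restricted to $\cD_i^\circ\mid_{T_Y}$, hence the section $p_i$ transforms under $T^D$ by the character $e_{D_i}$; correspondingly $\psi_i$, as a function $T_Y\to\Pic^0(\cD_t)\cong\mathbb{G}_m$, is $T^W$-equivariant with the same weight that $z^{D_i}$ carries (this is where Lemma \ref{lem:rel-tor-mar} and Theorem \ref{thm:rel-equi} combine). Both $\psi_i$ and $z^{D_i}$ are regular functions on $T_Y$ extending continuously over the limit, they agree at the one point $0$, and they have the same $T^W$-weight; since $T^W\cdot t$ accumulates at $0$, a regular function on $\mathbb{A}^1=\overline{T^W\cdot t}$ of fixed weight is determined by its value at the origin, forcing $\psi_i\equiv z^{D_i}$ on the orbit closure, and since $t$ was arbitrary, on all of $T_Y$. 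The main obstacle I expect is the bookkeeping in the first step — carefully checking that the boundary marking $p_i$ really specializes to the toric point $-1_{\cD_{i,0}}$ at the central fiber, including the monomial twist $z^{\sum\delta_jD_j}$ in \eqref{eq:diff-mar} and the behavior of broken lines under the degeneration (Claim \ref{mult-str}); once that matching is pinned down, Steps 2 and 3 are essentially formal.
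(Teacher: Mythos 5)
There is a genuine gap, and it sits exactly where the factor $z^{D_i}$ is supposed to come from. In Step 2 you evaluate $\psi_i$ at the central fiber $X_0$ by treating each $\cD_{j,0}$ as irreducible, but over $0$ each boundary divisor breaks into two toric components, $\cD_{j,0}=X(R_j)\cup X(R'_j)$ (a section of self-intersection $D_j^2$ and a fiber of self-intersection $0$). The restriction $\cO(\cD_i)|_{\cD_0}$ has degree $1$ on $X(R'_{i+1})$ and degree $0$ on $X(R_{i+1})$, whereas the canonical marked point $p_{i+1}(0)$ lies on $X(R_{i+1})$ (it is cut out on $R_{i+1}\subset\sigma_{i,i+1}$). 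So the multidegrees do not match, $\psi_i(0)$ is not an element of $\Pic^0(\cD_0)$, and $\psi_i$ does not extend to a nonvanishing function at $0$. Relatedly, your assertion that $z^{D_i}(0)=1$ is false: $[D_i]$ is a nonzero class in the sharp monoid $\NE(Y)$, so $z^{D_i}$ vanishes at the torus-fixed point. Both $\psi_i$ and $z^{D_i}$ in fact extend with value $0$ there, so Step 3 collapses to the vacuous identity $0=0$: two eigenfunctions of the same $T^W$-weight on the orbit closure $\mathbb{A}^1$ are of the form $c\,t^k$, and agreeing at $t=0$ when $k>0$ does not pin down $c$.

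The paper's proof repairs precisely this. One introduces an \emph{alternative} marking $p'_{i+1}$ cut out by the theta-ratio on the other $1$-cell $R'_{i+1}\subset\sigma_{i-1,i}$ of the edge $F_{i+1}$, and sets $\tilde\psi_i(t)=(\cO(\cD_i)|_{\cD_t})^{-1}\otimes\cO(p_{i-1}(t)+D_i^2p_i(t)+p'_{i+1}(t))$. Because the piecewise-linear function $\varphi$ has kink $[D_i]$ along $\rho_i$, the two coordinates differ by $f_{\cD_{i+1}}=z^{D_i}f'_{\cD_{i+1}}$, whence $\tilde\psi_i=\psi_i\otimes z^{-D_i}$ over $T_Y$ by Lemma \ref{lem:pic-mul}. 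With the alternative marking the multidegrees now match on every component of $\cD_0$ (degree $1$ on $X(R_{i-1})$ and $X(R'_{i+1})$, degree $D_i^2$ on $X(R_i)$, degree $0$ elsewhere), so $\tilde\psi_i$ extends to a nowhere-vanishing regular function on all of $\Spec(\bC[\NE(Y)])$, hence is constant, and Lemma \ref{lem:toricperiod} gives $\tilde\psi_i(0)=1$. This yields $\psi_i=z^{D_i}$. Your Steps 1 and 3 contain usable ingredients (the specialization of markings to $-1$-points, and $T^D$-equivariance as in Lemma \ref{lem:eigen-wei}), but without the twist by the alternative marking the monomial $z^{D_i}$ is invisible to the degeneration argument.
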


\begin{figure}
\begin{minipage}{\linewidth} \begin{center} 
\vspace{1 cm}
\begin{tikzpicture} 
\draw (-4,-2) node[anchor=north]{}
  -- (2,4) node[anchor=north]{}
  -- (2,-2) node[anchor=south]{}
  -- cycle;
  \draw (0,0) -- (2,0) node[anchor=west] {$\rho_1$};
  \draw (0,0)-- (0,2) node[anchor=south,yshift=.1cm] {$\rho_2$};
  \draw (0,0) --(-2,-2) node[anchor=north] {$\rho_3$};
  \filldraw [black] (2,1) circle (2pt);
  \filldraw [black] (2,2) circle (2pt);
    \filldraw [black] (0,-2) circle (2pt);
  \filldraw [black] (1,-2) circle (2pt);
    \filldraw [black] (-3,-1) circle (2pt);
  \filldraw [black] (-2,0) circle (2pt);
      \filldraw [red] (0,2) circle (2pt);
  \filldraw [red] (1,3) circle (2pt);
\end{tikzpicture}
\caption{The red nodes signify the alternative marking on the boundary divisor $D_2$ of $\mathbb{P}^2$ as in the proof of Proposition \ref{prop:boundaryperiod-para}}\label{fig:altmarking}
\end{center}\end{minipage}
\end{figure}

\begin{proof}
Fix the canonical identification $\Pic^0(\cD_t) \simeq \mathbb{G}_m$. Let $F$ be a parallel configuration for $(Y,D)$ and we use $F$ to compactify our mirror family. Then, $\cD_{i,t}$ is still a Cartier divisor on $\bar{\cX}_t$ for $t$ on the boundary strata $\Spec(\mathbb{C}[\NE(Y)])\setminus T_Y$. Therefore, $\cD_i$ is an Cartier divisor on $\bar{\cX}$ by \cite[\href{https://stacks.math.columbia.edu/tag/062Y}{Tag 062Y}]{Sta}. Let $0\in \Spec (\mathbb{C} \left[ \NE(Y) \right] )$ be the unique toric fixed point and $X_0$ the fiber of $\bar{\cX}$ over this point. 

 Let us first consider the case where $n\geq 2$. We now use notation from Construction \ref{cons:marking}.  Denote $R_i=F_i\cap Q_{i-1,i}$ and $R'_i=F_i\cap Q_{i-2,i-1}$.  The marking on $\cD_{i+1} \mid_{T_Y}$ is given by $f_{\cD_{i+1}} = \frac{\theta_{a_{i+1}}}{\theta_{b_{i+1}}} = -1$ where $a_{i+1} \succ b_{i+1}$ are two consecutive lattice points on $R_{i+1}$. Let $f'_{\cD_{i+1}} = \frac{\theta_{c_{i+1}}}{\theta_{d_{i+1}}}$ where $c_{i+1} \succ d_{i+1}$ are two consecutive lattice points on $R'_{i+1}$. Consider an alternative marking $p'_{i+1}$ on $\cD_{i+1} \mid_{T_Y}$ by letting $f'_{\cD_{i+1}}$ = -1 instead. By the definition of the $\Sigma$-piecewise linear function $\varphi$ and Proposition \ref{trivialboundary}, over $T_Y$ , we have
\[
f_{\cD_{i+1}} = z^{D_{i}} f'_{\cD_{i+1}} \quad \mathrm{on}\,\,\cD_{i+1}^{\circ} \mid T_Y.
\]
Using the alternative marking on $\cD_{i+1}\mid_{T_Y}$, we define a new function
\[\tilde{\psi_i}(t)=(\cO(\cD_i)|_{\cD_t})^{-1}\otimes \cO(p_{i-1}(t)+D_{i}^2p_i(t)+p'_{i+1}(t)) \in \Pic^{0}(\cD_t).\]
Then, it follows from the definition of the alternative marking on $\cD_{i+1}\mid_{T_Y}$ and Lemma \ref{lem:pic-mul} that $\tilde{\psi_i}(t)=\psi_i(t)\otimes z^{-D_i}(t)$ on $T_Y$.

Next, we show that $\tilde{\psi_i}$ extends to a nowhere vanishing regular function over the toric variety $\Spec (\mathbb{C} \left[ \NE(Y) \right] )$.  This is equivalent to showing that over $X_0$, $\tilde{\psi}_i$ still gives a numerically trivial line bundle. By Lemma \ref{extendsection}, the sections given by the marking extend over the full base.  On $X_0$, $\cD_{i}$ breaks into two components, $X(R_i)$ and $X(R'_i)$.  Furthermore, since $X(R_i)^2=D_i^2$ and $ X(R'_i)^2 =0$, we indeed get a numerically trivial line bundle on $X_0$. Thus, $\tilde{\psi_i}$ extends to a non-vanishing function over the whole base $\Spec (\mathbb{C} \left[ \NE(Y) \right] )$, which tells us that $\tilde{\psi_i}$ is constant over $\Spec (\mathbb{C} \left[ \NE(Y) \right] )$. On $X_0$,  using Lemma \ref{lem:toricperiod}, we can compute that $\tilde{\psi_i}(0)=1$.
Thus, $\tilde{\psi_i}\equiv 1$ over $T_Y$,  which implies that $\psi_i(t)=z^{D_i}(t)$ over $T_Y$. 

Finally, let us consider the case where $n=1$. Let $R_1 \subset F_1$ be the segment of the unique edge  $F_1$ before it crosses the unique ray $\rho_1$ and $R_2  \subset F_1$ the segment after $F_1$ crosses $\rho_1$. Consider the alternative marking $p'$ on $\cD|_{T_Y}$ given by two interior consecutive lattice points on  $c \succ d$ on $R_1$ instead. Define:
\[
\tilde{\psi}(t)=(\cO(\cD)|_{\cD_t})^{-1}\otimes \cO(p'(t)+(D^2-1)p(t)).
\]
Let $f:\tilde{X_0} \rightarrow X_0$ be the normalization map and $\tilde{X}(R_i):=f^{*}(X(R_i))(i=1,2)$.
Since $\tilde{X}(R_1)^2 =0$, $\tilde{X}(R_2)^2 = D^2 -2$, and $\tilde{X}(R_1) \cdot \tilde{X}(R_2) = 1$, by Lemma \ref{lem:toricperiod}, $\tilde{\psi}$ still defines  a numerically trivial line bundle on $X_0$ and is equal to $1$ at $0$. Then, with the same  argument we used for the $n\geq 2$ case, we can show that  $\psi = z^{D}$ on $T_Y$. 
\end{proof}

\begin{lem}\label{lem:eigen-wei}
Let $\psi_i$, for $1\leq i \leq n$, be defined as in Proposition \ref{prop:boundaryperiod-para}. Then, $\psi_i$ and $z^{D_i}$ are eigenfunctions of the $T^{D}$ action with the same weight. 
\end{lem}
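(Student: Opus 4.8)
The plan is to show both sides are $T^D$-eigenfunctions and to pin down their weights separately. For $z^{D_i}$, the weight is immediate: under the weight map $w: A_1(Y)\to \chi(T^D)$ of Definition \ref{def:weight-map}, the class $D_i$ is sent to $\sum_j (D_i\cdot D_j)e_{D_j} = e_{D_{i-1}} + D_i^2 e_{D_i} + e_{D_{i+1}}$ (when $n\geq 2$; the analogous self-intersection computation gives $(D^2-2)e_D$ after accounting for the normalization in the irreducible case, but one must be slightly careful here — see below). So $z^{D_i}$ is an eigenfunction of $T^D$ acting on $\mathbb{C}[\NE(Y)]$, hence on $A$ and on the compactified algebra $V$, with this weight.

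For $\psi_i$, I would combine Theorem \ref{thm:rel-equi} (each $\theta_q$ is a $T^D$-eigenfunction with character $w(q)$) with Lemma \ref{lem:rel-tor-mar}, which says $T^D$ acts on the pair $(\bar{\cX}\mid_{T_Y},\cD\mid_{T_Y})$ by moving the canonical markings $p_i$, and more precisely with the computation inside the proof of Lemma \ref{lem:rel-tor-mar} showing that the canonical coordinate $f_{\cD_i}$ restricted to $\cD_i^\circ\mid_{T_Y}$ is an eigenfunction of weight $e_{D_i}$. Using Lemma \ref{lem:pic-mul} (the identification $\Pic^\circ(\cD_t)\simeq \mathbb{G}_m$ sends $\cO(p-q)$ to the ratio of the coordinate values), an element of $\Pic^0(\cD_t)$ of the form $\cO(\sum c_j p_j(t))$ with $\sum c_j = 0$ gets identified with a product of coordinate values; since each $p_j(t)$ is defined by $f_{\cD_j} = -1$, translating $t$ by $\lambda\in T^D$ rescales the relevant coordinates and hence rescales $\psi_i(t)$ by the character of $T^D$ acting on the expression $\cO(p_{i-1} + D_i^2 p_i + p_{i+1})$ (respectively $\cO(D^2 p)$ when $n=1$). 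The key bookkeeping is that the coefficients $(1, D_i^2, 1)$ in the definition of $\psi_i$ are exactly the intersection numbers $(D_i\cdot D_{i-1}, D_i\cdot D_i, D_i\cdot D_{i+1})$, so the weight picked up is $\sum_j (D_i\cdot D_j) e_{D_j}$, which matches the weight of $z^{D_i}$.

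Concretely, I would argue: fix $t\in T_Y$ and $\lambda \in T^D$. The action moves $p_j(t)$ to the point where $f_{\cD_j} = -1$ on the translated fiber; equivalently, in a fixed coordinate $z$ on $\cD_j^\circ$, $p_j$ moves by the character $e_{D_j}$ evaluated at $\lambda$ (this is the content of the weight-$e_{D_j}$ statement for $f_{\cD_j}$). Also $\cO(\cD_i)\mid_{\cD_t}$ is $T^D$-invariant up to the linear-equivalence bookkeeping since $\cD_i$ is a $T^D$-invariant divisor. Then by Lemma \ref{lem:pic-mul},
\[
\psi_i(\lambda\cdot t) = \left(\prod_j e_{D_j}(\lambda)^{(D_i\cdot D_j)}\right)\psi_i(t) = \left(\sum_j (D_i\cdot D_j)e_{D_j}\right)(\lambda)\cdot \psi_i(t),
\]
which exhibits $\psi_i$ as an eigenfunction of weight $w(D_i)$. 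Comparing with the weight of $z^{D_i}$ computed above finishes the proof. The main obstacle I anticipate is the $n=1$ case: there the integral affine structure uses $D^2 - 2$ rather than $D^2$ (Remark \ref{rmk:irr-blo}), and the definition of $\psi$ uses the coefficient $D^2$ while the normalization splits $\cD_0$ into pieces with self-intersections $0$ and $D^2-2$; I would resolve this exactly as in the proof of Proposition \ref{prop:boundaryperiod-para}, i.e. pass to the toric blowup at the node and track the two segments $R_1, R_2$ of the single edge, so that the weight bookkeeping on the nodal curve still produces $w(D) = \sum_j(D\cdot D_j)e_{D_j}$. A secondary point to be careful about is that $\psi_i$ is a priori only defined over $T_Y$, so "eigenfunction" should be understood as equivariance of the period map $T_Y \to \Pic^0(\cD_t)\simeq \mathbb{G}_m$ under the $T^D$-action on the source (via its action on $\bar{\cX}\mid_{T_Y}$) and the $T^D$-action on $\mathbb{G}_m$ through the character $w(D_i)$ — but this is precisely the framework already set up in Lemma \ref{lem:rel-tor-mar}.
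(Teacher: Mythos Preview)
Your proposal is correct and follows essentially the same approach as the paper: use Lemma~\ref{lem:rel-tor-mar} to see that $f_{\cD_j}$ has weight $e_{D_j}$, then apply Lemma~\ref{lem:pic-mul} to track how $\psi_i$ transforms under $T^D$, and match against the weight $\sum_j (D_i\cdot D_j)e_{D_j}$ of $z^{D_i}$. One small clarification on the $n=1$ case: the weight of $z^{D}$ is simply $D^2 e_D$ directly from the weight map (the $D^2-2$ of Remark~\ref{rmk:irr-blo} is about the affine structure and plays no role here), and since $\psi$ involves $\cO(D^2 p(t))$ with $f_{\cD}$ of weight $e_D$, the weights match without any blowup detour.
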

\begin{proof}
Given $g\in T^{D}$, it follows from Lemma \ref{lem:rel-tor-mar} that
\begin{align*}
    f_{\cD_j}(g\cdot p_j(t)) & = z^{e_{D_j}}(g) \cdot f_{\cD_j}(p_j(t)) = -z^{e_{D_j}}(g).\\
\end{align*}
Since \begin{align*}
     f_{\cD_j}(p_j(g\cdot t)) & = -1,
\end{align*}
we have
\[
f_{\cD_j}(g\cdot p_j(t)) = z^{e_{D_j}}(g) \cdot f_{\cD_j}(p_j(g\cdot t)).
\]
If $n\geq2$, by Lemma \ref{lem:pic-mul},
\begin{align*}
    \psi_i(t) =& (\mathcal{O}(\cD_i)|_{D_{g\cdot t}})^{-1} \otimes \cO(g\cdot p_{i-1}(t)+D_{i}^{2}g\cdot p_i(t)+ g\cdot p_{i+1}(t))\\
     =& (\mathcal{O}(\cD_i)|_{D_{g\cdot t}})^{-1} \otimes \cdot \cO(p_{i-1}(g\cdot t) + D^{2}_ip_i(g\cdot t) + p_{i+1}(g\cdot t))\\
     &  \cdot z^{-e_{D_{i-1}}-D^{2}_{i}e_{D_i}-e_{D_{i+1}}}(g)\\
     = & \psi_i(g\cdot t) \cdot z^{-e_{D_{i-1}}-D^{2}_{i}e_{D_i}-e_{D_{i+1}}}(g).
\end{align*}
Therefore, both $\psi$ and $z^{D_i}$ are eigenfunctions of $T^{D}$ with weight \[
z^{e_{D_{i-1}}+D^{2}_{i}e_{D_i}+e_{D_{i+1}}}.
\]
The proof for $n=1$ cases is the same. There, $\psi_1 =\psi$ and $z^{D}$ are both eigenfunctions of $T^{D}$ with weight $z^{D^2 e_D}$.
\end{proof}

\begin{prop}\label{prop:boundaryperiod} 
The conclusion of Proposition \ref{prop:boundaryperiod-para} holds if $(Y,D)$ is not a nontrivial toric blowup of the del Pezzo surface of degree 1 together with an irreducible nodal anti-canonical cycle.
\end{prop}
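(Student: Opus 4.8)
The plan is to reduce the general positive case to the case of a parallel configuration, which is already handled by Proposition~\ref{prop:boundaryperiod-para}, via toric blowdowns together with the equivariance of the $T^D$-action. First I would recall from Lemma~\ref{blowdown} that any positive pair $(Y,D)$ admits a toric blowdown $(Y,D)\to(Y',D')$ such that either $(Y',D')$ has a parallel configuration or $(Y',D')$ is the degree~$1$ del Pezzo together with an irreducible anti-canonical cycle; the latter case is precisely the exception excluded in the statement, so under our hypothesis we may assume $(Y',D')$ has a parallel configuration. By Proposition~\ref{prop:boundaryperiod-para}, the conclusion $\psi'_i(t)=z^{D'_i}(t)$ (resp. $\psi'(t)=z^{D'}(t)$ when $n'=1$) holds for the canonically compactified mirror family of $(Y',D')$ over $T_{Y'}$.

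Next I would transport this along the toric blowup. By Lemma~\ref{lem:compa-para-tor-bl}, for $t\in T_{Y'}$ the fiber $\bar{\cX}_{\iota_\pi(t)}$ is a toric blowup of $\bar{\cX}'_t$ with exceptional divisors in bijection with the edges of $P(W)$ parallel to the rays $\rho_i$ with $D_i$ exceptional for $\pi$. The canonical marking $p_i$ is, by Construction~\ref{cons:marking} and the independence statement there (and by Lemma~\ref{lem:tor-bl-poly}, which shows the relevant edges persist), compatible with $\iota_\pi$: the boundary components $\cD_j$ of $\bar{\cX}'$ pull back to the strict transforms of $\cD_j$ and acquire the same canonical coordinate $f_{\cD_j}$, while the new exceptional boundary components each get their canonical coordinate as in Construction~\ref{cons:marking}. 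One then checks that the quantities $\psi_i(\iota_\pi(t))$ agree with $\psi'_i(t)$ (up to the obvious relabeling) for the non-exceptional $\cD_i$, and computes $\psi_i(\iota_\pi(t))$ for the exceptional ones directly, using that on the central fiber the extra toric boundary divisor has the prescribed self-intersection and that Lemma~\ref{lem:toricperiod} applies. This shows the conclusion of Proposition~\ref{prop:boundaryperiod-para} holds for $\bar{\cX}_{\iota_\pi(t)}$ for all $t\in T_{Y'}$, i.e. on the locally closed locus $\iota_\pi(T_{Y'})\subset T_Y$.

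Finally I would upgrade from the locus $\iota_\pi(T_{Y'})$ to all of $T_Y$ using the $T^D$-action. By Lemma~\ref{lem:eigen-wei}, both $\psi_i$ and $z^{D_i}$ are eigenfunctions of $T^D$ with the same weight, so their ratio $\psi_i/z^{D_i}$ is a $T^D$-invariant regular function on $T_Y$ (nowhere vanishing, since both are sections of $\Pic^0$). Using the one-parameter subgroup associated to a $D$-ample divisor $H=\sum a_iD_i$, every point of $T_Y$ can be pushed arbitrarily close to the toric fixed point $0\in\Spec(\bC[\NE(Y)])$; in particular the orbit closure meets $\iota_\pi(T_{Y'})$—more carefully, the $T^D$-orbit of any $t\in T_Y$ contains points of $\iota_\pi(T_{Y'})$ because $\iota_\pi(T_{Y'})$ is a $T^D$-stable (indeed $T^L$-related, cf. Lemma~\ref{lem:tor-bl-compa}) subtorus of $T_Y$ of the correct codimension. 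Since $\psi_i/z^{D_i}$ is $T^D$-invariant and equals $1$ on $\iota_\pi(T_{Y'})$, it is identically $1$ on $T_Y$, which is the claim. The main obstacle I anticipate is the bookkeeping in the middle step: verifying that the canonical marking genuinely descends/lifts compatibly along the toric blowup so that $\psi_i$ on $\bar{\cX}|_{\iota_\pi(T_{Y'})}$ is literally identified with $\psi'_i$ on $\bar{\cX}'|_{T_{Y'}}$, rather than merely related by some correction; this hinges on the independence-of-polygon statement in Construction~\ref{cons:marking} and on Lemma~\ref{lem:tor-bl-poly} matching the edges, and it must be done component-by-component for both the persistent and the newly created boundary divisors.
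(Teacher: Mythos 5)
Your overall strategy for the main case — blow down to a pair with a parallel configuration, invoke Proposition \ref{prop:boundaryperiod-para} there, transport the conclusion along $\iota_\pi$ to the locus $\iota_\pi(T_{Y'})$, and then spread it over all of $T_Y$ using the fact that $\psi_i$ and $z^{D_i}$ are eigenfunctions of the same weight (Lemma \ref{lem:eigen-wei}, Lemma \ref{lem:tor-bl-compa}) — is the same as the paper's. However, there is a genuine gap in your reduction. The statement only excludes \emph{nontrivial} toric blowups of the degree~$1$ del Pezzo pair; the degree~$1$ del Pezzo with irreducible nodal anticanonical $D$ (so $D^2=1$) is itself covered by the proposition. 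That pair has no parallel configuration (Proposition \ref{existenceofparallelconfigtwo} requires $D^2\geq 2$ when $n=1$) and admits no further blowdown, so Lemma \ref{blowdown} gives you nothing to reduce to, and your first step silently drops this case. The paper handles it by a separate direct argument: introduce an alternative marking $p'$ at two interior consecutive lattice points of the unique edge, form $\tilde\psi = (\cO(\cD)|_{\cD_t})^{-1}\otimes\cO(D^2 p'(t))$, check on the normalization of the central fiber (a $\mathbb{P}^2$ with $(f^*D_0)^2=1$) via Lemma \ref{lem:toricperiod} that $\tilde\psi$ is numerically trivial and equal to $1$ at $0$, and conclude $\psi=z^D$ on $T_Y$. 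Without some such base-case argument your proof is incomplete.

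A secondary point: the "bookkeeping in the middle step" that you flag as the main obstacle is where most of the actual work lies, and your proposal does not carry it out. For a boundary component $\cD_i$ corresponding to a $\pi$-exceptional divisor, $\psi_i$ involves the canonical markings of the \emph{neighbouring} components, and one must replace them by alternative markings supported in a single cell $F\cap\sigma'_{j-1,j}$, verify numerical triviality of the resulting $\tilde\psi_i$ on the central fiber $X_{0'}$, and then check that the kink corrections incurred when the forward rays cross $\rho'_j$ cancel — this uses the toric balancing relation $\xi_{i-1}+\xi_{i+1}=-D_i^2\xi_i$. This cancellation is not automatic from the "independence of polygon" statement in Construction \ref{cons:marking}; it is a computation that has to be done, and it is the reason the identity $\psi_i=z^{D_i}$ survives the blowup for the new exceptional components.
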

\begin{proof}
Again, by Lemma \ref{boundaryselfint}, $\cD_{i,t}^2 = D_i^2$. It remains to prove the case where $(Y,D)$ does not have a parallel configuration. We will follow the notation used in the proof of Proposition \ref{prop:boundaryperiod-para}.

First, we consider the case where  $(Y,D)$ does not have a toric blowdown to the del Pezzo surface of degree 1. Then, there is a toric blowdown $\pi:(Y,D)\rightarrow (Y',D')$ such that $(Y',D')$ has a parallel configuration $F'$. Let $D' = \sum_{j=1}^{n'}D'_j$. Denote cones in $\Sigma'=\Sigma_{(Y',D')}$ by $\sigma'_{j,j+1}$'s. By rescaling $F'$ if necessary, there exists a parallel polygon $F$, viewed as a polygon on $B'=B_{(Y',D')}$, such that if there exists a chain of $\pi$-exceptional divisors over some node $D'_j \cap D'_{j+1}$, $F \cap \sigma'_{j-1,j}$ is the chopping of $F' \cap \sigma'_{j-1,j}$, with one new edge in the interior of  $F' \cap \sigma'_{j-1,j}$ for each exceptional divisor between $D'_j$ and $D'_{j+1}$. We use a parallel polygon  $F$ like this for expository convenience.

Let $\bar{\cX}$ be the compactification of the mirror family using $F$. Then, since $\cD_{i,t}$ is still a Cartier divisor on $\bar{\cX}_t$ for $t$ on the toric boundary strata of $\Spec(\mathbb{C}[\NE(Y)])$, $\cD_i$ is a Cartier divisor on $\bar{\cX}$ by \cite[\href{https://stacks.math.columbia.edu/tag/062Y}{Tag 062Y}]{Sta}. Let $S'=\Spec(\mathbb{C} \left[ \NE(Y') \right])$. Let us first restrict to the locus $\iota_{\pi} (S')  \subset \Spec(\mathbb{C} \left[ \NE(Y) \right])$ where $\iota_{\pi}$ is the locally closed embedding defined in Subsection \ref{ssec:tor-bl} and compute the marked periods of boundary divisors of fibers of $\bar{\cX}$ over this locus. Notice that for any $\pi$-exceptional divisor $E$, $z^{[E]} =1$ over $\iota_{\pi} (S')$. Therefore, it suffices to consider the polyhedral decomposition of $F$ given by $\Sigma'$ instead of $\Sigma$ over $\iota_{\pi} (S')$. 

Let $0'$ be the image under $\iota_{\pi}$ of the unique toric fixed point of $S'$. Let $X_{0'}$ be the fiber of $\bar{\cX}$ over $0'$. Now, if $D_i$ is a $\pi$-exceptional divisor over a node $D'_{j} \cap D'_{j-1}$, we define $p'_{i}$ to be the alternative marking of $\cD_i \mid _{T_Y}$ given by two consecutive lattice points $a'_{i} \succ b'_i$ on the edge $F_i$ parallel to $\rho_i$ inside $F \cap \sigma'_{j-1,j}$. 

\emph{Claim: Over $\iota_{\pi}(T'_Y)$, $\psi_i(t)=z^{D_i}(t)$. }

\begin{figure}
\begin{minipage}{\linewidth} \begin{center} 
\vspace{1 cm}
\begin{tikzpicture}[scale=1] 
  \draw [-stealth](0,0) -- (4,0) node[anchor=west] {$\rho'_{j-1}$};
  \draw [-stealth](0,0)-- (0,4) node[anchor=south,yshift=.1cm] {$\rho'_{j}$};
  \draw [-stealth] (0,0) --(-4,4);
  \draw  [-stealth] (0,0) -- (-6, 3);
   \draw  [-stealth] (0,0) -- (-3, 6);
  \draw [-stealth] (0,0) -- (-4,0) node[anchor=east] {$\rho'_{j+1}$};
  \draw [red] (-3.5,0) -- (-3.5,3.5) -- (1.5,3.5) -- (2.5,3) -- (3,2.5) -- (3.5,1.5) -- (3.5,0);

\filldraw [black] (-3.5,3.5) circle (1pt);
\filldraw [black] (1.5,3.5) circle (1pt);
\filldraw [black] (2.5,3) circle (1pt);
\filldraw [black] (3,2.5) circle (1pt);
\filldraw [black] (3.5,1.5) circle (1pt);

  \end{tikzpicture}
 \caption{Blowing up between $D'_j$ and $D'_{j+1}$ corresponds to chopping down the corner in $F' \cap \sigma'_{j-1,j}$.}\label{fig:chopping down}
\end{center}\end{minipage}
\end{figure} 
If $D_i$ is not a $\pi$-exceptional divisor, then by the computations done in Proposition \ref{prop:boundaryperiod-para}, over $\iota_{\pi}(T'_Y)$, the marked period of $\cO(\pi^{*}\pi_{*}(\cD_i)\mid_{\cD_t})$ is equal to $z^{D_i}(t) = z^{\pi^{*}\pi_{*}(D_i)}(t)$. Therefore, to prove the claim, it remains to consider the case where $D_i$ is a $\pi$-exceptional divisor over a node $D'_j \cap D'_{j+1}$. Then, we define
$$
 \tilde{\psi}_i(t) =\begin{cases}
 (\cO(\cD_i)|_{\cD_t})^{-1}\otimes \cO( p_{i-1}(t) + D_i^{2} p'_{i}(t) + p'_{i+1}(t)) \,\,\,\\ \text{if $D_{i-1}$ is not an exceptional divisor, i.e., $\pi_{*}D_{i-1} = D'_j$}, \\
  (\cO(\cD_i)|_{\cD_t})^{-1}\otimes \cO( p'_{i-1}(t) +D_i^{2}p'_{i}(t) + p'_{i+1}(t)) \,\,\, \\\text{if $D_{i-1}$ is an exceptional divisor}, \\
  \end{cases}
$$
Here, $p'_{i+1}$ is the alternative marking on $\cD_{i+1}$ given by two consecutive lattice points on the segment $F_{i+1} \cap \sigma'_{j-1,j}$. In either case, $\tilde{\psi}_i(0')$ defines a numerically trivial line bundle on $X_{0'}$.  We can compute that $\tilde{\psi}_i(0') \equiv 1$ on $X_{0'}$ using Lemma \ref{lem:toricperiod}. Therefore, $\tilde{\psi}_i(0') \equiv 1$ on $\iota_{\pi}(S')$. Let $R_{\epsilon}$ be the forward ray $F_{\epsilon}$ for $\epsilon = i-1, i,i+1 \mod n$. Then, the only ray in $\Sigma_{(Y',D')}$ that $R_{\epsilon}$ can intersect is $\rho'_j$. Let $\xi_{\epsilon}$ be the direction of $F_{\epsilon}$ immediately after it crosses $\rho'_j$. By standard toric geometry, we have 
\[
\xi_{i-1} + \xi_{i+1} = -D_i^{2} \xi_{i}. 
\]
Thus, it follows that the contribution from the kinks on $\rho'_j$ due to the use of alternative markings is canceled. Therefore,  $\tilde{\psi}_i(t)= \psi_i(t)  \equiv 1 $ over $\iota_{\pi}(T_{Y'})$ and the claim is proved.
Now, consider the compactified mirror family over the entire base $\Spec(\mathbb{C}[\NE(Y)])$. Let $T^{L}$ be defined as in Subsection \ref{ssec:tor-bl}. By Lemma \ref{lem:eigen-wei}, both $\psi_i(t)$ and $z^{D_i}$ are  eigenfunctions for the $T^L$ action and have the same weight. Together with Lemma \ref{lem:tor-bl-compa}, we conclude that  $\psi_i(t)=z^{D_i}(t)$ over $T_Y$. 

Finally, let us deal with the case where $(Y,D)$ is the del Pezzo surface of degree 1 together with an irreducible nodal anticanonical cycle. Let $F$ be a parallel polygon with at least two interior consecutive lattice points on the unique edge $F_1$ and  $p'$ be the alternative marking given by them. Let 
\[
\tilde{\psi}(t) = (\cO(\cD)\mid_{\cD_t})^{-1}\otimes \cO(D^2p'(t)). 
\]
Let $(X_0,D_0)$ be the fiber over $0\in \Spec(\bC[\NE(Y)])$ and $f: \tilde{X_0} \rightarrow {X_0}$ be the normalization map. Then,  $\tilde{X_0}$ is isomorphic to $\mathbb{P}^2$ and $(f^{*}(D_0))^2 =1$. Again, $\tilde{\psi}$ defines a numerically trivial line bundle on $X_0$ and is equal to $1$ at $0\in \Spec(\bC[\NE(Y)])$ by Lemma \ref{lem:toricperiod}. Thus, we conclude that $\tilde{\psi}\equiv 1$  over $\Spec(\bC[\NE(Y)])$. Since $\tilde{\psi} = \psi \otimes z^{-D}$ over $T_Y$, we conclude that $\psi = z^{D}$ over $T_Y$. 
\end{proof}

\begin{remark}
The caveat here is that we are unable to compute the marked periods of all the boundary components for the compactified mirror family associated with the non-trivial toric blowup of the degree 1 del Pezzo surface. The method we have used breaks because of the singularities in irreducible components of the central fiber in this case. However, the computation we have done for the case of the degree 1 del Pezzo surface is sufficient for us to prove Proposition \ref{identity} later in Section \ref{sec:per-int}.
\end{remark}

\section{Tropical 1-cycles and period integrals}\label{sec:per-int}
Throughout this section, we assume that our positive Looijenga pair $(Y,D)$ has a  toric model $p:(Y,D) \rightarrow (\bar{Y},\bar{D})$ that we fix. The other cases can be deduced from the toric model case. 

 Recall that in Section \ref{sec:compa}, we gave a canonical compactification $(\bar{\cX}\mid_{T_Y}, \cD\mid_{T_Y})$ of the mirror family restricted to $T_Y \subset \Spec(\mathbb{C}[\NE(Y)])$ and showed that for each $t\in T_Y$, $(\bar{\cX}_t,\cD_t)$  is a generalized pair in Proposition \ref{can-comp-fiber}. In Section \ref{sec:marking-boundary}, we defined a canonical marking $p_i: T_Y \rightarrow \cD_i \mid_{T_Y}$ for each boundary component $\cD_i \mid_{T_Y}$. Now, we want to give a complete geometric interpretation of the family. In particular, we want to give a marking of the Picard group for fibers $\bar{\cX}\mid_{T_Y}$ by $\Pic(Y)$ and use it, together with the canonical marking on the boundary $\cD\mid_{T_Y}$, to identify $\bar{\cX}\mid_{T_Y}$ as the universal family of the moduli space of generalized marked pairs with a marking of the boundary and a marking of the Picard group by $\Pic(Y)$. 
 
 To achieve the goal  mentioned above, we will compute the marked period of each fiber of the family using the canonical marking $p_i$ $(1\leq i \leq n)$. This is done by a local analysis of the Gross-Siebert locus associated with a toric model $(Y,D)\rightarrow (\bar{Y},\bar{D})$. On this restriction, techniques from \cite{RS19} are used to construct tropical curves that correspond to exceptional curves of the toric model. In doing so, the deformation type of the family is determined, with the key observation being that the fibers are deformation equivalent to the original pair, $(Y,D)$. Note that in Proposition \ref{prop:boundaryperiod}, the part of the period map associated with $\langle D_i\rangle\subset \Pic(Y)$ is computed. The remainder of the period map is given by the exceptional curves of the toric model, which is carried out in this section.
\begin{remark} \label{rm:id-base}
Note the base of the mirror family is $T_Y = \Pic(Y)\otimes \mathbb{G}_m$ while the moduli space of generalized marked pairs is given by 
\[
T_{Y}^{*} = \Hom(\Pic(Y),\Pic^0(D))\simeq \Hom(\Pic(Y),\mathbb{G}_m).
\]
The isomorphism given by Poincare duality 
\begin{align*}
     \Pic(Y) & \rightarrow \Pic(Y)^{*} = \Hom(\Pic(Y),\mathbb{Z}) \\
     \mathcal{O}_{Y}(C) & \mapsto C
\end{align*}
between character lattices induces an isomorphism $T_Y \overset{\sim}{\rightarrow} T_{Y}^{*}$.  For the rest of the paper, we will implicitly use the identification between the two bases.
\end{remark}

\subsection{Gross-Siebert locus.}

The main results of \cite{RS19} give a way to construct singular cycles on smooth fibers near the central fiber of degenerations coming from \cite{GS11} and to also compute the periods of such cycles. These singular cycles come from tropical cycles on the corresponding integral affine manifold. In order to employ the techniques of \cite{RS19}, we need to align our constructions with one that would come from \cite{GS11}. To do this, we look the restriction of the  mirror family over certain loci in $\Spec(\mathbb{C}[\NE(Y)])$.

\begin{defn}[Gross-Siebert Locus] \label{def:GS-locus}
Let $p:(Y,D)\rightarrow (\bar{Y},\bar{D})$ be a toric model with exceptional curves $\{E_{ij}\}$ over $D_i$. Let $H$ be an ample divisor on $\bar{Y}$ such that $\NE(Y)\cap (p^*(H))^{\perp}$ is a face of $P=\NE(Y)$ generated by $\{E_{ij}\}$. Now, let $G$ be the prime monomial ideal that is generated by the complement of this face. In other words, $G$ is the prime monomial ideal generated by classes of curves that are \emph{not} contracted to a point by $p$. The open torus of $\Spec (\mathbb{C}[P]/G)$, denoted by $T^{GS}$, is called the \emph{Gross-Siebert locus} (GS locus for short) associated with the toric model given by $p$.
\end{defn}
Let $(\rho_i, f_{\rho_i})$ be the wall in the canonical scattering diagram supported on $\rho_i$. Let $X_i= z^{\varphi(v_i)}$ where $v_i$ is the primitive generator of $\rho_i$. Given a monomial ideal $I \subset R=\mathbb{C}[P]$, let $R_I = R/I$. Let $P_{\varphi}$ be the monoid generated by elements $(m,\varphi(m)+p) $ with $m \in B$ and $p\in P$. Recall the following lemma from \cite{GHK}:

\begin{lem} (Lemma 3.15, \cite{GHK})
If $f_{\rho_i}$ is viewed as an element of $\mathbb{C}[P_{\varphi}]\otimes R_I$ with $\sqrt{I}=\mathfrak{m}=P \setminus \{0\}$, then:
\[ f_{\rho_i}=g_{\rho_i}\prod_{j=1}^{l_i}(1+z^{[E_{ij}]}X_i^{-1})\]
where $g_{\rho_i}\equiv 1 \operatorname{ mod } G$. Here, $g_{\rho_i}$ is the product of contributions from all $\mathbb{A}^{1}$ classes other than $E_{ij}$'s.  
\end{lem}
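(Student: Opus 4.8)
The statement to prove is Lemma 3.15 of \cite{GHK}, which describes the wall function $f_{\rho_i}$ on a ray $\rho_i$ of the canonical scattering diagram modulo the ideal $G$ cutting out the Gross-Siebert locus. The plan is to start from the definition of $f_{\rho_i}$ as an exponential of a sum over $\mathbb{A}^1$-classes $\beta$ that meet only $\tilde D_i$ (with multiplicity $-k_\beta\le 0$) and are disjoint from all other boundary components. First I would organize this sum according to the intersection number $k_\beta = -\beta\cdot\tilde D_i$, writing $f_{\rho_i} = \exp\bigl(\sum_{k\ge 1}\sum_{\beta:\,k_\beta=k} k N_\beta\, z^{\pi_*\beta - \varphi_{\tau}(k m_{\rho_i})}\bigr)$, and then split the set of contributing classes into two groups: those $\beta$ whose class lies in the face of $\NE(Y)$ generated by the exceptional curves $\{E_{ij}\}_j$ of the toric model (equivalently, $\beta$ is contracted by $p$), and all the others. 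By the very definition of $G$, the second group consists exactly of classes $\beta$ with $z^\beta\in G$, so their total exponential contribution is a factor $g_{\rho_i}$ with $g_{\rho_i}\equiv 1\ \mathrm{mod}\ G$. This isolates the content of the lemma to a computation of the contribution of curve classes contracted by $p$.

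The key step is then to identify the first factor with $\prod_{j=1}^{l_i}(1+z^{[E_{ij}]}X_i^{-1})$. Since $p:(Y,D)\to(\bar Y,\bar D)$ is a toric model, the classes contracted by $p$ and supported in the relevant way are precisely the $E_{ij}$ themselves together with their iterated "sums along the ray" — more precisely, the $\mathbb{A}^1$-curves living in a formal neighborhood of $\tilde D_i$ meeting it at one point are, after the toric blowup refining $\Sigma$ so that $\rho_i$ is a ray, the proper transforms of the exceptional $(-1)$-curves and their multiple covers / combinations. The cleanest route is to invoke the known computation of the canonical scattering diagram for the local model $\bar Y$ blown up at $l_i$ points on one toric boundary divisor: there, the only nonzero $N_\beta$'s among $p$-contracted classes are $N_{E_{ij}}=1$ (a single line meeting $\tilde D_i$ once), and multiple-cover contributions are encoded so that $\exp\bigl(\sum_{k\ge1}\frac{(-1)^{k-1}}{k}\cdot k\cdot z^{k[E_{ij}]}X_i^{-k}\bigr) = 1+z^{[E_{ij}]}X_i^{-1}$; taking the product over $j=1,\dots,l_i$ (the points blown up on $D_i$) and using that the walls from distinct exceptional curves multiply, one recovers the product formula. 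Here one uses the degeneration formula / deformation invariance (Lemma 3.9 of \cite{GHK}) to reduce the multi-point case to the one-point case, and the identification $X_i = z^{\varphi(v_i)}$ so that $\varphi_\tau(k m_{\rho_i})$ in the exponent of the general formula becomes $X_i^{-k}$ up to the curve-class twist $z^{k[E_{ij}]}$.

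The main obstacle I expect is bookkeeping the exponents correctly: matching $z^{\pi_*\beta - \varphi_{\tau_{\mathfrak d}}(k_\beta m_{\mathfrak d})}$ in the definition of the wall function with the clean expression $z^{[E_{ij}]}X_i^{-1}$, and making sure the multiple-cover / log Gromov-Witten contributions of the classes $k[E_{ij}]$ (and of the "absorbing" combinations with other $E_{ij'}$) assemble into exactly the claimed product and not something with extra cross terms. The resolution is that cross terms between $E_{ij}$ and $E_{ij'}$ for $j\ne j'$ necessarily involve a class with positive intersection with some $D_k$, $k\ne i$, or are not $\mathbb{A}^1$-classes, hence either violate the wall-contribution constraint or lie in $G$; and the self-multiple-cover series for a single $(-1)$-curve is the standard $\log(1+x)$ computation. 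I would therefore present the proof as: (1) split off $g_{\rho_i}$ using the definition of $G$; (2) reduce to the local toric model near $D_i$ by deformation invariance and the structure of toric models; (3) quote/recompute the one-exceptional-curve wall as $1+z^{[E]}X_i^{-1}$; (4) multiply over the $l_i$ points. Most of steps (3)–(4) are already in \cite{GHK}, so the write-up is mainly assembling these ingredients and checking the exponent matches the normalization $X_i = z^{\varphi(v_i)}$.
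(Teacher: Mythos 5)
The paper does not prove this lemma at all: it is quoted verbatim from \cite{GHK} (their Lemma 3.15), and your plan is essentially the argument given there — split the defining sum for $f_{\rho_i}$ according to whether $\pi_*\beta$ lies in the face of $\NE(Y)$ generated by the $E_{ij}$ (everything outside that face exponentiates to a factor $g_{\rho_i}\equiv 1 \bmod G$ by the definition of $G$), and compute the contracted part from the multiple-cover contributions $N_{kE_{ij}}=(-1)^{k-1}/k^2$ of a rigid $(-1)$-curve, as in \cite{GPS}. Two small corrections: (i) the coefficient in your exponential should be $k_\beta N_\beta = k\cdot\tfrac{(-1)^{k-1}}{k^2}=\tfrac{(-1)^{k-1}}{k}$, so that the sum is $\log\bigl(1+z^{[E_{ij}]}X_i^{-1}\bigr)$; as written you have $\tfrac{(-1)^{k-1}}{k}\cdot k=(-1)^{k-1}$, and $\exp\bigl(\sum_k(-1)^{k-1}x^k\bigr)=\exp\bigl(x/(1+x)\bigr)\neq 1+x$. (ii) The absence of cross terms $aE_{ij}+bE_{ij'}$ with $a,b>0$ is \emph{not} because such classes meet some $D_k$, $k\neq i$ (they do not: $(aE_{ij}+bE_{ij'})\cdot D_k=0$ for all $k\ne i$), nor because they lie in $G$ (they do not); it is because $N_\beta=0$ for them — the exceptional curves are disjoint for generic position, so no connected $\mathbb{A}^1$-curve represents the class — which is exactly the second alternative you hedged with and is the content of the local computation in \cite{GPS}/\cite{GHK}.
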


For the rest of this section, we will let $I$ be an ideal such that $\sqrt{I}=G$. In order to apply the formulas in \cite{RS19}, we need to first restrict our mirror family to a locus transverse to the GS locus $T^{GS}$. More specifically, for the fixed toric model $p:(Y,D)\rightarrow (\bar{Y},\bar{D})$, let $E\subset P^{{\rm gp}}$ be the lattice generated by the classes
of exceptional curves and $P_{E}=P+E$ the localization of $P$ by inverting
elements in $E\cap P$. Let  $D^{\oplus}=\sum_{i}\mathbb{N}p^*([\bar{D}_{i}])$. The homomorphism of monoids 
\begin{align*}
P_{E}=P+E=D^{\oplus} \oplus E & \rightarrow\mathbb{N}\oplus E\\
\left(\sum_{i}a_{i}[p^*(\bar{D}_{i})],\sum_{i,j}n_{ij}[E_{ij}]\right) & \mapsto\left(\sum_{i}a_{i}[\bar{D}_{i}]\cdot H,\sum_{i,j}n_{ij}[E_{ij}]\right)
\end{align*}
 induces an embedding 
\[
\iota_{G}:{\rm Spec}(\mathbb{C}[E][t])\simeq {\rm Spec}(\mathbb{C}[E]) \times \mathbb{A}^{1}\hookrightarrow{\rm Spec}(\mathbb{C}[P_{E}]).
\]
Notice that the image of ${\rm Spec}(\mathbb{C}[E])\times \{0\}$ under
$\iota_{G}$ is the Gross-Siebert locus $T^{GS}$ (c.f. Definition \ref{def:GS-locus}) and for any $a\in{\rm Spec}\left(\mathbb{C}[E]\right)$,
$\{a\} \times \mathbb{A}^{1}$ is transverse to $T^{GS}$. Denote the
image of $\iota_{G}$ in ${\rm Spec}(\mathbb{C}[P_{E}])$ by $S^{GS}$ and denote $\iota_{G}(\{a\}\times \mathbb{A}^1) \subset S^{GS}$ by $T_a$.

Recall from Section $4$ of \cite{GHK} that $\mathcal{X}\mid_{T^{GS}} \simeq \mathbb{V}_{n}\times T^{GS}$ where, for $n\ge 3$, $\mathbb{V}_{n}$ is the $n$-vertex surface singularity defined as the $n$-cycle of coordinate planes in $\mathbb{A}^{n}$:
\[
\mathbb{V}_{n} := \mathbb{A}_{x_1, x_2} \cup \mathbb{A}_{x_2, x_3} \cup \cdots \cup \mathbb{A}_{x_n, x_1} \subset \mathbb{A}_{x_1, x_2, \cdots, x_n}. 
\]

Then, for any fixed $a \in \mathrm{Spec}\left(\mathbb{C}[E]\right)$, 
we can view $\mathcal{X}\mid_{T_a}$ as a 1-parameter degenerating family whose central fiber is $\mathbb{V}_n$. These 1-parameter families of degenerations are exactly the degenerations considered in the Gross-Siebert program and addressed in  \cite{RS19} which is why we consider the locus $S^{GS}$. Later, we will restrict our period computations to the mirror family over $S^{GS}$ and then extend to the entire family.

Fix a parallel polygon $F = P(W)$ associated with a $D$-ample divisor $W$. If $(Y,D)$ does not blow down to the del Pezzo surface of degree $1$, we require that $F$ is either a parallel configuration or the `chopping down'\ of one. As we showed in Proposition \ref{weilnomatter}, over $T_Y$, the family $(\bar{\cX} \mid_{T_Y},\cD \mid_{T_Y}$)  is independent of choice of the $D$-ample divisor $W$. 
For each $a\in {\rm Spec}(\mathbb{C}[E])$, the central fiber at $0\in\mathbb{A}^1$ of the 1-parameter family degeneration $\bar{\mathcal{X}}\mid_{T_a}$ is isomorphic to the union of toric varieties whose moment polytopes come from the polyhedral decomposition  $\underline{F}$ of $F$ given by $\Sigma \bigcap F$. \\

Now, we fix an $a\in {\rm Spec}(\mathbb{C}[E])$. We denote by $X_{a,0}$ the central fiber of the 1-parameter degeneration $\bar{\mathcal{X}}\mid_{T_a}$. 

\begin{defn}
Given a ray $\rho$ in $\Sigma$, let $X_{\rho}$ be the 1-dimensional toric stratum in $X_{a,0}$ corresponding to $\rho$. Let $L_{\rho} \subset X_{\rho}$ be the zero locus of $f_{\rho}$. 
We define the \emph{log singular locus} $L \subset X_{a,0}$ to be the union
$L= \bigcup_{\rho \in \Sigma^{(1)}} L_{\rho}$. We define the \emph{tropical amoeba} $\mathcal{A}$ to be the image of log singular locus $L$ under the moment map $\mu$. 
\end{defn}

After compactifying the mirror family, by Proposition 2.1 of \cite{RS19}, we get a degenerate moment map $\mu: X_{a,0} \rightarrow B'$ where $B'$ is an integral affine manifold with boundary.
As a topological space,  $B'$ is obtained from $F$ by factoring out the singularity at the origin $0\in F$. Instead of having a single singularity at the origin, singularities of the affine structure of $B'$ coincide with the tropical amoeba $\mathcal{A}$. 

By choosing a generic $a$, we can assume that $\mathcal{A}$ consists of distinct points, i.e., $L$ consists of distinct points, and their image under $\mu$ are also distinct as shown in Figure \ref{fig:focus}. Then, the integral affine structure of $B'$ can be viewed as obtained from that of $F$ by factoring the big singularity at the origin into $l_i$ simple focus-focus singularities $o_{ij}$ $(1 \leq j \leq l_i)$ on each ray $\rho_{i}$ where $l_i$ is the number of non-toric blowups on the corresponding divisor $\bar{D}_i$ in the toric model given by $p: (Y,D) \rightarrow (\bar{Y}, \bar{D})$. Moreover, $o_{ij}$ is the image of the point defined by $1+z^{[E_{ij}]}X_{i}^{-1}=0$ under the degenerate moment map. We call a connected component in the complement of the $o_{ij}$'s in $\rho_i$ a \emph{slab}.

It suffices to consider the cases where the $o_{ij}$'s have positions as shown in Figure \ref{fig:focus}. The descriptions and computations for all the other cases are exactly the same. Given slabs $\rho_{ij},\rho_{ij'}\subset\rho_{i}$, where $\rho_i$ is a ray, consider the affine parallel
transport $T$ along a path starting from a point $x$ in ${\rm Int}\rho_{ij}$
via ${\rm Int}\sigma_{+}$ to ${\rm Int}{\rho_{ij'}}$ and back to
$x$ through ${\rm Int}\sigma_{-}$ where $\sigma_{\pm}$ are  two maximal cells adjacent to $\rho_{i}$. We fix the convention that $\sigma_{+}$
is on the right side of the ray $\rho_{i}$.
Then, $T$ is of the form 
\begin{equation}
T(m)=m+\check{d}_{\rho_{i}}(m)\cdot m_{\rho_{ij}\rho_{ij'}},\quad m\in\Lambda_{x}\label{eq:monodromy}
\end{equation}
where $\check{d}_{\rho_{i}} \in \check{\Lambda}_{\rho_{i}}$ is a primitive
generator of $\rho_{i}^{\perp}\subset\check{\Lambda}_{\rho_{i}}$ such that
$\check{d}_{\rho_{i}}$ is non-negative on $\sigma_{+}$ and $m_{\rho_{ij}\rho_{ij'}}$
is parallel to $\rho_i$. Moreover, for adjacent slabs $\rho_{ij}$ and $\rho_{i (j+1)}$, we have $m_{\rho_{ij}\rho_{i
(j+1)}}= \nu_i$ where $\nu_i$ is the primitive generator of $\rho_i$. 


\begin{figure} 
\begin{minipage}{\linewidth} \begin{center}  \begin{tikzpicture} [circ/.style={shape=circle, inner sep=2pt, draw, node contents=},scale=1.2] \fill  (0,0) circle (2pt); \draw  node (x) at (-1,0) [circ]; \draw (0,0) -- (x); \draw  node (y) at (-2,0) [circ]; \draw (x) -- (y); \draw  (y) -- (-2.5,0);  \draw [dashed] (-2.5,0) -- (-4,0); \draw  node (z) at (-4.5,0) [circ]; \draw (-4,0) -- (z); \draw [->] (z) -- (-5.5,0);
\node [label=below:o] at (0,0){}; \node [label=below:$o_{i1}$] at (-1,0){}; \node [label=below:$o_{i2}$] at (-2,0){}; \node [label=below:$o_{i{l_{i}}}$] at (-4.5,0){};
\node [label=above:$\rho_{i0}$] at (-0.5,-0.1){}; \node [label=above:$\rho_{i1}$] at (-1.5,-0.1){};
\node [label=above:$\rho_{il_{i}}$] at (-5,-0.1){};
\node [label=left:$\rho_{i}$] at (-5.5,0){}; \end{tikzpicture}
\end{center} \end{minipage}\caption{Singularities on $\rho_i$}\label{fig:focus}
\end{figure}

We can obtain a consistent scattering diagram $\mathfrak{D'}$ on $B'$ from the canonical scattering diagram on $B$ by requiring consistency conditions around the focus-focus singularities and the origin $o \in B'$. Let us first consider the consistency condition around the focus-focus singularities. For $0\leq j \leq l_{i}$, the local model we have near $\rho_{ij}$ comes from blowing up on the divisor $\bar{D}_{i}$ at $j$ distinct points, with exceptional divisors $E_{i1}, \cdots, E_{ij}$ for $j\geq1$. Let
\begin{equation}
    \kappa_{\rho_{ij}} = \left[ D_{i} \right] + \sum_{k=j+1}^{l_{i}} \left[  E_{ik}  \right]. 
\end{equation}
When $j=l_i$, $f_{\rho_{il_{i}}}$ agrees with $f_{\rho_i}$ in \cite{GHK}:  
\begin{equation}
   f_{\rho_i} = f_{\rho_{{il_{i}}}} =g_{\rho_i}  \prod_{s=1}^{l_i} (1+z^{\left[E_{is}\right]}X_{i}^{-1}).    
\end{equation}
The consistency around focus-focus singularities translates into the following compatibility condition: Given any $\rho_{ij}$ and $\rho_{ij'}$
in $\rho_{i}$, we
have 
\begin{equation} 
z^{\kappa_{\rho_{ij'}}}f_{\rho_{ij'}}=z^{m_{\rho_{ij'}\rho_{ij}}}\cdot z^{\kappa_{\rho_{ij}}}f_{\rho_{ij}}.\label{eq:compa-bary}
\end{equation}
Then, the kink $\kappa_{\rho_{ij}}$ together with the compatibility condition \ref{eq:compa-bary} determines the wall-crossing functions 
\begin{equation}
    f_{\rho_{ij}}=g_{\rho_i} \prod_{s=j+1}^{l_i}(1+z^{[-E_{is}]}X_i) \prod_{s'=1}^{j}(1+z^{[E_{is'}]}X_i^{-1})  
\end{equation}
for $0 \leq j \leq l_{i}$.

For the consistency condition around the origin $o\in B'$, we need to add additional walls emanating from $o$ with support in the interior of 2-dimensional cones in $\Sigma$, i.e., walls \emph{of codimension 0}. Near $o$, the scattering diagram on $B'$ agrees with the asymptotic scattering diagram $\bar{\mathfrak{D}}$, as in Theorem 3.25 of \cite{GHK}, by pushing all focus-focus singularities to infinity. However, as mentioned in \cite{RS19}, the walls of codimension 0 do not play a role in the period computations. Therefore, we will not give explicit descriptions of these walls here. 

Mimicking Construction 2.14 of \cite{GHK}, we can construct a scheme $X^{\circ}_{I,\mathfrak{D'}}$ using the scattering diagram $\mathfrak{D'}$, in agreement with the order-by-order construction of schemes used in \cite{RS19}. The consistency of the scattering diagram $\mathfrak{D'}$ guarantees that the scheme $X^{\circ}_{I,\mathfrak{D'}}$ is well-defined. For the slab $\rho_{ij}$, the corresponding scheme  $\mathrm{Spec}(R_{\rho_{ij,I}}) $ is given by 
\[
R_{\rho_{ij},I}=R_{I}\left[\Lambda_{\rho_i}\right]\left[Z_{\rho_{ij}}^{+},Z_{\rho_{ij}}^{-}\right]/\left(Z_{\rho_{ij}}^{+}Z_{\rho_{ij}}^{-}-z^{\kappa_{\rho_{ij}}}f_{\rho_{ij}}\right).
\]
Here, $Z_{\rho_{ij}}^{+} = z^{\zeta_{j}}$ where $\zeta_{j}$ is a primitive generator of $\Lambda_{\sigma_{+}}/\Lambda_{\rho_{i}}$
pointing from $\rho_{i}$ to $\sigma_{+}$ and $Z_{\rho_{ij}}^{-} = z^{\tilde{\zeta_{j}}}$ where $\tilde{\zeta}_{j}$ is the parallel transport of $-\zeta_{j}$ via a path passing through the interior of $\rho_{ij}$. For more details, see Section 2.2 of \cite{GHK}.

\begin{defn}
Denote by $U_{sim}$ the locus in ${\rm Spec}(\mathbb{C}[E])$ where $B'$ has only focus-focus singularities, i.e., where points in $\mathcal{A}$ do not collide as shown in Figure \ref{fig:focus}
\end{defn}

\subsection{Singular cycles  from tropical 1-cycles} \label{ssec:sing-trop}
Denote the singular locus of the affine structure $B'$ by $\Delta$ and the embedding of the regular locus $B'\setminus{\Delta}$ into $B'$ by $\iota$. We also let $\check{\Lambda}$ be the sheaf of integral cotangent vectors $\mathcal{H}om\left( \Lambda,\underline{\mathbb{Z}}\right)$ on $B'\setminus{\Delta}$. 

\begin{defn}
A \emph{tropical 1-cycle} or \emph{tropical curve} is a twisted singular one-cycle on $B'\setminus \Delta$ with coefficients in the sheaf of integral tangent vectors $\Lambda$. 
\end{defn} 
More concretely, a tropical one cycle can be viewed as an oriented graph $\Gamma$ with a map $f:\Gamma\rightarrow B'\setminus \Delta$ and a section $\xi_e\in (f|_e)^* \Lambda$ for each edge $e\subset \Gamma$. Furthermore, at each vertex $v$, the balancing condition is satisfied, i.e. $\sum_{e \ni v} \varepsilon_{e,v} \xi_e=0$. Here,  $\varepsilon_{e,v}=1$ if the edge is pointing towards $v$ and $\varepsilon_{e,v}=-1$ otherwise. Throughout the paper, we assume that $\xi_{e}$ is a primitive element in $(f\mid_e)^* \Lambda$. 

To a tropical $1$-cycle $\beta_{\trop}$, we will give a corresponding singular cycle in $X_{a,0}$. For brevity, we explain how this is done in the case that every edge $e$ is embedded into a single maximal cell. We refer to \cite{RS19} for the general construction. In Proposition $2.1$, \cite{RS19} constructs a degenerate momentum map, $\mu:X_{a,0}\rightarrow B'$, where $X_{a,0}$ is the singular central fiber. Each maximal cell $\sigma$ has a momentum map $\mu_{\sigma}:X_{\sigma}\rightarrow \sigma$. Because we are in the case where the gluing data is trivial, the $\mu_{\sigma}$'s agree on codimension one strata and we can form a global degenerate momentum map, which is used to construct singular cycles from tropical ones. Fix a tropical cycle $\beta_{\trop}$. Then, for each edge $e$, we choose a section $S_e\subset X_{\sigma}$ of $\mu_{\sigma}$ over $e$. We choose these sections so that we have compatibility at vertices.  If the edge $e$ has a vertex $v$ on a codimension 1 cell $\rho$, we further require the condition that $S_e$ is contained in the closure of an orbit of the 1-parameter subgroup of $\rm{Spec}(\mathbb{C}[\Lambda_{\sigma}])$ that fixes $X_\rho \subset X_\sigma$ pointwise. Notice that this condition is equivalent to saying that for any $m \in \Lambda_{\rho}$, the monomial $z^m$ is constant on $S_e$.  To an edge $e$ carrying the tangent vector $\xi_e$, we then associate to it a chain $\beta_e$ as the orbit of $S_e$ under the subgroup of $\Hom(\Lambda_{\sigma},U(1))$ which maps  $\xi_e$ to $1$. At a vertex $v$, the balancing condition of the tropical cycle implies that the negative of boundaries over $v$ of the chains $\beta_e$ with $e\ni v$ bound an $n$-chain $\Gamma_v$ over $v$. Then, the associated singular cycle in $X_{a,0}$ associated with $\beta_{\trop}$ is:
\[ \beta=\sum_e \beta_e+\sum_v \Gamma_v.\]
For a vertex $v$ and an adjacent edge $e$, we will denote the point of intersection of $S(e)$ with $\mu^{-1}(v)$ by $S(v)$. The singular 2-cycle $\beta$ is well-defined up to integral multiples of the fiber class of the degenerate moment map.

We will specifically be interested in tropical cycles that correspond to exceptional curves of a toric model.

\begin{lem}\label{focusfocusvector}
If a tropical curve goes around a focus-focus singularity, as shown  
in Figure \ref{fig:tropicalfocus}, then the edge leading away from the singularity must carry a vector $\zeta$ parallel to $\Lambda_{\rho}$. Moreover, $\zeta$ is primitive if and only if the cycle carries a primitive
generator of $\Lambda/\Lambda_{\rho}$ when it starts to surround
the singularity. 
\end{lem}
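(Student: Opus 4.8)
The plan is to analyze the affine monodromy around a single focus-focus singularity and read off the constraint it forces on the tangent vector carried by an outgoing edge of a tropical $1$-cycle. Concretely, I would set up local coordinates near the singularity $o_{ij}$ on the ray $\rho_i$: choose a chart on the regular locus so that $\rho_i$ (minus the singular point) is the $x$-axis, with $\Lambda_\rho$ spanned by the primitive generator $\nu$ and the monodromy around $o_{ij}$ given by a transvection fixing $\Lambda_\rho$, i.e. $T(m) = m + \check{d}_{\rho_i}(m)\,\nu$ in the notation of equation \eqref{eq:monodromy}. The key point is that a loop in $B'\setminus\Delta$ encircling $o_{ij}$ acts on the stalk of $\Lambda$ by this transvection $T$, whose fixed subspace is precisely $\Lambda_\rho = \mathbb{Z}\nu$.

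The first step is to make precise what "goes around a focus-focus singularity" means for the tropical curve of Figure \ref{fig:tropicalfocus}: the curve $f\colon\Gamma\to B'\setminus\Delta$ has a subgraph that, together with a path closing it up, forms a loop $\gamma$ generating $\pi_1$ of a punctured neighborhood of $o_{ij}$, and the edge "leading away" is the unique edge $e$ whose image exits this neighborhood. Then I would carry out parallel transport of the tangent section $\xi$ around $\gamma$: since $\xi_e \in (f|_e)^*\Lambda$ must be a well-defined section on the edge $e$, and since continuing $\xi$ once around the loop returns it (up to the balancing bookkeeping at the vertices, which contributes nothing to the monodromy class) to $T(\xi_e)$, consistency forces $T(\xi_e) = \xi_e$. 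By the description of $T$ this means $\check{d}_{\rho_i}(\xi_e) = 0$, i.e. $\xi_e \in \rho_i^\perp{}^{\!\perp} = \Lambda_{\rho_i}$, so $\xi_e = \zeta$ is parallel to $\Lambda_\rho$. This is the substance of the first assertion.

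For the second assertion — primitivity — I would track how the transvection $T$ moves the incoming vector. Let $w$ be the vector carried by the cycle "when it starts to surround the singularity," and suppose $w$ is a primitive generator of $\Lambda/\Lambda_\rho$ (equivalently $\check{d}_{\rho_i}(w) = \pm 1$). Going around the focus-focus point, the balancing condition at the vertices of the loop, combined with one application of the monodromy transvection, produces the outgoing edge vector as $\zeta = T(w) - w = \check{d}_{\rho_i}(w)\,\nu = \pm\nu$ (the sign absorbed by orientation). Hence $\zeta$ is $\pm$ the primitive generator of $\Lambda_\rho$, so it is primitive. Conversely, if $w$ were not primitive in $\Lambda/\Lambda_\rho$, say $\check{d}_{\rho_i}(w) = k$ with $|k|\ge 2$, the same computation gives $\zeta = k\nu$, which is not primitive; and if $w$ lies in $\Lambda_\rho$ already then $T(w) = w$ and no outgoing vector along $\rho$ is forced, contradicting the picture. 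This establishes the "if and only if."

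I expect the main obstacle to be purely expository rather than mathematical: pinning down precisely, from the construction in Section \ref{ssec:sing-trop}, which combinatorial datum of the tropical $1$-cycle counts as "the vector carried when it starts to surround the singularity" and how the vertices along the encircling loop redistribute tangent vectors via the balancing condition, so that the net effect of one loop is exactly one application of the transvection $T$. Once that dictionary is fixed, the argument is the short linear-algebra computation with $T$ above; the appeal to \cite{RS19} for the general tropical-cycle formalism handles any remaining subtleties about edges crossing several cells. I would keep the proof short by reducing to the local model of a single focus-focus point and invoking the monodromy formula \eqref{eq:monodromy} directly.
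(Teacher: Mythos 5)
Your proposal is correct and takes essentially the same route as the paper: the balancing condition at the vertex where the outgoing edge attaches, combined with one application of the monodromy transvection \eqref{eq:monodromy}, yields $\zeta=\xi'-\xi=\langle\check{d}_{\rho},\xi\rangle e_{\rho}$, from which both assertions follow immediately. (Your first paragraph's appeal to monodromy-invariance of $\zeta$ itself is unnecessary and not quite the right mechanism, since the outgoing edge does not encircle the singularity, but the computation in your second paragraph is exactly the paper's argument and subsumes it.)
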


\begin{proof}
Let $e_{\rho}$ be a generator of $\Lambda_{\rho}$ and $\check{d}_{\rho}$
be a primitive vector such that $\left(\check{d}_{\rho}\right)^{\perp}=\Lambda_{\rho}$.
Let $\xi$ be the vector the tropical 1-cycle carries when it starts
to go around the focus-focus singularity and $\xi^{'}$ the vector
after applying the monodromy transformation to $\xi$. By the balancing
condition, 
\begin{align*}
\xi+\zeta & =\xi^{'}=\xi+<\check{d}_{\rho},\xi>e_{\rho}\\
\zeta & =<\check{d}_{\rho},\xi>e_{\rho}
\end{align*}
Therefore, $\zeta$ is parallel to $\Lambda_{\rho}$ and it is a primitive
generator of $\Lambda_{\rho}$ if and only if $\xi$ is a primitive
generator of $\Lambda/\Lambda_{\rho}$. 
\end{proof}

\begin{figure}
\begin{minipage}{\linewidth} \begin{center}  \begin{tikzpicture}[circ/.style={shape=circle, inner sep=2pt, draw, node contents=}] \draw  node (x) at (0,0) [circ]; \draw [ decoration={markings,  mark=at position 0.25 with {\arrow{>}}, mark=at position 0.75 with {\arrow{>}}},         postaction={decorate}](0,0) circle [radius=1cm]; \draw  [dashed] (-2,0)-- (x); \draw (x) -- (3.5,0); \node [label=right:$\xi^{'}$] at (-60:1cm){}; \node [label=above:${\xi}$] at ([yshift=8pt] 60:1cm){}; \node [label=right:$\zeta$] at (0.8,1.3){}; \draw [->] (-50:1cm) -- ([xshift=-12pt, yshift=12pt] -50:1cm); \draw [->] (60:1cm) -- ([yshift=12pt] 60:1cm); \draw  [decoration={markings, mark=at position 0.5 with {\arrow{>}}}, postaction={decorate}](45:1cm) .. coordinate [pos=.3] (a) controls (1.5,1) and  (2,1.5) .. ([xshift=3cm, yshift=1cm] 45:1cm );  \draw [->] (a) -- +(-12pt,0);  \draw [->] (-1,-2.5) -- (-2,-2.5);   \draw [->] (-1,-2.5) -- (-1,-1.5);  \node [label=above:$e_{\rho}$] at (-2,-2.5){};  \node [label=right:$d_{\rho}$] at (-1,-1.7){};  \node [label=left:$\rho$] at (-2,0){};
\draw [<-] (3.5,1.25) [out=300, in=60] to (3.5,-1.25);
\node [label=below:$\xi \mapsto \xi+  \langle {{\check{d}}_{\rho}, \xi} \rangle  e_{\rho}$ ] at (3.5,-1.25) {};
\end{tikzpicture} \captionof{figure}{A tropical cycle surrounding a focus-focus singularity.} \label{fig:tropicalfocus}
\end{center} \end{minipage}
\end{figure}


By rescaling $F$ if necessary, we can assume that $F$ satisfies the condition in Construction \ref{cons:marking}, i.e., for each edge $F_i$, there exists a maximal cone $\sigma$ in $\Sigma$ that contains two consecutive lattice points $a_i \succ b_i$ on $F_i$. 
\begin{defn}[Exceptional tropical $1$-cycles]\label{def:excep-1-cycle}
Let $F_i$ be the edge parallel to $\rho_i$. If $F$ is a parallel configuration, then $\underline{F}$ has two 1-cells contained in $F_i$, one in the 2-cell $\sigma_{i-1,i} \cap F$ and the other in the 2-cell $\sigma_{i-2,i-1} \cap F$. Denote these two 1-cells by $R_i, R_i'$ respectively. 
Then, consider a tropical $1$-cycle contained in the  2-cell $\sigma_{i-1,i} \cap F$ as in Figure \ref{exceptionaltrop} that starts on the interior of $R_i$ and then goes around a focus-focus singularity on the ray $\rho_{i}$. The labeling of the tropical $1$-cycle is shown in Figure \ref{exceptionaltrop} where $d_{\rho_i}$ is a primitive generator of $\Lambda/\Lambda_{\rho_i}$. 

If $F$ is not a parallel configuration, we do a similar construction. The only difference is that the tropical $1$-cycle may not be contained in a single maximal cell in $\Sigma \cap F$ anymore since $F$ is no longer a parallel configuration. In particular, the edge $e_0$ that starts from $F_i$ may pass through multiple rays in $\Sigma$. Moreover, we require that $e_0$ starts from a 1-cell contained in $F_i$ that contains two consecutive lattice points. By our assumption about $F$, this condition can be satisfied. The labeling on each edge remains the same. Moreover, for the simplicity of computations we will do later, we assume that when the tropical 1-cycle crosses a ray $\rho_i$, it always crosses the slab $\rho_{il_i}$ as in Figure \ref{fig:focus} and it never crosses the same ray twice. 

We call such a tropical 1-cycle as we constructed, an \emph{exceptional tropical 1-cycle}.
\end{defn}
Moreover, the following lemma follows from \cite{bauer}:
\begin{lem}[Lemma 4.10, Lemma $4.11$, \cite{bauer}]
Exceptional tropical 1-cycles have self-intersection $-1$ and two exceptional 1-tropical cycles that go around different focus-focus singularities intersect trivially.
\end{lem}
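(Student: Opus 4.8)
The plan is to reduce both assertions to the combinatorial intersection pairing on tropical $1$-cycles on the affine surface $B'$ (as set up in \cite{bauer}, following \cite{RS19}) and then to a local computation near a single focus-focus singularity. Recall that the intersection number $\beta\cdot\beta'$ of two tropical $1$-cycles is computed by first homotoping them within $B'\setminus\Delta$ into general position, so that they meet transversally at finitely many points lying in the interiors of maximal cells of $\underline{F}$ and away from $\Delta$, and then summing local contributions $\varepsilon_p\,(\xi_p\wedge\xi'_p)$, where $\beta$ carries the integral tangent vector $\xi_p$ at $p$, $\beta'$ carries $\xi'_p$, the wedge is taken with respect to the parallel integral volume form on that cell, and $\varepsilon_p=\pm 1$ records the orientation. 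The only subtlety is the self-intersection: a cycle cannot be pushed off itself across a focus-focus singularity that it encircles, and this is precisely where the monodromy transformation of equation \ref{eq:monodromy} enters the count.

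First I would compute the self-intersection of an exceptional tropical $1$-cycle $\beta$ around a fixed focus-focus singularity $o_{ij}\in\rho_i$. Perturb $\beta$ to a homologous cycle $\beta'$ supported in a small neighbourhood of $\beta$. Away from $o_{ij}$ the perturbation can be chosen to create no crossings, since by Definition \ref{def:excep-1-cycle} the underlying loop is embedded there; near $o_{ij}$, however, the two strands are forced to cross, because the incoming strand carries the primitive vector $d_{\rho_i}$ (a generator of $\Lambda/\Lambda_{\rho_i}$) while, by Lemma \ref{focusfocusvector}, the outgoing strand carries the vector $\zeta=\nu_i$, the primitive generator of $\Lambda_{\rho_i}$. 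Applying the monodromy $m\mapsto m+\langle\check d_{\rho_i},m\rangle\,\nu_i$ of equation \ref{eq:monodromy} to the perturbed strand and keeping track of orientations, exactly one transverse crossing survives, with local index equal, up to the overall orientation sign, to $d_{\rho_i}\wedge\nu_i=\langle\check d_{\rho_i},d_{\rho_i}\rangle=\pm1$; the orientation bookkeeping — the outgoing strand sits on the far side of $o_{ij}$ from the incoming one — fixes the sign to be $-1$, so $\beta\cdot\beta=-1$. Alternatively, one may argue through the correspondence of Section \ref{ssec:sing-trop}: $\beta$ maps to a $2$-cycle in $X_{a,0}$ that deforms to the exceptional curve $E_{ij}$ of the toric model, whence $\beta\cdot\beta=E_{ij}^{2}=-1$. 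I would use whichever of these two routes is already available in \cite{bauer}.

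For the second assertion, let $\beta,\beta'$ be exceptional tropical $1$-cycles around distinct focus-focus singularities. If these singularities lie on different rays, then after a homotopy in $B'\setminus\Delta$ the two cycles have disjoint support — each retracts into a neighbourhood of its own ray together with the two adjacent maximal cells — so $\beta\cdot\beta'=0$. If they lie on the same ray $\rho_i$, say around $o_{ij}$ and $o_{ij'}$ with $j\neq j'$, then by the normalisation built into Definition \ref{def:excep-1-cycle} (each such cycle crosses $\rho_i$ only through the slab $\rho_{il_i}$ and never crosses a ray twice) the only possible crossings of $\beta$ with $\beta'$ occur between the two edges $e_0$; but both of these edges carry the same vector $d_{\rho_i}$, so every local contribution has the form $\pm(d_{\rho_i}\wedge d_{\rho_i})=0$ and again $\beta\cdot\beta'=0$. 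The main obstacle is the sign in the self-intersection computation: one must arrange the perturbation of a loop encircling a focus-focus singularity carefully enough that the monodromy yields exactly one residual crossing with the correct orientation, since a careless perturbation could naively suggest $0$ or $-2$. This is exactly the content of Lemmas $4.10$ and $4.11$ of \cite{bauer}, and I would follow that argument.
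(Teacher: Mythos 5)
The paper does not prove this statement at all: it is quoted verbatim from Lemmas 4.10 and 4.11 of \cite{bauer}, so there is no in-paper argument to compare yours against. Your sketch is consistent with what that reference does --- compute the pairing on twisted $1$-cycles by perturbing to transverse position, with the monodromy \ref{eq:monodromy} around the encircled focus-focus point producing the single residual crossing of index $-1$, and with parallel framings (or disjoint supports after homotopy) killing all contributions between cycles around distinct singularities --- and you correctly identify the sign bookkeeping at the forced crossing as the only delicate point. Two caveats. First, your proposed ``alternative route'' via $\beta\cdot\beta=E_{ij}^{2}=-1$ is circular in the logic of this paper: Construction \ref{def:singular-2-cycles} uses the tropical self-intersection $-1$ (transported to the topological self-intersection by Lemma 7.11 of \cite{Sym}) precisely in order to \emph{produce} the exceptional curve $E_{ij,s}$ in that homology class, so the existence of the $(-1)$-curve cannot be assumed as input. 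Second, a small labeling slip: the outgoing edge $e_0$ of an exceptional tropical $1$-cycle carries $\nu_i$, the primitive generator of $\Lambda_{\rho_i}$ (by Lemma \ref{focusfocusvector}), not $d_{\rho_i}$; your vanishing argument for two cycles on the same ray still goes through because the two $e_0$-framings are parallel to each other, but as stated the vectors are misidentified. Neither point affects the conclusion.
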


Thus, for each focus-focus singularity $o_{ij}$ on the ray $\rho_i$, there is an exceptional tropical 1-cycle $\beta_{\mathrm{trop},ij}$ $(1 \leq j \leq l_i)$ going around it with self-intersection -1. \par 
\begin{figure} 
\begin{minipage}{\linewidth} \begin{center}  \begin{tikzpicture}[circ/.style={shape=circle, inner sep=2pt, draw, node contents=}, scale=1.2] \draw  node (x) at (0,0) [circ]; \draw [ decoration={markings,  mark=at position 0.07 with {\arrow{<}}, mark=at position 0.17 with {\arrow{<}} },         postaction={decorate}](0,0) circle [radius=1cm]; \draw  [dashed] (x)-- (-3,0); \draw (x) -- (3,0); \node [label=right:$d_{\rho_i}+\nu_{i}$] at (-75:1cm){}; \node [label=above:$d_{\rho_i}$] at ([xshift=-3pt, yshift=6pt] 75:1cm){}; \node [label=right:$\nu_{i}$] at (0.8,1.2){}; \draw [->] (-60:1cm) -- ( [xshift=-12pt, yshift=12pt] -60:1cm); \draw [->] (75:1cm) -- ( [yshift=12pt]75:1cm); \draw  [decoration={markings, mark=at position 0.5 with {\arrow{<}}}, postaction={decorate}](45:1cm) .. coordinate [pos=.3] (a) controls (1.5,1) and  (2,1.5) .. ([xshift=3cm, yshift=1cm] 45:1cm ); \fill [black] (45:1cm) circle (2pt); \fill [black] (180:1cm) circle (2pt); \fill [black] (270:1cm) circle (2pt); \fill [black] (0:1cm) circle (2pt); \fill [black] ([xshift=3cm, yshift=1cm] 45:1cm )circle (2pt); \node [label=below:$v_{0}$] at ([xshift=3cm, yshift=1cm] 45:1cm ){};
\node [label=right:$v_{1}$] at (44:0.9cm){}; \node [label=left:$v_{4}$] at (170:0.9cm){}; \node [label=below:$v_{3}$] at (270:0.9cm){}; \node [label=right:$v_{2}$] at (10:0.9cm){}; \node [label=left:$e_{1}$] at (18:1.1cm){}; \node [label=below:$e_{3}$] at (90:1.1cm){}; \node [label=right:$e_{2}^{'}$] at (215:1cm){}; \node [label=above:$e_{2}$] at (309:1cm){}; \node [label=below:$e_{0}$] at (2.6,1.3){};
\node [label=right:$R_i$] at (-3.6,1.7) {};
\draw [->] (a) -- +(-12pt,0); \draw [->] (-2,-1.5) -- (-3,-1.5); \draw [->] (-2,-1.5) -- (-2,-0.5); \node [label=above:$\nu_{i}$] at (-3,-1.5){}; \node [label=right:$d_{\rho_i}$] at (-2,-0.7){}; \node [label=above: $o_{ij}$] at (0,-0.1) {};
\draw (-3,1.7) -- (4,1.7);
\end{tikzpicture}\caption{An exceptional tropical cycle inside a parallel configuration}\label{exceptionaltrop} \end{center} \end{minipage}
\end{figure}



\begin{cons}\label{def:singular-2-cycles}
(Construction of singular 2-cycles from tropical 1-cycles.) Consider the toric model $p:(Y,D)\rightarrow (\bar{Y},\bar{D})$ where $\bar{D}_i$ is blown up $l_i$ times and the locus $S^{GS}\subset \mathrm{Spec}(\mathbb{C}[P])$, which is transverse to the GS locus $T^{GS}$. For a fixed $a \in U_{sim} \subset \mathrm{Spec}(\mathbb{C}[E])$, consider the 1-parameter family $T_a = \{a\} \times \mathbb{A}^1 \subset S^{GS}$. By the construction in Section 2 and Appendix A of \cite{RS19}, there exists an analytic disc $\mathbb{D}_a \subset \mathbb{A}^1$ around $0\in \mathbb{A}^{1}$ such that for any $t \in \left\{ a \right\} \times \mathbb{D}_a$, given a tropical 1-cycle $\beta_{\mathrm{trop}}$ with $\beta$ a corresponding singular 2-cycle on $X_{a,0}$, there is a singular 2-cycle $
\beta_t$ on $\bar{\mathcal{X}_t}$ that is a deformation of $\beta$. 

Now, if we restrict to the locus $U_{sim} \subset \mathrm{Spec}(\mathbb{C}[E])$, on a ray $\rho_i$, we can produce $l_i$ exceptional tropical curves $\beta_{\mathrm{trop},ij}$ $(1 \leq j \leq l_i)$ such that $\beta_{\mathrm{trop},ij}$ goes around the focus-focus singularity $o_{ij}$. The labeling on our exceptional tropical 1-cycles guarantees that when we apply the construction in Subsection \ref{ssec:sing-trop} for each tropical 1-cycle $\beta_{\mathrm{trop},ij}$,  we get a corresponding connected singular 2-cycle on $X_{a,0}$  for each fixed parameter $a \in U_{sim}$.
Then, by considering the union of $\left\{ a \right\} \times \mathbb{D}_a$ for each $a\in U_{sim}$ and then taking the intersection with the locus in $T_Y$ that parametrizes smooth fibers, we get an analytic open neighborhood $W \subset S^{GS}$ parametrizing smooth fibers such that for any $s \in W \cap T_Y$, the singular 2-cycle associated with the exceptional tropical 1-cycle $\beta_{\mathrm{trop},ij}$ on the central fiber deforms to a corresponding singular 2-cycle $\beta_{ij,s}$ on $\bar{\mathcal{X}}_s$. By Lemma 7.11 of \cite{Sym}, the self-intersection number of $\beta_{ij,s}$ is also -1. We claim for each $s \in W\cap T_Y$, there is an exceptional curve $E_{ij,s}$ in $\bar{\mathcal{X}}_s$ representing the same homological class as $\beta_{ij,s}$, which follows from the lemma below.
\end{cons}

\begin{lem}\label{lem:(-1)-class}
For each $s \in W\cap T_Y$, there is an exceptional curve in $\bar{\mathcal{X}}_s$ representing the same homological class as $\beta_{ij,s}$. Moreover, $(\bar{\mathcal{X}}_s, \cD_s)$ is deformation equivalent to the original pair $(Y,D)$. 
\end{lem}
\begin{proof}
Without loss of generality, we could assume that the original pair $(Y,D)$ is generic. Let $\omega$ the class of the $D$-ample divisor $W$ we used to construct $F=P(W)$. By Theorem 5.4 and Lemma 5.6 of \cite{EF21}, there exists an almost toric fibration (in the sense of Definition 4.5 of \cite{Sym}) $(Y, D, \omega) \rightarrow B'$ such that a (-1)-sphere representing the class of $E_{ij}$ fibers over the path $\gamma_{ij}$ in $B'$ which is identified pair of edges of the triangle with the vertex $o_{ij}$ and base on $\bar{F}_i$ that we deleted while constructing $F$ from $\bar{F}=P(\bar{W})$ (c.f. Subsection \ref{ssec:cano-poly}). By Theorem 0.1, Theorem 0.2 of \cite{Ar21} and Lemma 8.8 of \cite{AAP24}, $(\bar{\cX}_s, \cD_s)$ also admits an almost toric fibration over the same base $B'$ such that a (-1)-sphere representing the class of $\beta_{ij,s}$ is mapped to $\gamma_{ij}$. By Corollary 5.4 of \cite{Sym}, $(\bar{\cX}_s, \cD_s)$ is diffeomorphic to $(Y,D)$ via a diffeomorphism $f$ such that $f^{*}([D_i]) = [\cD_{i,s}]$ and $f^{*}([E_{ij}]) = [\beta_{ij,s}]$ for each $i$ and $j$. Then, by Theorem 5.14 of \cite{Fr2}, the diffeomorphism $f$ will preserve the generic ample cones and hence $(\bar{\cX}_s, \cD_s)$ and $(Y,D)$ are deformation equivalent. Since $\bar{\cX}_s \setminus \cD_s$ is affine and smooth, $(\bar{\cX}_s, \cD_s)$ has no $(-2)$-curves and its generic ample cone is equal to its ample cone. Since $(Y,D)$ is generic, its generic ample cone is also equal to the ample cone. By Corollary 4.3 of \cite{Fr2}, $[E_{ij}]$'s generate distinct faces of the ample cone of $(Y,D)$. Since $f$ preserves the ample cones,  $[\beta_{ij,s}]$'s also generate distinct faces of the ample cone of $(\bar{\cX}_s, \cD_s)$ and are presented by exceptional curves by Corollary 4.3 of \cite{Fr2}.
 
\end{proof}

\begin{remark}
In the above construction, given a point $s \in W\cap T_Y$, $s$ is contained in a unique 1-parameter family $T_a$ and the exceptional tropical 1-cycle $\beta_{\mathrm{trop}, ij}$ depends on $a$. However, for expositional simplicity, we do not emphasize this dependence.
\end{remark}

\begin{lem} \label{lem:full-dim}
Let $W$ be the same as in Construction \ref{def:singular-2-cycles}. Then, under the $T^D$ action, $T^{D} \cdot ({W \cap T_{Y}})$ is a full-dimensional subset of $T_Y$.
\end{lem}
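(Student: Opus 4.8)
\textbf{Proof plan for Lemma \ref{lem:full-dim}.}

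The plan is to combine the two torus actions at our disposal: the relative torus $T^D$, which acts equivariantly on $\bar{\cX}\mid_{T_Y}$ and whose orbits move the canonical markings $p_i$ around (Lemma \ref{lem:rel-tor-mar}), and the fact that $W\cap T_Y$ is a nonempty \emph{analytic open} subset of $S^{GS}\cap T_Y$, hence of positive dimension in every direction transverse to the GS locus as well as along it. Since $T^{GS}$ has dimension equal to the rank of the sublattice $E$ generated by the exceptional classes $\{[E_{ij}]\}$, and $S^{GS} = \iota_G(\Spec(\bC[E])\times \mathbb{A}^1)$ is transverse to $T^{GS}$, the locus $W$ — being open and nonempty in $S^{GS}$ — has dimension $\operatorname{rk}(E)+1$. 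The goal is to show the $T^D$-saturation of $W\cap T_Y$ is not contained in any proper closed subvariety of $T_Y$, equivalently that the map $T^D\times (W\cap T_Y)\to T_Y$, $(g,s)\mapsto g\cdot s$, has image of dimension $\dim T_Y = \operatorname{rk}\Pic(Y)$.

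First I would record the numerics: if $p:(Y,D)\to(\bar Y,\bar D)$ is the fixed toric model, then $\Pic(Y)$ is generated by the pullbacks $p^*\Pic(\bar Y)$ together with the exceptional classes $[E_{ij}]$, and $\operatorname{rk}\Pic(Y) = \operatorname{rk}\Pic(\bar Y) + \sum_i l_i = \operatorname{rk}\Pic(\bar Y)+\operatorname{rk}(E)$. Using the identification $T_Y = \Pic(Y)\otimes\mathbb{G}_m$, the cocharacters coming from $E$ span a subtorus of dimension $\operatorname{rk}(E)$, and $W\cap T_Y$ already surjects (analytically openly) onto an open subset of the corresponding $\operatorname{rk}(E)+1$ coordinates — more precisely, $W$ contains an open subset of $\Spec(\bC[E])\times\mathbb{A}^1$ which, intersected with the smooth-fiber locus in $T_Y$, is open in the subtorus cut out by the characters $e_{D_i}\mapsto$(something). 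Then I would show that the $T^D$-action supplies the missing $\operatorname{rk}\Pic(\bar Y) = n - (\text{number of relations})$ directions: concretely, the weights $w(C)=\sum_i(C\cdot D_i)e_{D_i}$ of the $T^D$-action on the base $S=\Spec(\bC[\NE(Y)])$, restricted to $T_Y$, span the image of $\Pic(Y)\to\mathbb{Z}^D$, $C\mapsto(C\cdot D_i)_i$, whose image together with the $E$-directions generates a finite-index sublattice of $\Pic(Y)$. This is where positivity of $(Y,D)$ enters: the intersection pairing is nondegenerate on the span of the $[D_i]$ modulo $D^\perp$, and $D^\perp$ is generated by roots which (after the toric model) are combinations of exceptional classes — so nothing is lost.

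The cleanest way to organize the argument is probably dimension-counting via generic smoothness / constructibility: $T^D\cdot(W\cap T_Y)$ is a constructible subset of $T_Y$ (image of an algebraic map, using that $W\cap T_Y$ is Zariski-dense in its Zariski closure, which is all of $S^{GS}\cap T_Y$ since $W$ is analytically open there), so it suffices to show its dimension equals $\dim T_Y$. I would compute the dimension of the image by looking at a single well-chosen orbit: pick $s_0\in W\cap T_Y$; the orbit $T^D\cdot s_0$ has dimension equal to $\dim T^D$ minus the dimension of the stabilizer, and the stabilizer of a point in $T_Y\cong T_Y^*=\Hom(\Pic(Y),\mathbb{G}_m)$ under the $T^D$-action (given by translation by the character $C\mapsto z^{w(C)}$) is the kernel of $\mathbb{Z}^D\to\Pic(Y)^\vee$ dual to $C\mapsto(C\cdot D_i)$. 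Then $\dim T^D\cdot s_0 + \dim(S^{GS}\cap T_Y) \ge \operatorname{rk}\Pic(Y)$ follows by checking that the tangent space to the orbit at $s_0$ and the tangent space to $S^{GS}\cap T_Y$ together span $T_{s_0}T_Y$; this is a linear-algebra statement about the sublattices $\operatorname{im}(\Pic(Y)\to\mathbb{Z}^D)$ and $E\subset\Pic(Y)$ spanning $\Pic(Y)_\mathbb{Q}$, which is exactly the numerical fact in the previous paragraph.

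The main obstacle I anticipate is the bookkeeping needed to confirm that $\operatorname{im}(\Pic(Y)\to\mathbb{Z}^D, C\mapsto(C\cdot D_i)_i)$ together with the exceptional sublattice $E$ really do span $\Pic(Y)_\mathbb{Q}$ — i.e., that $D^\perp\cap E^{\perp_{\text{naive}}}$ contributes nothing new. Equivalently one must rule out the possibility that the orbit directions and the $S^{GS}$-directions are confined to a common proper subspace. I expect this to reduce, via the toric model, to the observation that on $\bar Y$ the classes $[\bar D_i]$ together span $\Pic(\bar Y)_\mathbb{Q}$ (true for any smooth complete toric surface, since the toric boundary is ample-ish / its components generate the Picard group), and that pulling back and adding the $[E_{ij}]$ recovers all of $\Pic(Y)_\mathbb{Q}$. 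Once that is in hand, the dimension count closes and $T^D\cdot(W\cap T_Y)$, being constructible of full dimension in the irreducible variety $T_Y$, contains a dense open subset, proving the lemma.
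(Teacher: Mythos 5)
Your proposal is correct and follows essentially the same route as the paper: both arguments use that $W\cap T_Y$ already sweeps out the $\operatorname{rk}(E)+1$ directions coming from $S^{GS}$, while the $T^D$-orbit directions span $\langle[D_1],\dots,[D_n]\rangle\otimes\mathbb{Q}\subset\Pic(Y)_{\mathbb{Q}}$, which via the toric model reduces to the fact that the toric boundary classes generate $\Pic(\bar Y)_{\mathbb{Q}}$. The paper packages this last step as surjectivity of the composite $T^{D}\to T_Y\twoheadrightarrow T_{\bar Y}$ (citing Lemmas 2.5 and 2.7 of \cite{moduli}), whereas you carry out the equivalent lattice computation directly; either way the dimension count closes.
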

\begin{proof}
The weight map as in Definition \ref{def:weight-map} induces a homomorphism $\varphi': T^{D} \rightarrow T_Y$. Let $\mathrm{Aut}^{0}(D)$ be the identity component of the automorphism group of $D$. Using the identification $\mathrm{Aut}^{0}(D) \simeq \mathbb{G}_{m}^{n} \simeq {T^D}$ and the identification $T_Y \simeq T_{Y}^{*}$, the homomorphism $\varphi': T^{D} \rightarrow T_Y$ coincides with the homomorphism $\varphi:\mathrm{Aut}^{0}(D) \rightarrow T_{Y}^{*}$ as defined in Lemma 2.5 of \cite{moduli}. 

Consider the injective homomorphism $T_{\bar{Y}} \hookrightarrow T_Y$ induced by the pullback map $\Pic(\bar{Y}) \hookrightarrow \Pic(Y)$. Its image is the locus where $z^{[E_{ij}]}$ is equal to $1$ for all $i,j$. Moreover, the composition $T_{\bar{Y}} \hookrightarrow T_Y \twoheadrightarrow T_{\bar{Y}}$ is the identity map. Therefore, given $t \in T_Y$, $t$ is determined by its  image under the surjective homomorphism $T_Y \twoheadrightarrow T_{\bar{Y}}$ and $z^{[E_{ij}]}(t)$ for all $i,j$.  

Now, consider the relative torus $T^{\bar{D}}$ and the injective homomorphism $T^{\bar{D}}\hookrightarrow T^{D}$ induced by the map $e_{D_i} \mapsto e_{\bar{D}_i}$. Also consider the surjective homomorphism $T_Y \twoheadrightarrow T_{\bar{Y}}$ induced by $z^{[\bar{C}]} \mapsto z^{p^{*}(\bar{C})}$. The composition $\phi:T^{\bar{D}} \hookrightarrow T^{D} \overset{\varphi'}{\rightarrow} T_Y \twoheadrightarrow T_{\bar{Y}} \overset{\sim}{\rightarrow} T_{\bar{Y}^{*}}$ agrees with 
the surjective homomorphism $\bar{\varphi}: T^{\bar{D}} \rightarrow T_{\bar{Y}^{*}}$ of Lemma 2.7 of \cite{moduli} since $p^{*}(\bar{C})\cdot D_i = \bar{C} \cdot \bar{D}_i$ for any $\bar{C}\in A_{1}(\bar{Y})$ and any $i$.
Thus, the statement of the Lemma follows from the fact that the composition $T^{D} \overset{\varphi'}{\rightarrow} T_Y \twoheadrightarrow T_{\bar{Y}}$ is surjective.  
\end{proof} 

Let $e(Y \setminus D)$ be the topological Euler characteristic of $Y\setminus D$. Recall that the charge $Q(Y,D)$ of a Looijenga pair $(Y,D)$ is
\[
Q(Y,D) = 12 - D^{2} - n
\]
where $n$ is the number of irreducible components of $D$.  Moreover, we have $Q(Y,D) = e (Y\setminus D) \geq 0$ with equality if and only if $(Y,D)$ is toric.

\begin{lem}\label{lem:def-type} There exists an analytic open neighborhood $U\subset T_Y$ parametrizing smooth fibers such that for every  $s\in U$, there is a collection of $(-1)$-curves on $\bar{\mathcal{X}}_{s}$ that has the same combinatorial type as the toric model $p: (Y,D) \rightarrow (\bar{Y}, \bar{D})$. Consequently, the surface pairs of the family are deformation equivalent to $(Y,D)$. 
\end{lem}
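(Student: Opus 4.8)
\textbf{Proof proposal for Lemma \ref{lem:def-type}.}

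The plan is to leverage Construction \ref{def:singular-2-cycles} together with the equivariance of the $T^D$-action to propagate the curves produced near the Gross--Siebert locus over a full-dimensional, and then (by a deformation-invariance argument) an open, subset of $T_Y$. First I would recall that in Construction \ref{def:singular-2-cycles} we obtained an analytic open set $W\subset S^{GS}$ parametrizing smooth fibers such that for each $s\in W\cap T_Y$ and each non-toric blowup $E_{ij}$ of the chosen toric model $p:(Y,D)\to(\bar Y,\bar D)$, the exceptional tropical $1$-cycle $\beta_{\mathrm{trop},ij}$ deforms to a singular $2$-cycle $\beta_{ij,s}$ on $\bar{\mathcal X}_s$ of self-intersection $-1$, represented by an actual $(-1)$-curve $E_{ij,s}$. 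The first key step is to check that this collection of $(-1)$-curves together with the boundary $\cD_s=\cD_{1,s}+\dots+\cD_{n,s}$ forms a configuration combinatorially identical to the toric model: the $E_{ij,s}$ are disjoint from each other (Lemma of \cite{bauer} on trivial intersection of exceptional tropical cycles, and Lemma 7.11 of \cite{Sym} for the intersection-number invariance), each $E_{ij,s}$ meets exactly $\cD_{i,s}$ transversally in one point (which one reads off from the tropical picture: the non-compact edge $e_0$ of $\beta_{\mathrm{trop},ij}$ emanates from the edge $F_i$, parallel to $\rho_i$), and contracting them simultaneously yields a smooth pair whose boundary has the self-intersection sequence of $\bar D$ (using $\cD_{i,s}^2=D_i^2$ from Lemma \ref{boundaryselfint} and the standard change of self-intersection under blow-down). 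Here one should also count: the number of $(-1)$-curves we produce is $\sum_i l_i$, which by the charge formula $Q(Y,D)=12-D^2-n$ and $Q(\bar Y,\bar D)=0$ equals the number of non-toric blowups of the toric model, so the configuration is complete, and the contracted pair, having the correct boundary self-intersections and being smooth rational, is a smooth toric pair deformation equivalent to $(\bar Y,\bar D)$; re-blowing up gives that $(\bar{\mathcal X}_s,\cD_s)$ is deformation equivalent to $(Y,D)$.

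The second key step is to spread this over an open set of $T_Y$. By Lemma \ref{lem:full-dim}, $T^D\cdot(W\cap T_Y)$ is full-dimensional in $T_Y$; since the $T^D$-action is by isomorphisms of the pairs $(\bar{\mathcal X}_t,\cD_t)$ up to moving the markings (Theorem \ref{thm:rel-equi}, Lemma \ref{lem:rel-tor-mar}), the property ``$(\bar{\mathcal X}_t,\cD_t)$ admits a collection of $(-1)$-curves of the combinatorial type of the toric model'' holds on $T^D\cdot(W\cap T_Y)$. Finally, the locus $U\subset T_Y$ of smooth fibers is irreducible, the existence of such a curve configuration is an open condition on the base (a $(-1)$-curve deforms in a family of smooth surfaces, and transversality to the boundary and disjointness are open), and deformation equivalence is locally constant in families of smooth pairs; hence, enlarging $W$ to the open set $U$ swept out, every smooth fiber carries such a configuration and is deformation equivalent to $(Y,D)$.

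The main obstacle, I expect, is the bookkeeping in the first step: verifying that the $(-1)$-curves $E_{ij,s}$ meet the \emph{correct} boundary component with the \emph{correct} multiplicity and that the whole configuration contracts to a \emph{toric} pair rather than merely to some rational pair with the right boundary self-intersections. This requires a careful reading of the tropical $1$-cycle $\beta_{\mathrm{trop},ij}$ of Definition \ref{def:excep-1-cycle} through Construction \ref{def:singular-2-cycles}: one must track, via the degenerate momentum map, which $2$-dimensional toric stratum of $X_{a,0}$ the non-compact edge $e_0$ lands in and confirm this stratum specializes $\cD_{i,s}$, and one must rule out extra intersection points arising when $e_0$ crosses rays $\rho_k$ with $k\ne i$ (the assumption in Definition \ref{def:excep-1-cycle} that $\beta_{\mathrm{trop},ij}$ crosses each ray at most once and always through the slab $\rho_{kl_k}$ is precisely what one uses here). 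A secondary, milder issue is that the toric model and hence $W$ depend on choices, so one should note that the deformation type is independent of the toric model used, which follows since any two toric models are connected by toric blowups/blowdowns and these do not change the integral affine manifold $B$ (Remark \ref{rmk:irr-blo}) nor, by Lemma \ref{lem:compa-para-tor-bl}, the restricted compactified family in an essential way.
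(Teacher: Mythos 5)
Your proposal follows essentially the same route as the paper's proof: produce the $(-1)$-curves via Construction \ref{def:singular-2-cycles}, blow them down and use the charge formula together with Lemma \ref{boundaryselfint} to conclude the contracted pair is toric and isomorphic to $(\bar{Y},\bar{D})$, then spread the conclusion over an open set via the $T^{D}$-action and Lemma \ref{lem:full-dim}. The extra bookkeeping you flag (which boundary component each $E_{ij,s}$ meets, completeness of the configuration) is left implicit in the paper but is exactly the right thing to verify, so the proposal is correct.
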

\begin{proof}
We continue from Construction \ref{def:singular-2-cycles} and use the same notation as there. The deformation type of the family over $W$ follows from Lemma \ref{lem:(-1)-class}. For each $s\in W\cap T_Y$, let $Z_s = \bar{\mathcal{X}}_s$. By constructing an exceptional tropical curve for each focus-focus singularity, for each $s \in W \cap T_Y$,  we obtain a collection of (-1)-curves on $(Z_s, \cD_s)$ with the same combinatorics as the exceptional collection of $p$. Let $(\bar{Z}_{s},\bar{\cD}_s)$ be the pair we get after blowing down the collection of (-1)-curves constructed from exceptional tropical 1-cycles. 

We know the self-intersection numbers of the boundary components $\cD_{i,s} = \cD_i|_{Z_{s}}$ from Lemma \ref{boundaryselfint}.  Then by a simple computation, we know that the charge of  $(\bar{Z}_{s},\bar{\cD}_s)$ is zero.  This implies that the pair $(\bar{Z}_{s},\bar{\cD}_s)$ is toric and is isomorphic to the pair $(\bar{Y},\bar{D})$. Thus, $(Z_s,
\cD_s)$ and $(Y,D)$ have the same toric model with the same combinatorics.

 Using the equivariant $T^D$ action, by Lemma \ref{lem:full-dim} we can extend $W \cap T_Y$ to an analytic open neighborhood $U $ in $T_Y$ such that for each $s \in U$, the statement of the proposition holds for $\bar{\mathcal{X}}_{s}$. 
\end{proof}

\begin{prop} \label{prop:mar-family} 
Consider a positive Looijenga pair $(Y,D)$ and its mirror family $\cX \rightarrow \Spec(\bC[\NE(Y)])$. Let $(\bar{\cX},\cD)\rightarrow \Spec(\bC[\NE(Y)])$ be the canonical compactification of $\mathcal{X}$ as defined in Subsection \ref{ssec:cano-poly}. Then, the restriction of the compactified mirror family to the algebraic torus $T_Y=\Pic(Y)\otimes \mathbb{G}_m$ is a family $(\bar{\cX}\mid_{T_Y},\cD \mid_{T_Y})$ of generalized pairs deformation equivalent to $(Y,D)$. Furthermore, $(\bar{\cX} \mid _{T_Y},\cD \mid_{T_Y})$ comes with a natural marking $p_i$ of the boundary $\cD$ and a marking $\mu$ by $\Pic(Y)$. 
\end{prop}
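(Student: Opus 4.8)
The plan is to assemble Proposition~\ref{prop:mar-family} from the pieces already established in Sections~\ref{sec:compa}, \ref{sec:marking-boundary}, and from the tropical cycle constructions of this section. The statement has three assertions: (i) each fiber $(\bar{\cX}_t,\cD_t)$ over $T_Y$ is a generalized pair; (ii) the family is deformation equivalent to $(Y,D)$; (iii) the family carries a natural marking $p_i$ of the boundary and a marking $\mu$ of $\Pic(Y)$. Assertion (i) is essentially immediate: by Proposition~\ref{can-comp-fiber} (which invokes Proposition~\ref{compa-fiber}), for each $t\in T_Y$ the pair $(\bar{\cX}_t,\cD_t)$ is a generalized pair in the sense of Definition~\ref{generalized pair} --- $\bar{\cX}_t$ is a rational projective surface with at worst du Val singularities, smooth along $\cD_t$, and $\cD_t$ is an anti-canonical cycle of rational curves. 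Assertion (ii) is exactly the content of Lemma~\ref{lem:def-type}: via Construction~\ref{def:singular-2-cycles} one produces, on an analytic open $W\cap T_Y$ (then spread out over all of $T_Y$ by the equivariant $T^D$-action using Lemma~\ref{lem:full-dim}), a configuration of $(-1)$-curves with the same combinatorics as the fixed toric model $p:(Y,D)\to(\bar Y,\bar D)$; blowing these down yields a charge-zero, hence toric, pair isomorphic to $(\bar Y,\bar D)$, so $(\bar{\cX}_t,\cD_t)$ shares a toric model with $(Y,D)$ and is therefore deformation equivalent to it.

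For assertion (iii), the boundary marking $p_i$ is already produced in Construction~\ref{cons:marking}: the canonical coordinates $f_{\cD_i}$ cut out sections $p_i:T_Y\to\cD_i\mid_{T_Y}$ of the (smooth, by Proposition~\ref{compa-fiber}) boundary components, extended over the base by Lemma~\ref{extendsection}, and independent of the choice of parallel polygon. What remains is to construct the marking $\mu$ of $\Pic(Y)$. Here I would use the exceptional curves $E_{ij,s}$ produced in Construction~\ref{def:singular-2-cycles} together with the boundary divisors $\cD_{i,s}$. Since $(\bar{\cX}_s,\cD_s)$ has the same toric model as $(Y,D)$, the classes $\{[\cD_{i,s}]\}$ and $\{[E_{ij,s}]\}$ generate $\Pic(\bar{\cX}_s)$ (after passing to the minimal resolution if $\bar{\cX}_s$ is singular, adjoining the classes of exceptional $(-2)$-curves), and sending $[D_i]\mapsto[\cD_{i,s}]$, $[E_{ij}]\mapsto[E_{ij,s}]$ defines an isometry $\Pic(Y)\to\Pic(\tilde X_s)$ of lattices; restricting to $I(\bar{\cX}_s)$ gives the required injection $\mu|_s: I(\bar{\cX}_s)\hookrightarrow\Pic(Y)$ (or rather $\Pic(Y_0)$ after fixing a generic reference pair), and one checks it satisfies the conditions of Definitions~\ref{smooth marking} and \ref{def:gen-mar}, namely $\mu([D_i])=[D_i]$ and $\mu(C^{++})=C^{++}$, the latter because the toric-model combinatorics fix the relevant chambers. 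These $\mu|_s$ glue to a morphism of sheaves $\mu:I(\bar{\cX}\mid_{T_Y}/T_Y)\to\underline{\Pic(Y_0)}_{T_Y}$ using Lemma~\ref{lem:I-sheaf}, giving a family of generalized marked pairs in the sense of Subsection~\ref{ssec:ger-mar}; for $n=1,2$ one additionally records the orientation of $\cD$ coming from the global theta functions.

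I expect the main obstacle to be the well-definedness and global consistency of the marking $\mu$: Construction~\ref{def:singular-2-cycles} produces the exceptional curves only on an analytic open subset $W$, with the exceptional tropical cycles $\beta_{\mathrm{trop},ij}$ depending on the chosen one-parameter family $T_a$, and one must argue that the resulting homology classes --- and hence the induced lattice maps --- patch together over all of $T_Y$ into an honest morphism of sheaves. The resolution is that the lattice $\Pic(Y)$ is discrete, so a marking, once defined on a connected analytic open set and shown to be locally constant (which follows since the $E_{ij,s}$ vary in flat families of $(-1)$-curves), extends uniquely; combined with the $T^D$-equivariance from Lemma~\ref{lem:full-dim} this covers $T_Y$. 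A secondary point requiring care is checking that $\mu|_s$ respects the nef cone condition implicit in Definition~\ref{smooth marking} --- but this will follow from the explicit description of the boundary self-intersections in Lemma~\ref{boundaryselfint} together with the toric-model combinatorics, so it is bookkeeping rather than a genuine difficulty. Once $\mu$ is in hand, the proposition follows by combining (i), (ii), and (iii); the identification of this marked family with the \emph{universal} family of generalized marked pairs (the stronger Theorem~\ref{main theorem}) will then require the period computations of Propositions~\ref{prop:boundaryperiod} and the exceptional-curve period computations, which is the business of the remainder of this section.
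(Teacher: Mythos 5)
Your overall route is the same as the paper's: assertion (i) is Proposition~\ref{can-comp-fiber}, assertion (ii) is Lemma~\ref{lem:def-type}, the boundary marking is Construction~\ref{cons:marking}, and the Picard marking is first defined over the analytic open set $U$ of Lemma~\ref{lem:def-type} by sending $\cO_{\bar{\cX}_s}(E_{ij,s})\mapsto\cO_Y(E_{ij})$ and $\cO_{\bar{\cX}_s}(\cD_{i,s})\mapsto\cO_Y(D_i)$. (The paper also begins by reducing to the case where $(Y,D)$ has a toric model via Lemma~\ref{lem:compa-para-tor-bl}, since Section~\ref{sec:per-int} assumes one exists; you use this reduction only implicitly.)

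The one step where your argument is too loose is the extension of $\mu$ from $U$ to all of $T_Y$. ``The lattice is discrete and the classes are locally constant, hence the marking extends uniquely'' is not sufficient: the classes $[E_{ij,s}]$ are flat sections of a local system only on the complement of the discriminant locus, that complement is not simply connected, and the $(-1)$-curves themselves need not form flat families across singular fibers. Extending by parallel transport therefore produces a genuine monodromy ambiguity, and your appeal to Lemma~\ref{lem:full-dim} only yields a full-dimensional subset of $T_Y$, not all of it. The paper's resolution is that this ambiguity is generated by reflections in vanishing cycles, which are internal $(-2)$-curves on the minimal resolution, i.e.\ classes in $R(\bar{\cX}_s)$; since $r_\alpha$ fixes $\alpha^\perp$ pointwise, such reflections act trivially on $I(\bar{\cX}_s)=R(\bar{\cX}_s)^{\perp}$, so the induced embedding $I(\bar{\cX}_s)\hookrightarrow\Pic(Y)$ is monodromy-invariant and well defined over every fiber. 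This is precisely why Definition~\ref{def:gen-mar} asks only for a marking of $I$ rather than of the full Picard lattice of the minimal resolution --- a point you set up correctly when defining $\mu|_s$ but do not invoke at the step where it is actually needed.
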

\begin{proof}
It suffices to consider the case where $(Y,D)$ has a toric model. Indeed, by Lemma \ref{lem:compa-para-tor-bl}, if $(Y',D') \rightarrow (Y,D)$ is a toric blowup, a marking of the Picard group for $(\bar{\cX'}\mid_{T_{Y'}}, \cD' \mid_{T_{Y'}})$ will induce a marking of the Picard group for $(\bar{\cX}\mid_{T_Y}, \cD\mid_{T_Y})$.

The marking of the boundary $\mathcal{D}\mid_{T_Y}$ is already done in Section \ref{sec:marking-boundary} and the deformation type of the compactified mirror family restricted to $T_Y$ is established in Lemma \ref{lem:def-type}. It remains to show that we get a marking of the Picard group by $\mathrm{Pic}(Y)$. Let $U \subset T_{Y}$ be the analytic open neighborhood as in Lemma \ref{lem:def-type}. For any $s\in U$, let $E_{ij,s}$ be the exceptional curve on $\bar{\cX}_s$ corresponding to the tropical 1-cycle $\beta_{\mathrm{trop},ij}$. By letting $\cO_{\bar{\cX}_s}(E_{ij,s})$ go to $\cO_{Y}(E_{ij})$ and $\cO_{\bar{\cX}_s}(\cD_{i,s})$ go to $\cO_Y(D_i)$, we get a marking of the Picard group of $\bar{\cX}_s$ by $\mathrm{Pic}(Y)$. 

Note that the marking of the compactified mirror family over the analytic neighborhood $U$ can be extended to the entire base $T_Y$. Indeed,  we can  use parallel transport on the base to obtain a configuration of $(-1)$-curves of the combinatorial type of the toric model $p:(Y,D)\rightarrow (\bar{Y},\bar{D})$ on the minimal resolution of $\bar{\mathcal{X}}_s$ over any point $s\in T_Y$. If $\bar{\mathcal{X}}_s$ is singular, there is an ambiguity, coming from  the existence of vanishing cycles, on the choice of the marking of the Picard group of the minimal resolution of $\bar{\mathcal{X}}_s$. However, since these vanishing cycles are internal (-2)-curves on the minimal resolution, we still obtain a marking $\mu$ of the Picard group, in the sense of Definition \ref{def:gen-mar}, for each fiber in the family $\bar{\mathcal{X}} \rightarrow \Spec(\bC[\NE(Y)])$.

\end{proof}

\subsection{Period Integrals}
Let $S^\circ \subset S=\Spec(\bC[P])$ be the locus where $\bar{\mathcal{X}}_s$ is smooth. By Proposition 2.31 of \cite{GHK}, the dualizing sheaf $\omega_{\mathcal{X}/S}$ is trivial. Moreover, the fiberwise compactification $(\bar{\mathcal{X}},\mathcal{D})$  is such that $\omega_{\bar{\mathcal{X}}/S} (\mathcal{D})$ is trivial by Proposition \ref{compa-fiber}. Let $\Omega$ be the global section of $\omega_{\bar{\mathcal{X}}/S} (\mathcal{D})$ normalized so that for each $s\in S^\circ$, $\int_{\Gamma_s} \Omega = (2\pi i)^2$, where $\Gamma_s$ is the class of a real 2-torus that generates the kernel of the exact sequence 
\begin{equation}
    0 \rightarrow \mathbb{Z} \rightarrow H_{2}(\mathcal{X}_s, \mathbb{Z}) \rightarrow Q_s \rightarrow 0,
\end{equation}\label{eq:exa}
where $Q_s = \left\langle D_1, D_2 \cdots, D_n \right\rangle^{\perp}  \subset H_{2}(\bar{\mathcal{X}}_s,\mathbb{Z})$.

Let $W \subset S^{GS}$ be the analytic open neighborhood as in Construction \ref{def:singular-2-cycles}. Then, as shown in Proposition \ref{prop:mar-family}, for each $s \in W \cap T_Y$, $(\bar{\mathcal{X}}_s, \cD_s)$ is a pair deformation equivalent to $(Y,D)$ together with a marking $\mu: \mathrm{Pic}(\bar{\mathcal{X}}_s) \rightarrow \mathrm{Pic(Y)}$. In Subsection \ref{ssec:per-excep}, we apply the formulas for period integrals in \cite{RS19} and prove the following proposition:

\begin{prop} \label{prop:mar-per-id} 
Consider the marked family as in Proposition \ref{prop:mar-family}. Then, given $s\in W\cap T_Y$,
the marked period of the element in $\Pic(\bar{\cX}_s)$ marked by $\cO_Y(E_{ij})$ is given by $z^{E_{ij}}(s)$. 
\end{prop}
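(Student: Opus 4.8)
The strategy is to reduce the statement, via the equivariant $T^D$-action and the identification $T_Y \simeq T_Y^*$ of Remark \ref{rm:id-base}, to an explicit period computation over the Gross--Siebert locus using the machinery of \cite{RS19}. First I would unwind what the marked period of $\cO_Y(E_{ij})$ means. By Definition \ref{defn:markedperiod}, once we have the marking $\mu:\Pic(\bar{\cX}_s)\to\Pic(Y)$ of Proposition \ref{prop:mar-family}, the marked period at $s$ applied to $\cO_Y(E_{ij})$ is
\[
\bigl(\mu^{-1}(\cO_Y(E_{ij}))\big|_{\cD_s}\bigr)^{-1}\otimes \cO_{\cD_s}\Bigl(\sum_k (E_{ij}\cdot D_k)\,p_k(s)\Bigr),
\]
and since $E_{ij}\cdot D_k = \delta_{ik}$ for the toric model $p:(Y,D)\to(\bar Y,\bar D)$, the second factor is just $\cO_{\cD_s}(p_i(s))$. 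Here $\mu^{-1}(\cO_Y(E_{ij})) = \cO_{\bar\cX_s}(E_{ij,s})$, where $E_{ij,s}$ is the $(-1)$-curve produced from the exceptional tropical $1$-cycle $\beta_{\mathrm{trop},ij}$ in Construction \ref{def:singular-2-cycles}. So the quantity to compute is $\bigl(\cO_{\bar\cX_s}(E_{ij,s})|_{\cD_s}\bigr)^{-1}\otimes\cO_{\cD_s}(p_i(s))$, interpreted as an element of $\Pic^0(\cD_s)\simeq\mathbb{G}_m$ via the orientation fixed by the canonical marking.

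Next I would realize this as a period integral. The restriction $\cO_{\bar\cX_s}(E_{ij,s})|_{\cD_s}$ is $\cO_{\cD_s}(E_{ij,s}\cap \cD_s) = \cO_{\cD_s}(v_{ij,s})$ where $v_{ij,s}$ is the single intersection point of $E_{ij,s}$ with the boundary component $\cD_{i,s}$ (this uses that the configuration of $(-1)$-curves has the combinatorics of the toric model, so $E_{ij,s}$ meets $\cD$ transversally at one smooth point of $\cD_{i,s}$). Thus the marked period equals $\cO_{\cD_s}(p_i(s) - v_{ij,s})$, which by Lemma \ref{lem:pic-mul} is $z(v_{ij,s})/z(p_i(s))$ in the coordinate $z$ on $\cD_{i,s}^\circ$. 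Following the computation in Lemma \ref{lem:per-excep} (equations \eqref{eq:A} and \eqref{eq:B}), this quantity is $\exp\bigl(\tfrac{1}{2\pi i}\int_{\tilde\beta_{ij,s}}\Omega\bigr)$ where $\tilde\beta_{ij,s}$ is the cycle-with-boundary glued from (the complement of a disc in) $E_{ij,s}$ and a tube over a path in $\cD_{i,s}^\circ$ from $p_i(s)$ to $v_{ij,s}$. The point of working over $S^{GS}$ is that $\tilde\beta_{ij,s}$ is, up to the fiber class of the degenerate moment map (which contributes nothing since $\Omega$ is a relative $2$-form with log poles), homologous to the singular $2$-cycle $\beta_{ij,s}$ attached to the exceptional tropical $1$-cycle $\beta_{\mathrm{trop},ij}$; one checks the boundary data match because of how the sections $S_e$ in Subsection \ref{ssec:sing-trop} are chosen to be constant with respect to the boundary coordinates, precisely aligning $\partial\tilde\beta_{ij,s}$ with the $S(e)$-sections of $\beta_{ij,s}$ lying over the edge $e_0$ that emanates from $F_i$.

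Then the main computation: apply the period formula of \cite{RS19} (this is what Subsection \ref{ssec:per-excep} is devoted to) to evaluate $\exp\bigl(\tfrac{1}{2\pi i}\int_{\beta_{ij,s}}\Omega\bigr)$. The tropical $1$-cycle $\beta_{\mathrm{trop},ij}$ wraps once around the focus-focus singularity $o_{ij}$, which is the locus cut out by $1 + z^{[E_{ij}]}X_i^{-1}=0$; the \cite{RS19} formula expresses the period as a sum over the edges and wall-crossings of $\beta_{\mathrm{trop},ij}$, and by the explicit shape of the wall functions $f_{\rho_{il_i}}$ (Lemma 3.15 of \cite{GHK} and equation for $f_{\rho_{ij}}$ in the excerpt) the dominant contribution, coming from surrounding $o_{ij}$, is exactly $z^{[E_{ij}]}(s)$; the contributions of the edges $e_1,e_2,e_2',e_3$ and the vertex chains that make up the rest of the cycle, together with the kink terms picked up as $e_0$ crosses intermediate rays, are engineered to cancel (this cancellation is why the labeling in Definition \ref{def:excep-1-cycle} is chosen as it is, and why we require $e_0$ to cross each ray at most once through the outermost slab $\rho_{il_i}$). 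After pinning down signs using the normalization $\int_{\Gamma_s}\Omega=(2\pi i)^2$ and the $-1$ convention in Construction \ref{cons:marking} for the marked points $p_i$, one obtains that the marked period of $\cO_Y(E_{ij})$ is $z^{E_{ij}}(s)$ for $s\in W\cap T_Y$.

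\textbf{Main obstacle.} The hard part is the precise bookkeeping in the \cite{RS19} period formula: matching the analytic $2$-cycle $\tilde\beta_{ij,s}$ of Construction \ref{def:cons-excep} with the tropically-constructed cycle $\beta_{ij,s}$ including all boundary and vertex-chain contributions, and verifying that every term except the one wrapping $o_{ij}$ cancels — in particular controlling the kink contributions from the rays crossed by $e_0$ when $F$ is not a parallel configuration, and getting the sign conventions (orientation of $\cD$, the $-1$ basepoint, the primitive-generator choices $d_{\rho_i},\nu_i$) consistent throughout. Everything else — the identification of the marked period with a period integral, the reduction over $S^{GS}$, and the extension off $W\cap T_Y$ — is comparatively routine given the results already established.
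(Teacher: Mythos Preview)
Your overall strategy matches the paper's: reduce the marked period to a period integral via Lemma \ref{lem:per-excep}, compute the period of the tropically-constructed cycle over $S^{GS}$ using \cite{RS19}, and then compare. The detailed \cite{RS19} computation you flag as the main obstacle is exactly what Subsection \ref{ssec:per-excep} carries out, and the cancellation of kink terms along $e_0$ against the difference between the markings $p_i$ and $p'_i$ is precisely the content of Lemma \ref{lem:experiod}.

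There is one genuine imprecision in your comparison step. You write that the chain $\tilde\beta_{ij,s}$ built from the algebraic curve $E_{ij,s}$ is ``homologous to the singular $2$-cycle $\beta_{ij,s}$'' up to the fiber class. But $\tilde\beta_{ij,s}$ has nonempty boundary (a circle over $p_i(s)$) while $\beta_{ij,s}$ is closed, so this comparison is ill-posed as stated. The paper resolves this by an intermediate construction (Construction \ref{def:bou-chain}): one also modifies the tropical $2$-cycle near the boundary, replacing $\beta_{e_0}$ by a chain $\tilde\beta_{e_0}$ ending on a tube over the marked point, to obtain a second chain with the \emph{same} boundary as the algebraic one. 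Only then does the difference of the two chains define a class in $H_2(\cX_s,\Z)$, and one argues it lies in $\ker\bigl(H_2(\cX_s,\Z)\to H_2(\bar\cX_s,\Z)\bigr)=\Z\cdot\Gamma_s$ because both chains represent $[E_{ij,s}]$ in $H_2(\bar\cX_s,\Z)$.

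Relatedly, your parenthetical that the fiber class ``contributes nothing since $\Omega$ is a relative $2$-form with log poles'' is not correct: by the normalization $\int_{\Gamma_s}\Omega=(2\pi i)^2$, the fiber class contributes $2\pi i$ to $\tfrac{1}{2\pi i}\int\Omega$, which is nonzero. The point is that this contribution vanishes only \emph{after exponentiation}, which is why the paper works with $\exp\bigl(\tfrac{1}{2\pi i}\int\Omega\bigr)$ throughout and deduces equation \eqref{eq:eq-chain} from $\tfrac{1}{2\pi i}\int_{\Gamma_s}\Omega\in 2\pi i\,\Z$ rather than from any vanishing.
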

As we mentioned in Remark \ref{rm:id-base}, we identify $T_Y$ with $T_{Y}^{*}$. 

Recall that we choose $\sqrt{I}=G$. Let $S_{I}^{GS}=S^{GS}\times_{S}S_{I}$ where $S_I = \mathrm{Spec}(\mathbb{C}[P]/I)$. Recall, we have 
\begin{equation}
     f_{\rho_i} = f_{\rho_{il_{i}}}=  \prod_{s=1}^{l_i} (1+z^{\left[E_{is}\right]}X_{i}^{-1}) \mod G.    
\end{equation}
When we restrict $R_{I,\rho_{il_i}}$ to $S_{I}^{GS}$, it is the quotient by the ideal generated by 
\[
Z_{\rho_{il_i}}^{+}Z_{\rho_{il_i}}^{-}-t^{\kappa_{\rho_i}}\left(z^{-\sum_{k=1}^{l_i}[E_{ik}]}\cdot f_{\rho_{i}}\right).
\]
Here $\kappa_{\rho_i} = [\bar{D}_{i}]\cdot H\in\mathbb{N}$. 
Let $f_{\rho_{i}}^{'}=z^{-\sum_{k=1}^{l_i}[E_{ik}]}\cdot f_{\rho_{i}}$.
Notice that restricting to $S^{GS}$, we have 
\[
 \kappa_{\rho_{ij}} = \kappa_{\rho_i} = [\bar{D}_i]\cdot H 
\]
for any $j$ with $0 \leq j \leq l_i$.

Again, it suffices to consider the cases where the $o_{ij}$ are positioned as in Figure \ref{fig:focus} since computations for all other cases are exactly the same.  Recall that in Section 2 of \cite{RS19}, the period integral involves the \emph{complex Ronkin function} associated with the function $f'_{\rho_i}$  for a ray $\rho_i$ in $\Sigma_{(Y,D)}$. Let $x \in \rho_i \setminus {\mathcal{A}}$. Then, the complex Ronkin function at $x$ is defined by:
\[ \mathcal{R}(z^{-m_x}f'_{\rho_i},x):=\frac{1}{2\pi i}\int_{\mu^{-1}(x)}\frac{\log (z^{-m_{x}}f'_{\rho_i}(X_i))}{X_i}dX_{i}.\]
Here, $m_{x} \in \Lambda_{\rho_i}$ is the unique element such that the restriction $z^{-m_{x}}f'_{\rho_i}: \mu^{-1}(x) \rightarrow \mathbb{C}^*$ is contractible. More concretely, given a point $x_j \in \mathrm{Int}\rho_{ij}$, we have $-m_{x_j} = (l_i - j)\nu_i$ and 
\begin{equation}
     z^{-m_{x_j}}f'_{\rho_i} = z^{-\sum_{k=1}^{l_i}[E_{ik}]}\prod_{s=j+1}^{l_i}(X_s + z^{\left[E_{is}\right]}) \prod_{s'=1}^{j} (1+z^{\left[E_{is'}\right]}X_{i}^{-1}) \mod G.
\end{equation}


We compute the Ronkin function
$\mathcal{R}(z^{-m_{x_j}}f'_{\rho_{i}},x_j)$. Notice that $z^{-m_{x_j}}f'_{\rho_i} = z^{-\sum_{k=1}^{j}[E_{ik}]}f_{\rho_{ij}}$. Since
$f_{\rho_{ij}}$
already satisfies the normalized condition as defined in Proposition
1.6 of \cite{RS19}, we have
\begin{equation}\label{eq:slab-fcn}
\mathcal{R}(z^{-m_{x_j}}f_{\rho_{i}}^{'},x_j)=\log(z^{-\sum_{k=1}^{j}[E_{ik}]}).
\end{equation}

From now on, we make the assumption that all intersection points of a tropical 1-cycle with slabs are vertices. Suppose we have a vertex $v$ of $\beta_{\trop}$ such that it lies on a slab contained in the ray $\rho_i$. Following the orientation of the $\beta_{\trop}$, let $e$ and $e'$ be the adjacent edges to $v$ with $\beta_{\rm{trop}}$ oriented from $e$ to $e'$. Now let $\sigma$ be the maximal cell such that $e\in \sigma$ and let $\check{d}_e$ be the primitive generator of $\Lambda_{\rho}^{\perp}$ that evaluates positively on tangent vectors pointing form $\rho$ into $\sigma$. We weight the complex Ronkin function at $v$ by $\langle \check{d}_e,\xi_e\rangle \mathcal{R}(z^{-m_{v}}f_{\rho_{i}}^{'},v)$. The sum of these contributions from Ronkin functions is denoted by $\mathcal{R}(\beta_{\mathrm{trop}})$.

According to \cite{RS19}, the Picard-Lefschetz transformation also contributes to the period integrals. The piecewise linear function $\varphi$ gives rise to a cohomology class, $c_1(\varphi)\in H^1(B'\setminus{\Delta},\check{\Lambda})$. Thus, we can compute $\langle c_1(\varphi),\beta_{\trop}\rangle$. Given a vertex $v$ of $\beta_{\mathrm{trop}}$ lying a slab contained in the ray $\rho_i$, $v$ contributes $\langle \check{d}_e,\xi_e\rangle \cdot \kappa_{\rho_i}$ to $\langle c_1(\varphi),\beta_{\trop}\rangle$, where the notation is as above. The total pairing is obtained by summing over all such contributions from vertices of $\beta_{\trop}$ that lie on slabs. 

The results in \cite{RS19} involve extra data referred to as gluing data. We are in the case in which we work with trivial gluing data so this part of the formula does not affect us. 

Fix $a\in U_{sim}$. Let by $T_{a,I} = T_a \times_{S} S_I$.  Then, $T_{a,I}$ is isomorphic to $\mathrm{Spec}(\mathbb{C}[t]/(t^{k+1}))$ for some $k$. Let $\bar{\mathcal{X}_I}$ be the base change of $\bar{\mathcal{X}}$ to $S_I$. Then, $ \bar{\mathcal{X}}_I \mid_{T_{a,I}} /T_{a,I}$ is a flat 1-parameter family deformation of $X_{0,a}$. View $\Omega$ as a relative holomorphic volume form on $\mathcal{X}_I \mid_{T_{a,I}}$. Then, the following theorem follows from the main theorem of \cite{RS19}:
\begin{thm}[Theorem $1.7$, \cite{RS19}] \label{thm:per-int}
Let $\beta_{\trop}$ be a tropical 1-cycle on $B'$ not touching the boundary and
$\beta$ an associated singular 2-cycle on $X_{0,a}$. Then, 
\[ \exp\left(\frac{1}{2\pi i}\int_{\beta} \Omega_{} \right)=\exp(\mathcal{R}(\beta_{\trop}))\cdot t^{\langle c_1(\varphi),\beta_{\trop}\rangle}.\]
\end{thm}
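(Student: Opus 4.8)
The plan is to follow the strategy of Ruddat--Siebert, working order by order in the deformation parameter $t$ over $S_{I}$ and exploiting the decomposition $\beta=\sum_{e}\beta_{e}+\sum_{v}\Gamma_{v}$ of the singular $2$-cycle into edge tubes $\beta_{e}$ and vertex caps $\Gamma_{v}$, together with additivity of $\int_{(-)}\Omega$ along this decomposition. Since $\Omega$ is a closed relative $2$-form, it suffices to compute each local contribution and check that the pieces assemble into the asserted product. First I would fix the analytic open set $W$ of Construction \ref{def:singular-2-cycles} so that, over each $1$-parameter family $T_{a}$ with $a\in U_{\mathrm{sim}}$, the cycle $\beta$ on $X_{0,a}$ genuinely deforms to a cycle $\beta_{t}$ on the smooth fiber $\bar{\mathcal{X}}_{t}$ (the content of Section $2$ and Appendix A of \cite{RS19}), so that $\int_{\beta_{t}}\Omega$ is a well-defined function of $t$, analytic near $0$.

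\textbf{Edge tubes.} For an edge $e$ contained in the interior of a maximal cell $\sigma$ and meeting no slab, the chain $\beta_{e}$ is, by construction, the $U(1)$-orbit of the chosen section $S_{e}$ under the rank-one subtorus of $\operatorname{Hom}(\Lambda_{\sigma},U(1))$ killing $\xi_{e}$; thus $\beta_{e}\cong e\times S^{1}$. In logarithmic toric coordinates on $X_{\sigma}=(\mathbb{G}_{m})^{2}$ adapted to $\xi_{e}$, the form $\Omega$ is $d\log\wedge d\log$, and since $S_{e}$ is chosen so that the monomial in the $\xi_{e}$-direction has constant modulus, Fubini over the $S^{1}$-factor reduces $\frac{1}{2\pi i}\int_{\beta_{e}}\Omega$ to the difference $\Phi(x_{1})-\Phi(x_{0})$ of a locally defined potential $\Phi$ at the two endpoints of $e$. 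Summing over consecutive collinear edges, the contributions at interior vertices not lying on a slab telescope and cancel. When $e$ and $e'$ meet at a vertex $v$ lying on a slab $\rho_{ij}\subset\rho_{i}$, the monomial carried along $e$ must be re-expanded in the coordinates valid across $\rho_{i}$; the slab relation $Z^{+}_{\rho_{ij}}Z^{-}_{\rho_{ij}}=z^{\kappa_{\rho_{ij}}}f_{\rho_{ij}}$ forces the wall-crossing function $f_{\rho_{ij}}$ (equivalently $f'_{\rho_{i}}$ after the $S^{GS}$-normalization) into the potential, and the jump of $\Phi$ across $v$ equals the fiber integral $\langle\check{d}_{e},\xi_{e}\rangle\,\mathcal{R}(z^{-m_{v}}f'_{\rho_{i}},v)$, the weighted complex Ronkin value. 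Adding over all slab vertices of $\beta_{\trop}$ yields $\exp(\mathcal{R}(\beta_{\trop}))$.

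\textbf{Vertex caps and the kink factor.} The remaining caps $\Gamma_{v}$ lie, by the compatibility of the sections $S_{e}$ at $v$ and the balancing condition $\sum_{e\ni v}\varepsilon_{e,v}\xi_{e}=0$, inside the single fibre $\mu^{-1}(v)$; on $\mu^{-1}(v)\cong(U(1))^{2}$ one has $\Omega=d\log\wedge d\log$, and an explicit description of $\Gamma_{v}$ (a real torus, resp.\ a pair of pants at a trivalent vertex) shows each $\Gamma_{v}$ contributes only an integer multiple of the fibre class, hence nothing modulo the normalization $\int_{\Gamma_{s}}\Omega=(2\pi i)^{2}$ and the inherent ambiguity of $\beta$ by fibre classes. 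Finally, the kinks of the multivalued piecewise linear function $\varphi$ enter through the $\kappa_{\rho_{i}}$: in the thickened degeneration $\bar{\mathcal{X}}_{I}$, crossing the slab $\rho_{ij}$ multiplies the relevant monomial by $t^{\kappa_{\rho_{i}}}$, so the total power of $t$ accumulated along $\beta_{\trop}$ is the pairing of $c_{1}(\varphi)\in H^{1}(B'\setminus\Delta,\check{\Lambda})$ with the class of $\beta_{\trop}$, namely $\langle c_{1}(\varphi),\beta_{\trop}\rangle$. Collecting the edge, vertex, and kink contributions gives $\exp\!\big(\tfrac{1}{2\pi i}\int_{\beta}\Omega\big)=\exp(\mathcal{R}(\beta_{\trop}))\cdot t^{\langle c_{1}(\varphi),\beta_{\trop}\rangle}$.

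\textbf{The hard part.} The principal difficulty will not be the bookkeeping above but the analytic input: one must build the order-by-order family $\bar{\mathcal{X}}_{I}$ and the deformed cycle $\beta_{t}$ and prove that the formal solution converges to an honest analytic family near $X_{0,a}$ (the analyticity theorem of \cite{RS19}), so that $\int_{\beta_{t}}\Omega$ is a genuine period rather than a formal series. A secondary subtlety is tracking the coordinate changes across every slab with the correct primitive covectors $\check{d}_{e}$ and signs, so that the Ronkin terms and the kink exponents appear with exactly the normalization used here. In our situation the gluing data is trivial, which makes the vertex-compatibility of the sections $S_{e}$ immediate and is the one genuine simplification relative to the general case of \cite{RS19}.
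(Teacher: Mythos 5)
This statement is not proved in the paper at all: it is quoted verbatim as Theorem 1.7 of \cite{RS19}, and the text immediately preceding it says only that it ``follows from the main theorem of \cite{RS19}.'' So there is no in-paper argument to compare yours against; what you have written is a sketch of the Ruddat--Siebert proof itself, and as such it is a reasonable outline of their strategy (edge tubes reducing to potential differences, Ronkin jumps at slab crossings, kinks of $\varphi$ producing the power of $t$, and the analyticity theorem supplying the honest period rather than a formal series).

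Two caveats. First, your treatment of the vertex caps is too glib: you assert that each $\Gamma_{v}$ contributes only an integer multiple of the fibre class and hence nothing after exponentiation. That is false at trivalent vertices, where Lemma 3.3 of \cite{RS19} gives $\frac{1}{2\pi i}\int_{\Gamma_{v}}\Omega \equiv \pi i \bmod 2\pi i$, i.e.\ a sign $-1$; the present paper itself invokes exactly this in its computation for exceptional cycles in Section \ref{ssec:per-excep}. In the statement of Theorem \ref{thm:per-int} these half-periods are accounted for by the normalization convention on the slab functions entering $\mathcal{R}$, so you cannot simply drop the caps --- you must either track the $\pi i$'s or explain why the normalization of $f_{\rho_{ij}}$ absorbs them. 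Second, essentially all of the genuine mathematical content (the order-by-order construction of $\bar{\cX}_{I}$, the deformation of $\beta$ to $\beta_{t}$, and the convergence statement) is deferred to \cite{RS19} in your sketch just as in the paper, so your proposal does not constitute an independent proof; it is an annotated citation, which is in fact all the paper intends here.
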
 
The above theorem implies that computations done on the central fiber of the toric degeneration are sufficient to give formulas for period integrals for the entire family as formal functions on the base. Since our mirror family is algebraic, the period integrals we get by applying the formulas in \cite{RS19} will be analytic. 

Slightly different from \cite{RS19}, we consider tropical 1-cycles that touch the boundary of $B'$. For a singular 2-cycle on $X_{a,0}$ associated with an exceptional tropical 1-cycle, we need to modify it to a singular 2-chain with boundary to integrate it against $\Omega$. See the next subsection for details.

\subsection{Periods for Exceptional Cycles} \label{ssec:per-excep}
In this subsection, we will first prove Proposition \ref{prop:mar-per-id}.

Given $s \in W \cap T_Y$, consider the image of the degenerate moment map of the corresponding central fiber. Consider the exceptional tropical 1-cycle $\beta_{\mathrm{trop},ij}$ that goes around the focus-focus singularity $o_{ij}$ with the labeling as in Figure \ref{exceptionaltrop}.
Following the computations done in Subsection 3.4, 3.5 and 3.6 in \cite{RS19}, we have
\begin{align*}
\frac{1}{2\pi i}\left(\int_{\beta_{e_{1}}+\beta_{e_{2}}}\Omega+\int_{\Gamma_{e_{2}}}\Omega\right)= & \left\langle d_{e_{1}},\xi_{e_{1}}\right\rangle \left(\kappa_{\rho_i}\log t+\mathcal{R}\left(z^{-m_{{v_2}}}f_{\rho_{i}}^{'},v_{2}\right)\right)\\
 & +\log z^{\xi_{e_{2}}}\left(S\left(v_{3}\right)\right)-\log z^{\xi_{e_{1}}}\left(S\left(v_{1}\right)\right)
\end{align*}
\begin{align*}
\frac{1}{2\pi i}\left(\int_{\beta_{e_{2}^{'}}+\beta_{e_{3}}}\Omega+\int_{\Gamma_{e_{3}}}\Omega\right)= & \left\langle d_{e_{2}^{'}},\xi_{e_{2}^{'}}\right\rangle \left(\kappa_{\rho_i}\log t+\mathcal{R}\left(z^{-m_{{v_4}}}f_{\rho_{i}}^{'},v_{4}\right)\right)\\
 & +\log z^{\xi_{e_{3}}}\left(S\left(v_{1}\right)\right)-\log z^{\xi_{e_{2}^{'}}}\left(S\left(v_{3}\right)\right).
\end{align*}
Here, $\Gamma_{e_2}$ and $\Gamma_{e_3}$ are the slab add-ins as in subsection 3.4 of \cite{RS19}. Also, we have $\left\langle d_{e_{1}},\xi_{e_{1}}\right\rangle =1$ since $\xi_{e_1}$ is a primitive generator of $\Lambda/\Lambda_{\rho_i}$ and points into the maximal cell containing $e_1$. Similarly, we have  $\left\langle d_{e_{2}^{'}},\xi_{e_{2}^{'}}\right\rangle =-1$. 
Since $z^{\xi_{e_{2}}}=z^{\xi_{e_{2}^{'}}}$, we have 
\begin{align*}
 & \frac{1}{2\pi i}\left(\int_{\beta_{e_{1}}+\beta_{e_{2}}}\Omega+\int_{\Gamma_{e_{2}}}\Omega+\int_{\beta_{e_{2}^{'}}+\beta_{e_{3}}}\Omega+\int_{\Gamma_{e_{3}}}\Omega\right)\\
= & \mathcal{R}\left(z^{-m_{{v_2}}}f_{\rho_{i}}^{'},v_{2}\right)-\mathcal{R}\left(z^{-m_{{v_4}}}f_{\rho_{i}}^{'},v_{4}\right)  +\log z^{\xi_{e_{3}}}\left(S\left(v_{1}\right)\right)-\log z^{\xi_{e_1}}\left(S\left(v_{1}\right)\right).
\end{align*}
It only remains to specify the contribution of the Ronkin function. By Equation \ref{eq:slab-fcn}, we get that
\begin{equation}\label{eq:localfocuscomputation}
\mathcal{R}\left(z^{-m_{{v_2}}}f_{\rho_{i}}^{'},v_{2}\right)-\mathcal{R}\left(z^{-m_{{v_4}}}f_{\rho_{i}}^{'},v_{4}\right)  =z^{\left[E_{ij}\right]}.
\end{equation}
We emphasize that the above formula is true regardless of the configuration of positions of the focus-focus singularities on the ray $\rho_i$. At the trivalent vertex $v_1$, the chain $\Gamma_{v_1}$, as described in Subsection \ref{ssec:sing-trop}, is non-empty. By Lemma 3.3 of \cite{RS19}, we have
\begin{equation}
    \frac{1}{2 \pi i} \int_{\Gamma_{v_1}} \Omega = \pi i \mod 2 \pi i.
\end{equation}
Let  $\beta_1 = \beta_{e_1} + \beta_{e_2} + \Gamma_{e_2} + \beta_{e_{2}{'}} +\beta_{e_3} + \Gamma_{e_3} + \Gamma_{v_1}$. We conclude that 
\begin{align}\label{eq:q}
    \exp \left( \frac{1}{2\pi i}\int_{\beta_{1}}\Omega  \right)   = -z^{[E_{ij}]} \cdot \frac{z^{\xi_{e_{3}}}\left(S\left(v_{1}\right)\right)}{z^{\xi_{e_{1}}}\left(S\left(v_{1}\right)\right)}
 = -\frac{z^{[E_{ij}]}}{z^{\xi_{e_{0}}}\left(S\left(v_{1}\right)\right)}.
\end{align}
The second equality in the above follows from the balancing condition $\xi_{e_{3}}+\xi_{e_{0}}=\xi_{e_{1}}$ at $v_1$. Since $\xi_{e_{0}}$ is parallel to $\rho_{i}$ and $F_{i}$ is parallel
to $\rho_{i}$, $\beta_{e_{0}}$ is closed at $S(v_{0})$.

Let $\beta_{ij,s}$ be the deformation of the singular 2-cycle $\beta_{1}+ \beta_{e_0}$ on $\bar{\mathcal{X}}_s$ as in Construction \ref{def:singular-2-cycles}. The holomorphic form $\Omega$ has poles along $\mathcal{D}$. In the following construction, after fixing a marking of $\mathcal{D}$, we will replace $\beta_{ij,s}$ with a singular 2-chain with boundary in $\bar{\mathcal{X}}_s$ whose period will give the marked period of $E_{ij,s}$.

\begin{cons} \label{def:bou-chain} We follow the notation in Definition \ref{def:excep-1-cycle}.  For convenience, we can assume that for a fixed $i$, all the $\beta_{\mathrm{trop}, ij}$ start from the same 1-cell $\varrho \subset F_i$. In particular, if $F$ is a parallel configuration, then $\varrho= R_i$. Let $q \succ q'$ be two adjacent lattice points on $\varrho$. Let $D_{\varrho}$ be the boundary component corresponding to $\varrho$ on the central fiber. Notice that on $D_{\varrho}$, the boundary component corresponding to $\varrho$ on the central fiber, $\frac{\theta_q}{\theta_{q'}}$ restricts to the toric monomial $z^{\xi_{e_0}}$. 

By Lemma \ref{extendsection}, $\frac{\theta_q}{\theta_{q'}} =-1$ gives a section $p'_i: \cD_i \rightarrow \Spec(\mathbb{C}[\NE(Y)])$. If $F$ is a parallel configuration, then $p'_i$ agrees with the canonical marking $p_i$ in Construction \ref{cons:marking}. If $F$ is not a parallel configuration, then $p'_i$ and $p_i$ might differ. The difference is given by the contribution from bending parameters of rays in $\Sigma$ that will intersect with the forward ray of $\varrho$, as in Construction \ref{cons:marking}.

Let $p_0$ be the point on $D_{\varrho}$ such that $\frac{\theta_q}{\theta_{q'}}(p_0) = z^{\xi_{e_0}}(p_0)=-1$. 
Let $\gamma$ be an arc in $D_{\varrho}$  connecting $p_0$ and $S(v_0)$. We orient $\gamma$ from $p_0$ to $S(v_0)$.
Let $N_{\gamma}$ be the tube over $\gamma$ in a small tubular neighborhood of $D_{\varrho}$. We delete a small disc in  $\beta_{e_{0}}$ around $S(v_0)$ and glue it to $N_{\gamma}$ to get a new 
chain $\tilde{\beta}_{e_{0}}$. Thus, on the central fiber, we modify the singular 2-cycle $\beta_1 + \beta_{e_0}$ to $\beta_1 + \tilde{\beta}_{e_0}$, a singular 2-chain with boundary. 

On $\bar{\mathcal{X}}_s$, by deforming $\tilde{\beta}_{e_0}$,
we modify $\beta_{ij,s}$ to a singular 2-chain $\beta'_{ij,s}$ whose boundary is a circle over $p'_i(s)$ contained in a small tubular neighborhood of $\cD_{i,s}$. Let $\gamma_{s}$ be an arc in $\cD_{i,s}$ connecting $p_i(s)$ and $p'_i(s)$. Let $N_{\gamma_{s}}$ be a tubular neighborhood of $\gamma_{s}$ in $\cD_{i,s}$. Gluing $N_{\gamma_{s}}$ to $\beta'_{ij,s}$, we get a singular 2-chain $\tilde{\beta}_{ij,s}$ whose boundary is a circle over $p_i(s)$ contained in a small tubular neighborhood of $\cD_{i,s}$.
\end{cons}

\begin{remark}
$\tilde{\beta}_{e_0}$ and $\tilde{\beta}_{ij,s}$ are only well-defined up to multiples of classes of real 2-tori whose integrals against $\Omega$ are integral multiples of $(2\pi i)^2$. After we take the exponential map, this ambiguity disappeared. 
\end{remark}

We prove the following lemma:
\begin{lem}\label{lem:experiod} 
Using the canonical marking of the boundary defined in Construction \ref{cons:marking}, we have
\begin{equation} \label{eq:r}
    \exp{\left( \frac{1}{2\pi i}\int_{\tilde{\beta}_{ij,s}}\Omega \right)}=   z^{[E_{ij}]}(s).
\end{equation} \label{eq:a}
\end{lem}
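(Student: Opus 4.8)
The plan is to assemble Lemma \ref{lem:experiod} from the local period computation around a single focus-focus singularity that was carried out just above, together with the modification of chains from Construction \ref{def:bou-chain} and the normalization of the boundary marking from Construction \ref{cons:marking}. The starting point is Equation \ref{eq:q}, which gives
\[
\exp\left(\frac{1}{2\pi i}\int_{\beta_1}\Omega\right) = -\frac{z^{[E_{ij}]}}{z^{\xi_{e_0}}(S(v_1))},
\]
and the observation that $\beta_{e_0}$ is closed at $S(v_0)$ because $\xi_{e_0}$ is parallel to $\rho_i$ (hence parallel to $F_i$). First I would compute the contribution of $\beta_{e_0}$ itself: since $\beta_{e_0}$ is the orbit of a section $S(e_0)$ over the edge $e_0$ under the subgroup of $\Hom(\Lambda_\sigma,U(1))$ killing $\xi_{e_0}$, and $\Omega$ restricts to $0$ on the toric strata away from $\mathcal D$ (it is holomorphic on $\cX$), a Stokes-type argument exactly as in Equation \ref{eq:B} shows that $\int_{\tilde\beta_{e_0}}\Omega$ only picks up the boundary contribution along the arc $\gamma$ in $D_\varrho$ from $p_0$ to $S(v_0)$, giving a factor $z^{\xi_{e_0}}(S(v_0))/z^{\xi_{e_0}}(p_0)$. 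Since $\beta_{e_0}$ goes from $v_0$ to $v_1$ and carries $\xi_{e_0}$, parallel transport along $F_i$ identifies $z^{\xi_{e_0}}(S(v_0))$ with $z^{\xi_{e_0}}(S(v_1))$ up to the relevant bending factors, and by construction $z^{\xi_{e_0}}(p_0) = -1$. Multiplying the contribution of $\beta_1$ by that of $\tilde\beta_{e_0}$ then cancels the denominator $z^{\xi_{e_0}}(S(v_1))$ and the two signs, leaving $z^{[E_{ij}]}(s)$ on the central fiber, and by Theorem \ref{thm:per-int} this formal identity is the actual analytic period on $\bar{\cX}_s$ for $s\in W\cap T_Y$.

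Next I would account for the passage from $\beta'_{ij,s}$ (boundary a circle over $p'_i(s)$) to $\tilde\beta_{ij,s}$ (boundary a circle over $p_i(s)$) via the tube $N_{\gamma_s}$ over the arc $\gamma_s\subset\cD_{i,s}$ connecting $p_i(s)$ and $p'_i(s)$. By the same Stokes argument as in Equation \ref{eq:B}, gluing in $N_{\gamma_s}$ multiplies the exponentiated period by $f_{\cD_i}(p'_i(s))/f_{\cD_i}(p_i(s))$ where $f_{\cD_i}$ is the canonical coordinate; but the whole point of Construction \ref{cons:marking} is that $p'_i$ (defined via $\theta_q/\theta_{q'}=-1$) and the canonical marking $p_i$ (defined via $f_{\cD_i}=-1$) differ precisely by the bending-parameter factor $z^{\sum_{\rho_j}\delta_j D_j}$ that appears in Equation \ref{eq:diff-mar}. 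So the correction from switching $p'_i$ to $p_i$ exactly matches — and is canceled by — the bending contributions that were implicitly present when $e_0$ (and the whole cycle) crosses the rays $\rho_j$ between $\varrho$ and its forward ray in the non-parallel-configuration case. When $F$ is a parallel configuration these bending terms are all absent and $p'_i=p_i$ already, so nothing to reconcile. I would spell this bookkeeping out carefully, organizing the rays crossed by $e_0$ and by the arc $R$ in Construction \ref{cons:marking} into a single sum over $\rho_j$, and invoking $c_1(\varphi)$ and the kink $\kappa_{\rho_j}=[D_j]$ to identify the $t$-powers with the $z^{[D_j]}$ factors.

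The main obstacle, I expect, is precisely this last reconciliation: tracking the interplay between (i) the bending/kink contributions $t^{\langle c_1(\varphi),\beta_{\trop}\rangle}$ that Theorem \ref{thm:per-int} attaches at every vertex of the exceptional tropical cycle lying on a slab, (ii) the difference between the ``naive'' section $p'_i$ and the canonical section $p_i$, and (iii) the matching of the two sums over rays $\rho_j$ — the rays crossed by the tail edge $e_0$ of $\beta_{\mathrm{trop},ij}$ versus the rays crossed by the forward ray $R$ of $\varrho$ in Construction \ref{cons:marking}. One must check these are the same set of rays with the same coefficients $\delta_j=\langle n_{\rho_j},\overrightarrow{b_ia_i}\rangle$, using that $\xi_{e_0}$ is parallel to $\rho_i$ and the assumption (built into Definition \ref{def:excep-1-cycle}) that the cycle crosses each ray at most once, through the slab $\rho_{il_i}$. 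Once this is done, the sign $-1$ from $\Gamma_{v_1}$ (Equation from Lemma 3.3 of \cite{RS19}) and the sign from $z^{\xi_{e_0}}(p_0)=-1$ cancel, all $z^{[D_j]}$ and $t$-powers cancel against the marking correction, and only $z^{[E_{ij}]}(s)$ survives, which is exactly \ref{eq:r}. Finally, Proposition \ref{prop:mar-per-id} follows immediately: $\tilde\beta_{ij,s}$ is a 2-chain whose boundary is a loop around $\cD_{i,s}$ over the marked point $p_i(s)$, so by the description of the marked period in Definition \ref{defn:markedperiod} (cf. Lemma \ref{lem:per-excep}, where the analogous computation identifies $\exp(\frac{1}{2\pi i}\int\omega)$ with the marked period of $\cO_Y(E_{ij})$), the left side of \ref{eq:r} computes the marked period of the class marked by $\cO_Y(E_{ij})$, giving $z^{E_{ij}}(s)$ as claimed.
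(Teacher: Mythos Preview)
Your approach is essentially the paper's: start from Equation \ref{eq:q}, compute the contribution of $\tilde\beta_{e_0}$ via Stokes to cancel the $z^{\xi_{e_0}}(S(v_1))$ and the sign, then in the non-parallel-configuration case show that the slab contributions picked up when $e_0$ crosses a ray $\rho_{i_0}$ (namely $(t^{\kappa_{\rho_{i_0}}}\cdot z^{-\sum_k[E_{i_0k}]})^{-\delta_{i_0}}$, combining the $c_1(\varphi)$ kink and the Ronkin value from Equation \ref{eq:slab-fcn}) cancel against the tube $N_{\gamma_s}$ correcting $p'_i$ to $p_i$ via Equation \ref{eq:diff-mar}. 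The paper organizes this as two separate cases (parallel configuration, where $p_i=p'_i$ and $e_0$ stays in one cell, versus the general case treated one ray at a time), but the content is identical to what you outline; your identification of the rays crossed by $e_0$ with those crossed by the forward ray $R$ of $\varrho$, with matching coefficients $\delta_j$ since $\xi_{e_0}=\nu_i=\overrightarrow{b_ia_i}$, is exactly the bookkeeping the paper carries out.
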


\begin{proof} 
If $F$ is a parallel configuration, then $p_i(s)=p'_i(s)$. Moreover, since the tropical 1-cycle $\beta_{\mathrm{trop},ij}$ is contained in a single maximal cell  $F \cap \sigma_{i,i-1}$ and does not intersect with any ray in $\Sigma$, there is no contribution from $\langle c_1(\varphi),\,\beta_{\mathrm{trop},ij}\rangle$. By the standard Stokes' Theorem and Equation \ref{eq:q},
\begin{align*}
& \exp \left( \frac{1}{2\pi i}\int_{\beta_{1} + \tilde{\beta}_{e_{0}}}\Omega \right)
= - \frac{z^{[E_{ij}]} }{z^{\xi_{e_{0}}}\left(S\left(v_{1}\right)\right)} \cdot \frac{z^{\xi_{e_{0}}}\left(S\left(v_{1}\right)\right) }{z^{\xi_{e_{0}}}\left( p_0 \right)} 
=  - \frac{  z^{[E_{ij}]} }{z^{\xi_{e_{0}}}\left( p_0 \right)} 
=  z^{[E_{ij}]}.
\end{align*}
By Theorem 1.7 and Corollary 4.6 of \cite{RS19}, we obtain Equation \ref{eq:r}.

If $F$ is not a parallel configuration, then $p_i(s)$ and $p'_i(s)$ might differ.  We follow notation in Construction \ref{def:bou-chain}. Let $f'_{\cD_i} = \frac{\theta_q}{\theta_{q'}}$. It suffices to consider the case where $\beta_{\mathrm{trop},ij}$ crosses a single ray $\rho_{i_0}$. The general cases follow from exactly the same computation.  Let $\delta_{i_0} = 
\left\langle n_{\rho_{i_0}} \,,\,\overrightarrow{q' q} \right\rangle$ where $n_{\rho_{i_0}}\in \check{\Lambda}_{\rho_{i_0}}$ is the unique primitive element annihilating $\rho_{i_0}$ and is positive on the parallel transport of $\overrightarrow{q' q}$.
By our assumption that  $\beta_{\mathrm{trop},ij}$ will cross the slab $\rho_{i_0l_{i_0}}$, together with the period formula and Equation \ref{eq:slab-fcn}, the contribution to the period when  $\beta_{\mathrm{trop},ij}$ crosses $\rho_{i_0}$ is 
\[
\left(t^{\kappa_{\rho_{i_0}}} \cdot \mathcal{R}(z^{-m_{x}}f_{\rho_{i_0}}^{'},x) \right)^{-\delta_{i_0}} = \left(   t^{\kappa_{\rho_{i_0}}} \cdot  z^{-\sum_{k=1}^{l_{i_0}}[E_{i_0k}]} \right)^{-\delta_{i_0}}
\]
where $x$ is the crossing point of $\beta_{\mathrm{trop},ij}$ on $\rho_{i_0 l_{i_0}}$. Recall that $f'_{\cD_i}$ is the canonical coordinate on $\cD_{i}$ that was defined in Construction \ref{cons:marking}. By Equation \ref{eq:diff-mar}, restricted to $S^{GS}$, we have
\[
f_{\cD_i} = \left(  t^{\kappa_{\rho_{i_0}}} \cdot  z^{-\sum_{k=1}^{l_{i_0}}[E_{i_0k}]} \right)^{\delta_{i_0}} \cdot f'_{\cD_i}
\]
In particular, we have \[
f_{\cD_i}(p_i(s)) = -1 = \left(  t^{\kappa_{\rho_{i_0}}} \cdot  z^{-\sum_{k=1}^{l_{i_0}}[E_{i_0k}]}(s) \right)^{\delta_{i_0}} f'_{\cD_i}(p_i(s))
\]
It follows from Stokes' Theorem that 
\begin{equation*}
    \exp{\left( \frac{1}{2\pi i}\int_{N_{\gamma_s}} \Omega \right)} = \left( t^{\kappa_{\rho_{i_0}}} \cdot  z^{-\sum_{k=1}^{l_{i_0}}[E_{i_0k}]}(s) \right)^{\delta_{i_0}}
\end{equation*}
Hence, the contribution to the period when $\beta_{\mathrm{trop},ij}$ crosses a ray in $\Sigma$ 
will be canceled with the difference between the marking $p_i$ and $p'_i$. Therefore, we still obtain Equation \ref{eq:r}.
\end{proof}

Now, we give the proof of Proposition \ref{prop:mar-per-id}:
\begin{proof}[Proof of Proposition \ref{prop:mar-per-id}] 
Recall that for $s\in W\cap T_Y$, $E_{ij,s}$ is the exceptional curve on $\bar{\mathcal{X}}_s$ corresponding to the tropical 1-cycle $\beta_{\mathrm{trop},ij}$. Denote the point where the  curve $E_{ij,s}$ touches the boundary divisor $D_i$ by $q_{ij}(s) \in D_i$. We repeat what we have done in Construction \ref{def:cons-excep}. Let $\gamma^{'}$ be an arc in $D_i$ connecting $q_{ij}(s)$ and $p_{i}(s)$ oriented from $p_{i}(s)$ to $q_{ij}(s)$. Let $N_{\gamma^{'}}$ be a tubular neighborhood of $\gamma^{'}$ in $\mathcal{X}_s$ whose boundary circle over $p_i(s)$ is also the boundary of $\tilde{\beta}_{ij,s}$ . Then, by deleting a disc in $E_{ij,s}$ and glue it to $N_{\gamma^{'}}$, we get a new chain $\tilde{\beta}'_{ij,s}$ with the same boundary as $\tilde{\beta}_{ij,s}$. 
By Lemma \ref{lem:per-excep}, $\exp \left({\frac{1}{2\pi i}\int_{\tilde{\beta}'_{ij,s}}} \Omega \right)$ is equal to the marked period of $\cO_{\bar{\cX}_s}(E_{ij,s})$, which is marked by $\cO_{Y}(E_{ij})$. 
Now, we want to show that 
\begin{align}\label{eq:eq-chain}
    \exp \left({\frac{1}{2\pi i}\int_{\tilde{\beta}_{ij,s}'}} \Omega \right) = \exp \left({\frac{1}{2\pi i}\int_{\tilde{\beta}_{ij,s}}} \Omega \right). 
\end{align}
With the equality, it would then follow immediately from Lemma \ref{lem:experiod} that the marked period of $E_{ij,s}$ is equal to $z^{[E_{ij}]}(s)$ if we use the canonical marking given in Section \ref{sec:marking-boundary}. 
Observe that $[\tilde{\beta}_{ij,s}'] - [\tilde{\beta}_{ij,s}]$ is homologically trivial in $H_2\left( \bar{\mathcal{X}}_s, \mathbb{Z}\right)$. Thus, in $H_2\left(\mathcal{X}_s, \mathbb{Z}\right)$, 
\[
[\tilde{\beta}_{ij,s}'] - [\tilde{\beta}_{ij,s}] \in \mathrm{ker} \left(H_2\left(\mathcal{X}_s, \mathbb{Z}\right) \rightarrow H_2\left( \bar{\mathcal{X}}_s, \mathbb{Z}\right) \right).
\]
We know that $\mathrm{ker} \left(H_{2}\left(\mathcal{X}_s, \mathbb{Z}\right) \rightarrow H_{2}\left( \bar{\mathcal{X}}_s, \mathbb{Z}\right) \right)$ is generated by $\Gamma_s$, the generator of the kernel of the exact sequence \ref{eq:exa}. By our scaling of $\Omega$, we conclude that 
\[
\frac{1}{2\pi i} \left(\int_{\tilde{\beta}_{ij,s}'}\Omega - \int_{\tilde{\beta}_{ij,s}}\Omega \right)\in 2\pi i \mathbb{Z}
\]
After exponentiation, we obtain the desired equality \ref{eq:eq-chain}.
\end{proof}

The kernel of the weight map $w:\Pic(Y)^{*} \rightarrow \chi(T^{D})$ via the identification $\Pic(Y)^{*} \simeq \Pic(Y)$ is identified as $D^\perp$. Therefore, $T_{Y}/T^{D}$ is identified as $T_{(D^\perp)^{*}}$, the period domain in Definition \ref{defn:unmarkedperiod}.
\begin{prop}\label{identity} 
Let $((\bar{\cX}\mid_{T_Y},\cD\mid_{T_Y}),p_i,\mu)\rightarrow T_Y$ be the marked compactified mirror family over the base torus $T_Y$. Then, the induced marked period map is the identity if $(Y,D)$ does not have a  toric blowdown to the del Pezzo surface of degree 1 and an isomorphism otherwise. In either case, given $t \in T_Y$, the period point of $\bar{\cX}_t$ is given by the image of $t$ under the surjective homomorphism $T_Y \twoheadrightarrow T_{(D^\perp)^{*}}$.
\end{prop}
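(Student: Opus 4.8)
The plan is to assemble \autoref{identity} from the period computations already in place for the two halves of $\Pic(Y)$. Recall that $\Pic(Y)$ is generated, as a lattice, by the boundary classes $[D_i]$ together with the exceptional classes $[E_{ij}]$ of the fixed toric model $p:(Y,D)\to(\bar Y,\bar D)$; more precisely, the span of the $[D_i]$ and the $[E_{ij}]$ together with the pullback of $\Pic(\bar Y)$ generate $\Pic(Y)$, and $\Pic(\bar Y)$ is itself generated by the toric boundary classes $[\bar D_i]$. So it suffices to check that the marked period map $\Phi:T_Y\to\Hom(\Pic(Y),\mathbb G_m)\simeq T_Y^{*}$ agrees with the identity (under the Poincar\'e-duality identification of \autoref{rm:id-base}) on a generating set of characters.

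First I would recall what \autoref{prop:boundaryperiod} and \autoref{prop:mar-per-id} already give us. For each component $D_i$, \autoref{prop:boundaryperiod} says $\psi_i(t)=z^{D_i}(t)$ over $T_Y$ whenever $(Y,D)$ is not a nontrivial toric blowup of the degree $1$ del Pezzo pair; by the definition of $\psi_i$ in \autoref{lem:num-tri} and of the marked period point in \autoref{defn:markedperiod}, this is exactly the statement that $\Phi(t)(\mathcal O_Y(D_i))=z^{[D_i]}(t)$. Likewise \autoref{prop:mar-per-id} gives $\Phi(t)(\mathcal O_Y(E_{ij}))=z^{[E_{ij}]}(t)$ for $s=t\in W\cap T_Y$. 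The next step is to propagate these two families of identities from the analytic open set $W\cap T_Y$ (and its $T^D$-orbit) to all of $T_Y$: both $\Phi(-)(\mathcal O_Y(C))$ and $z^{[C]}$ are regular (in fact invertible, algebraic) functions on the irreducible variety $T_Y$, so an equality holding on a nonempty analytic open subset holds everywhere. For the $E_{ij}$ one also invokes \autoref{lem:full-dim} and the $T^D$-equivariance (\autoref{lem:rel-tor-mar}, \autoref{lem:eigen-wei}) to see the orbit $T^D\cdot(W\cap T_Y)$ is full-dimensional, hence Zariski dense. Since the characters $\{[D_i]\}\cup\{[E_{ij}]\}\cup\{p^{*}\Pic(\bar Y)\}$ generate $\Pic(Y)$ — and $\Phi$ and the identity are both group homomorphisms $T_Y\to T_Y^{*}$, equivalently the underlying maps on character lattices $\Pic(Y)\to\Pic(Y)^{*}$ are equal on generators — we conclude $\Phi=\mathrm{id}$. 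The pullback classes $p^{*}\Pic(\bar Y)$ are handled because they are integral combinations of the $[D_i]$ and $[E_{ij}]$ in $\Pic(Y)$, so no separate computation is needed.

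For the exceptional case — $(Y,D)$ a nontrivial toric blowup of the degree $1$ del Pezzo pair with irreducible nodal anticanonical cycle — \autoref{prop:boundaryperiod} no longer delivers the periods of all boundary components, as flagged in the Remark following its proof. Here the plan is to argue that $\Phi$ is still an isomorphism of tori even though it may not be the identity: \autoref{prop:mar-per-id} still computes the $E_{ij}$-periods (its proof does not use the structure of the central-fiber components, only the focus–focus local model), and the degree $1$ del Pezzo computation inside the proof of \autoref{boundaryselfint}, together with the $n=1$ part of \autoref{prop:boundaryperiod}, pins down the period along the single class $[D]$. Counting dimensions, the characters whose periods we control already span a finite-index sublattice of $\Pic(Y)$, and the marked family has no automorphisms (\autoref{noaut}), so the classifying map $\Phi$ to the fine moduli space $T_{Y_0}^{*}$ is injective; an injective algebraic homomorphism between tori of equal dimension is an isomorphism. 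Finally, the last sentence of the statement follows formally: the weight map $w:\Pic(Y)^{*}\to\chi(T^D)$ has kernel $D^{\perp}$, so $T_Y/T^D\simeq T_{(D^{\perp})^{*}}$, and composing $\Phi$ with this quotient and restricting the target kills exactly the $T^D$-part, leaving the unmarked period point of \autoref{defn:unmarkedperiod} — which, by the discussion after \autoref{defn:unmarkedperiod} identifying periods with the Hodge-theoretic data, is the image of $t$ under $T_Y\twoheadrightarrow T_{(D^{\perp})^{*}}$.

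The main obstacle I anticipate is the bookkeeping in the exceptional (degree $1$ del Pezzo) case: one must be careful that the classes $\{[E_{ij}]\}\cup\{[D]\}$ (after passing to the toric blowup at the node and tracking how $\Pic$ changes under toric blowups via \autoref{lem:compa-para-tor-bl}) really do span a full-rank sublattice of $\Pic(Y)$, so that injectivity of $\Phi$ is genuine and not merely injectivity onto a subtorus. Away from that case the argument is essentially a density-plus-generation argument and should be routine given \autoref{prop:boundaryperiod} and \autoref{prop:mar-per-id}; the one point requiring a line of justification is that equality of the two homomorphisms on the specified generating set of $\Pic(Y)$ does imply equality of homomorphisms of tori, which is immediate since $\mathrm{Hom}(-,\mathbb G_m)$ is an (anti)equivalence between finitely generated abelian groups and diagonalizable groups.
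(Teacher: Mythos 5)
Your treatment of the main case (no toric blowdown to the degree~$1$ del Pezzo) is essentially the paper's argument: equality of $\Phi(t)(\cdot)$ and $z^{(\cdot)}(t)$ on the generators $\{[D_i]\}\cup\{[E_{ij}]\}$ of $\Pic(Y)$ via Proposition~\ref{prop:boundaryperiod} and Proposition~\ref{prop:mar-per-id}, propagated from $W\cap T_Y$ to all of $T_Y$ by $T^D$-equivariance and analytic continuation. That part is fine.

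The genuine gap is in the exceptional case, in your claim that the final sentence of the statement ``follows formally'' from $T_Y/T^D\simeq T_{(D^\perp)^*}$. Knowing that $\Phi$ is an isomorphism does \emph{not} tell you that the composite $T_Y\xrightarrow{\Phi}T_Y^*\to T_{(D^\perp)^*}$ is the canonical projection; for that you need $\Phi(t)(\alpha)=z^{\alpha}(t)$ for every $\alpha\in D^{\perp}$, and $D^{\perp}$ is \emph{not} contained in the sublattice of $\Pic(Y)$ on which you control the periods. Concretely, for the degree~$1$ del Pezzo the classes you control ($E_1,\dots,E_8$ from the exceptional-cycle computation transported via Lemma~\ref{lem:tor-bl-compa}, plus $[D]=-K_Y=3H-\sum E_i$ from the $n=1$ case of Proposition~\ref{prop:boundaryperiod}) span $\langle E_1,\dots,E_8,3H\rangle$, an index-$3$ sublattice; a root such as $H-E_1-E_2-E_3\in D^{\perp}$ lies outside it, so your argument only pins down its period up to a cube root of unity. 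This is precisely why the paper devotes Appendix~\ref{delpezzo1} to constructing tropical $1$-cycles realizing a basis of $D^{\perp}$ (the $(-2)$-classes $E_i-E_{i+1}$ and $3F+C-E_1-E_2-E_3-E_9-E_{10}$), verifying the $E_8(-1)$ intersection form, and computing their periods directly via the Ronkin-function formula (Proposition~\ref{prop:del-pezzo-1}); that computation cannot be bypassed. Separately, your injectivity step ``the moduli space is fine, hence the classifying map is injective'' is a non sequitur --- fineness does not prevent a family from having isomorphic fibers over distinct base points --- though this is repairable via the finite-index-sublattice argument you sketch (an equality of characters on a finite-index sublattice forces $\Phi$ to be the identity up to a constant torsion twist, hence an isomorphism), which is closer to how the paper deduces the isomorphism from Lemma~\ref{lem:tor-bl-compa}, Lemma~\ref{lem:eigen-wei} and Proposition~\ref{prop:boundaryperiod}.
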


\begin{proof} It suffices to consider the case where $(Y,D)$ has a toric model. 

The marking of the Picard group $\mu$ of the compactified mirror family over $T_Y$ is done in Proposition \ref{prop:mar-family}. Let $U \subset T_{Y}$ be the analytic open neighborhood as in Lemma \ref{lem:def-type}. Recall that we define $U$ to be $T^{D} \cdot (W \cap T_{Y})$. By Lemma \ref{lem:rel-tor-mar} and the weight function, we know that both the marked period of $\cO_{\bar{\cX}_s}(E_{ij,s})$ and $z^{E_{ij}}$ is an eigenfunction of $T^{D}$ with weight $e_{D_i}$. By Proposition \ref{prop:mar-per-id}, we know they coincide on $W \cap T_Y$. Therefore, they must also agree on $U$ by the equivariance of the action of $T^{D}$ on the family. By the uniqueness of analytic continuation, they agree on the entire base $T_{Y}$. 

Suppose $(Y,D)$ is not a toric blowup of the del Pezzo surface of degree $1$. Then,  Proposition \ref{prop:boundaryperiod} together with computations for the marked periods of exceptional curves show that the marked period map is the identity. If there is a  toric blowdown of $(Y,D)$ to the del Pezzo surface of degree $1$, we do not know if the marked period map is an identity, but it is still an isomorphism by Lemma \ref{lem:tor-bl-compa}, Lemma \ref{lem:eigen-wei} and Proposition \ref{prop:boundaryperiod}. The statement about period points in this special case follows from Proposition \ref{prop:del-pezzo-1} in Appendix \ref{delpezzo1}.
\end{proof}

We can now prove the main statement, given in Theorem \ref{main theorem}.
\begin{proof}[Proof of Theorem~\ref{main theorem}]
The compactification of the family comes from the choice of a Weil divisor $W$ such that $W\cdot D_i>0$ for all $i$ and the associated polytope. By Proposition \ref{weilnomatter}, this does not depend on the choice of Weil divisor, giving rise to a canonical compactification. The marking of the boundary is given by Construction \ref{cons:marking}. The deformation type is determined by the proof of Proposition \ref{prop:mar-family}. Finally, the induced marked period map is an isomorphism by Proposition \ref{identity}, identifying the marked compactified mirror family with the universal family of generalized marked pairs deformation equivalent to $(Y,D)$. 
\end{proof}

\begin{remark}
Recall that the set of roots $\Phi$ is finite for a positive pair $(Y,D)$. In the positive cases, the finite union of hypertori
\[
T_{\alpha} = \{\varphi \in T_Y \mid \varphi(\alpha)=1\}
\]
for $\alpha \in \Phi$ parametrizes generalized marked pairs with du Val singularities deformation equivalent to $(Y,D)$ in the mirror family, since only minimal resolutions of generalized marked pairs with du Val singularities have internal (-2) curves. 
\end{remark}



 

\begin{section}{Connections to the work of Keel-Yu}\label{sec:keel-yu}

For a variety $X$ over a non-archimedean field $K$, we have an associated $K$-analytic space $X^{\text{an}}$ in the sense of Berkovich \cite{Ber90}. In our setting, the base field $k$ is equipped with the trivial absolute value.

\begin{defn}
    Let $U$ be a connected smooth affine log Calabi-Yau variety over $k$ together with its  canonical volume form $\omega$ (which is unique up to scaling). Define
\begin{align*}
     U^{\mathrm{trop}}(\Z)& :=\{0\} \cup \{
     {\text{divisorial discrete valuations}\, \nu:\,k(U)\setminus \{0\} \rightarrow \Z \mid \nu (\omega) <0 } \}\\
    & := \{0\} \cup  \{(E,m) \mid m \in \Z_{>0}, E \subset (Y\setminus U),\,\omega\, \text{has a pole along}\,E \}
\end{align*}
Here, $k(U)$ is the field of rational functions. In the second expression, $E$ is a divisorial irreducible component of the boundary in some partial compactification $U \subset Y$, and two divisors on two possibly different birational varieties are identified if they give the same valuation on their common field of fractions. 
\end{defn}

The set $U^{\mathrm{trop}}(\Z)$ is another name for the set of integer points $\SK(U,\Z)$ on the Kontsevich-Soibelman essential skeleton $\SK(U)$. 

Suppose $U$  contains a Zariski-open algebraic torus. Let $U \subset Y$ be a simple normal crossing compactification. In \cite{KY}, the authors construct a commutative associative algebra $A$ equipped with a compatible multilinear form. The mirror algebra $A$ is a free $\mathbb{Z}[\NE(Y)]$-module with basis $\SK(U,\Z)$, the integer points on the essential skeleton $\SK(U)$ of $U$. The structure constants of $A$ as an algebra are given by naive counts of non-archimedean analytic discs with the given intrinsic skeleton,  realized as counts associated with trees in $\SK(U)$. 

Given $P_1,\cdots, P_n \in \SK(U,\Z)$, we write the product in the mirror algebra $A$ as
\begin{align}
    \theta_{P_1}  \cdots \theta_{P_n} =\sum_{Q \in \SK(U,\Z)} \sum_{\gamma \in \NE(Y)} \chi(P_1, \cdots,P_n,Q,\gamma) z^{\gamma}\theta_Q 
\end{align} 
where $\gamma \in \NE(Y)$ is a curve class.

 Here, we reproduce the heuristic behind the structure constants as in Remark $1.7$ of \cite{KY} and refer the readers to Section $1.1$ of \cite{KY} for the technical definitions of these constants. Fix $P_1,...,P_n,Q\in \SK(U,\Z)$ and $\gamma\in \NE(Y)$. Write $P_j=m_j v_j$. By passing to a blow-up $\pi:(\tilde{Y},\tilde{D})\rightarrow (Y,D)$, we can always assume that $v_j$ has divisorial center $D_j\subset\tilde{D}$. Suppose $Q\neq 0$. Then heuristically,  $\chi(P_1,...,P_n,Q,\gamma)$ counts   analytic maps
\[ g:[\mathbb{D},(p_1,...,p_n)]\rightarrow \tilde{Y}^{\operatorname{an}}\]
such that
\begin{enumerate}
    \item $\mathbb{D}$ is a closed unit disk over a non-archimedean field extension $k'/k$ and $p_1,...,p_n$ are distinguished $k'$-points on $\mathbb{D}$,
    \item $g(p_j)\in D_j^{\circ}$ for all $j$ such that $P_j\ne 0$,
    \item $g^{-1}(\tilde{D})=\sum m_j p_j$,
    \item $g(\partial \mathbb{D})=Q\in U^{\operatorname{an}}$,
    \item The derivative of $g$ at $\partial \mathbb{D}$ is $Q$,
    \item $(\pi \circ g)_{*}=\gamma$.
\end{enumerate}


Now, let us narrow the construction of \cite{KY} to the dimension $2$ case. In this context, every smooth affine log Calabi-Yau surface $U$ with maximal boundary appears as the interior of a positive Looijenga pair $(Y,D)$. Moreover, $\SK(U,\mathbb{Z})$ is equal to $B(\Z)$. The following proposition from \cite{HKY} gives a geometric interpretation of the structure coefficients of the mirror algebra constructed in \cite{GHK} via the construction of \cite{KY}:
\begin{prop}[Proposition 4.10, \cite{HKY}]\label{prop:compar-ky}
Let $U$ be a smooth affine log Calabi-Yau surface with maximal boundary. Fix a Looijenga pair $(Y,D)$ that compactifies $U$. Then, (after base change to $\mathbb{C}[\NE(Y)]$) the mirror algebra constructed in \cite{GHK} associated with $(Y,D)$ agrees with that constructed in \cite{KY} using the compactification $U\subset Y$.
\end{prop}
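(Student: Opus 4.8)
The statement to prove, Proposition~\ref{prop:compar-ky}, is quoted from \cite{HKY}, so the cleanest proof is a careful reduction to that reference rather than a re-derivation from scratch. The plan is first to recall that both constructions produce a free $\mathbb{Z}[\NE(Y)]$-module with the \emph{same} underlying basis: on the \cite{GHK} side the basis is $\{\theta_q : q \in B(\mathbb{Z})\}$ by Theorem~\ref{thm:fiber-mirror}, and on the \cite{KY} side the basis is $\{\theta_P : P \in \SK(U,\mathbb{Z})\}$; so the first step is to invoke the identification $B(\mathbb{Z}) \simeq \SK(U,\mathbb{Z}) = U^{\mathrm{trop}}(\mathbb{Z})$ already noted in the excerpt (``$\SK(U,\mathbb{Z})$ is equal to $B(\mathbb{Z})$'' in dimension two, and $B(\Z)\simeq U^{\trop}(\Z)$ recorded earlier). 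This reduces the claim to matching the two families of structure constants $\alpha(p_1,p_2,s)$ (from broken lines) and $\chi(P_1,P_2,Q,\gamma)$ (from non-archimedean disc counts) after the base change to $\mathbb{C}[\NE(Y)]$.

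Second, I would invoke the geometric realization of broken lines as tropicalizations of punctured non-archimedean analytic discs. This is precisely the content alluded to in the introduction: in \cite{KY}, broken lines are realized as the essential skeletons of punctured analytic discs, and the naive disc counts defining $\chi$ organize, wall-by-wall, into exactly the combinatorial sum over pairs of broken lines computing $\alpha$. Concretely, one compares the canonical scattering diagram $\mathfrak{D}^{\mathrm{can}}$ of \cite{GHK} — whose wall functions $f_{\mathfrak{d}}$ are assembled from the relative Gromov--Witten numbers $N_\beta$ counting $\mathbb{A}^1$-curves meeting $D$ at one point — with the analogous ``canonical'' structure on the \cite{KY} side coming from counts of analytic discs with one boundary puncture; the core input is that these two sets of numbers agree (the non-archimedean disc count of \cite{KY} recovers the log Gromov--Witten invariant $N_\beta$ of \cite{GHK}), and hence the two scattering diagrams coincide. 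Once the scattering diagrams coincide, both multiplication rules are determined by the same broken-line sum $\sum c(\gamma_1)c(\gamma_2)$, so the structure constants match.

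Third, I would handle the bookkeeping of coefficient rings: the \cite{KY} algebra is naturally defined over $\mathbb{Z}[\NE(Y)]$ with $\mathbb{N}[\NE(Y)]$-valued structure constants, while the \cite{GHK} algebra $A$ in this paper has already been set up over $R = \mathbb{C}[\NE(Y)] = \mathbb{C}[P]$ with $P = \NE(Y)$ and $J = 0$ (using positivity of $(Y,D)$). So the comparison is literally an equality of $\mathbb{C}[\NE(Y)]$-algebras after tensoring the $\mathbb{Z}[\NE(Y)]$-algebra of \cite{KY} up to $\mathbb{C}$, which is the parenthetical ``after base change to $\mathbb{C}[\NE(Y)]$'' in the statement. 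I would also note that the comparison is compatible with the choice of toric model used to set up the scattering diagram and with passing to toric blowups (Proposition~1.3 of \cite{GHK} and Remark~\ref{rmk:irr-blo}), so no generality is lost.

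\textbf{Main obstacle.} The genuine content — and the step I expect to be hardest — is the identification of the two families of curve counts: showing that the naive counts of non-archimedean analytic discs in $U^{\mathrm{an}}$ appearing in \cite{KY} reproduce the relative/log Gromov--Witten invariants $N_\beta$ underlying the canonical scattering diagram of \cite{GHK}, uniformly in the class $\beta \in \NE(Y)$ and compatibly with the wall-crossing functions. This is exactly where one must lean on the detailed analysis in \cite[Proposition~4.10]{HKY} (and the disc-counting machinery of \cite{KY}); I would not reprove it but would cite it as the key lemma, after having set up the dictionary of bases and coefficient rings so that the citation applies verbatim. Everything else — the identification of point sets, the triviality of the coefficient-ring base change, and the reduction to a statement about a single scattering diagram — is formal.
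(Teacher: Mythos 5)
Your approach matches the paper's: Proposition \ref{prop:compar-ky} is stated there as a direct quotation of Proposition 4.10 of \cite{HKY}, with no proof given in the text, so deferring the identification of the disc counts with the log Gromov--Witten numbers $N_\beta$ to that reference is exactly what the paper does. Your preliminary bookkeeping (the identification $B(\mathbb{Z})\simeq \SK(U,\mathbb{Z})$ and the base change of coefficient rings) is consistent with the surrounding discussion in Section \ref{sec:keel-yu} and is a reasonable way to make the citation apply verbatim.
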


Combining Theorem \ref{main theorem} and Proposition \ref{prop:compar-ky}, we obtain the following theorem of the paper which is a combination of Theorem \ref{thm:mai-main-intro} and Theorem \ref{thm:main-main-intro-2}:
\begin{thm}\label{thm:main-main-thm}
Let  $U$ be a smooth affine log Calabi-Yau surface with maximal boundary. Fix a Looijenga pair $(Y,D)$ that compactifies $U$ and a marking of $D$, then 
there is a  decomposition 
\begin{align}\label{eq:rs}
    H^0(U,\cO_U)=\bigoplus_{q\in U^{\operatorname{trop}}(\Z)} \mathbb{C}\cdot \theta_q 
\end{align}
of the vector space of regular functions on $U$  into one-dimensional subspaces with distinguished basis elements of theta functions. The multiplication rule of theta functions is given by the geometric construction in \cite{KY}. Different choices of compactifications and markings will only result in the rescaling of theta functions and not affect the  vector space decomposition. The decomposition is canonical up to the choice of one of the two possible orientations of $U^{\trop}(\Z)$.  
\end{thm}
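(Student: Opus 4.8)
The plan is to bootstrap Theorem~\ref{thm:main-main-thm} from the two pillars already in place: the modular description of the compactified mirror family (Theorem~\ref{main theorem}) and the comparison of structure constants (Proposition~\ref{prop:compar-ky}, i.e.\ Proposition~4.10 of \cite{HKY}). First I would fix a smooth affine log Calabi--Yau surface $U$ with maximal boundary, choose a Looijenga pair $(Y,D)$ compactifying $U$, and choose a marking of $D$. Since $(Y,D)$ is positive (affineness of $U$ forces $D$ to support an ample divisor, hence the intersection matrix is not negative semidefinite), the mirror algebra $A$ of \cite{GHK} is an honest $\mathbb{C}[\NE(Y)]$-algebra, free as a module on $\{\theta_q : q \in B(\mathbb{Z})\}$, with $B(\mathbb{Z}) \simeq U^{\trop}(\mathbb{Z})$. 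The core of the argument is then a specialization: by Theorem~\ref{main theorem}, the pair $(Y,D)$ (more precisely, $(Y,D)$ with its chosen boundary marking and some marking of $\Pic(Y)$) occurs as a fiber of the canonically compactified mirror family over a point $t \in T_Y$. Restricting the family over $t$ and taking the complement of the boundary $\cD$ recovers $\cX_t = \Spec(A \otimes_{\mathbb{C}[\NE(Y)]} \kappa(t))$, whose ring of global functions I would identify with $H^0(U,\cO_U)$ via the deformation-equivalence and the identification $\cX_t \cong U$ coming from the modular interpretation; concretely, $A \otimes \kappa(t)$ has $\mathbb{C}$-basis $\{\theta_q \otimes 1\}_{q \in B(\mathbb{Z})}$, yielding the decomposition \eqref{eq:rs}.

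Next I would transport the multiplication rule. The structure constants $\alpha(p,q,r) \in \mathbb{C}[\NE(Y)]$ of $A$ are, by Proposition~\ref{prop:compar-ky}, exactly the naive non-archimedean disc counts $\chi(P,Q,R,\gamma)$ of \cite{KY} for the compactification $U \subset Y$; specializing the curve-class variables $z^{\gamma}$ at $t$ gives the structure constants of $H^0(U,\cO_U)$ in the basis $\{\theta_q\}$, so these are governed by the geometry of \cite{KY} as claimed. The remaining point is \emph{canonicity}: I must show that while the individual basis elements $\theta_q$ depend on the choice of $(Y,D)$, the marking, and the identification of $(Y,D)$ with a fiber, the \emph{decomposition} \eqref{eq:rs} into one-dimensional subspaces does not, up to an overall orientation of $U^{\trop}(\mathbb{Z})$. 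For this I would argue that any two such choices produce theta bases differing by rescaling: the one-dimensional subspace $\mathbb{C}\cdot\theta_q$ is intrinsically pinned down, e.g.\ by the $T^D$-eigenfunction property (Theorem~\ref{thm:rel-equi}: $\theta_q$ has weight $w(q)$, and distinct $q$ give distinct weights within a maximal cone, so the weight grading of $H^0(U,\cO_U)$ cuts out the lines), or equivalently by the leading-order behaviour (the $z^q$ monomial) under the canonical coordinates near the boundary strata. The orientation ambiguity enters precisely through Lemma~\ref{lem:pic-mul}: the identification $\Pic^{\circ}(D) \simeq \mathbb{G}_m$, and hence the sign conventions threading through Construction~\ref{cons:marking} and the period computations, depends on an orientation of $D$, which corresponds to an orientation of $U^{\trop}(\mathbb{Z})$.

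I would organize the write-up as: (1) set up $(Y,D)$, the marking, and invoke Theorem~\ref{main theorem} to realize $(Y,D)$ as a marked fiber $(\bar\cX_t,\cD_t,p_i,\mu)$; (2) identify $H^0(U,\cO_U)$ with the degree-zero subring of the localized homogeneous coordinate ring, equivalently with $A\otimes\kappa(t)$, and read off \eqref{eq:rs}; (3) apply Proposition~\ref{prop:compar-ky} to rewrite the structure constants as \cite{KY} disc counts and specialize; (4) prove the lines $\mathbb{C}\cdot\theta_q$ are independent of all choices using the $T^D$-grading, and track the single residual orientation dependence to Lemma~\ref{lem:pic-mul}. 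The main obstacle I anticipate is step~(4): making precise, choice-free sense of ``the theta function up to scaling'' and verifying that \emph{every} admissible compactification/marking/identification yields the same grading of $H^0(U,\cO_U)$ --- one must check that the weight map $w\colon B(\mathbb{Z}) \to \chi(T^D)$ is injective on each maximal cone (true by the finite-fiber argument in the proof of Proposition~\ref{prop:fin-gen}) and then patch across cones, and separately confirm that changing the Looijenga compactification $(Y,D)$ of $U$ (not just toric blowups, which are handled by Subsection~\ref{ssec:tor-bl}) does not disturb the decomposition --- this uses that any two such pairs are related by a sequence of operations under which the canonical scattering diagram, and hence the $\theta_q$ up to scaling, transform compatibly, as in \cite{GHK}. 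The rest is assembly of results already established in the excerpt.
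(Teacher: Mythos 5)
Your steps (1)--(3) coincide with the paper's proof: one takes $t=\phi^{-1}(t')$, where $\phi$ is the marked period map of Proposition \ref{identity} and $t'$ is the marked period point of $((Y,D),p_i)$, so that $(\bar\cX_t,\cD_t)\cong(Y,D)$ as marked pairs; one reads off the basis from $A\otimes\kappa(t)$; and one invokes Proposition \ref{prop:compar-ky} for the \cite{KY} structure constants. The divergence, and the genuine gap, is in your step (4).

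You propose to make the lines $\mathbb{C}\cdot\theta_q$ intrinsic via a $T^D$-weight grading of $H^0(U,\cO_U)$. No such grading exists on a single fiber: $T^D$ acts equivariantly on the \emph{family}, carrying $\cX_t$ to $\cX_{g\cdot t}$, so it does not act on $H^0(\cX_t,\cO_{\cX_t})$ for fixed $t$; and after specializing $z^{[C]}$ at $t$, Lemma \ref{lem:eq-hw} gives $w(r)+w(C)=w(p)+w(q)$, so the terms of $\theta_p\theta_q$ are not $w$-homogeneous of weight $w(p)+w(q)$. Moreover $w\colon B(\Z)\to\chi(T^D)$ is far from injective when $n=1,2$ (for $n=1$ the target has rank one), so even as a labelling of lines the grading fails. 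The paper argues instead choice by choice: a change of boundary marking moves $t$ to $g\cdot t$ for some $g\in T^D$ (Lemma \ref{lem:rel-tor-mar}), and equivariance (Theorem \ref{thm:rel-equi}) rescales each $\theta_q$ by $z^{w(q)}(g)$; a change of Picard marking is precomposition with a lattice automorphism fixing the $[D_i]$ and $C^{++}$, under which the canonical scattering diagram, hence $A$, is invariant; and a change of compactification is absorbed by a common toric blowup via Lemmas \ref{lem:tor-bl-compa} and \ref{lem:compa-para-tor-bl} --- contrary to your worry, any two Looijenga compactifications of the same $U$ do admit a common toric blowup, so no further compatibility of scattering diagrams is needed. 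Finally, you gloss over the identification of $B(\Z)$ (which indexes the $\theta_q$ for the chosen $(Y,D)$) with $U^{\trop}(\Z)$ for the abstract surface $U\cong\cX_t$; the paper constructs it via partial compactifications by generalized half spaces, and it is there --- not only in Lemma \ref{lem:pic-mul} --- that the residual dependence on an orientation of $U^{\trop}(\Z)$ is located.
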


\begin{proof}
Fix a Looijenga pair $(Y,D)$ that compactifies $U$ and a choice of marking of $D$. Let $t' \in T_Y$ be the corresponding marked period point. Let $\phi: T_Y \rightarrow T_Y$ be the marked period map of the compactified mirror family. Recall that by Proposition \ref{identity}, the isomorphism $\phi$ is the identity except for the case of degree 1 del Pezzo surfaces. Let $t =\phi^{-1}(t')$. Then, $(\bar{\cX}_t, \cD_t)$ is isomorphic to $(Y,D)$ as a marked pair and in particular, $\cX_t$ is isomorphic to $U$. By evaluating the structure coefficients of $A$ at $t$, i.e., evaluate $z^{[C]}$ at $t$ for $[C] \in \NE(Y)$, we get a basis for $H^0(\cX_t,\cO_{\cX_t})$ given by theta functions parametrized by $U^{\trop}(\Z)$. By Proposition \ref{prop:compar-ky}, the multiplication rule for theta functions in \cite{GHK} is equivalently given by the geometric construction of  \cite{KY}. By Lemma \ref{lem:rel-tor-mar}, changing the marking of $\cD_t$ corresponds to the action of the relative torus $T^{D}$.  Since theta functions are eigenfunctions of the action of $T^{D}$ by Theorem \ref{thm:rel-equi}, different choices of marking of $D$ will only rescale the basis elements of $H^0(\cX_t,\cO_{\cX_t})$.

Change the marking of $\Pic(\bar{\cX}_t)$ by $\Pic(Y)$ is equivalent to pre-composing with a lattice automorphism of $\Pic(Y)$ that preserves $D_i$ for each $i$ and the cone $C^{++}$ (cf. Definition \ref{def:cones}). Since the canonical scattering diagram in \cite{GHK} is invariant under such an automorphism, changing the marking of $\Pic(\bar{\cX}_t)$ will not change the algebra we get. 

If we use another Looijenga pair $(Y',D')$ to compactify $U$, we can always find a common toric blowup $(Y'',D'')$ of both $(Y,D)$ and $(Y',D')$.  By Lemma \ref{lem:tor-bl-compa} and Lemma \ref{lem:compa-para-tor-bl}, toric blowups of fibers in $(\bar{\cX}\mid_{T_Y},\cD\mid_{T_Y})$,$(\bar{\cX}'\mid_{T_Y},\cD'\mid_{T_Y})$ can be identified as fibers in $(\bar{\cX}''\mid_{T_Y},\cD''\mid_{T_Y})$. Since markings of the compactified mirror families are compatible with the toric blowup operation, we conclude that different compactifications will again result only in the rescaling of the basis elements of theta functions.  

It remains to identify $\cX^{\mathrm{trop}}_{t}(\Z)$ with $U^{\trop}(\Z)$. We recall the identification of $\cX^{\mathrm{trop}}_{t}(\Z)$ with $U^{\trop}(\Z)$ as described at the beginning of Section 4 of \cite{Man16}. Given a primitive element $q$ in $U^{\trop}(\Z)$, let $L$ be an immersed line that escapes to infinity parallel to $q$. Then, the generalized half space $Z(L)$ gives a partial compactification of $\cX_t$ with an irreducible boundary divisor. The compactification is independent of the distance of $L$ to $0\in B$ and therefore depends only on $q$. Thus, we can define an identification of $U^{\trop}(\Z)$ with $\cX^{\mathrm{trop}}_{t}(\Z)$  such that  given an element $mq \in U^{\trop}(\Z)$ where $m\in \Z_{>0}$ and $q$ a primitive element, $mq$ is sent to the pair $(D_q,m)$ where $D_q$ is the irreducible divisor of the partial compactification of $\cX_t$ determined by $q$ that we just described.  This identification depends only on the convention of the orientation of immersed lines on $B$ that we choose (in this paper, we choose the orientation so that $0$ is always on the left of an immersed line). Notice that fixing the orientation of immersed lines is equivalent to fixing the orientation of $U^{\trop}(\Z)$.  The decomposition \ref{eq:rs} is canonical up to this choice of the two possible orientations of $U^{\trop}(\Z)$. 
\end{proof}

\begin{remark}
In the mirror symmetry program for affine log Calabi-Yau varieties (with maximal boundary), it is expected that $H^0(U,\cO_U)$ has a vector space basis parametrized by the integer tropical  points of its mirror and the structure constants of $H^0(U,\cO_U)$, as an algebra, should be given by the enumerative geometry of its mirror. Here, the surprising fact is that for a smooth affine log Calabi-Yau surface with maximal boundary, it appears as a fiber in its own mirror family. However, since dual Lagrangian torus fibrations in dimension 2 are topologically equivalent, this is consistent with the SYZ mirror symmetry picture. 

\end{remark}

\end{section}

\appendix
\section{Degree $1$ del Pezzo}\label{delpezzo1}
We now cover the exceptional case of the degree $1$ del Pezzo. This is the pair $(Y,D)$ where $Y=\operatorname{Bl}_8(\mathbb{P}^2)$ and $D$ is an irreducible nodal curve whose self-intersection number is $1$. 

We construct tropical cycles that correspond to $(-2)$-classes that have trivial intersections with the boundary and then compute the periods. For pictorial convenience, we work on a toric blowup of $(Y,D)$ instead. First, we find a toric blowup of $(Y,D)$ that has a toric model. 
\begin{lem}
There is a toric blowup $(Y',D')\rightarrow (Y,D)$ such that $(Y',D')$ has a toric model given by $(F_3,\bar{D})$, where $F_3$ is the Hirzebruch surface and $\bar{D}$ is its toric boundary.
\end{lem}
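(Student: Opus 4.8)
The plan is to exhibit an explicit toric blowup of $(Y,D)$ by repeatedly blowing up at the node of the (successive) boundary curve, and then to recognize the resulting toric surface as $F_3$. Recall that $Y = \operatorname{Bl}_8(\mathbb{P}^2)$ with $D$ an irreducible nodal anticanonical curve satisfying $D^2 = 1$. First I would pass to a toric blowup $(Y_1,D_1)\to(Y,D)$ at the node of $D$, so that, as in Remark \ref{rmk:irr-blo}, $D_1$ becomes a cycle of two smooth rational curves; by Lemma 1.6 of \cite{GHK} this does not change the underlying integral affine manifold. Tracking self-intersections as in Example \ref{ex:dp1} (where the degree-$1$ case was run through once), one toric blowup produces $D_1 = \tilde D + E$ with $\tilde D^2 = -3$ and $E^2 = -1$. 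I would then continue performing toric blowups at nodes until the boundary cycle has enough components, and of sufficiently negative self-intersection, that the pair becomes a non-toric blowup of $(F_3,\bar D)$; concretely, the aim is to reach a boundary cycle whose self-intersection sequence matches that of the toric boundary of $F_3$ after accounting for the eight non-toric blowups of $\mathbb{P}^2$ (equivalently, $Q(Y,D) = 12 - 1 - 1 = 10$, so there should be a toric model with ten non-toric blowups distributed over the boundary of $F_3$).

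The key steps, in order, are: (1) identify the charge $Q(Y,D) = 10$, which by the discussion after Lemma \ref{lem:def-type} equals the number of non-toric blowups in any toric model; (2) perform a sequence of toric blowups at nodes of the boundary, computing the updated self-intersection numbers at each stage, until the boundary cycle $D'$ has self-intersection sequence realizable as (the image under non-toric blowdowns of) the toric boundary of $F_3$, namely $\{3, -1, -3, -1\}$ up to the shifts caused by the ten non-toric blowups; (3) check that the resulting pair $(Y',D')$ genuinely admits the structure of a non-toric blowup $p:(Y',D')\to(F_3,\bar D)$ — i.e., that there exist $10$ disjoint $(-1)$-curves meeting $D'$ transversally at smooth points, whose contraction yields $(F_3,\bar D)$ — which follows because $Y'$ is a rational surface, $D'$ is anticanonical, and the charge bookkeeping forces the blown-down pair to be toric with the correct intersection matrix; and (4) verify $F_3$ is the specific Hirzebruch surface that appears, by matching the negative self-intersection $-3$ of the appropriate boundary component with the $(-3)$-section of $F_3$.

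The main obstacle I anticipate is step (3): showing that after the toric blowups the pair $(Y',D')$ really has a \emph{toric model} with base $(F_3,\bar D)$, as opposed to merely having the right numerical invariants. The subtlety is that one must produce an honest configuration of disjoint exceptional $(-1)$-curves meeting the boundary appropriately and contract them to a smooth toric surface; abstractly one knows such a toric model exists for \emph{some} blowup by Proposition 1.3 of \cite{GHK}, but pinning down that the toric base is $F_3$ (and not, say, $\mathbb{P}^2$ or another Hirzebruch surface) requires controlling which boundary component carries the most negative self-intersection throughout the blowup process. I would handle this by arranging the toric blowups so that the strict transform $\tilde D$ (with $\tilde D^2 = -3$ after the first blowup, and kept at $-3$ by only blowing up at other nodes) becomes the $(-3)$-section, and then invoking the uniqueness of the minimal toric model together with the self-intersection computation to conclude the base is exactly $(F_3,\bar D)$. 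Once the toric model is identified, everything else in the appendix — constructing tropical cycles for the $(-2)$-classes and computing their periods via \cite{RS19} — proceeds as in the body of the paper.
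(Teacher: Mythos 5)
There is a genuine gap at precisely the step you flag as the main obstacle, and your proposed resolution does not close it. The paper's proof hinges on an explicit geometric input that your proposal never produces: the two lines $L_1,L_2$ in $\mathbb{P}^2$ tangent to the two branches of $D$ at its node. Each $L_i$ meets $D$ with multiplicity $3$ at the node, so after the toric blowup at the node and the two further toric blowups at the points where the strict transform of $D$ meets the first exceptional divisor, the strict transforms of $L_1,L_2$ become disjoint $(-1)$-curves meeting the boundary transversally at single interior points of $D_2$ and $D_4$. Together with the eight original exceptional curves (which meet $D_1$), these give the ten disjoint $(-1)$-curves whose contraction yields boundary self-intersections $(-5+8,-1+1,-3+0,-1+1)=(3,0,-3,0)$, i.e.\ $(F_3,\bar D)$. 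Your charge bookkeeping ($Q=10$) only counts the exceptional curves; it does not exhibit them, and after contracting the eight obvious ones the remaining pair still has charge $2$, so the argument cannot terminate without identifying the two tangent lines.

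Two further problems with your plan. First, "uniqueness of the minimal toric model" is not available and is false in general: toric models are non-unique and their bases can differ, and the charge alone does not determine how the non-toric blowups are distributed among the boundary components, which is exactly what determines the base. Second, your proposed blowup sequence ("keep $\tilde D^2=-3$ by only blowing up at other nodes" and make $\tilde D$ the $(-3)$-section) is not realizable: after the first blowup both nodes of the two-cycle lie on $\tilde D$, so the second toric blowup necessarily drops $\tilde D^2$ to $-4$; and in the correct construction the strict transform of $D$ ends at self-intersection $-5$ and becomes the $(+3)$-curve of $F_3$ (it carries the eight non-toric blowups), while the $(-3)$-section is the first exceptional divisor $D_3$, which carries none.
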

\begin{proof}
First, blow up the unique node on $D$. This gives rise to an exceptional divisor, $D_3$. There are two tangent lines to the node in $D$, call these $L_1$ and $L_2$. The strict transform of $L_1$ intersects $D_3$ and the strict transform $D_1$ of $D$. We blow up at this intersection point to get a new divisor, $D_2$. Now the strict transform of $L_1$ only intersects $D_2$. We do the same thing with $L_2$, which gives a new boundary divisor, $D_4$. We then get a new pair $(Y',D')$ where $D'$ has four irreducible components, $D_1,D_2,D_3,D_4$ with (respective) self intersections $(-5,-1,-3,-1)$. Now, blowdown the strict transforms of $L_i$ and also the strict transforms of the original eight exceptional curves on $D$ to get a new pair, $(\bar{Y},\bar{D})$ which has boundary intersections $(3,0,-3,0)$. This is the Hirzebruch Surface, $F_3$. So we have taken a toric blowup of the original pair such that it has a toric model.
\end{proof}

Now consider the GS locus corresponding to the toric model $p:(Y',D')\rightarrow (\bar{Y},\bar{D})$. We want to find tropical curves on $B_{(Y',D')}$ that correspond to a basis of $D^{\perp} \simeq D'^{\perp}$. First, we compute what $D^{\perp}$ is. Let $\bar{C}$ be the class corresponding to the unique curve of self-intersection $-3$ of $F_3$ and $\bar{F}$ a fiber of $F_3$ (under its standard ruling). Also, denote their respective pullbacks under $p$ by $C$ and $F$. Then we have 
\[\Pic(\bar{Y})=\langle \bar{C}, \bar{F}\rangle\]
\[ \Pic(Y')=\langle C,F, E_1,...,E_{10}\rangle,\]
Where, $E_1,...,E_{8}$ are the exceptional curves on $D_1$, $E_9$ is the exceptional curve on $D_2$ and $E_{10}$ is the exceptional curve on $D_4$. Denote the corresponding toric boundary components by $\bar{D}_i$. Then:
\[ D_1=p^{*}(\bar{D}_1)-E_1-...-E_8=3F+C-E_1-...-E_8\]
\[ D_2=F-E_9, \hspace{1cm}D_3=C,\hspace{1cm}D_4=F-E_{10}. \]
Because $(Y,D)$ is positive, $D^{\perp}$ will be generated by roots. A quick computation shows that a basis for $D^{\perp}$ can be given by the following $(-2)$-classes:
\[ D^{\perp}=\langle E_1-E_2,...,E_7-E_8,3F+C-E_1-E_2-E_3-E_9-E_{10}\rangle.\]

First, we produce tropical cycles corresponding to each $(-2)$-curve. The tropical cycle $\beta^{i}_{\trop}$ that first winds around the focus-focus singularity $o_{1(i+1)}$ and then winds around $o_{1i}$, as depicted in Figure \ref{fig:root1}, corresponds to $E_i-E_{i+1}$ and $\beta'_{\trop}$ in Figure \ref{fig:root2} corresponds to $3F+C-E_1-E_2-E_3-E_9-E_{10}$. The vectors on each edge are given by Lemma \ref{focusfocusvector}.  \\

\begin{figure}
\begin{minipage}{\linewidth} \begin{center}  \begin{tikzpicture}[circ/.style={shape=circle, inner sep=2pt, draw, node contents=}] 
\draw  node (x) at (-2,0) [circ]; 
\draw  node (y) at (2,0) [circ]; 
\draw (x) -- (y); \draw [dashed] (x) -- (-4.5,0); \draw [dashed] (y) -- (4.5,0);
\draw [ decoration={markings,  mark=at position 0.13 with {\arrow{<}}, mark=at position 0.37 with {\arrow{<}} }, postaction={decorate}](y) circle [radius=1cm];          
\draw [ decoration={markings,  mark=at position 0.13 with {\arrow{>}}, mark=at position 0.37 with {\arrow{>}} },         postaction={decorate}](x) circle [radius=1cm];          
\draw  [decoration={markings, mark=at position 0.5 with {\arrow{>}}}, postaction={decorate}](-2,1) .. coordinate [pos=.5] (a) controls (-1,2) and  (1,2) .. (2,1 );
\fill [black] ( [xshift=1cm] y) circle (2pt); 
\fill [black] ( [xshift=-1cm] y) circle (2pt); 
\fill [black] ( [xshift=1cm] x) circle (2pt); 
\fill [black] ( [xshift=-1cm] x) circle (2pt); 
\fill [black] (2,1) circle (2pt); 
\fill [black] (-2,1) circle (2pt);
\node [label=below:$v_{1}$] at (2,1.1){}; 
\node [label=right:$v_{2}$] at ([xshift=0.9cm,yshift=0.2cm] y){}; 
\node [label=left:$v_{3}$] at ([xshift=-0.9cm,yshift=0.2cm] y){}; 
\node [label=below:$C_{2}$] at (2,-1){};
\node [label=below:$w_{1}$] at (-2,1.1){};
\node [label=right:$w_{2}$] at ([xshift=0.9cm,yshift=0.2cm] x){};
\node [label=left:$w_{3}$] at ([xshift=-0.9cm,yshift=0.2cm] x){}; 
\node [label=below:$C_{1}$] at (-2,-1){};
\node [label=above:$e$] at (0,1.8){}; 
\node [label=below:$o_{1(i+1)}$] at (-2,0.1){}; 
\node [label=below:$o_{1i}$] at (2,0.1){};
\draw [->] ([xshift=2cm] 150:1cm)   -- ( [xshift=2cm, yshift=0.5cm] 150:1cm); \draw [->] ([xshift=2cm] 60:1cm)   -- ( [xshift=1.5cm, yshift=0.5cm] 60:1cm);
\node [label=right:$d_{\rho}+\nu_{1}$] at ( [xshift=1.6cm, yshift=0.3cm] 60:1cm){}; 
\node [label=above:$d_{\rho}$] at ( [xshift=2cm, yshift=0.3cm] 150:1cm){}; 
\draw [<-] (-1.5,1.6) -- (-1,1.6);
\node [label=above:$\nu_{1}$] at (-1.5,1.5){};
\draw [->] ([xshift=-2cm] 120:1cm)   -- ( [xshift=-1.5cm, yshift=0.5cm] 120:1cm); 
\draw [->] ([xshift=-2cm] 30:1cm)   -- ( [xshift=-2cm, yshift=0.5cm] 30:1cm);
\node [label=above:$d_{\rho}$] at ([xshift=-2cm, yshift=0.3cm] 30:1cm){}; 
\node [label=above:$d_{\rho}-\nu_{1}$] at ( [xshift=-2cm, yshift=0.3cm] 150:1cm){};
\draw [->] (-3.5,1.5) -- (-4.5,1.5); 
\draw [->] (-3.5,1.5) -- (-3.5,2.5); 
\node [label=above:$\nu_{1}$] at (-4.5,1.4){}; 
\node [label=right:$d_{\rho}$] at (-3.6,2.3){};
\node [label=left:$e_{2}$] at (-1,0.4){}; 
\node [label=left:$e_{1}$] at (-2.2,0.4){};
\node [label=left:$e_{3}$] at (3,0.4){}; 
\node [label=left:$e_{4}$] at (1.8,0.4){};
\end{tikzpicture} 
\caption{The tropical cycle $\beta^{i}_{\trop}$}\label{fig:root1}
\end{center} \end{minipage}
\end{figure}

While balancing conditions for $\beta^{i}_{\trop}$ are obvious, we note that for $\beta'_{\trop}$, the unique vertex which is not at a focus-focus singularity also satisfies the balancing condition. On an edge leaving a focus-focus singularity, the edge carries a primitive vector, $\zeta_{\rho}$, parallel to $\Lambda_{\rho}$. Thus, at the vertex, we have the sum of vectors:
\[ 3\zeta_{\rho_1}-\zeta_{\rho_2}-\zeta_{\rho_4}=0\]
since $v_4+3v_1+v_2=0$.

\begin{lem} \label{lem:1}
The tropical cycles $\beta^{i}_{\trop}$ and $\beta'_{\trop}$ have self-intersection $-2$. 
\end{lem}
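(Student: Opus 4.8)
The plan is to use the self-intersection formula for singular 2-cycles arising from tropical 1-cycles, following Lemma 4.10 and Lemma 4.11 of \cite{bauer} (which are already invoked earlier for exceptional tropical 1-cycles), and reduce the computation to a local count of ``focus-focus windings.'' The key observation is that a tropical 1-cycle which winds once around a single focus-focus singularity contributes $-1$ to the self-intersection, that windings around distinct focus-focus singularities contribute independently, and that the remaining edges — which are honestly embedded in the affine manifold and do not surround any singularity — contribute $0$. Since both $\beta^{i}_{\trop}$ and $\beta'_{\trop}$ are built by winding once around exactly two focus-focus singularities (the pair $o_{1(i+1)}, o_{1i}$ for $\beta^{i}_{\trop}$, and two of the three singularities on $\rho_1, \rho_2, \rho_4$ for $\beta'_{\trop}$, together with one trivalent vertex away from any singularity), each should have self-intersection $(-1)+(-1) = -2$.

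First I would recall the general self-intersection formula for a tropical 1-cycle from \cite{bauer}: for a tropical 1-cycle $\beta_{\trop}$ with associated singular 2-cycle $\beta$, the self-intersection $\beta \cdot \beta$ can be computed combinatorially as a sum of local contributions, one for each focus-focus singularity the cycle passes around, plus correction terms at the vertices coming from the balancing data. Then I would verify that for $\beta^{i}_{\trop}$ the two windings are around the singularities $o_{1(i+1)}$ and $o_{1i}$, each contributing $-1$, and that the connecting edge $e$ (which lies in the interior of a maximal cell and carries the vector $\nu_1 = d_\rho + \nu_1 - d_\rho$ determined by the balancing conditions shown in Figure \ref{fig:root1}) does not create any extra intersection. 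For $\beta'_{\trop}$ I would check the balancing condition at its unique non-focus-focus vertex — already noted in the excerpt via $3\zeta_{\rho_1} - \zeta_{\rho_2} - \zeta_{\rho_4} = 0$ following from $v_4 + 3v_1 + v_2 = 0$ — and that the two focus-focus windings again each contribute $-1$.

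The main obstacle I expect is bookkeeping: one must make sure that the local contributions are genuinely additive, i.e. that the distinct focus-focus singularities involved do not ``interact'' in the intersection pairing. For $\beta^{i}_{\trop}$ the two singularities both lie on the single ray $\rho_1$, so I would need to check (as in Lemma 4.11 of \cite{bauer}, which treats exactly the case of two exceptional tropical cycles around different focus-focus singularities on the same ray) that the cross-terms vanish; equivalently, one can perturb the cycle to realize it as a connected sum of two exceptional tropical 1-cycles joined along a trivial handle, whose self-intersections add. For $\beta'_{\trop}$ the two windings are on different rays, which makes the independence even more transparent. Once additivity is established, the conclusion $\beta^{i}_{\trop} \cdot \beta^{i}_{\trop} = \beta'_{\trop} \cdot \beta'_{\trop} = -2$ is immediate, and this matches the fact that $E_i - E_{i+1}$ and $3F + C - E_1 - E_2 - E_3 - E_9 - E_{10}$ are $(-2)$-classes in $\Pic(Y')$, consistent with the requirement that these singular 2-cycles represent the corresponding vanishing cycles on the smooth fibers.
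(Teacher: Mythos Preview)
Your argument for $\beta^{i}_{\trop}$ is essentially the paper's: both simply cite Lemma~4.10 and Lemma~4.11 of \cite{bauer}, and your heuristic of ``one winding contributes $-1$'' is the correct local picture for that cycle.

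However, your treatment of $\beta'_{\trop}$ contains a factual error that breaks the argument. The cycle $\beta'_{\trop}$ does not wind around two focus-focus singularities; it winds around \emph{five}: three on $\rho_1$ (corresponding to $E_1,E_2,E_3$), one on $\rho_2$ (corresponding to $E_9$), and one on $\rho_4$ (corresponding to $E_{10}$). This is visible in Figure~\ref{fig:root2} and in the balancing condition $3\zeta_{\rho_1}-\zeta_{\rho_2}-\zeta_{\rho_4}=0$, which shows that the central vertex is $5$-valent, not trivalent. Under your additivity heuristic (``each winding contributes $-1$, the rest contributes $0$'') you would obtain self-intersection $-5$, not $-2$. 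So the heuristic cannot be the whole story: the self-intersection formula in \cite{bauer} must also pick up a nontrivial contribution from the $5$-valent vertex (or, equivalently, from the overlaps created when one perturbs the cycle against itself), and this contribution is exactly $+3$ here, matching $(3F+C-E_1-E_2-E_3-E_9-E_{10})^2 = (3F+C)^2 - 5 = 3 - 5 = -2$. To repair the proposal you need to invoke the full intersection formula from \cite{bauer}, not just the local focus-focus count.
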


\begin{proof}
This follows from Lemma 4.10 and Lemma 4.11 of \cite{bauer}.
\end{proof}

\begin{figure} 
\begin{minipage}{\linewidth} \begin{center} 
\vspace{1 cm}
\begin{tikzpicture}[scale = 0.5]
  \draw [->] (0,0) -- (-2,6) node [anchor=east] {$\rho_2$};
  \draw [->] (0,0) -- (6,0) node[anchor=west] {$\rho_4$};
  \draw [->] (0,0)-- (0,6) node[anchor=south,yshift=.1cm] {$\rho_3$};
  \draw [->] (0,0) --(0,-6) node[anchor=north] {$\rho_1$};
 \draw (3,0) circle (5pt);
  \draw (-1,3) circle (5pt);
  \draw (0,-2) circle (5pt);
  \draw (0,-3) circle (5pt);
  \draw (0,-4) circle (5pt);
  \fill [black] (1,-1) circle (3pt); 
  
    \node (a) at (3, 0) {};
\node (b) at (-1, 3) {};
\node (c) at (0, -2) {};
\node (d) at (0, -3) {};
\node (e) at (0, -4) {};
\node (f) at (1, -1) {};

\draw[->] (a)  to [out=-120,in=45, looseness=1](f);
\draw[->] (b)  to [out=-90,in=-180, looseness=1] (f);
\draw[->] (c)  to [out=-45,in=-35, looseness=1] (f);
\draw[->] (d)  to [out=-45,in=-25, looseness=1] (f);
\draw[->] (e)  to [out=-45,in=-15, looseness=1] (f);

\end{tikzpicture}
\caption{ The tropical cycle $\beta'_{\trop}$. On $\rho_1$, $\beta'_{\trop}$ winds around focus-focus singularities given by $1+z^{[E_i]}X_{1}^{-1}=0 (i=1,2,3)$. On $\rho_2$, it winds around the focus-focus singularity given by $1+z^{[E_9]}X_{2}^{-1}=0$ and on $\rho_3$, around that given by  $1+z^{[E_{10}]}X_{4}^{-1}=0$ } \label{fig:root2}
\end{center}\end{minipage}
\end{figure}

To show that we have actually constructed a set of cycles that correspond to a basis of $D^{\perp}$, we compute the intersection matrix.

\begin{lem}
The intersection form given by $\beta'_{\trop},\beta^{1}_{\trop},.\beta^{2}_{\trop},...,\beta^{7}_{\trop}$ coincides with that of the lattice $E_8(-1)$.
\end{lem}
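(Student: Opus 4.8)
The plan is to compute directly the $8\times 8$ matrix of intersection numbers $\bigl(\beta_i \cdot \beta_j\bigr)$ where $\beta_i$ ranges over the ordered collection $\beta'_{\trop}, \beta^{1}_{\trop}, \dots, \beta^{7}_{\trop}$, and to check that it agrees with the Gram matrix of $E_8(-1)$ with respect to the standard simple-root ordering (the $A_7$-chain $\beta^{1},\dots,\beta^{7}$ together with the node $\beta'$ attached at the appropriate vertex). The diagonal entries are already settled: every one of these cycles has self-intersection $-2$ by Lemma~\ref{lem:1}. So the whole content is in the off-diagonal entries, and these are governed by how the winding loops of two tropical cycles are positioned relative to one another.

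First I would record the combinatorics of the eight cycles in terms of which focus-focus singularities they encircle: $\beta^{i}_{\trop}$ winds around $o_{1i}$ and $o_{1(i+1)}$ (the singularities cut out by $1+z^{[E_i]}X_1^{-1}=0$ and $1+z^{[E_{i+1}]}X_1^{-1}=0$ on $\rho_1$), while $\beta'_{\trop}$ winds around $o_{11},o_{12},o_{13}$ on $\rho_1$ and around the single singularity on each of $\rho_2$ and $\rho_3$. Then I would invoke the intersection-theoretic input already cited, namely Lemma~4.10 and Lemma~4.11 of \cite{bauer}: two exceptional-type tropical loops surrounding \emph{distinct} focus-focus singularities contribute $0$ to the intersection pairing, while two loops that surround a \emph{common} singularity contribute $\pm 1$, with the sign fixed by the relative orientation of the two loops there. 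Concretely: $\beta^{i}_{\trop}$ and $\beta^{i+1}_{\trop}$ share exactly the singularity $o_{1(i+1)}$, so $\beta^{i}_{\trop}\cdot\beta^{i+1}_{\trop}=1$; $\beta^{i}_{\trop}$ and $\beta^{j}_{\trop}$ with $|i-j|\ge 2$ share no singularity, so the pairing is $0$; and $\beta'_{\trop}$ shares a singularity only with those $\beta^{i}_{\trop}$ whose support meets $\{o_{11},o_{12},o_{13}\}$, which after inspecting the pictures is exactly one of them (the one whose node attaches to the $E_8$ diagram), giving pairing $1$ with that cycle and $0$ with the rest. Assembling these entries reproduces precisely the $E_8$ Cartan matrix times $(-1)$, after possibly flipping the sign of one or two cycles to normalize all the off-diagonal $\pm 1$'s to $+1$ (which does not change self-intersections and is harmless since $\beta\mapsto -\beta$ represents the same $(-2)$-class up to sign on homology).

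I would close by remarking that the collection $\{E_i - E_{i+1}\}_{i=1}^{7}$ together with $3F+C-E_1-E_2-E_3-E_9-E_{10}$ is, by the direct computation in $\Pic(Y')$ already carried out in the text, a set of simple roots for $D^\perp$ whose Gram matrix is $E_8(-1)$; since the tropical cycles $\beta^{i}_{\trop}$ and $\beta'_{\trop}$ were constructed to correspond to exactly these classes and we have just verified their intersection matrix coincides with that Gram matrix, the identification of lattices follows.

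\textbf{Main obstacle.} The routine part is bookkeeping; the genuinely delicate point is pinning down the \emph{signs} of the off-diagonal intersection numbers and confirming that the node $\beta'_{\trop}$ attaches to the $A_7$-chain at the correct vertex (so that one really gets the $E_8$ diagram and not, say, $A_8$ or $D_8$). This requires being careful about the orientations of the winding loops in Figures~\ref{fig:root1} and~\ref{fig:root2} and about which of $o_{11},o_{12},o_{13}$ is shared with which $\beta^{i}_{\trop}$ — i.e. tracking the local intersection signs from Lemma~4.11 of \cite{bauer} rather than just the unsigned count. Once the orientation conventions are fixed consistently with Construction~\ref{def:excep-1-cycle}, the sign of each shared-singularity contribution is forced, and the verification that the resulting Dynkin diagram is $E_8$ is immediate.
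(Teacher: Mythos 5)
Your overall strategy is the same as the paper's: the diagonal entries come from Lemma \ref{lem:1}, and the off-diagonal entries are computed by locating the focus-focus singularities that two cycles both encircle and summing signed local contributions. The $A_7$-chain part of your computation ($\beta^{i}_{\trop}\cdot\beta^{i+1}_{\trop}=1$ from the shared singularity $o_{1(i+1)}$, and $0$ for $|i-j|>1$) matches the paper.

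However, there is a genuine error in your treatment of $\beta'_{\trop}$. You assert that $\beta'_{\trop}$ ``shares a singularity only with those $\beta^{i}_{\trop}$ whose support meets $\{o_{11},o_{12},o_{13}\}$, which \ldots is exactly one of them.'' That count is wrong: $\beta^{1}_{\trop}$ encircles $o_{11}$ and $o_{12}$, $\beta^{2}_{\trop}$ encircles $o_{12}$ and $o_{13}$, and $\beta^{3}_{\trop}$ encircles $o_{13}$ and $o_{14}$, so \emph{three} of the chain cycles share focus-focus singularities with $\beta'_{\trop}$ — indeed $\beta'_{\trop}$ meets $\beta^{1}_{\trop}$ and $\beta^{2}_{\trop}$ at \emph{two} common singularities each. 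The statement that the node attaches at a single vertex of the chain is therefore not a matter of counting shared singularities; it requires the cancellation argument that the paper supplies: for $i\in\{1,2\}$, the two loops of $\beta^{i}_{\trop}$ around $o_{1i}$ and $o_{1(i+1)}$ have \emph{opposite} orientations, while the edges of $\beta'_{\trop}$ at those two singularities carry identical vectors with the same orientation, so the two local contributions are $+1$ and $-1$ and cancel, giving $\beta'_{\trop}\cdot\beta^{i}_{\trop}=0$. Only for $i=3$ is there a single shared singularity ($o_{13}$), yielding $\beta'_{\trop}\cdot\beta^{3}_{\trop}=1$ and the $E_8$ diagram with the node at the third vertex. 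Without this cancellation your bookkeeping would leave $\beta'_{\trop}\cdot\beta^{1}_{\trop}$ and $\beta'_{\trop}\cdot\beta^{2}_{\trop}$ undetermined (or, if one naively added unsigned contributions, nonzero), and the resulting lattice would not be $E_8(-1)$. You correctly flag sign-tracking as the delicate point, but the resolution is not a normalization by flipping signs of basis elements — it is this pairwise cancellation at the two shared singularities, which is the actual content of the lemma.
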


\begin{proof}
It suffices to compute all the intersection numbers. The self-intersection numbers are done in Lemma \ref{lem:1}. Consider $\beta^{i}_{\trop}\cdot \beta^{i+1}_{\trop}$. Then, there is a single intersection between the two cycles at the $(i+1)^{th}$ focus-focus singularity. Since the orientations of the two loops around the singularity are opposite, the intersection is $+1$. It is clear that $\beta^{i}_{\trop}\cdot \beta^{j}_{\trop}$ do not intersect if $|i-j|>1$. Finally, we check intersections with $\beta'_{\trop}$. There is no intersection between $\beta'_{\trop}$ and $\beta^{i}_{\trop}$ for $i>3$. Consider the case now where $i\in \{1,2\}$. There is an intersection at both the $i^{th}$ and $(i+1)^{th}$ focus-focus singularity. Note that the loops around the $i^{th}$ and $(i+1)^{th}$ focus-focus singularities have opposite orientations and the edges of $\beta'_{\trop}$ corresponding to those singularities are identical in terms of the vectors they carry and the orientation. Thus, the intersection contributions will cancel so $\beta'_{\trop}\cdot \beta^{i}_{\trop}=0$. Finally, consider $\beta'_{\trop}\cdot \beta^{3}_{\trop}$. Now there is only one intersection, around the third focus-focus singularity and it has the intersection number $1$.
To summarize, the intersection matrix (with the ordered basis as in the lemma) is given by:\\
\begin{center}
$
\begin{pmatrix}
-2 & 0 & 0 & 0 & 0 & 0 & 0 & 0\\
0 & -2 & 1 & 0 & 0 & 0 & 0 & 0\\
0 & 1 & -2 & 1 & 0 & 0 & 0 & 0\\
0 & 0 & 1 & -2 & 1 & 0 & 0 & 0\\
0 & 0 & 0 & 1 & -2 & 1 & 0 & 0\\
0 & 0 & 0 & 0 & 1 & -2 & 1 & 0\\
0 & 0 & 0 & 0 & 0 & 1 & -2 & 1\\
0 & 0 & 0 & 0 & 0 & 0 & 1 & -2
\end{pmatrix}$
\end{center}

\end{proof}

\begin{lem}\label{lem:a}
Given $s \in W$,  let $\beta_{i,s}$ be the singular cycle on $\bar{\mathcal{X}}_s$ corresponding to $\beta^{i}_{\trop}$. Then:
\[ \exp{ \left( \frac{1}{2\pi i} \int_{\beta_{i,s}}\Omega \right)}=z^{[E_i]-[E_{i+1}]}(s).\]
\end{lem}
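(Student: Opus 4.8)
The plan is to compute the period integral $\int_{\beta_{i,s}}\Omega$ by the same local analysis around focus-focus singularities that was carried out in Subsection \ref{ssec:per-excep} for the exceptional cycles $\beta_{\mathrm{trop},ij}$, but now applied to the cycle $\beta^{i}_{\trop}$ depicted in Figure \ref{fig:root1}, which winds once around the focus-focus singularity $o_{1(i+1)}$ and once (with the opposite orientation) around $o_{1i}$ on the ray $\rho_1$. Since $\beta^{i}_{\trop}$ is a genuine closed tropical $1$-cycle that does not touch the boundary of $B'$, I can apply Theorem \ref{thm:per-int} of \cite{RS19} directly, without the boundary-chain modification of Construction \ref{def:bou-chain}.

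First I would split the singular $2$-cycle $\beta_{i}$ (on the central fiber $X_{a,0}$) into the pieces dictated by the labeling in Figure \ref{fig:root1}: the two arcs $e_1,e_2$ of the loop around $o_{1(i+1)}$ with their slab add-in $\Gamma$ at the vertices on $\rho_1$, the two arcs $e_3,e_4$ of the loop around $o_{1i}$ with their slab add-in, the connecting edge $e$, and the trivalent-vertex chains $\Gamma_{w_1},\Gamma_{v_1}$. Using Equation \ref{eq:slab-fcn} for the Ronkin function contributions, the loop around $o_{1(i+1)}$ contributes $z^{[E_{i+1}]}$ (as in Equation \ref{eq:localfocuscomputation}, applied to the singularity $o_{1(i+1)}$), and the loop around $o_{1i}$, traversed with the opposite orientation, contributes $z^{-[E_{i}]}$; the pairing weights $\langle \check d_{e},\xi_{e}\rangle$ come out to $\pm 1$ because the relevant tangent vectors are primitive generators of $\Lambda/\Lambda_{\rho_1}$, exactly as in the computation preceding Equation \ref{eq:q}. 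Since $\beta^{i}_{\trop}$ is contained in the union of the two maximal cells adjacent to $\rho_1$ and never crosses any other ray of $\Sigma$, the Picard-Lefschetz term $\langle c_1(\varphi),\beta^{i}_{\trop}\rangle$ vanishes (the kinks $\kappa_{\rho_1}$ picked up at the two slabs on $\rho_1$ cancel because the two crossings carry opposite weights, just as the bending contributions canceled in Lemma \ref{lem:experiod}), so no power of $t$ survives. The connecting edge $e$ is parallel to $\rho_1$ hence $\beta_e$ closes up, and the two trivalent-vertex contributions $\Gamma_{w_1},\Gamma_{v_1}$ each contribute $\pi i \bmod 2\pi i$ by Lemma 3.3 of \cite{RS19}, so their product contributes $(-1)(-1)=1$ after exponentiation. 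Assembling these, $\exp\!\left(\frac{1}{2\pi i}\int_{\beta_i}\Omega\right)=z^{[E_{i+1}]}\cdot z^{-[E_i]}=z^{[E_i]-[E_{i+1}]}$.

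Finally I would promote this from the central-fiber statement to the statement about $\bar{\mathcal X}_s$: by the construction in Section 2 and Appendix A of \cite{RS19} (cf. Construction \ref{def:singular-2-cycles}), for $s\in W$ the cycle $\beta_i$ deforms to $\beta_{i,s}$ on $\bar{\mathcal X}_s$, and by Theorem \ref{thm:per-int} together with the analyticity of the mirror family (so that the formal period formula is the convergent one, as discussed after Theorem \ref{thm:per-int}) the identity $\exp\!\left(\frac{1}{2\pi i}\int_{\beta_{i,s}}\Omega\right)=z^{[E_i]-[E_{i+1}]}(s)$ holds. The main obstacle I anticipate is bookkeeping the signs and orientations correctly at the two focus-focus loops and the trivalent vertex $w_1$ — in particular making sure that the opposite orientation of the $o_{1i}$-loop really does invert the corresponding monomial and cancel the $\kappa_{\rho_1}$-contribution, rather than doubling it — but this is exactly parallel to the verified computation in Subsection \ref{ssec:per-excep} and in \cite{bauer}, so it should go through without new ideas.
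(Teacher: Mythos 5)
Your approach is exactly the paper's: the published proof of this lemma is a single sentence invoking Theorem \ref{thm:per-int} together with the local Ronkin-function computation of Equation \ref{eq:localfocuscomputation}, and your elaboration (no boundary modification needed since $\beta^{i}_{\trop}$ is closed and avoids $\partial B'$, cancellation of the $\kappa_{\rho_1}$ kinks from the two oppositely oriented slab crossings, closure of $\beta_e$ along the $\rho_1$-parallel connecting edge) is the correct fleshing-out of that sentence. One slip in your final assembly: with the contributions as you have assigned them, $z^{[E_{i+1}]}\cdot z^{-[E_i]}=z^{[E_{i+1}]-[E_i]}$, which is the reciprocal of the claimed answer, so one of the two orientation assignments (which loop picks up $z^{[E_{\bullet}]}$ versus $z^{-[E_{\bullet}]}$) must be swapped — precisely the bookkeeping point you flagged yourself. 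This is a fixable orientation convention, not a gap in the method.
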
 

\begin{proof}
This follows by applying Theorem \ref{thm:per-int} and also Equation \ref{eq:localfocuscomputation}.
\end{proof}

\begin{lem} \label{lem:b}
Let $W$ be as given in Construction \ref{def:singular-2-cycles}. Then,
Given $s =(a,t) \in W \cap T_Y$,  let $\beta'_s$ be the singular cycle on $\bar{\mathcal{X}}_s$ corresponding to $\beta'_{\trop}$. Then:
\[  \exp{ \left( \frac{1}{2\pi i} \int_{\beta'_s}\Omega\right)}=t^{\kappa_{\rho_1}}z^{\left[ -E_1-E_2-E_3-E_9-E_{10} \right]}(a).\] 
\end{lem}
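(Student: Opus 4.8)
The plan is to compute the period $\exp\left(\frac{1}{2\pi i}\int_{\beta'_s}\Omega\right)$ by applying Theorem \ref{thm:per-int} directly to the tropical $1$-cycle $\beta'_{\mathrm{trop}}$ constructed in Figure \ref{fig:root2}, exactly as Lemma \ref{lem:a} does for the $\beta^i_{\mathrm{trop}}$, but now keeping careful track of two extra contributions: the Picard--Lefschetz term $\langle c_1(\varphi),\beta'_{\mathrm{trop}}\rangle$, which is nonzero because $\beta'_{\mathrm{trop}}$ genuinely crosses rays of $\Sigma$ (unlike the exceptional cycles living in a single maximal cell), and the Ronkin-function contributions $\mathcal{R}(\beta'_{\mathrm{trop}})$ coming from each vertex of $\beta'_{\mathrm{trop}}$ that lies on a slab. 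First I would fix the labelling: $\beta'_{\mathrm{trop}}$ has one trivalent vertex in the interior of a maximal cell (where the balancing $3\zeta_{\rho_1}-\zeta_{\rho_2}-\zeta_{\rho_4}=0$ holds, as already noted after Figure \ref{fig:root1}), and then edges running out to wind once around each of the five focus-focus singularities: the three singularities $1+z^{[E_i]}X_1^{-1}=0$ ($i=1,2,3$) on $\rho_1$, the one $1+z^{[E_9]}X_2^{-1}=0$ on $\rho_2$, and the one $1+z^{[E_{10}]}X_4^{-1}=0$ on $\rho_3$.

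Next I would assemble the contributions. For each of the five loops around a focus-focus singularity, the local computation is identical to the one carried out in Subsection \ref{ssec:per-excep} leading to Equation \ref{eq:localfocuscomputation}: the difference of Ronkin functions across the two slab-crossings of the loop contributes a factor $z^{[E_i]}$ (resp. $z^{[E_9]}$, $z^{[E_{10}]}$), with a sign governed by the orientation $\langle\check d_e,\xi_e\rangle$ of the loop and the weighting in the definition of $\mathcal{R}(\beta'_{\mathrm{trop}})$. Reading the orientations off Figure \ref{fig:root2} — $\beta'_{\mathrm{trop}}$ winds the \emph{same} way around each singularity, all oriented so that the cycle flows out toward the trivalent vertex — every one of the five factors enters with a negative exponent, giving the combined Ronkin contribution $z^{-[E_1]-[E_2]-[E_3]-[E_9]-[E_{10}]}$. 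For the Picard--Lefschetz term, each slab-crossing of an edge on a ray $\rho_k$ contributes $\langle\check d_e,\xi_e\rangle\cdot\kappa_{\rho_k}$; since by our choice (as in Definition \ref{def:excep-1-cycle}) the cycle crosses the outermost slab $\rho_{kl_k}$ on each ray, and restricted to $S^{GS}$ we have $\kappa_{\rho_{kj}}=\kappa_{\rho_k}=[\bar D_k]\cdot H$, the total $\langle c_1(\varphi),\beta'_{\mathrm{trop}}\rangle$ collapses to $\kappa_{\rho_1}$ after the crossings on $\rho_2,\rho_3$ cancel in pairs (the loop enters and exits each such ray), leaving the single factor $t^{\kappa_{\rho_1}}$ — here one must be slightly careful that the net count on $\rho_1$ is $\kappa_{\rho_1}$ and not a multiple, which follows from the fact that $\beta'_{\mathrm{trop}}$ has net homological winding number $1$ transverse to $\rho_1$ (equivalently, the sum of $\langle\check d_{\rho_1},\xi_e\rangle$ over $\rho_1$-crossing edges is $3-1-1=1$, matching $D_1^2$ bookkeeping through $v_4+3v_1+v_2=0$). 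The contributions of the slab add-ins $\Gamma_v$ at the trivalent vertex and of the trivial gluing data vanish exactly as in Subsection \ref{ssec:per-excep}, and the terms $\log z^{\xi_e}(S(v))$ at the trivalent vertex telescope to $1$ by the balancing condition there. Multiplying everything and invoking Theorem \ref{thm:per-int} (together with Corollary 4.6 of \cite{RS19} to pass from the formal to the analytic statement), I obtain
\[
\exp\left(\frac{1}{2\pi i}\int_{\beta'_s}\Omega\right)=t^{\kappa_{\rho_1}}\,z^{[-E_1-E_2-E_3-E_9-E_{10}]}(a),
\]
which is the claim.

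The main obstacle I anticipate is purely bookkeeping of signs and multiplicities: unlike the exceptional cycles, $\beta'_{\mathrm{trop}}$ is not contained in a single maximal cell, so one must carefully orient each of the five loops, compute each $\langle\check d_e,\xi_e\rangle$ against the correct primitive conormal, verify that the Ronkin contributions on $\rho_1$ combine into $z^{-[E_1]-[E_2]-[E_3]}$ rather than into some partial sum, and confirm that the Picard--Lefschetz crossings on $\rho_2$ and $\rho_3$ really do cancel while the three on $\rho_1$ accumulate to give precisely $t^{\kappa_{\rho_1}}$. A secondary point to check is that $\beta'_{\mathrm{trop}}$ does not touch the boundary of $B'$ (it lives entirely in the star of $0$, winding around interior focus-focus singularities), so Theorem \ref{thm:per-int} applies verbatim with no need for the boundary-chain modification used in Lemma \ref{lem:experiod}; this is clear from Figure \ref{fig:root2} but worth stating. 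With these checks in place the computation is a direct application of the machinery already developed.
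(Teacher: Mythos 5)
Your overall strategy is the same as the paper's (whose proof of this lemma is literally a one-line appeal to Theorem \ref{thm:per-int} and Equation \ref{eq:localfocuscomputation}), and your final answer and the treatment of the Ronkin contributions from the five loops are correct. However, your accounting of the Picard--Lefschetz term $\langle c_1(\varphi),\beta'_{\trop}\rangle$ is wrong in its specifics, and this is exactly the bookkeeping you flagged as the main obstacle. Each loop of $\beta'_{\trop}$ around a focus-focus singularity on $\rho_1$ crosses $\rho_1$ twice, on the two adjacent slabs, with $\langle\check d_e,\xi_e\rangle=+1$ and $-1$ respectively; since $\kappa_{\rho_{1j}}=\kappa_{\rho_1}$ for all $j$ on $S^{GS}$, the two $\kappa_{\rho_1}\log t$ contributions cancel within each loop --- this is precisely why Lemma \ref{lem:a} produces no $t$-power. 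Hence the net signed count of $\rho_1$-crossings is $0$, not the ``$3-1-1=1$'' you claim, and the three $\rho_1$-loops contribute nothing to the $t$-exponent.

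The factor $t^{\kappa_{\rho_1}}$ instead comes from the crossings you claim cancel: the connecting edge from the interior vertex to the singularity on $\rho_2$ (carrying the primitive vector $\zeta_{\rho_2}$ parallel to $\rho_2$) crosses $\rho_4$ and $\rho_3$ once each, and these are single crossings, not enter-and-exit pairs. Working through the charts (in $U_3$ one has $v_2=(1,0)$, $v_3=(0,1)$, $v_4=(-1,3)$, and the transition to $U_4$ sends $v_2$ to $(3,-1)$), one finds $\langle\check d_{\rho_3},\zeta_{\rho_2}\rangle=1$ and $\langle\check d_{\rho_4},\zeta_{\rho_2}\rangle=3$, with no Ronkin corrections since the $\rho_4$-crossing occurs on the innermost slab and $\rho_3$ carries no singularities. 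The total Picard--Lefschetz contribution is therefore $3\kappa_{\rho_4}+\kappa_{\rho_3}=(3\bar F+\bar C)\cdot H=\bar D_1\cdot H=\kappa_{\rho_1}$, using $[\bar D_1]=3[\bar F]+[\bar C]$ in $\Pic(F_3)$ and $\kappa_{\rho_2}=\kappa_{\rho_4}=\bar F\cdot H$. (Equivalently, one may reroute that edge through $\sigma_{1,2}$ so that it crosses the innermost slab of $\rho_1$ exactly once with pairing $1$; the two routings give homologous cycles and the same period.) With this correction the rest of your argument, including the observation that $\beta'_{\trop}$ avoids the boundary of $B'$ so that no boundary-chain modification is needed, goes through.
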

\begin{proof}
This follows from Theorem \ref{thm:per-int} and local contributions  of Ronkin functions as shown in Equation \ref{eq:localfocuscomputation}.
\end{proof}

\begin{prop} \label{prop:del-pezzo-1}
Given $s \in T_Y$, the period point of $\bar{\cX}_s$ is given by the image of $s$ under the surjective homomorphism $T_Y \twoheadrightarrow T_{(D^{\perp})^*}$.

\end{prop}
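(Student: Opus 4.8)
The plan is to reduce the statement to the period computations already carried out for the exceptional $(-2)$-cycles $\beta^i_{\trop}$ and $\beta'_{\trop}$. Recall that $T_{(D^\perp)^*} = \Hom(D^\perp, \mathbb{G}_m)$ and that the unmarked period point $\varphi_{\bar{\cX}_s}$ is, by Definition \ref{defn:unmarkedperiod}, the homomorphism $D^\perp \to \Pic^0(\cD_s) = \mathbb{G}_m$, $L \mapsto L|_{\cD_s}$. Since $D^\perp \subset \Pic(Y) \simeq \Pic(\bar{\cX}_s)$ (via the marking $\mu$ established in Proposition \ref{prop:mar-family}, which exists here because even for the degree $1$ del Pezzo the deformation type and the marking of the Picard group are constructed), the point $\varphi_{\bar{\cX}_s}$ is determined by its values on the basis $E_1-E_2, \dots, E_7-E_8, 3F+C-E_1-E_2-E_3-E_9-E_{10}$ of $D^\perp$ computed above. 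So it suffices to evaluate $\varphi_{\bar{\cX}_s}$ on each of these generators and to check that the result agrees with the image of $s$ under $T_Y \twoheadrightarrow T_{(D^\perp)^*}$, i.e. with $z^{E_i - E_{i+1}}(s)$ and $z^{3F+C-E_1-E_2-E_3-E_9-E_{10}}(s)$ respectively.

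First I would work over the locus $W \cap T_Y$ from Construction \ref{def:singular-2-cycles}, restricted to the Gross--Siebert locus, where the singular $2$-cycles $\beta_{i,s}$ and $\beta'_s$ deforming $\beta^i_{\trop}$ and $\beta'_{\trop}$ are available. Each such cycle is closed (it does not touch the boundary of $B'$, unlike the exceptional tropical $1$-cycles), so it pairs honestly with $\Omega$, and by the same Poincar\'e-duality normalization of $\Omega$ used throughout, $\exp\!\big(\tfrac{1}{2\pi i}\int_{\beta_{i,s}}\Omega\big)$ is exactly the value $\varphi_{\bar{\cX}_s}$ takes on the class represented by $\beta_{i,s}$, which is $E_i - E_{i+1}$ (and similarly $3F+C-E_1-E_2-E_3-E_9-E_{10}$ for $\beta'_s$). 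Lemma \ref{lem:a} gives $\exp\!\big(\tfrac{1}{2\pi i}\int_{\beta_{i,s}}\Omega\big) = z^{[E_i]-[E_{i+1}]}(s)$, and Lemma \ref{lem:b} gives $\exp\!\big(\tfrac{1}{2\pi i}\int_{\beta'_s}\Omega\big) = t^{\kappa_{\rho_1}} z^{-E_1-E_2-E_3-E_9-E_{10}}(a)$. On the Gross--Siebert slice the coordinate $t^{\kappa_{\rho_1}}$ is precisely $z^{p^*(\bar D_1)\cdot H}$, and since $3F+C = p^*(\bar D_1) = D_1 + E_1+\cdots+E_8$, after the change of variables identifying the slice coordinates with $\Pic(Y)\otimes\mathbb{G}_m$ one gets $t^{\kappa_{\rho_1}} z^{-E_1-E_2-E_3-E_9-E_{10}} = z^{3F+C-E_1-E_2-E_3-E_9-E_{10}}$; here one also uses that $z^{D_1}$ and the relevant exceptional classes are handled consistently with the boundary-period computation of Proposition \ref{prop:boundaryperiod}, which for the degree $1$ del Pezzo gives $\psi = z^D$ and pins down the $\langle D\rangle$-part. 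Matching these with the induced map $T_Y \twoheadrightarrow T_{(D^\perp)^*}$ dual to the inclusion $D^\perp \hookrightarrow \Pic(Y)$ establishes the identity on $W \cap T_Y$.

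Next I would propagate the identity from $W \cap T_Y$ to all of $T_Y$. Both sides — the function $s \mapsto \varphi_{\bar{\cX}_s}(\alpha)$ for each $\alpha \in D^\perp$, and $s \mapsto z^\alpha(s)$ — are regular functions on $T_Y$, in fact eigenfunctions of the relative torus $T^D$ action; for $\alpha \in D^\perp$ the relevant $T^D$-weight is trivial (that is exactly the defining property of $D^\perp = \ker w$), so the period $\varphi_{\bar{\cX}_s}(\alpha)$ is a $T^D$-invariant function and in particular descends to $T_Y/T^D \simeq T_{(D^\perp)^*}$. Since $W \cap T_Y$ is a non-empty analytic open subset (Lemma \ref{lem:full-dim} shows $T^D \cdot (W\cap T_Y)$ is full-dimensional, and $T^D$-invariance of $\varphi(\alpha)$ then spreads the equality over $T^D \cdot (W\cap T_Y)$), uniqueness of analytic continuation gives the equality on all of $T_Y$. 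Finally, since the basis $\{E_i-E_{i+1}\} \cup \{3F+C-E_1-E_2-E_3-E_9-E_{10}\}$ spans $D^\perp$, agreement on this basis is agreement of the two homomorphisms $D^\perp \to \mathbb{G}_m$, which is exactly the assertion that the period point of $\bar{\cX}_s$ is the image of $s$ under $T_Y \twoheadrightarrow T_{(D^\perp)^*}$.

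\textbf{Main obstacle.} The delicate point is the bookkeeping in the second paragraph: matching the formula $t^{\kappa_{\rho_1}} z^{-E_1-E_2-E_3-E_9-E_{10}}(a)$ obtained from the tropical period integral — which is expressed in the coordinates $(a,t)$ of the Gross--Siebert slice $T_a \subset S^{GS}$ — with the intrinsic coordinate $z^{3F+C-E_1-E_2-E_3-E_9-E_{10}}$ on $T_Y$, using the explicit description of $\iota_G$ and the identity $\kappa_{\rho_1} = [\bar D_1]\cdot H$ together with $p^*(\bar D_1) = 3F+C$. One must also be careful that, unlike the non-del-Pezzo case where the marked period map is literally the identity, here only an isomorphism $T_Y \xrightarrow{\sim} T_Y^*$ is available (cf. Proposition \ref{identity}), so the statement is phrased purely in terms of the \emph{unmarked} period point and the canonical surjection $T_Y \twoheadrightarrow T_{(D^\perp)^*}$, which is insensitive to this ambiguity — the computation above only ever uses the restriction of the period map to $D^\perp$, and that restriction is unambiguous.
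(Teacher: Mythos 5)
Your proposal is correct and follows essentially the same route as the paper's proof: identify the classes of $\beta_{i,s}$ and $\beta'_s$ with the basis of $D^{\perp}$ via intersection pairing, read off their periods from Lemma \ref{lem:a} and Lemma \ref{lem:b} over $W\cap T_Y$, and then extend to all of $T_Y$ using the $T^{D}$-equivariance (trivial weight on $D^{\perp}$) together with uniqueness of analytic continuation. Your extra care with the change of coordinates between the Gross--Siebert slice and $T_Y$, and with the insensitivity of the restriction to $D^{\perp}$ to the marking ambiguity, fills in details the paper leaves implicit but does not change the argument.
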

\begin{proof}
It follows from intersection pairing that $\beta'_s$ is in the same homology class as the curve class marked by $ p^{*}(\bar{D}_1)-E_1-E_2-E_3-E_9-E_{10}$ and $\beta_{i,s}$ the curve class marked by $E_i -E_{i+1}$ for $s\in T_Y$.

Suppose $s\in U$ where $U = T^{D} \cap (W \cap T_Y)$. Recall that for internal (-2)-curves, their holomorphic periods are equal to algebraic ones. Then, it follows from Lemma \ref{lem:a} and Lemma \ref{lem:b} that the statement of the lemma holds restricted to $W$. Then, the lemma follows from Lemma \ref{lem:rel-tor-mar} and the uniqueness of the analytic continuation. 
\end{proof}

\end{document}